\newcounter{theoremintro}
\newtheorem{thmintro}[theoremintro]{Theorem}
\newtheorem{corintro}[theoremintro]{Corollary}
\newtheorem{theorem}{Theorem}[section]
\newtheorem*{theorem*}{Theorem}
\newtheorem{axiom}[theorem]{Axiom}
\newtheorem*{claim*}{Claim}
\newtheorem{conjecture}[theorem]{Conjecture}
\newtheorem{corollary}[theorem]{Corollary}
\newtheorem{example}[theorem]{Example}
\newtheorem{exercise}[theorem]{Exercise}
\newtheorem{notation}[theorem]{Notation}
\newtheorem*{notation*}{Notation}
\newtheorem{proposition}[theorem]{Proposition}
\newtheorem*{question*}{Question}
\theoremstyle{definition}
\newtheorem{remark}[theorem]{Remark}
\newtheorem{lemma}[theorem]{Lemma}
\newtheorem{definition}[theorem]{Definition}
\chardef\@x10\chardef\@xv60
\def\tcitime{
\def\@time{%
  \@minute\time\@hour\@minute\divide\@hour\@xv
  \ifnum\@hour<\@x 0\fi\the\@hour:%
  \multiply\@hour\@xv\advance\@minute-\@hour
  \ifnum\@minute<\@x 0\fi\the\@minute
  }}%
\def\QCTOpt[#1]#2{%
  \def\QCTOptB{#1}
  \def\QCTOptA{#2}
}
\def\QCTNOpt#1{%
  \def\QCTOptA{#1}
  \let\QCTOptB\empty
}
\def\Qct{%
  \@ifnextchar[{%
    \QCTOpt}{\QCTNOpt}
}
\def\QCBOpt[#1]#2{%
  \def\QCBOptB{#1}
  \def\QCBOptA{#2}
}
\def\QCBNOpt#1{%
  \def\QCBOptA{#1}
  \let\QCBOptB\empty
}
\def\Qcb{%
  \@ifnextchar[{%
    \QCBOpt}{\QCBNOpt}
}
\def\PrepCapArgs{%
  \ifx\QCBOptA\empty
    \ifx\QCTOptA\empty
      {}%
    \else
      \ifx\QCTOptB\empty
        {\QCTOptA}%
      \else
        [\QCTOptB]{\QCTOptA}%
      \fi
    \fi
  \else
    \ifx\QCBOptA\empty
      {}%
    \else
      \ifx\QCBOptB\empty
        {\QCBOptA}%
      \else
        [\QCBOptB]{\QCBOptA}%
      \fi
    \fi
  \fi
}
\def\GRAPHICSPS#1{%
 \ifcase\GRAPHICSTYPE
   \special{ps: #1}%
 \or
   \special{language "PS", include "#1"}%
 \fi
}%
\def\graffile#1#2#3#4{%
    \leavevmode
    \raise -#4 \BOXTHEFRAME{%
        \hbox to #2{\raise #3\hbox to #2{\null #1\hfil}}}%
}%
\def\draftbox#1#2#3#4{%
 \leavevmode\raise -#4 \hbox{%
  \frame{\rlap{\protect\tiny #1}\hbox to #2%
   {\vrule height#3 width\z@ depth\z@\hfil}%
  }%
 }%
}%
\newif\ifwasdraft
\def\GRAPHIC#1#2#3#4#5{%
 \ifnum\draft=\@ne\draftbox{#2}{#3}{#4}{#5}%
  \else\graffile{#1}{#3}{#4}{#5}%
  \fi
 }%
\def\addtoLaTeXparams#1{%
    \edef\LaTeXparams{\LaTeXparams #1}}%
\newif\ifBoxFrame \BoxFramefalse
\newif\ifOverFrame \OverFramefalse
\newif\ifUnderFrame \UnderFramefalse
\def\BOXTHEFRAME#1{%
   \hbox{%
      \ifBoxFrame
         \frame{#1}%
      \else
         {#1}%
      \fi
   }%
}
\def\doFRAMEparams#1{\BoxFramefalse\OverFramefalse\UnderFramefalse\readFRAMEparams#1\end}%
\def\readFRAMEparams#1{%
 \ifx#1\end%
  \let\next=\relax
  \else
  \ifx#1i\dispkind=\z@\fi
  \ifx#1d\dispkind=\@ne\fi
  \ifx#1f\dispkind=\tw@\fi
  \ifx#1t\addtoLaTeXparams{t}\fi
  \ifx#1b\addtoLaTeXparams{b}\fi
  \ifx#1p\addtoLaTeXparams{p}\fi
  \ifx#1h\addtoLaTeXparams{h}\fi
  \ifx#1X\BoxFrametrue\fi
  \ifx#1O\OverFrametrue\fi
  \ifx#1U\UnderFrametrue\fi
  \ifx#1w
    \ifnum\draft=1\wasdrafttrue\else\wasdraftfalse\fi
    \draft=\@ne
  \fi
  \let\next=\readFRAMEparams
  \fi
 \next
 }%
\def\IFRAME#1#2#3#4#5#6{%
      \bgroup
      \let\QCTOptA\empty
      \let\QCTOptB\empty
      \let\QCBOptA\empty
      \let\QCBOptB\empty
      #6%
      \parindent=0pt%
      \leftskip=0pt
      \rightskip=0pt
      \setbox0 = \hbox{\QCBOptA}%
      \@tempdima = #1\relax
      \ifOverFrame
          \typeout{This is not implemented yet}%
          \show\HELP
      \else
         \ifdim\wd0>\@tempdima
            \advance\@tempdima by \@tempdima
            \ifdim\wd0 >\@tempdima
               \textwidth=\@tempdima
               \setbox1 =\vbox{%
                  \noindent\hbox to \@tempdima{\hfill\GRAPHIC{#5}{#4}{#1}{#2}{#3}\hfill}\\%
                  \noindent\hbox to \@tempdima{\parbox[b]{\@tempdima}{\QCBOptA}}%
               }%
               \wd1=\@tempdima
            \else
               \textwidth=\wd0
               \setbox1 =\vbox{%
                 \noindent\hbox to \wd0{\hfill\GRAPHIC{#5}{#4}{#1}{#2}{#3}\hfill}\\%
                 \noindent\hbox{\QCBOptA}%
               }%
               \wd1=\wd0
            \fi
         \else
            \ifdim\wd0>0pt
              \hsize=\@tempdima
              \setbox1 =\vbox{%
                \unskip\GRAPHIC{#5}{#4}{#1}{#2}{0pt}%
                \break
                \unskip\hbox to \@tempdima{\hfill \QCBOptA\hfill}%
              }%
              \wd1=\@tempdima
           \else
              \hsize=\@tempdima
              \setbox1 =\vbox{%
                \unskip\GRAPHIC{#5}{#4}{#1}{#2}{0pt}%
              }%
              \wd1=\@tempdima
           \fi
         \fi
         \@tempdimb=\ht1
         \advance\@tempdimb by \dp1
         \advance\@tempdimb by -#2%
         \advance\@tempdimb by #3%
         \leavevmode
         \raise -\@tempdimb \hbox{\box1}%
      \fi
      \egroup%
}%
\def\DFRAME#1#2#3#4#5{%
 \begin{center}
     \let\QCTOptA\empty
     \let\QCTOptB\empty
     \let\QCBOptA\empty
     \let\QCBOptB\empty
     \ifOverFrame 
        #5\QCTOptA\par
     \fi
     \GRAPHIC{#4}{#3}{#1}{#2}{\z@}
     \ifUnderFrame 
        \nobreak\par #5\QCBOptA
     \fi
 \end{center}%
 }%
\def\FFRAME#1#2#3#4#5#6#7{%
 \begin{figure}[#1]%
  \let\QCTOptA\empty
  \let\QCTOptB\empty
  \let\QCBOptA\empty
  \let\QCBOptB\empty
  \ifOverFrame
    #4
    \ifx\QCTOptA\empty
    \else
      \ifx\QCTOptB\empty
        \caption{\QCTOptA}%
      \else
        \caption[\QCTOptB]{\QCTOptA}%
      \fi
    \fi
    \ifUnderFrame\else
      \label{#5}%
    \fi
  \else
    \UnderFrametrue%
  \fi
  \begin{center}\GRAPHIC{#7}{#6}{#2}{#3}{\z@}\end{center}%
  \ifUnderFrame
    #4
    \ifx\QCBOptA\empty
      \caption{}%
    \else
      \ifx\QCBOptB\empty
        \caption{\QCBOptA}%
      \else
        \caption[\QCBOptB]{\QCBOptA}%
      \fi
    \fi
    \label{#5}%
  \fi
  \end{figure}%
 }%
\def\makeactives{
  \catcode`\"=\active
  \catcode`\;=\active
  \catcode`\:=\active
  \catcode`\'=\active
  \catcode`\~=\active
}
   \gdef\activesoff{%
      \def"{\string"}
      \def;{\string;}
      \def:{\string:}
      \def'{\string'}
      \def~{\string~}
    }
\def\FRAME#1#2#3#4#5#6#7#8{%
 \bgroup
 \@ifundefined{bbl@deactivate}{}{\activesoff}
 \ifnum\draft=\@ne
   \wasdrafttrue
 \else
   \wasdraftfalse%
 \fi
 \def\LaTeXparams{}%
 \dispkind=\z@
 \def\LaTeXparams{}%
 \doFRAMEparams{#1}%
 \ifnum\dispkind=\z@\IFRAME{#2}{#3}{#4}{#7}{#8}{#5}\else
  \ifnum\dispkind=\@ne\DFRAME{#2}{#3}{#7}{#8}{#5}\else
   \ifnum\dispkind=\tw@
    \edef\@tempa{\noexpand\FFRAME{\LaTeXparams}}%
    \@tempa{#2}{#3}{#5}{#6}{#7}{#8}%
    \fi
   \fi
  \fi
  \ifwasdraft\draft=1\else\draft=0\fi{}%
  \egroup
 }%
\def\TEXUX#1{"texux"}
\long\def\QQQ#1#2{%
     \long\expandafter\def\csname#1\endcsname{#2}}%
\long\def\QQA#1#2{}%
\def\QTR#1#2{{\csname#1\endcsname #2}}
\def\EXPAND#1[#2]#3{}%
\def\NOEXPAND#1[#2]#3{}%
\def\LaTeXparent#1{}%
\def\ChildStyles#1{}%
\def\ChildDefaults#1{}%
\def\QTagDef#1#2#3{}%
\def\QQfnmark#1{\footnotemark}
\def\makeatletter\input gnuindex.sty\makeatother\makeindex{\makeatletter\input gnuindex.sty\makeatother\makeindex}%
\def\initial#1{\bigbreak{\raggedright\large\bf #1}\kern 2\p@\penalty3000}}%
 \def\abstract{%
  \if@twocolumn
   \section*{Abstract (Not appropriate in this style!)}%
   \else \small 
   \begin{center}{\bf Abstract\vspace{-.5em}\vspace{\z@}}\end{center}%
   \quotation 
   \fi
  }%
   \def\registered{\relax\ifmmode{}\r@gistered
                    \else$\m@th\r@gistered$\fi}%
 \def\r@gistered{^{\ooalign
  {\hfil\raise.07ex\hbox{$\scriptstyle\rm\text{R}$}\hfil\crcr
  \mathhexbox20D}}}}{}%
\newdimen\theight
\def\Column{%
 \vadjust{\setbox\z@=\hbox{\scriptsize\quad\quad tcol}%
  \theight=\ht\z@\advance\theight by \dp\z@\advance\theight by \lineskip
  \kern -\theight \vbox to \theight{%
   \rightline{\rlap{\box\z@}}%
   \vss
   }%
  }%
 }%
\def\qed{%
 \ifhmode\unskip\nobreak\fi\ifmmode\ifinner\else\hskip5\p@\fi\fi
 \hbox{\hskip5\p@\vrule width4\p@ height6\p@ depth1.5\p@\hskip\p@}%
 }%
\def\miss{\hbox{\vrule height2\p@ width 2\p@ depth\z@}}%
\def\tcol#1{{\baselineskip=6\p@ \vcenter{#1}} \Column}  %
\def\newfmtname{LaTeX2e}
\def\chkcompat{%
   \if@compatibility
   \else
     \usepackage{latexsym}
   \fi
}
  \DeclareOldFontCommand{\rm}{\normalfont\rmfamily}{\mathrm}
  \DeclareOldFontCommand{\sf}{\normalfont\sffamily}{\mathsf}
  \DeclareOldFontCommand{\tt}{\normalfont\ttfamily}{\mathtt}
  \DeclareOldFontCommand{\bf}{\normalfont\bfseries}{\mathbf}
  \DeclareOldFontCommand{\it}{\normalfont\itshape}{\mathit}
  \DeclareOldFontCommand{\sl}{\normalfont\slshape}{\@nomath\sl}
  \DeclareOldFontCommand{\sc}{\normalfont\scshape}{\@nomath\sc}
\def\alpha{\Greekmath 010B }%
\def\beta{\Greekmath 010C }%
\def\gamma{\Greekmath 010D }%
\def\delta{\Greekmath 010E }%
\def\epsilon{\Greekmath 010F }%
\def\zeta{\Greekmath 0110 }%
\def\eta{\Greekmath 0111 }%
\def\theta{\Greekmath 0112 }%
\def\iota{\Greekmath 0113 }%
\def\kappa{\Greekmath 0114 }%
\def\lambda{\Greekmath 0115 }%
\def\mu{\Greekmath 0116 }%
\def\nu{\Greekmath 0117 }%
\def\xi{\Greekmath 0118 }%
\def\pi{\Greekmath 0119 }%
\def\rho{\Greekmath 011A }%
\def\sigma{\Greekmath 011B }%
\def\tau{\Greekmath 011C }%
\def\upsilon{\Greekmath 011D }%
\def\phi{\Greekmath 011E }%
\def\chi{\Greekmath 011F }%
\def\psi{\Greekmath 0120 }%
\def\omega{\Greekmath 0121 }%
\def\varepsilon{\Greekmath 0122 }%
\def\vartheta{\Greekmath 0123 }%
\def\varpi{\Greekmath 0124 }%
\def\varrho{\Greekmath 0125 }%
\def\varsigma{\Greekmath 0126 }%
\def\varphi{\Greekmath 0127 }%
\def\nabla{\Greekmath 0272 }
\def\FindBoldGroup{%
   {\setbox0=\hbox{$\mathbf{x\global\edef\theboldgroup{\the\mathgroup}}$}}%
}
\def\Greekmath#1#2#3#4{%
    \if@compatibility
        \ifnum\mathgroup=\symbold
           \mathchoice{\mbox{\boldmath$\displaystyle\mathchar"#1#2#3#4$}}%
                      {\mbox{\boldmath$\textstyle\mathchar"#1#2#3#4$}}%
                      {\mbox{\boldmath$\scriptstyle\mathchar"#1#2#3#4$}}%
                      {\mbox{\boldmath$\scriptscriptstyle\mathchar"#1#2#3#4$}}%
        \else
           \mathchar"#1#2#3#4%
        \fi 
    \else 
        \FindBoldGroup
        \ifnum\mathgroup=\theboldgroup 
           \mathchoice{\mbox{\boldmath$\displaystyle\mathchar"#1#2#3#4$}}%
                      {\mbox{\boldmath$\textstyle\mathchar"#1#2#3#4$}}%
                      {\mbox{\boldmath$\scriptstyle\mathchar"#1#2#3#4$}}%
                      {\mbox{\boldmath$\scriptscriptstyle\mathchar"#1#2#3#4$}}%
        \else
           \mathchar"#1#2#3#4%
        \fi     	    
	  \fi}
\newif\ifGreekBold  \GreekBoldfalse
\let\SAVEPBF=\pbf
\def\pbf{\GreekBoldtrue\SAVEPBF}%
  \newcounter{equationnumber}  
  \def\mathletters{%
     \addtocounter{equation}{1}
     \edef\@currentlabel{\theequation}%
     \setcounter{equationnumber}{\c@equation}
     \setcounter{equation}{0}%
     \edef\theequation{\@currentlabel\noexpand\alph{equation}}%
  }
    \def\BibTeX{{\rm B\kern-.05em{\sc i\kern-.025em b}\kern-.08em
                 T\kern-.1667em\lower.7ex\hbox{E}\kern-.125emX}}}{}%
\def\AmS{{\protect\usefont{OMS}{cmsy}{m}{n}%
                A\kern-.1667em\lower.5ex\hbox{M}\kern-.125emS}}}{}%
\let\DOTSI\relax
\def\RIfM@{\relax\ifmmode}%
\def\FN@{\futurelet\next}%
\def\iint{\DOTSI\intno@\tw@\FN@\ints@}%
\def\iiint{\DOTSI\intno@\thr@@\FN@\ints@}%
\def\iiiint{\DOTSI\intno@4 \FN@\ints@}%
\def\idotsint{\DOTSI\intno@\z@\FN@\ints@}%
\def\ints@{\findlimits@\ints@@}%
\newif\iflimtoken@
\newif\iflimits@
\def\findlimits@{\limtoken@true\ifx\next\limits\limits@true
 \else\ifx\next\nolimits\limits@false\else
 \limtoken@false\ifx\ilimits@\nolimits\limits@false\else
 \ifinner\limits@false\else\limits@true\fi\fi\fi\fi}%
\def\multint@{\int\ifnum\intno@=\z@\intdots@                          
 \else\intkern@\fi                                                    
 \ifnum\intno@>\tw@\int\intkern@\fi                                   
 \ifnum\intno@>\thr@@\int\intkern@\fi                                 
 \int}
\def\multintlimits@{\intop\ifnum\intno@=\z@\intdots@\else\intkern@\fi
 \ifnum\intno@>\tw@\intop\intkern@\fi
 \ifnum\intno@>\thr@@\intop\intkern@\fi\intop}%
\def\intic@{%
    \mathchoice{\hskip.5em}{\hskip.4em}{\hskip.4em}{\hskip.4em}}%
\def\negintic@{\mathchoice
 {\hskip-.5em}{\hskip-.4em}{\hskip-.4em}{\hskip-.4em}}%
\def\ints@@{\iflimtoken@                                              
 \def\ints@@@{\iflimits@\negintic@
   \mathop{\intic@\multintlimits@}\limits                             
  \else\multint@\nolimits\fi                                          
  \eat@}
 \else                                                                
 \def\ints@@@{\iflimits@\negintic@
  \mathop{\intic@\multintlimits@}\limits\else
  \multint@\nolimits\fi}\fi\ints@@@}%
\def\intkern@{\mathchoice{\!\!\!}{\!\!}{\!\!}{\!\!}}%
\def\plaincdots@{\mathinner{\cdotp\cdotp\cdotp}}%
\def\intdots@{\mathchoice{\plaincdots@}%
 {{\cdotp}\mkern1.5mu{\cdotp}\mkern1.5mu{\cdotp}}%
 {{\cdotp}\mkern1mu{\cdotp}\mkern1mu{\cdotp}}%
 {{\cdotp}\mkern1mu{\cdotp}\mkern1mu{\cdotp}}}%
\def\RIfM@{\relax\protect\ifmmode}
\def\text{\RIfM@\expandafter\text@\else\expandafter\mbox\fi}
\let\nfss@text\text
\def\text@#1{\mathchoice
   {\textdef@\displaystyle\f@size{#1}}%
   {\textdef@\textstyle\tf@size{\firstchoice@false #1}}%
   {\textdef@\textstyle\sf@size{\firstchoice@false #1}}%
   {\textdef@\textstyle \ssf@size{\firstchoice@false #1}}%
   \glb@settings}
\def\textdef@#1#2#3{\hbox{{%
                    \everymath{#1}%
                    \let\f@size#2\selectfont
                    #3}}}
\newif\iffirstchoice@
\def\Let@{\relax\iffalse{\fi\let\\=\cr\iffalse}\fi}%
\def\vspace@{\def\vspace##1{\crcr\noalign{\vskip##1\relax}}}%
\def\multilimits@{\bgroup\vspace@\Let@
 \baselineskip\fontdimen10 \scriptfont\tw@
 \advance\baselineskip\fontdimen12 \scriptfont\tw@
 \lineskip\thr@@\fontdimen8 \scriptfont\thr@@
 \lineskiplimit\lineskip
 \vbox\bgroup\ialign\bgroup\hfil$\m@th\scriptstyle{##}$\hfil\crcr}%
\def\Sb{_\multilimits@}%
\def\endSb{\crcr\egroup\egroup\egroup}%
\def\Sp{^\multilimits@}%
\newdimen\ex@
\def\rightarrowfill@#1{$#1\m@th\mathord-\mkern-6mu\cleaders
 \hbox{$#1\mkern-2mu\mathord-\mkern-2mu$}\hfill
 \mkern-6mu\mathord\rightarrow$}%
\def\leftarrowfill@#1{$#1\m@th\mathord\leftarrow\mkern-6mu\cleaders
 \hbox{$#1\mkern-2mu\mathord-\mkern-2mu$}\hfill\mkern-6mu\mathord-$}%
\def\leftrightarrowfill@#1{$#1\m@th\mathord\leftarrow
\mkern-6mu\cleaders
 \hbox{$#1\mkern-2mu\mathord-\mkern-2mu$}\hfill
 \mkern-6mu\mathord\rightarrow$}%
\def\overrightarrow{\mathpalette\overrightarrow@}%
\def\overrightarrow@#1#2{\vbox{\ialign{##\crcr\rightarrowfill@#1\crcr
 \noalign{\kern-\ex@\nointerlineskip}$\m@th\hfil#1#2\hfil$\crcr}}}%
\def\overleftarrow{\mathpalette\overleftarrow@}%
\def\overleftarrow@#1#2{\vbox{\ialign{##\crcr\leftarrowfill@#1\crcr
 \noalign{\kern-\ex@\nointerlineskip}$\m@th\hfil#1#2\hfil$\crcr}}}%
\def\overleftrightarrow{\mathpalette\overleftrightarrow@}%
\def\overleftrightarrow@#1#2{\vbox{\ialign{##\crcr
   \leftrightarrowfill@#1\crcr
 \noalign{\kern-\ex@\nointerlineskip}$\m@th\hfil#1#2\hfil$\crcr}}}%
\def\underrightarrow{\mathpalette\underrightarrow@}%
\def\underrightarrow@#1#2{\vtop{\ialign{##\crcr$\m@th\hfil#1#2\hfil
  $\crcr\noalign{\nointerlineskip}\rightarrowfill@#1\crcr}}}%
\def\underleftarrow{\mathpalette\underleftarrow@}%
\def\underleftarrow@#1#2{\vtop{\ialign{##\crcr$\m@th\hfil#1#2\hfil
  $\crcr\noalign{\nointerlineskip}\leftarrowfill@#1\crcr}}}%
\def\underleftrightarrow{\mathpalette\underleftrightarrow@}%
\def\underleftrightarrow@#1#2{\vtop{\ialign{##\crcr$\m@th
  \hfil#1#2\hfil$\crcr
 \noalign{\nointerlineskip}\leftrightarrowfill@#1\crcr}}}%
\def\qopnamewl@#1{\mathop{\operator@font#1}\nlimits@}
\let\nlimits@\displaylimits
\def\setboxz@h{\setbox\z@\hbox}
\def\varlim@#1#2{\mathop{\vtop{\ialign{##\crcr
 \hfil$#1\m@th\operator@font lim$\hfil\crcr
 \noalign{\nointerlineskip}#2#1\crcr
 \noalign{\nointerlineskip\kern-\ex@}\crcr}}}}
 \def\rightarrowfill@#1{\m@th\setboxz@h{$#1-$}\ht\z@\z@
  $#1\copy\z@\mkern-6mu\cleaders
  \hbox{$#1\mkern-2mu\box\z@\mkern-2mu$}\hfill
  \mkern-6mu\mathord\rightarrow$}
\def\leftarrowfill@#1{\m@th\setboxz@h{$#1-$}\ht\z@\z@
  $#1\mathord\leftarrow\mkern-6mu\cleaders
  \hbox{$#1\mkern-2mu\copy\z@\mkern-2mu$}\hfill
  \mkern-6mu\box\z@$}
\def\projlim{\qopnamewl@{proj\,lim}}
\def\injlim{\qopnamewl@{inj\,lim}}
\def\varinjlim{\mathpalette\varlim@\rightarrowfill@}
\def\varprojlim{\mathpalette\varlim@\leftarrowfill@}
\def\varliminf{\mathpalette\varliminf@{}}
\def\varliminf@#1{\mathop{\underline{\vrule\@depth.2\ex@\@width\z@
   \hbox{$#1\m@th\operator@font lim$}}}}
\def\varlimsup{\mathpalette\varlimsup@{}}
\def\varlimsup@#1{\mathop{\overline
  {\hbox{$#1\m@th\operator@font lim$}}}}
\def\align{\@verbatim \frenchspacing\@vobeyspaces \@alignverbatim
You are using the "align" environment in a style in which it is not defined.}
\let\csname endalign*\endcsname =\endtrivlist
\def\alignat{\@verbatim \frenchspacing\@vobeyspaces \@alignatverbatim
You are using the "alignat" environment in a style in which it is not defined.}
\let\csname endalignat*\endcsname =\endtrivlist
\def\xalignat{\@verbatim \frenchspacing\@vobeyspaces \@xalignatverbatim
You are using the "xalignat" environment in a style in which it is not defined.}
\let\csname endxalignat*\endcsname =\endtrivlist
\def\gather{\@verbatim \frenchspacing\@vobeyspaces \@gatherverbatim
You are using the "gather" environment in a style in which it is not defined.}
\let\csname endgather*\endcsname =\endtrivlist
\def\multiline{\@verbatim \frenchspacing\@vobeyspaces \@multilineverbatim
You are using the "multiline" environment in a style in which it is not defined.}
\let\csname endmultiline*\endcsname =\endtrivlist
\def\arrax{\@verbatim \frenchspacing\@vobeyspaces \@arraxverbatim
You are using a type of "array" construct that is only allowed in AmS-LaTeX.}
\def\tabulax{\@verbatim \frenchspacing\@vobeyspaces \@tabulaxverbatim
You are using a type of "tabular" construct that is only allowed in AmS-LaTeX.}
\let\csname endarrax*\endcsname =\endtrivlist
\let\csname endtabulax*\endcsname =\endtrivlist
\def\@@eqncr{\let\@tempa\relax
    \ifcase\@eqcnt \def\@tempa{& & &}\or \def\@tempa{& &}%
      \else \def\@tempa{&}\fi
     \@tempa
     \if@eqnsw
        \iftag@
           \@taggnum
        \else
           \@eqnnum\stepcounter{equation}%
        \fi
     \fi
     \global\tag@false
     \global\@eqnswtrue
     \global\@eqcnt\z@\cr}
 \def\endequation{%
     \ifmmode\ifinner 
      \iftag@
        \addtocounter{equation}{-1} 
        $\hfil
           \displaywidth\linewidth\@taggnum\egroup \endtrivlist
        \global\tag@false
        \global\@ignoretrue   
      \else
        $\hfil
           \displaywidth\linewidth\@eqnnum\egroup \endtrivlist
        \global\tag@false
        \global\@ignoretrue 
      \fi
     \else   
      \iftag@
        \addtocounter{equation}{-1} 
        \eqno \hbox{\@taggnum}
        \global\tag@false%
        $$\global\@ignoretrue
      \else
        \eqno \hbox{\@eqnnum}
        $$\global\@ignoretrue
      \fi
     \fi\fi
 } 
 \newif\iftag@ \tag@false
 \def\tag{\@ifnextchar*{\@tagstar}{\@tag}}
 \def\@tag#1{%
     \global\tag@true
     \global\def\@taggnum{(#1)}}
 \def\@tagstar*#1{%
     \global\tag@true
     \global\def\@taggnum{#1}%
}
\begin{document}
\def\cprime{$'$}

\title[The complexity of actions of nonamenable groups]{The complexity of
conjugacy, orbit equivalence, and von Neumann equivalence of actions of
nonamenable groups}
\author{Eusebio Gardella}
\address{Eusebio Gardella\\
Westf\"{a}lische Wilhelms-Universit\"{a}t M\"{u}nster, Fachbereich
Mathematik, Einsteinstrasse 62, 48149 M\"{u}nster, Germany}
\email{gardella@uni-muenster.de}
\urladdr{https://wwwmath.uni-muenster.de/u/gardella/}
\author{Martino Lupini}
\address{Martino Lupini\\
Mathematics Department\\
California Institute of Technology\\
1200 E. California Blvd\\
MC 253-37\\
Pasadena, CA 91125}
\email{lupini@caltech.edu}
\urladdr{http://www.lupini.org/}
\date{\today }
\subjclass[2000]{Primary 37A20, 03E15; Secondary 20L05, 37A55}
\thanks{The first-named author was partially funded by SFB 878 \emph{Groups,
Geometry and Actions}, and by a postdoctoral fellowship from the Humboldt
Foundation. The second-named author was partially supported by the NSF Grant
DMS-1600186. This work was initiated while the authors were visiting the
Centre de Recerca Matem{\`{a}}tica in March 2017, in occasion of the
Intensive Research Programme on Operator Algebras. Part of the work was done
while the authors where visiting the University of Houston
in July and August 2017, in occasion of the Workshop on Applications of Model Theory
to Operator Algebras, supported by the NSF grant DMS-1700316.  The authors gratefully
acknowledge the hospitality of both institutions.}
\keywords{Orbit equivalence, von Neumann equivalence, pmp action, pmp
expansion, nonamenable group, Bernoulli action, cocycle superrigidity, pmp
groupoid}
\dedicatory{}

\begin{abstract}
Building on work of Popa, Ioana, and Epstein--T\"{o}rnquist, we show that,
for every nonamenable countable discrete group $\Gamma$, the relations of
conjugacy, orbit equivalence, and von Neumann equivalence of free ergodic (or weak
mixing) measure preserving actions of $\Gamma$ on the standard
atomless probability space are not Borel, thus answering questions of
Kechris. This is an optimal and definitive result, which establishes a neat
dichotomy with the amenable case, since any two free ergodic actions of an
amenable group on the standard atomless probability space are orbit equivalent
by classical results of Dye and Ornstein--Weiss. The statement about conjugacy
solves the nonamenable case of Halmos' conjugacy problem in Ergodic Theory, 
originally posed by Halmos in 1956 for ergodic transformations.

In order to obtain these results, we study ergodic (or weak mixing)
class-bijective extensions of a given ergodic countable probability measure preserving
equivalence relation $R$. When $R$ is nonamenable, we show that the relations
of isomorphism and von Neumann equivalence of extensions of $R$ are not Borel.
When $R$ is amenable, all the extensions of R are again amenable, and hence isomorphic
by classical results of Dye and Connes--Feldman--Weiss. This approach allows us to
extend the results about group actions mentioned above to the case of nonamenable
locally compact unimodular groups, via the study of their cross-section equivalence relations.
\end{abstract}

\maketitle

\renewcommand*{\thetheoremintro}{\Alph{theoremintro}}

\section{Introduction}

Let $\Gamma $ be countable discrete group, and let $(Y,\nu )$ be a standard
atomless probability space. A\emph{\ probability-measure-preserving }(pmp)
\emph{action} of $\Gamma $ on $(Y,\nu )$ is a homomorphism $\theta \colon
\Gamma \rightarrow \mathrm{Aut}(Y,\nu )$ from $\Gamma $ to the group of
measure-preserving Borel automorphisms of $(Y,\nu )$.
Two pmp actions $\theta $ and $\theta ^{\prime }$ of $\Gamma $ on $(Y,\nu )$
are \emph{conjugate} if there exists $T\in \mathrm{Aut}(Y,\nu )$ such that $%
T\circ \theta _{\gamma }=\theta _{\gamma }^{\prime }\circ T$ for every $%
\gamma \in \Gamma $. The \emph{conjugacy problem} in ergodic theory,
initially formulated by Halmos for $\mathbb{Z}$-actions \cite%
{halmos_lectures_1956}, asks whether there exists a \emph{method} to
determine whether two given pmp actions of $\Gamma $ on $(Y,\nu )$ are
conjugate.

By Halmos' own admission, this is a vague question, but it can be given a
precise meaning in the context of Borel complexity. The space $\mathrm{%
\mathrm{Act}}_{\Gamma }(Y,\nu )$ of pmp actions of $\Gamma $ on $(Y,\nu )$
is endowed with a canonical Polish topology \cite[Section 10]%
{kechris_global_2010}, and the subset $\mathrm{FE}_{\Gamma }(Y,\nu )$ of
free ergodic actions is a Polish space with the induced topology. The same
holds for the subset $\mathrm{FWM}_{\Gamma }(Y,\nu )$ of free weak mixing
actions. An instance of Halmos' conjugacy problem is the following:

\begin{question*}
(Kechris, \cite[18.(IVb)]{kechris_global_2010}). Let $\Gamma$ be a countable
discrete group and let $(Y,\nu)$ be a standard atomless probability space.

\begin{enumerate}
\item Is the relation of conjugacy of free ergodic pmp actions of $\Gamma $
on $(Y,\nu )$ a Borel subset of $\mathrm{\mathrm{FE}}_{\Gamma }(Y,\nu
)\times \mathrm{\mathrm{FE}}_{\Gamma }(Y,\nu )$ endowed with the product
topology?

\item Is the relation of conjugacy of free weak mixing pmp actions of $%
\Gamma $ on $(Y,\nu )$ a Borel subset of $\mathrm{\mathrm{FWM}}_{\Gamma
}(Y,\nu )\times \mathrm{\mathrm{FWM}}_{\Gamma }(Y,\nu )$ endowed with the
product topology?
\end{enumerate}
\end{question*}

Observe that a negative answer for part~(2) implies a negative answer for
part~(1), since $\mathrm{FWM}_{\Gamma }(Y,\nu )$ is a Borel subset of $%
\mathrm{\mathrm{FE}}_{\Gamma }(Y,\nu )$. The question above has been
addressed in \cite{foreman_conjugacy_2011} in the case of $\mathbb{Z}$%
-actions, where it is shown that the relation of conjugacy of free ergodic
pmp transformations is \emph{not} Borel. Epstein and T\"{o}rnquist showed in
\cite{epstein_borel_2011} that when $\Gamma $ is a group containing a
nonabelian free group as an (almost) normal subgroup, then the relation of
conjugacy of free weak mixing actions of $\Gamma $ on $(Y,\nu )$ is not
Borel.

In the present paper, we give a complete answer to both parts of the
question above for actions of arbitrary nonamenable groups:

\begin{thmintro}
\label{Theorem:nB-conj}Let $\Gamma $ be a nonamenable countable group and
let $(Y,\nu)$ be a standard atomless probability space. Then the relations
of conjugacy of free weak mixing (or ergodic) pmp actions of $\Gamma$ on $%
(Y,\nu )$ is not Borel.
\end{thmintro}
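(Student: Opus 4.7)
The strategy is to reduce Theorem~\ref{Theorem:nB-conj} to the main structural result of the paper, namely, that for any nonamenable ergodic pmp equivalence relation $R$, the relation of isomorphism of ergodic (respectively, weak mixing) class-bijective extensions of $R$ is not Borel. For the remainder of the sketch, I assume that this equivalence-relation result has been established.

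First, fix a free weak mixing pmp action $\alpha_{0}\colon\Gamma\curvearrowright(X_{0},\mu_{0})$ on the standard atomless probability space, such as the Bernoulli action $\Gamma\curvearrowright[0,1]^{\Gamma}$. Since $\Gamma$ is nonamenable, the orbit equivalence relation $R_{\alpha_{0}}$ is a nonamenable ergodic pmp equivalence relation on $(X_{0},\mu_{0})$. Next, I would construct a Borel map from a Polish parametrization of ergodic (respectively, weak mixing) class-bijective extensions of $R_{\alpha_{0}}$ into $\mathrm{FE}_{\Gamma}(Y,\nu)$ (respectively, $\mathrm{FWM}_{\Gamma}(Y,\nu)$) as follows. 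Given such an extension $\pi\colon(\tilde{X},\tilde{R},\tilde{\mu})\to(X_{0},R_{\alpha_{0}},\mu_{0})$, freeness of $\alpha_{0}$ together with class-bijectivity of $\pi$ allows a unique lift of $\alpha_{0}$ to a free pmp action $\tilde{\alpha}$ of $\Gamma$ on $(\tilde{X},\tilde{\mu})$ whose orbit equivalence relation is $\tilde{R}$: declare $\tilde{\alpha}_{\gamma}(x)$ to be the unique element of $\pi^{-1}\{\alpha_{0}(\gamma)\pi(x)\}$ lying in the $\tilde{R}$-class of $x$. Ergodicity (respectively, weak mixing) of the extension transfers to ergodicity (respectively, weak mixing) of $\tilde{\alpha}$. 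Composing with a fixed measure-space isomorphism $(\tilde{X},\tilde{\mu})\cong(Y,\nu)$ yields a Borel map into the appropriate space of actions.

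The core of the argument is to show that this map is a Borel reduction. The easy direction is that an isomorphism of extensions restricts to a measure-preserving isomorphism intertwining the two lifted actions, hence yields a conjugacy. The delicate converse requires showing that any conjugacy of $\tilde{\alpha}_{1}$ and $\tilde{\alpha}_{2}$ can be realized as an isomorphism of the extensions $\pi_{1}$ and $\pi_{2}$. A priori a conjugating automorphism need not respect the projections to $X_{0}$; the rigidity step is to show that, modulo post-composition with an element of $\mathrm{Aut}(R_{\alpha_{0}})$, it does. This is the main obstacle. I expect Popa's cocycle superrigidity for Bernoulli actions of nonamenable groups, applied to the base $\alpha_{0}$, combined with strong ergodicity, to force the conjugating automorphism to be compatible with the projection up to an element of $\mathrm{Aut}(R_{\alpha_{0}})$. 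Since the latter is a Polish group acting Borel on the parametrization space of extensions, quotienting out this ambiguity is harmless and the map remains a Borel reduction.

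Applying the structural theorem for extensions to the nonamenable equivalence relation $R_{\alpha_{0}}$ then produces a non-Borel equivalence relation which Borel-reduces to conjugacy of free ergodic (respectively, free weak mixing) pmp actions of $\Gamma$ on $(Y,\nu)$. Since a Borel relation cannot have a non-Borel Borel-reduction into it, this completes the proof.
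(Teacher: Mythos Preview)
Your overall strategy---lift class-bijective extensions of $R_{\alpha_0}$ to $\Gamma$-actions and transfer non-Borelness---matches the paper's, but the argument as written has a genuine gap at the rigidity step, and the paper's route is designed precisely to avoid that step.

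The problem is the backward direction of your proposed Borel reduction. Popa's cocycle superrigidity for Bernoulli actions applies to $w$-rigid source groups (property~(T), product groups, etc.), not to arbitrary nonamenable $\Gamma$. For a general nonamenable $\Gamma$ there is no result forcing a conjugacy $\phi$ between the lifted actions $\tilde\alpha_1,\tilde\alpha_2$ to descend to the base $\alpha_0$, even modulo $\mathrm{Aut}(R_{\alpha_0})$. So the implication ``$\tilde\alpha_1$ conjugate to $\tilde\alpha_2$ $\Rightarrow$ extensions isomorphic (relatively to $R$)'' is not available. Note also that your easy direction already requires isomorphism \emph{relatively to $R$}, not mere isomorphism of extensions: an isomorphism that does not respect the projections $\pi_i$ need not intertwine the lifted actions.

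The paper does not deduce Theorem~\ref{Theorem:nB-conj} from Theorem~\ref{Theorem:nB-oe-relation} via a Borel reduction; rather, both are consequences of a single refined statement (Theorem~\ref{Theorem:reduction-relations} / Theorem~\ref{Theorem:c-to-1-reduction-relation}). That theorem produces a Borel map $A\mapsto R_A$ from countable abelian groups to weak mixing class-bijective extensions of $R_{\alpha_0}$ such that (i) isomorphic groups give extensions isomorphic \emph{relatively to} $R_{\alpha_0}$, and (ii) any family of pairwise nonisomorphic groups whose $R_A$ are pairwise \emph{stably von Neumann equivalent} is countable. Composing with your lifting $R_A\mapsto\theta_A$ and using only the easy forward direction gives a Borel map $A\mapsto\theta_A$ with: isomorphic $A$ $\Rightarrow$ conjugate $\theta_A$; and since conjugate actions are trivially stably von Neumann equivalent and the orbit equivalence relation of $\theta_A$ is $R_A$, any conjugacy class pulls back to at most countably many isomorphism classes of groups. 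This is a \emph{countable-to-one Borel homomorphism} from $\cong_{\mathrm{AG}}$ to conjugacy, and the Epstein--T\"ornquist criterion (Theorem~\ref{Theorem:criterion-nB}) finishes. The key point is that the countable-to-one property is inherited from the much coarser relation of stable von Neumann equivalence, where a separability argument (Theorem~\ref{Theorem:separability}) does the work; no rigidity at the level of conjugacy is ever invoked.
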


In recent years, ergodic theory has focused on classification of actions up
to other two equivalence relations: orbit equivalence and von Neumann
equivalence. A \emph{countable Borel equivalence relation} on a standard
probability space $(Y,\nu )$ is a Borel equivalence relation $R$ on $Y$ such
that the $R$-class of almost every point in $Y$ is countable. A countable
Borel equivalence relation $R$ is said to be \emph{%
probability-measure-preserving} (pmp) if every Borel automorphisms of $Y$
that maps almost everywhere $R$-classes to $R$-classes is automatically a
measure-preserving automorphism of $(Y,\nu )$. A countable pmp Borel
equivalence relation $R$ is called \emph{ergodic }if every $R$-invariant
Borel subset of $Y$ is either null or co-null. One can associate with a
countable pmp Borel equivalence relation $R$ a von Neumann algebra $L(R)$,
which is a II$_{1}$ factor if and only if $\left( Y,\nu \right) $ is
atomless and $R$ is ergodic \cite%
{feldman_ergodic_1977,feldman_ergodic_1977-1}.

Two ergodic countable Borel equivalence relations $R$ and $R^{\prime }$ on
the standard atomless probability space are:

\begin{itemize}
\item \emph{isomorphic} if there is a measure-preserving automorphism of $%
(Y,\nu )$ that maps almost everywhere $R$-classes to $R^{\prime }$-classes;

\item \emph{stably isomorphic }if there exist Borel subsets $X$ and $%
X^{\prime }$ of $Y$ that meet every class of $R$ and, respectively, $%
R^{\prime }$, and a measure-preserving Borel isomorphism $\varphi
:X\rightarrow X^{\prime }$ that maps $R$-classes to $R^{\prime }$-classes;

\item \emph{von Neumann equivalent }if the II$_{1}$ factors $L(R)$ and $%
L(R^{\prime })$ are isomorphic;

\item \emph{stably von Neumann equivalent }if the II$_{1}$ factors $L(R)$
and $L(R^{\prime })$ are stably isomorphic, i.e.\ there exist nonzero
projections $p\in L(R)$ and $q\in L(R^{\prime })$ such that the corners $%
pL(R)p$ and $qL(R^{\prime })q$ are isomorphic.
\end{itemize}

Given a pmp action $\theta $ of a countable discrete group $\Gamma $ on a
standard atomless probability space $\left( Y,\nu \right) $, one can define
its \emph{orbit equivalence relation} $R$ on $\left( Y,\nu \right) $ by
setting $xRy$ if and only if there exists $\gamma \in \Gamma $ such that $%
\theta _{\gamma }(x)=y$. When $\theta $ is free and ergodic, the von Neumann
algebra $L(R)$ is a II$_{1}$ factor isomorphic to the group-measure space
construction $\Gamma \ltimes ^{\theta }L^{\infty }(Y,\nu )$ of Murray and
von Neumann \cite{murray_rings_1936}. Two free ergodic action $\theta $ and $%
\theta ^{\prime }$ of $\Gamma $ on the standard atomless probability space
are:

\begin{itemize}
\item \emph{(stably) }$\emph{orbit}$\emph{\ equivalent }if their orbit
equivalence relations are (stably) isomorphic,

\item \emph{(stably) von Neumann equivalent }if their orbit equivalence
relations are (stably) von Neumann equivalent.
\end{itemize}

When $\Gamma$ is an \emph{amenable group}, it follows from results of Dye
\cite{dye_groups_1959} and Ornstein--Weiss \cite{ornstein_ergodic_1980} that
any two free ergodic actions of $\Gamma $ on $(Y,\nu )$ are orbit
equivalent. On the other hand, when $\Gamma $ is nonamenable, Epstein and
Ioana \cite{epstein_orbit_2007,ioana_orbit_2011} showed that the relations
of orbit equivalence and von Neumann equivalence for free, ergodic, pmp
actions have uncountably many classes. This has motivated Kechris to ask the
following:

\begin{question*}
(Kechris, \cite[18.(IVb)]{kechris_global_2010}). Let $\Gamma$ be a
nonamenable countable group, and let $(Y,\nu)$ be a standard atomless
probability space. Are the relations of orbit equivalence and von Neumann
equivalence for free ergodic actions of $\Gamma $ on $(Y,\nu)$ Borel?
\end{question*}

The results, as well as the proofs, from \cite%
{epstein_orbit_2007,ioana_orbit_2011} that such relations have uncountably
many classes do not address the question above. Indeed, in these papers one
encodes irreducible representations of $\mathbb{F}_{2}$ up to unitary
equivalence within free ergodic actions of $\Gamma $ up to orbit
equivalence; see also \cite{ioana_subequivalence_2009}. However, the
relation of unitary equivalence of irreducible representations of $\mathbb{F}%
_{2}$ \textit{is} Borel. Therefore, a new set of ideas and techniques is
needed to answer this question. In this paper, we completely settle the
matter:

\begin{thmintro}
\label{Theorem:nB-oe}Let $\Gamma $ be the nonamenable countable discrete
group, and let $(Y,\nu )$ be the standard atomless probability space. Then
the relations of orbit equivalence, stable orbit equivalence, von Neumann
equivalence, and stable von Neumann equivalence of free weak mixing (or
ergodic) pmp actions of $\Gamma $ on $(Y,\nu )$ are not Borel.
\end{thmintro}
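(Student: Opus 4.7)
The plan is to deduce Theorem~\ref{Theorem:nB-oe} from the companion technical result announced in the abstract: for any nonamenable ergodic countable pmp equivalence relation $R_{0}$, the relations of (stable) isomorphism and (stable) von Neumann equivalence of its ergodic (or weak mixing) class-bijective extensions are not Borel. I would fix once and for all a distinguished free ergodic (indeed weak mixing) pmp $\Gamma$-action $\sigma$ on some $(Y_{0},\nu_{0})$ whose orbit equivalence relation $R_{0}:=R_{\sigma}$ is nonamenable and on which the Popa--Ioana cocycle superrigidity machinery is available --- for instance, a Bernoulli action, composed if necessary with a coinduction from a Gaboriau--Lyons free subgroup lurking inside $R_{\sigma}$ so as to cover arbitrary nonamenable $\Gamma$, in the spirit of Epstein.

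Given any ergodic (weak mixing) class-bijective extension $R$ of $R_{0}$, class-bijectivity permits the $\Gamma$-action $\sigma$ to be lifted through the quotient to a canonical free ergodic (weak mixing) pmp $\Gamma$-action $\tilde{\sigma}_{R}$ on an enlarged standard probability space, whose orbit equivalence relation is precisely $R$. I would then verify that the assignment $\Phi:R\mapsto \tilde{\sigma}_{R}$ is Borel (via a uniform measurable-selection argument) and is a Borel reduction from isomorphism of extensions to orbit equivalence of $\Gamma$-actions. The forward direction is immediate: an isomorphism of extensions conjugates the lifted actions. The converse is where rigidity enters: any orbit equivalence between $\tilde{\sigma}_{R}$ and $\tilde{\sigma}_{R'}$ produces a measurable cocycle from $\Gamma$ into a Polish group which, by Popa--Ioana cocycle superrigidity applied to $\sigma$, reduces to a conjugacy of $\sigma$ with itself; that conjugacy descends through the fibration to an isomorphism $R\cong R'$. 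Stable orbit equivalence then follows since compression and amplification of equivalence relations are Borel operations and transfer the non-Borelness at the extension level.

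For (stable) von Neumann equivalence one composes $\Phi$ with the functor $R\mapsto L(R)$ and invokes W$^{\ast}$-superrigidity theorems of Popa, Ioana, and Popa--Vaes: in the cocycle-superrigid context furnished by $\sigma$, the II$_{1}$ factor $L(R_{\tilde{\sigma}_{R}})$ recovers $R_{\tilde{\sigma}_{R}}$ up to (stable) isomorphism, so $\Phi$ also Borel-reduces the (non-Borel) von Neumann equivalence of extensions to (stable) von Neumann equivalence of the corresponding $\Gamma$-actions. The main obstacle, I expect, is to verify that every ingredient --- the lifting construction, the cocycle-untwisting step, and the W$^{\ast}$-recovery --- goes through uniformly in Borel parameters, so that $\Phi$ is a genuine Borel reduction rather than merely a pointwise correspondence, and that the rigidity conclusions apply in parameter-measurable form. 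A secondary but essential technical point is arranging, via a canonical isomorphism of standard probability spaces, that the image of $\Phi$ lands inside the fixed space $\mathrm{FWM}_{\Gamma}(Y,\nu)$ called for in the theorem.
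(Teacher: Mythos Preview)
Your overall strategy---deduce Theorem~B from the companion result on class-bijective extensions (Theorem~D in the paper) via the lift $R\mapsto\tilde\sigma_R$---is precisely what the paper does. But your justification for why $\Phi$ is a Borel reduction is confused, and the rigidity you invoke is both unnecessary and inapplicable as stated.

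The point is that the lift is \emph{tautologically} a reduction: by construction the orbit equivalence relation of $\tilde\sigma_R$ is $R$ itself, so $\tilde\sigma_R$ and $\tilde\sigma_{R'}$ are orbit equivalent if and only if $R\cong R'$ as pmp equivalence relations, and likewise $L(R_{\tilde\sigma_R})=L(R)$, so (stable) von Neumann equivalence of the lifted actions is literally (stable) von Neumann equivalence of the extensions. Theorem~D already asserts that these relations on extensions are not Borel, and that transfers immediately. Your appeal to Popa--Ioana cocycle superrigidity and to W$^*$-superrigidity for the ``converse'' direction is therefore unneeded---and in any case would not apply: those theorems concern the \emph{base} action $\sigma$ (Bernoulli, say), not an arbitrary class-bijective extension $\tilde\sigma_R$, which is not malleable or Bernoulli. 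You also slide between two different equivalence relations on extensions: isomorphism \emph{relative to $R_0$} (which does conjugate lifted actions, as you say for the forward direction) versus isomorphism as equivalence relations (which is all you recover from orbit equivalence of the lifts). An orbit equivalence of $\tilde\sigma_R$ and $\tilde\sigma_{R'}$ does not, via any known rigidity, force the extensions to be isomorphic relative to $R_0$.

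The paper sidesteps this entirely by not claiming a Borel reduction at all: it builds a \emph{countable-to-one Borel homomorphism} $A\mapsto R_A$ from countable abelian groups to extensions (Theorem~\ref{Theorem:reduction-relations}), with isomorphic $A$'s yielding extensions isomorphic relative to $R$, and with any single stable von Neumann class containing the images of only countably many non-isomorphic $A$'s. The latter comes from Ioana's separability argument (Theorem~\ref{Theorem:separability}), not from W$^*$-superrigidity. Composing with the lift gives $A\mapsto\theta_A$ with the same two properties for $\Gamma$-actions, and the Epstein--T\"ornquist criterion (Theorem~\ref{Theorem:criterion-nB}) finishes. All the rigidity input---property~(T) for triples, the extended Popa superrigidity, Ioana separability---lives inside the proof of the extension theorem; the passage from extensions to group actions is soft.
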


Together with the results of Dye and Ornstein--Weiss for amenable groups
recalled above, Theorem \ref{Theorem:nB-oe} implies the following dichotomy.

\begin{corintro}
\label{cor:dychotomyDiscrGrp} Let $\Gamma $ be a countable discrete group,
and let $(Y,\nu )$ be the standard atomless probability space. Suppose that $%
E$ is one of the following equivalence relations: orbit equivalence, stable
orbit equivalence, von Neumann equivalence, or stable von Neumann
equivalence of free weak mixing (or ergodic) pmp actions of $\Gamma $ on $%
(Y,\nu )$.

\begin{enumerate}
\item[(a)] If $\Gamma $ is amenable, then $E$ has a single equivalence class.

\item[(b)] If $\Gamma $ is not amenable, then $E$ is not Borel.
\end{enumerate}
\end{corintro}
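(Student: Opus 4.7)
The plan is to derive the corollary as a direct combination of Theorem \ref{Theorem:nB-oe} with the classical results of Dye and Ornstein--Weiss already cited. No new argument is needed beyond correctly packaging known ingredients.

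First, part (b) requires nothing at all: it is literally the assertion of Theorem \ref{Theorem:nB-oe} for each of the four equivalence relations in question on $\mathrm{FE}_{\Gamma}(Y,\nu)$ and on $\mathrm{FWM}_{\Gamma}(Y,\nu)$.

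For part (a), assume $\Gamma$ is amenable. The substantive case is $\Gamma$ infinite: then the Bernoulli shift of $\Gamma$ on $([0,1]^{\Gamma},\lambda^{\Gamma})$ is a free, weak mixing, pmp action, so $\mathrm{FWM}_{\Gamma}(Y,\nu)\subseteq \mathrm{FE}_{\Gamma}(Y,\nu)$ is nonempty. (If $\Gamma$ is finite, no free ergodic pmp action on an atomless probability space exists, so the statement is vacuous.) Next I would invoke the Dye--Ornstein--Weiss theorem: any two free ergodic pmp actions of a countable amenable group on $(Y,\nu)$ are orbit equivalent. Hence the orbit equivalence relation on $\mathrm{FE}_{\Gamma}(Y,\nu)$, and a fortiori on $\mathrm{FWM}_{\Gamma}(Y,\nu)$, has a single class.

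To finish, I would observe the implication chain: orbit equivalence implies stable orbit equivalence (trivially, by taking $X = X' = Y$), orbit equivalence implies von Neumann equivalence (isomorphic equivalence relations yield isomorphic factors $L(R)\cong L(R')$), and each of these in turn implies stable von Neumann equivalence. Therefore collapse of orbit equivalence to a single class forces all four of the relations listed in the statement to collapse to a single class as well, which is exactly~(a).

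There is no real obstacle in this proof; the deep content sits entirely inside Theorem \ref{Theorem:nB-oe} and the classical Dye--Ornstein--Weiss orbit equivalence theorem. The only mildly delicate point is the remark above about finite amenable groups, which is handled by noting that the statement is vacuous in that case.
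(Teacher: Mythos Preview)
Your proposal is correct and follows exactly the approach indicated in the paper, which simply states that the corollary follows from Theorem~\ref{Theorem:nB-oe} together with the Dye and Ornstein--Weiss results. Your added remarks about the finite case and the implication chain among the four equivalence relations are appropriate details that flesh out what the paper leaves implicit.
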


The methods that we use to obtain Theorem \ref{Theorem:nB-conj} and Theorem %
\ref{Theorem:nB-oe} contain two main innovations. First, we introduce the
notion of property (T) for a \emph{triple }$\Delta \leq \Lambda \leq \Gamma $
consisting a group $\Gamma $ together with nested subgroups $\Delta
\subseteq \Lambda \subseteq \Gamma $. The classical notion of property (T)
for pairs $\Lambda \leq \Gamma $ corresponds to the case when $\Delta $ is
the trivial subgroup. In this context, we present an extension of Popa's
cocycle superrigidity theorem for malleable actions \cite{popa_cocycle_2007}%
, which plays a crucial role in our construction. Specifically, these
techniques allow us to construct a family, parametrized by \emph{countable
abelian groups}, of free ergodic actions of $\mathbb{F}_{2}$. The same
construction applies after replacing $\mathbb{F}_{2}$ with more general
nonamenable groups.


The next fundamental ingredient, to carry over the argument from free groups
to arbitrary amenable groups, is the combination of the Gaboriau--Lyons
measurable solution to von Neumann's problem from \cite%
{gaboriau_measurable-group-theoretic_2009} with the theory of coinduction
for actions of groupoids, initially considered for principal groupoids by
Epstein in \cite{epstein_orbit_2007}. The case of groups containing a free
group is technically easier, as it does not require the Gaboriau--Lyons
theorem, and it only uses the theory of coinduction for group actions. Since
this case contains \emph{in nuce }all the fundamental tools and ideas needed
to prove the general result, while also being significantly less technical,
a sketch of its proof is presented in Section~2.

Rigidity questions have also been intensively studied for actions of locally
compact, second countable, unimodular groups. For example, it follows from
\cite{connes_amenable_1981} that all free pmp ergodic actions of an amenable
locally compact, second countable, unimodular group on standard probability
spaces are orbit equivalent. On the other hand, the nonamenable case was
considerably more difficult to approach in comparison with the discrete
case. In \cite{zimmer_ergodic_1984}, Zimmer showed that any connected
semisimple Lie group with real rank at least two, finite center, and no
compact quotients, admits uncountably many pairwise not orbit equivalent
free ergodic pmp actions. In a very recent breakthrough, Bowen, Hoff, and
Ioana \cite{bowen_neumanns_2015} showed that the same conclusion holds for
any nonamenable locally compact, second countable, unimodular group.
However, their proof offers no information on whether the relation of orbit
equivalence is Borel or not. Here, we also settle this matter:

\begin{thmintro}
\label{Theorem:nB-oeLCSCU}Let $G$ be a nonamenable locally compact, second
countable, unimodular group, and let $(Y,\nu )$ be the standard atomless
probability space. Then the relations of conjugacy, orbit equivalence,
stable orbit equivalence, von Neumann equivalence, and stable von Neumann
equivalence of free ergodic pmp actions of $G$ on $(Y,\nu )$ are not Borel.
\end{thmintro}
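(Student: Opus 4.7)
The plan is to transfer the non-Borelness results for class-bijective extensions of nonamenable ergodic countable pmp equivalence relations (established earlier in the paper) to the lcsc setting, using cross-section equivalence relations as the bridge between these two worlds.

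First I would fix a free ergodic weakly mixing pmp action $\alpha_{0}\colon G\curvearrowright (Z_{0},\zeta_{0})$ of $G$ on the standard atomless probability space, e.g.\ a Bernoulli action of $G$. Since $G$ is lcsc unimodular and the action is free, standard results produce a Borel cross-section $X_{0}\subseteq Z_{0}$ carrying a canonical transverse probability measure, such that the associated cross-section equivalence relation $R_{0}$ on $X_{0}$ is a countable ergodic pmp equivalence relation. Because $G$ is nonamenable, so is $R_{0}$, and the weak mixing of $\alpha_{0}$ induces the corresponding mixing property of $R_{0}$, so the earlier theorem for extensions of nonamenable relations applies to $R_{0}$.

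By that theorem, there is a Polish parameter space $P$ and a Borel family $(R_{t})_{t\in P}$ of ergodic (resp.\ weakly mixing) class-bijective extensions $\pi_{t}\colon X_{t}\to X_{0}$ of $R_{0}$ for which the relations of (stable) isomorphism and (stable) von Neumann equivalence are not Borel. I would lift each such extension to a free pmp $G$-space $(Z_{t},\zeta_{t})\to(Z_{0},\zeta_{0})$ by gluing the cross-section extension to the $G$-flow on $Z_{0}$: concretely, $Z_{t}$ is built as a fiber product of $X_{t}$ with $Z_{0}$ over $X_{0}$, using a Borel return map to a transversal to equip the resulting space with a free pmp $G$-action $\alpha_{t}$ whose cross-section over $X_{t}$ is exactly $R_{t}$. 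After identifying each $(Z_{t},\zeta_{t})$ with $(Y,\nu)$ via a Borel family of isomorphisms of probability spaces, the map $t\mapsto \alpha_{t}$ becomes Borel into the Polish space of free pmp $G$-actions on $(Y,\nu)$.

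The correspondence $R_{t}\leftrightarrow \alpha_{t}$ matches the relevant equivalence relations: two actions $\alpha_{t}$ and $\alpha_{t'}$ are stably orbit equivalent iff $R_{t}\cong R_{t'}$ (by the cross-section theorem of Forrest), and the crossed product $L^{\infty}(Z_{t})\rtimes G$ is Morita equivalent to $L(R_{t})$, so stable von Neumann equivalence matches likewise. Arranging that the covolume of $X_{t}$ is uniformly constant on $P$ promotes stable to non-stable equivalence within the family. Non-Borelness of the equivalence relations on $(R_{t})_{t\in P}$ thus gives, via this Borel reduction, non-Borelness on the image family $(\alpha_{t})_{t\in P}$, and hence on the ambient space of free ergodic pmp actions of $G$.

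The main technical obstacle is the conjugacy statement, since conjugacy is strictly stronger than orbit equivalence: a conjugacy between $\alpha_{t}$ and $\alpha_{t'}$ corresponds to an isomorphism of $R_{t}$ with $R_{t'}$ \emph{as extensions of $R_{0}$}, not merely as abstract equivalence relations. To handle this I would invoke the extension of Popa's cocycle superrigidity developed earlier in the paper for triples $\Delta\le\Lambda\le\Gamma$, applied internally to the cross-section equivalence relation $R_{0}$ via the Gaboriau--Lyons measurable von Neumann solution, to show that every isomorphism between members of the family $(R_{t})$ essentially descends to an isomorphism of extensions, and therefore lifts to a conjugacy of the corresponding $G$-actions. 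This upgrades the earlier Borel reduction to conjugacy and completes the proof.
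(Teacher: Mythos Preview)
Your overall architecture---fix a free ergodic action of $G$, take its cross-section equivalence relation $R_0$, and lift class-bijective extensions of $R_0$ back to $G$-actions via the fiber-product construction of Bowen--Hoff--Ioana---is exactly what the paper does in Proposition~\ref{prop:reductionUnimodular}. For stable orbit equivalence and stable von Neumann equivalence your sketch is essentially correct and matches the paper.

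The gap is in how you handle conjugacy. You frame the transfer as a Borel \emph{reduction} and then, realizing that conjugacy of $\alpha_t,\alpha_{t'}$ does not obviously correspond to any single relation on $R_t,R_{t'}$, you propose using superrigidity to prove that every abstract isomorphism $R_t\cong R_{t'}$ descends to an isomorphism of extensions over $R_0$. This rigidity statement is neither proved in the paper nor clearly true for the family at hand, and your invocation of the triple property~(T) machinery does not make it precise. The paper avoids this entirely: rather than seeking a reduction, it composes the lifting $S\mapsto\zeta_S$ with the map $A\mapsto R_A$ from Theorem~\ref{Theorem:c-to-1-reduction-relation}. The point is that this earlier theorem already guarantees that isomorphic abelian groups yield extensions isomorphic \emph{relatively to $R_0$}, and it is precisely relative isomorphism (not abstract isomorphism) that the fiber-product construction sends to conjugacy of $G$-actions. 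In the other direction, stable von Neumann equivalence of the $G$-actions pulls back to stable von Neumann equivalence of the extensions, whence the countable-to-one property. So $A\mapsto\zeta_{R_A}$ is a countable-to-one Borel homomorphism from $\cong_{\mathrm{AG}}$ to every $E$ lying between conjugacy and stable von Neumann equivalence, and the Epstein--T\"ornquist criterion (Theorem~\ref{Theorem:criterion-nB}) finishes uniformly. No biconditional characterization of conjugacy, no covolume matching, and no extra rigidity is needed.
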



We will obtain Theorem~\ref{Theorem:nB-oeLCSCU} as a consequences of our
main result, Theorem~\ref{Theorem:nB-oe-relation}, which we proceed to
motivate. A \emph{pmp class-bijective extension} of a countable pmp Borel
equivalence relation $R$ on a standard probability space $(Y,\nu )$ is a
pair $(\widehat{R},\pi )$, where $\widehat{R}$ is a countable pmp
equivalence relation on a standard probability space $(X,\mu )$, and $\pi
\colon X\rightarrow Y$ is a Borel map with $\pi _{\ast }(\mu )=\nu $, which
maps $[x]_{\widehat{R}}$ bijectively onto $[\pi (x)]_{R}$ for almost every $%
x\in X$; see \cite%
{kechris_spaces_2017,feldman_subrelations_1989,bowen_neumanns_2015}.
Ergodicity and weak mixing for (class-bijective extensions of) countable pmp
equivalence relations are defined in a natural way; see Subsection~\ref%
{Subsection:weak-mixing}. We consider a natural strengthening of the
relation of isomorphism for pmp class-bijective extensions of a given
countable pmp Borel equivalence relation $R$.

\begin{definition}
Let $(\widehat{R}_{1},\pi _{1})$ and $(\widehat{R}_{2},\pi _{2})$ be pmp
class-bijective extensions of a countable Borel equivalence relation $R$ on $%
\left( Y,\nu \right) $. Then $(\widehat{R}_{1},\pi _{1})$ and $(\widehat{R}%
_{2},\pi _{2})$ are \emph{isomorphic relatively to }$R$ if there exists $%
\theta \in \mathrm{Aut}\left( Y,\nu \right) $ such that, up to discarding a
null set, $\pi _{2}\circ \theta =\pi _{1}$, and $\theta $ maps $\widehat{R}%
_{1}$-classes to $\widehat{R}_{2}$-classes.
\end{definition}

Countable pmp equivalence relations and class-bijective extensions arise
naturally in the context of pmp actions of locally compact groups, as we
briefly explain. A free, ergodic, pmp action of a locally compact, second
countable group $G$ on a standard probability space $\left( X,\mu \right) $
admits a \emph{cocompact cross section }\cite[Theorem 4.2]%
{kyed_l2-betti_2015}; see also \cite[Proposition 2.10]{forrest_virtual_1974}%
. This is a Borel subset $Y\subseteq X$ for which there exist an open
neighborhood $U$ of the identity in $G$ and a compact subset $K$ of \ G such
that the restriction of the action $U\times Y\rightarrow X$ is injective,
and the image $K\cdot Y$ of $K\times Y$ under the action is a $G$-invariant
and conull subset of $X$. One then defines its associated \emph{cross
section equivalence relation} as the restriction to $Y$ of the orbit
equivalence relation of $G$, which is a countable Borel equivalence relation
on $Y$. It is proved in \cite[Proposition 4.3]{kyed_l2-betti_2015} that
there exists a unique $R$-invariant probability measure $\nu $ on $Y$ such
that the push-forward measure of $\lambda |_{U}\times \nu $ by the
restriction of the action $U\times Y\rightarrow X$ is equal to a multiple of
$\mu |_{U\cdot Y}$; see also \cite[Remark 8.2]{bowen_neumanns_2015}. It is
also shown in \cite[Proposition~8.3]{bowen_neumanns_2015} that if $G$ is
unimodular, then every countable class-bijective pmp extension of $R$ is
isomorphic to the cross section equivalence relation of some other free
ergodic pmp action of $G$.

It follows from classical results of Dye \cite{dye_groups_1959}, and of
Connes--Feldman--Weiss \cite{connes_amenable_1981}, that any two ergodic
\emph{amenable} countable pmp equivalence relations on the standard atomless
probability space are isomorphic. In particular, any two ergodic
class-bijective pmp extensions of an ergodic amenable countable pmp
equivalence relation are isomorphic, since these are automatically amenable.
On the other hand, it has recently been shown in \cite{bowen_neumanns_2015}
that a \emph{nonamenable} ergodic countable pmp equivalence relation has
uncountably many pairwise not stably isomorphic ergodic class-bijective pmp
extensions. Their arguments do not give information on whether the relation
of (stable) isomorphism of ergodic class-bijective pmp extensions of a given
nonamenable ergodic pmp equivalence relation is Borel. In this paper, we
strengthen their result by solving this problem:

\begin{thmintro}
\label{Theorem:nB-oe-relation} Let $R$ be an ergodic nonamenable countable
pmp equivalence relation on a standard probability space $(X,\mu )$, and let
$(Y,\nu )$ be the standard atomless probability space. Then the relations of
isomorphism relative to $R$, isomorphism, stable isomorphism, von Neumann
equivalence, and stable von Neumann equivalence of weak mixing (or ergodic)
class-bijective extensions of $R$ on $(Y,\nu )$ are not Borel.
\end{thmintro}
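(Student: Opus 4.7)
The plan is to exhibit a Borel reduction from isomorphism of countable abelian groups---which is well known to be a non-Borel equivalence relation---into each of the five equivalence relations listed in the statement. Concretely, I would construct, in a Borel fashion, for every countable abelian group $A$ a weak mixing class-bijective extension $(\widehat{R}_{A},\pi _{A})$ of $R$ on $(Y,\nu)$ with the property that $\widehat{R}_{A}$ and $\widehat{R}_{B}$ are isomorphic relative to $R$ (respectively, isomorphic, stably isomorphic, von Neumann equivalent, or stably von Neumann equivalent) if and only if $A\cong B$. Since the Borel-ness of any of the five equivalence relations would transfer through this reduction to the isomorphism relation on countable abelian groups, each of them must fail to be Borel.

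The first ingredient is the Gaboriau--Lyons measurable solution to von Neumann's problem: as $R$ is nonamenable, a suitable amplification of $R$ contains, as a subrelation, the orbit equivalence relation of a free pmp action of $\mathbb{F}_{2}$. This reduces the problem to constructing class-bijective extensions of this concrete free $\mathbb{F}_{2}$-subrelation. For the $\mathbb{F}_{2}$-case I plan to use the new notion of property (T) for a triple $\Delta \leq \Lambda \leq \Gamma $ together with the extension of Popa's cocycle superrigidity theorem for malleable actions alluded to in the introduction, to produce, for every countable abelian group $A$, a free weak mixing pmp action $\theta _{A}$ of $\mathbb{F}_{2}$ whose orbit equivalence relation $R_{A}$ is a class-bijective extension of the initial $\mathbb{F}_{2}$-orbit relation and into which the isomorphism class of $A$ is encoded---for example, as a cohomology-theoretic invariant such as the fundamental group or an outer automorphism datum of the associated II$_{1}$ factor. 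The assignment $A\mapsto \theta _{A}$ should be visibly Borel (as is typical for Bernoulli-type constructions twisted by $A$), and this is where the sketch in Section~2 for $\mathbb{F}_{2}$ does most of the work.

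To pass from $\mathbb{F}_{2}$ back to the original equivalence relation $R$, I would use the theory of coinduction for actions of groupoids, generalizing Epstein's principal-groupoid construction. Applied to $R_{A}$, coinduction yields a class-bijective extension $\widehat{R}_{A}$ of $R$ in a Borel manner, and weak mixing of $\theta _{A}$ is inherited by $\widehat{R}_{A}$ because the $\mathbb{F}_{2}$-subgroupoid is large enough inside $R$ (this is, morally, the content of the Gaboriau--Lyons step). What remains to be shown is that the rigidity survives under coinduction: if $\widehat{R}_{A}$ and $\widehat{R}_{B}$ are related by any of the five equivalence relations of the theorem, then the same holds for $R_{A}$ and $R_{B}$ as class-bijective extensions of the $\mathbb{F}_{2}$-orbit relation, whence $A\cong B$ by the construction of the family $\{\theta _{A}\}$.

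The main obstacle is precisely this last rigidity step. Popa's classical cocycle superrigidity theorem applies to malleable actions of groups with suitable relative property (T), but here one needs a version strong enough both (i) to handle the groupoid-theoretic setting that arises from class-bijective extensions of $R$, and (ii) to detect the isomorphism type of $A$ not only under conjugacy but under the weaker relations of (stable) orbit equivalence and (stable) von Neumann equivalence---the last two requiring one to control all partial isomorphisms of amplifications and corners of $L(\widehat{R}_{A})$. This is exactly the role played by property (T) for the triple $\Delta \leq \Lambda \leq \Gamma $ and the accompanying extension of Popa's theorem: one must trace every such equivalence down to a cocycle, untwist it via superrigidity, and show that the only residual datum is an isomorphism between the parameter groups $A$ and $B$. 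Once this is accomplished, the Borel reduction from the non-Borel isomorphism relation on countable abelian groups yields simultaneously the non-Borelness of all five equivalence relations, and Theorems~\ref{Theorem:nB-conj}, \ref{Theorem:nB-oe}, and~\ref{Theorem:nB-oeLCSCU} follow by specializing to orbit equivalence relations of free actions and, in the locally compact case, to cross section equivalence relations.
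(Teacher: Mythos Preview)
Your overall architecture---Gaboriau--Lyons plus groupoid coinduction plus a Popa-type superrigidity for triples encoding $A$ cohomologically---matches the paper's, but the final rigidity step as you describe it is too optimistic and constitutes a genuine gap. You aim for a full Borel \emph{reduction}: $\widehat{R}_{A}$ and $\widehat{R}_{B}$ related by any of the five relations $\Longrightarrow A\cong B$. The paper does \emph{not} establish this, and with the present techniques it is unclear that one can. Superrigidity together with the cohomological invariant only shows that \emph{conjugacy of the restricted $\Lambda$-actions} forces $A\cong B$ (via $H^{1}_{:\Delta,\mathrm{w}}([\zeta_{A}]|_{\Lambda})\cong A$). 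It says nothing directly about (stable) von Neumann equivalence, and your proposed mechanism---``trace every such equivalence down to a cocycle and untwist it''---would amount to an OE/W$^{*}$-superrigidity statement that is not available here.

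What the paper actually proves is a \emph{countable-to-one Borel homomorphism}: any family of pairwise nonisomorphic $A$'s whose associated extensions are pairwise stably von Neumann equivalent must be countable. The extra ingredient you are missing is Ioana's separability argument (in its Bowen--Hoff--Ioana form for equivalence relations, Theorem~\ref{Theorem:separability}), which in turn requires building the extensions as \emph{expansions of the rigid action} $\rho\colon\Lambda\curvearrowright\mathbb{T}^{2}$ of a specific free subgroup $\Lambda\subseteq\mathrm{SL}_{2}(\mathbb{Z})$. Concretely, the construction coinduces not only the Bernoulli action carrying the $A$-data but also $\rho$, so that each $R_{A}$ is an expansion of the orbit relation of $\rho$; separability then says that among such expansions, a stably von Neumann equivalent family with pairwise non-conjugate $\Lambda$-restrictions is countable. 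This weaker conclusion still suffices because of the Epstein--T\"{o}rnquist criterion (Theorem~\ref{Theorem:criterion-nB}): a countable-to-one Borel homomorphism from $\cong_{\mathrm{AG}}$ to $E$ already forces $E$ to be non-Borel. So the correct logical chain is: cohomology invariant $\Rightarrow$ non-conjugate $\Lambda$-restrictions; separability $\Rightarrow$ countable fibers over stable von Neumann equivalence; Epstein--T\"{o}rnquist $\Rightarrow$ non-Borel.
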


One technical difference between our approach and the one used in \cite%
{bowen_neumanns_2015} is that we regard equivalence relations as groupoids,
and that we work with certain discrete subgroups of the full groups. This
allows us, in some sense, to work with discrete groups throughout, which
gives us access to many arguments that would otherwise not be available.

Together with the results of Dye and Connes--Feldman--Weiss recalled above,
Theorem \ref{Theorem:nB-oe-relation} implies the following dichotomy.

\begin{corintro}
Let $R$ be an ergodic pmp countable Borel equivalence relation on a standard
Borel space $(X,\mu )$, and let $(Y,\nu )$ be the standard atomless
probability space. Suppose that $E$ is one of the following equivalence
relations: isomorphism, stable isomorphism, von Neumann equivalence, or
stable von Neumann equivalence of class-bijective pmp extensions of $R$ on $%
(Y,\nu )$.

\begin{enumerate}
\item[(a)] If $R$ is amenable, then $E$ has a single equivalence class.

\item[(b)] If $R$ is not amenable, then $E$ is not Borel.
\end{enumerate}
\end{corintro}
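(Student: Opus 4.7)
The plan is to treat the two parts of the corollary independently, as both are short consequences of material already recorded in the introduction.

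For part (a), my plan proceeds in two steps. First, I would verify that class-bijective pmp extensions inherit amenability from the base relation. By the Connes--Feldman--Weiss identification of amenability with hyperfiniteness, I can write the amenable relation $R$ on $(X,\mu)$ as an increasing union $R=\bigcup_{n} R_{n}$ of Borel subrelations with finite classes. Given a class-bijective pmp extension $(\widehat{R},\pi)$ of $R$ on $(Y,\nu)$, the Borel sets
\[
\widehat{R}_{n}:=\{(y,y^{\prime })\in \widehat{R}:(\pi(y),\pi(y^{\prime}))\in R_{n}\}
\]
form an increasing sequence of subrelations of $\widehat{R}$ whose union is all of $\widehat{R}$; because $\pi$ restricts to a bijection $[y]_{\widehat{R}}\to [\pi(y)]_{R}$, each $\widehat{R}_{n}$-class maps bijectively to the corresponding finite $R_{n}$-class, so $\widehat{R}_{n}$ has finite classes. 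Hence $\widehat{R}$ is hyperfinite, and therefore amenable. Second, if $\widehat{R}$ is moreover ergodic, then the Dye--Connes--Feldman--Weiss uniqueness theorem, quoted in the introduction, forces all such $\widehat{R}$ on $(Y,\nu)$ to be mutually isomorphic. Since isomorphism implies stable isomorphism, von Neumann equivalence, and stable von Neumann equivalence, this collapses $E$ to a single class for each of the four equivalence relations listed in the corollary.

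For part (b), there is nothing new to prove: the four equivalence relations named in the corollary form a sublist of the five listed in Theorem~\ref{Theorem:nB-oe-relation}, whose conclusion, under the hypothesis that $R$ is nonamenable, is precisely that each of these relations fails to be Borel on the space of weak mixing (or ergodic) class-bijective pmp extensions of $R$ on $(Y,\nu)$.

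The only substantive content of the corollary, beyond direct citation of earlier results, is the descent of hyperfiniteness along class-bijective extensions used in (a). I do not expect this to be an obstacle; the genuine difficulty of the dichotomy is entirely absorbed into Theorem~\ref{Theorem:nB-oe-relation}, whose proof is the task elsewhere in the paper.
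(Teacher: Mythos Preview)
Your proposal is correct and matches the paper's own treatment: the paper does not give a formal proof of this corollary but states in the introduction that part (a) follows because class-bijective extensions of an amenable relation are ``automatically amenable'' (your hyperfiniteness descent argument makes this explicit), whence Dye and Connes--Feldman--Weiss apply, while part (b) is Theorem~\ref{Theorem:nB-oe-relation} verbatim. The only point worth adding is that if the corollary is read as concerning \emph{all} class-bijective pmp extensions rather than just the ergodic ones, part (b) still follows, since the ergodic extensions form a Borel subset and non-Borelness of $E$ restricted to a Borel subset forces non-Borelness of $E$ on the ambient space.
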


The present paper is divided into four sections, apart from this
introduction. In Section \ref{Section:free} we introduce the notion of
property (T) for triples of groups, and use it to prove Theorem \ref%
{Theorem:nB-conj} and Theorem \ref{Theorem:nB-oe} in the case when the given
nonamenable group $\Gamma $ contains a nonabelian free group. Although not
necessary for the rest of the paper, we decided to include a self-contained
proof of this particular case to illustrate the main ideas involved in the
proof of Theorem~\ref{Theorem:nB-conj} and Theorem~\ref{Theorem:nB-oe}.
Additionally, it provides a different proof of the main result of \cite%
{ioana_orbit_2011}. The main difference in our approach is the use of an
analog of Popa's superrigidity theorem in the context of triples of groups
with property (T). This grants us access to a weak form of rigidity for
groups containing a free group, which is not available in the context of
the previously known superrigidity results.


In Section \ref{Section:actions} we prove several facts concerning discrete
pmp groupoids, their actions and representations. For future reference, we
present these facts in a slightly more general form than strictly needed in
this paper. In Section~\ref{Section:coinduction}, we develop a coinduction
theory for actions of groupoids, which can be seen as a common
generalization of coinduction for groups and for countable pmp equivalence
relations as defined by Epstein \cite{epstein_orbit_2007}. In Section \ref%
{Section:main}, we present the proof of \ref{Theorem:nB-oe-relation}, and
show how to deduce from it Theorems~\ref{Theorem:nB-conj}, \ref%
{Theorem:nB-oe}, and \ref{Theorem:nB-oeLCSCU}.

\begin{notation*}
\label{Notation:leg} Let $I$ be an index set, and let $(H_{i})_{i\in I}$ be
a family of Hilbert spaces with distinguished unit vectors $\xi _{i}\in
H_{i} $, for $i\in I$. Define $H$ to be the tensor product $\bigotimes_{i\in
I}(H_{i},\xi )$ as defined in \cite[Section III.3]{blackadar_operator_2006}.
For every $i_{0}\in I$, there is a canonical isometric inclusion $%
H_{i_{0}}\rightarrow \bigotimes_{i\in I}(H_{i},\xi _{i})$. For $\eta \in
H_{i_{0}}$, we denote by $\eta _{(i_{0})}\in H$ the image of $\eta $ under
such an inclusion. We will adopt similar notations for tensor products of
von Neumann algebras with respect to a distinguished normal trace as defined
in \cite[Section III.3]{blackadar_operator_2006}. (In particular, we denote
von Neumann algebraic tensor products by $\otimes $ instead of the more
standard $\overline{\otimes }$.) More generally, we will adopt the
leg-numbering notation for linear operators on tensor products; see \cite[%
Notation 7.1.1]{timmermann_invitation_2008}.
\end{notation*}

Throughout the paper, we will tacitly use Borel selection theorems for Borel
relations with countable fibers; see \cite[Section 18.C]%
{kechris_classical_1995}.

\subsubsection*{Acknowledgments}

We are grateful to Alexander Kechris for a large number of useful
conversations throughout the preparation of the present manuscript. We also
thank Daniel Hoff and Anush Tserunyan for several helpful comments and
suggestions.

\section{Actions of groups containing a nonabelian free subgroup\label%
{Section:free}}

\subsection{Cocycles for measure preserving actions}

Let $\Gamma $ be a countable discrete group, and let $( X,\mu ) $ be the
standard atomless probability space. In the following, we will identify a
pmp action of $\Gamma $ on $( X,\mu ) $ with the corresponding action on the
tracial von Neumann algebra $(M,\tau)=L^{\infty } ( X,\mu ) $. (By a \emph{%
tracial von Neumann algebra }we mean a finite von Neumann algebra endowed
with a distinguished faithful normal tracial state.) Here $M$ is endowed
with the canonical tracial state $\tau ( f ) =\int fd\mu $ associated with
the measure $\mu $. We also identify the group $\mathrm{Aut} ( X,\mu ) $ of
measure-preserving automorphisms of $( X,\mu ) $ with the group $\mathrm{Aut}
( M,\tau ) $ of trace-preserving automorphisms of $M$.

If $\theta \colon \Gamma \rightarrow \mathrm{Aut}(M,\tau )$ is an action on
a tracial von Neumann algebra $(M,\tau )$, a (scalar, unitary) \emph{%
1-cocycle for $\theta $} is a function $w\colon \Gamma \rightarrow U(M)$
satisfying $w_{\gamma }\theta _{\gamma }(w_{\rho })=w_{\gamma \rho }$ for $%
\gamma ,\rho \in \Gamma $. The trivial cocycle for $\theta $ is the cocycle
constantly equal to $1$. Given $a,b\in M$, we write $a=b\ \mathrm{\mathrm{mod%
}}\mathbb{C}$ if there exists a nonzero $\lambda \in \mathbb{C}$ such that $%
a=\lambda b$. Two cocycles $w,w^{\prime }$ for $\theta $ are \emph{weakly
cohomologous }if there exists a unitary $z\in M$ such that%
\begin{equation*}
w_{\gamma }^{\prime }\theta _{\gamma }(z)=zw_{\gamma }\ \mathrm{mod}\mathbb{C%
}
\end{equation*}%
for every $\gamma \in \Gamma $. Given two cocycles $w,w^{\prime }$, we will
denote by $ww^{\prime }$ the cocycle $\gamma \mapsto w_{\gamma }w_{\gamma
}^{\prime }$.

\begin{definition}
Let $\theta$ be an action of a discrete group $\Gamma$ on a standard
probability space $(X,\mu)$, and let $\Delta \leq \Lambda \leq \Gamma$ be
nested subgroups.

\begin{enumerate}
\item A $\theta $-cocycle $w$ is said to be $\Delta $-\emph{invariant} if $%
w_{\delta }=1$ and $\theta _{\delta }(w_{\gamma })=w_{\gamma }$ for every $%
\gamma \in \Gamma $ and $\delta \in \Delta $. We write $Z_{:\Delta ,\mathrm{w%
}}^{1}(\theta )$ for the set of $\Delta $-invariant $1$-cocycles for $\theta
$, which we consider as a topological space with respect to the topology of
pointwise convergence in $2$-norm.

\item Two $\theta $-cocycles $w$ and $w^{\prime }$ are said to be $\Lambda $-%
\emph{relatively weakly} \emph{cohomologous} if there exists a unitary $z$
in $L^{\infty }(X,\mu )$ such that $w_{\lambda }^{\prime }\theta _{\lambda
}(z)=zw_{\lambda }$ $\mathrm{\ \mathrm{mod}}\mathbb{C}$ for every $\lambda
\in \Lambda $.

\item The $\Delta $-\emph{invariant weak }$1$\emph{-cohomology group} $%
H_{:\Delta ,\mathrm{w}}^{1} ( \theta ) $ is the space of weak cohomology
classes of $\Delta $-invariant cocycles for $\theta $, endowed with the
group operation defined by $[ w ] [ w^{\prime } ] = [ ww^{\prime}]$.

\item The $\Delta $-\emph{invariant} $\Lambda $-\emph{relative} \emph{weak} $%
1$-\emph{cohomology} \emph{group} $H_{:\Delta ,\Lambda ,\mathrm{w}}^{1} (
\theta ) $ is the space of $\Lambda $-relative weak cohomology classes of $%
\Delta $-invariant cocycles for $\theta $ endowed with the group operation
defined as above.
\end{enumerate}
\end{definition}

It is clear that the $\Delta $-invariant $\Lambda $-relative weak $1$%
-cohomology group $H_{:\Delta ,\Lambda ,\mathrm{w}}^{1} ( \theta ) $ of an
action $\theta $ is an invariant of $\theta $ up to conjugacy.

\subsection{Property\ (T) for triples of groups}

We fix a countable discrete group $\Gamma$ and nested subgroups $\Delta \leq
\Lambda \leq \Gamma $. Given a unitary representation $\pi\colon \Gamma\to
U(H)$ of $\Gamma$ on a Hilbert space $H$, a subset $F\subseteq \Gamma$, and $%
\varepsilon >0$, a unit vector $\xi \in H$ is said to be \emph{$(
F,\varepsilon ) $-invariant} if $\Vert \pi _{\gamma }(\xi) -\xi \Vert
<\varepsilon $ for every $\gamma \in F$. Moreover, $\xi $ is said to be
\emph{$\Delta $-invariant} for $\pi $ if $\pi _{\delta }(\xi) =\xi $ for
every $\delta \in \Delta $. The representation $\pi $ is said to have \emph{%
almost invariant $\Delta $-invariant vectors} if for every finite subset $%
F\subseteq \Gamma $ and every $\varepsilon >0$, there exists an $(
F,\varepsilon ) $-invariant $\Delta $-invariant unit vector for $\pi $.

\begin{definition}
\label{Definition:trivial-relative}The triple $\Delta \leq \Lambda \leq
\Gamma $ has \emph{property (T)} if every unitary representation of $\Gamma $
with almost invariant $\Delta $-invariant unit vectors has a $\Lambda $%
-invariant unit vector.
\end{definition}

When $\Delta \leq \Lambda \leq \Gamma $ has property (T), we also say that $%
\Lambda $ has the $\Delta $-invariant relative property (T) in $\Gamma $.
Moreover, if $\Delta \leq \Lambda \leq \Lambda $ has property (T), we say
that $\Lambda $ has the $\Delta $-invariant property (T).

\begin{example}
It is clear that when $\Delta $ is the trivial group, the triple $\Delta
\leq \Lambda \leq \Gamma $ has properly (T) if and only if the pair $\Lambda
\leq \Gamma $ has the relative property (T) in the usual sense; see \cite%
{delaroche_relation_1995,kazhdan_connection_1967,jolissaint_property_2005}.
\end{example}

\begin{example}
\label{eg:QuotTtripleT} It is also easy to observe that if $\Delta $ is a
normal subgroup of $\Lambda $ such that $\Lambda /\Delta $ has property (T),
then $\Lambda $ has the $\Delta $-invariant property (T). Indeed, under
these assumptions, for any unitary representation $\pi $ for $\Lambda $, the
space of $\Delta $-invariant vectors is $\Lambda $-invariant.
\end{example}

As in the case of property (T) for pairs of groups, one can provide several
equivalent reformulations of the notion of property (T) for triples of
groups. We collect some in Proposition~\ref{Propostion:trivial-relative}
below.

\begin{definition}
Let $\psi \colon\Gamma \to\mathbb{C}$ be a function.

\begin{itemize}
\item $\psi$ is said to be of \emph{conditionally negative type }if it
satisfies $\psi ( \gamma ^{-1} ) =\overline{\psi (\gamma )}$ for $\gamma \in
\Gamma $ and
\begin{equation*}
\sum_{i,j=1}^{n}\overline{\alpha }_{i}\alpha _{j}\psi ( \gamma
_{i}^{-1}\gamma _{j} ) \leq 0
\end{equation*}
for every $n\geq 1$, elements $\gamma _{1},\ldots ,\gamma _{n}\in \Gamma $,
and $\alpha _{1},\ldots ,\alpha _{n}\in \mathbb{C}$ satisfying $\alpha
_{1}+\cdots +\alpha _{n}=0$.

\item $\psi$ is said to be of \emph{positive type} if it satisfies
\begin{equation*}
\sum_{i,j=1}^{n}\overline{\alpha }_{i}\alpha _{j}\psi ( \gamma
_{i}^{-1}\gamma _{j} ) \geq 0
\end{equation*}
for every $n\geq 1$, elements $\gamma _{1},\ldots ,\gamma _{n}\in \Gamma $,
and $\alpha _{1},\ldots ,\alpha _{n}\in \mathbb{C}$.

\item \emph{normalized} if $\psi(1)=1$, and $\Delta $-\emph{invariant} if $%
\psi ( \gamma \delta ) =\psi ( \delta \gamma ) =\psi (\gamma )$ for every $%
\gamma \in \Gamma $ and $\delta \in \Delta$.
\end{itemize}
\end{definition}

Observe that if $\pi $ is a unitary representation of $\Gamma $ on $H$ and $%
\xi $ is a $\Delta $-invariant unit vector in $H$, then the function $\psi
(\gamma )= \langle \pi_{\gamma}(\xi) ,\xi \rangle $ is a normalized $\Delta $%
-invariant function of positive type on $\Gamma $. Conversely, the GNS
construction for functions of positive type shows that any normalized $%
\Delta $-invariant function of positive type on $\Gamma $ arises in this
fashion; see \cite[Appendix C]{bekka_kazhdans_2008}.

The same proof as \cite[Theorem 1.2]{jolissaint_property_2005} allows one to
prove the following characterization of property (T) for triples of groups.

\begin{proposition}
\label{Propostion:trivial-relative} Let $\Delta \leq \Lambda \leq \Gamma $
be nested discrete groups. Then the following are equivalent:

\begin{enumerate}
\item $\Delta \leq \Lambda \leq \Gamma $ has property (T);

\item there exist $\varepsilon >0$ and a finite subset $F\subseteq\Gamma $
such that whenever $\pi $ is a unitary representation of $\Gamma $, if $\pi $
has an $( F,\varepsilon ) $-invariant $\Delta $-invariant unit vector, then
the restriction of $\pi $ to $\Lambda $ contains a nonzero
finite-dimensional subrepresentation;

\item the restriction to $\Lambda $ of every $\Delta $-invariant
complex-valued function on $\Gamma $ which is conditionally of negative type
is bounded;

\item for every $\varepsilon >0$, there exist a finite subset $F\subseteq
\Gamma $ and $\delta>0$ such that whenever $\pi $ is a unitary
representation of $\Gamma $, if $\pi $ has an $( F,\delta ) $-invariant $%
\Delta $-invariant unit vector $\xi $, then there is a $\Lambda $-invariant
vector $\eta $ satisfying $\Vert \xi -\eta \Vert <\varepsilon $;

\item if $( \psi _{n} )_{n\in\mathbb{N}} $ is a sequence of normalized $%
\Delta $-invariant functions of positive type on $\Gamma $ which converges
pointwise to $1$, then $( \psi _{n}|_{\Lambda } )_{n\in\mathbb{N}} $
converges uniformly to $1$.
\end{enumerate}
\end{proposition}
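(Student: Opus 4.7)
The plan is to follow Jolissaint's proof of the analogous characterization for pairs \cite[Theorem 1.2]{jolissaint_property_2005}, tracking the $\Delta$-invariance through each construction. The overall structure is the cycle $(1) \Rightarrow (4) \Rightarrow (2) \Rightarrow (1)$ together with the equivalences $(1) \Leftrightarrow (5)$ and $(5) \Leftrightarrow (3)$. The key observation that enables the transfer is a bijective correspondence, via the GNS construction, between normalized $\Delta$-invariant positive-type functions $\psi$ and pairs $(\pi_\psi, \xi_\psi)$ in which $\xi_\psi$ is a $\Delta$-invariant unit cyclic vector: the two-sided invariance $\psi(\gamma\delta) = \psi(\delta\gamma) = \psi(\gamma)$ combined with $\psi(1)=1$ forces $\psi(\delta) = 1$ for every $\delta \in \Delta$, which by the polarization identity is equivalent to $\pi_\psi(\delta)\xi_\psi = \xi_\psi$.

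For $(1) \Rightarrow (4)$, I would argue by contradiction: if $(4)$ fails at some $\varepsilon_0 > 0$, pick for each finite $F \subseteq \Gamma$ and $\delta > 0$ a representation $\pi_{F,\delta}$ with an $(F,\delta)$-invariant $\Delta$-invariant unit vector $\xi_{F,\delta}$ admitting no $\Lambda$-invariant vector within distance $\varepsilon_0$. The direct sum $\pi$ has almost invariant $\Delta$-invariant unit vectors, and by $(1)$ has a nonzero $\Lambda$-invariant subspace $H^\Lambda$. Since $\Delta \leq \Lambda$, we have $H^\Lambda \subseteq H^\Delta$, so each $\xi_{F,\delta}$ decomposes orthogonally as $\xi' + \xi''$ with $\xi' \in H^\Lambda$ and $\xi'' \in H^\Delta \cap (H^\Lambda)^\perp$. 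The representation on $(H^\Lambda)^\perp$ has no $\Lambda$-invariant vector, so applying $(1)$ to this subrepresentation shows that no $\Delta$-invariant vector in it can be too $\pi$-invariant; for sufficiently large $F$ and small $\delta$, this forces $\|\xi''\| < \varepsilon_0$, contradicting the choice of $\xi_{F,\delta}$. The implication $(4) \Rightarrow (2)$ is immediate by picking the one-dimensional subspace spanned by a $\Lambda$-invariant vector, while $(2) \Rightarrow (1)$ follows the classical Kazhdan argument: a finite-dimensional $\Lambda$-invariant subspace contains a $\Lambda$-invariant vector because the unitary group of a finite-dimensional Hilbert space is compact, hence amenable.

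The equivalence $(1) \Leftrightarrow (5)$ proceeds via the GNS correspondence together with a direct-sum construction. If a sequence $(\psi_n)$ of normalized $\Delta$-invariant positive-type functions converges pointwise to $1$, the GNS vectors $\xi_{\psi_n}$ inside $\bigoplus_n H_{\psi_n}$ are $\Delta$-invariant and almost invariant for the direct sum representation; applying $(4)$ yields $\Lambda$-invariant vectors near each $\xi_{\psi_n}$, giving $\psi_n(\lambda) = \langle \pi_{\psi_n}(\lambda)\xi_{\psi_n}, \xi_{\psi_n}\rangle$ uniformly close to $1$ on $\Lambda$. Conversely, if $(5)$ holds and $\pi$ has almost invariant $\Delta$-invariant unit vectors $(\xi_n)$, the matrix coefficients $\psi_n(\gamma) = \langle \pi(\gamma)\xi_n, \xi_n\rangle$ are normalized $\Delta$-invariant positive-type functions converging pointwise to $1$, so $(5)$ gives uniform convergence on $\Lambda$, from which a standard spectral argument produces a $\Lambda$-invariant vector. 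Finally, $(5) \Leftrightarrow (3)$ proceeds via Schoenberg's theorem: if $\psi$ is $\Delta$-invariant conditionally negative type, one may assume $\psi(1)=0$, and then $\varphi_t := e^{-t\psi}$ is a family of normalized $\Delta$-invariant positive-type functions converging pointwise to $1$ as $t \to 0^+$; uniform convergence on $\Lambda$ forces $\psi|_\Lambda$ bounded (after applying the same reasoning to the real part $\tfrac12(\psi(\gamma)+\psi(\gamma^{-1}))$ and using the Cauchy--Schwarz inequality for conditionally negative type functions to control the imaginary part). For the reverse direction, if $(5)$ fails along a sequence $(\psi_n)$, a standard averaging and diagonalization argument of Akemann--Walter type yields a $\Delta$-invariant conditionally negative type function whose restriction to $\Lambda$ is unbounded.

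The main obstacle will be the careful bookkeeping needed to ensure that two-sided $\Delta$-invariance survives the GNS, direct sum, and Schoenberg exponential constructions, and that the subspace $H^\Lambda$ of $\Lambda$-invariant vectors is automatically $\Delta$-invariant (which is where the nestedness $\Delta \leq \Lambda$ is crucially used). Once these invariance properties are tracked, the arguments are parallel to the classical proofs of Jolissaint.
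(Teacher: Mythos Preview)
Your overall plan—follow Jolissaint's proof for pairs while tracking $\Delta$-invariance through the GNS construction, direct sums, and Schoenberg's theorem—is exactly what the paper intends: the paper gives no argument beyond the sentence ``The same proof as \cite[Theorem 1.2]{jolissaint_property_2005} allows one to prove the following,'' and the GNS correspondence you single out is precisely the observation recorded in the paragraph preceding the proposition.

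That said, your concrete sketch of $(1)\Rightarrow(4)$ contains a real gap. You propose to restrict the direct-sum representation $\pi$ to $(H^\Lambda)^\perp$ and ``apply $(1)$ to this subrepresentation.'' But $(H^\Lambda)^\perp$ is in general \emph{not} $\Gamma$-invariant: for $\gamma\in\Gamma$ one has $\pi(\gamma)H^\Lambda=H^{\gamma\Lambda\gamma^{-1}}$, which coincides with $H^\Lambda$ only when $\Lambda\trianglelefteq\Gamma$, and no such normality is assumed here. Hence there is no $\Gamma$-subrepresentation on $(H^\Lambda)^\perp$ to which property~(T) for the triple can be applied, and the step ``no $\Delta$-invariant vector in it can be too $\pi$-invariant'' does not follow. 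This obstruction is already present in the classical relative case $\{1\}\leq\Lambda\leq\Gamma$, and Jolissaint's actual argument avoids it by routing the implication through the function-theoretic characterizations: one passes from $(1)$ to $(3)$ (or to $(5)$) via a Guichardet/Akemann--Walter type construction, and then recovers $(4)$ from $(5)$ using the convex-hull/circumcenter argument you already invoke in your sketch of $(5)\Rightarrow(1)$. Once you reorganize the cycle this way, the $\Delta$-invariance carries through exactly as you anticipate and the proof goes through.
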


Towards proving an extension of Popa's cocycle superrigidity theorem from
\cite{popa_cocycle_2007}, we present the following lemma, which is a natural
generalization of \cite[Lemma~4.2]{popa_cocycle_2007}.

\begin{lemma}
\label{lma:keyPopaSuperr} Let $\Delta \leq \Lambda \leq \Gamma $ be a triple
with property (T), and let $\theta \colon \Gamma \rightarrow \mathrm{Aut}%
(M,\tau )$ be an action on a tracial von Neumann algebra $(M,\tau )$. If $%
w\colon \Gamma \rightarrow U(M)$ is a $\Delta $-invariant cocycle for $%
\theta $, then for every $\varepsilon >0$ there exists a neighborhood $%
\Omega $ of $w$ in $Z_{:\Delta }^{1}(\theta )$ such that for all $w^{\prime
}\in \Omega $ there exists a partial isometry $v\in M$ with $w_{h}^{\prime
}\sigma _{\lambda }(v)=vw_{\lambda }$ for all $\lambda \in \Lambda $ and $%
\Vert v-1\Vert _{2}\leq \varepsilon $. Define the $w$-perturbation $\theta
^{w}$ of $\theta $ to be the action $\theta^w(\gamma)= \mathrm{Ad}\left(
w_{\gamma }\right) \circ \theta _{\gamma }$ for all $\gamma\in\Gamma$.
If $\theta ^{w}$ is ergodic,
then the restriction to $\Lambda $ of any $\Delta $-invariant cocycle in $%
\Omega $ is cohomologous to $w|_{\Lambda }$.
\end{lemma}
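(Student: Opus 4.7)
The proof will follow the strategy of Popa's cocycle superrigidity lemma \cite[Lemma~4.2]{popa_cocycle_2007}, with the property~(T) input replaced by its triple analogue from Proposition~\ref{Propostion:trivial-relative}(4). The key construction is the unitary representation $\pi \colon \Gamma \to U(L^2(M,\tau))$ defined by $\pi_{\gamma}(\xi) = w'_{\gamma} \theta_{\gamma}(\xi) w_{\gamma}^{*}$, which a direct computation using the cocycle identities for $w$ and $w'$ shows to be a genuine representation (and whose unitarity is clear since $w'_{\gamma}, w_{\gamma}$ are unitaries and $\theta_{\gamma}$ is trace preserving). The $\Delta$-invariance of both cocycles forces $w_{\delta} = w'_{\delta} = 1$ for $\delta \in \Delta$, so $\pi_{\delta}$ coincides with $\theta_{\delta}$ on $L^{2}(M,\tau)$; in particular, the cyclic trace vector $\hat 1$ is $\Delta$-invariant for $\pi$. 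Moreover $\|\pi_{\gamma}(\hat 1) - \hat 1\|_{2} = \|w'_{\gamma} - w_{\gamma}\|_{2}$, so by shrinking the neighborhood of $w$ in $Z^{1}_{:\Delta}(\theta)$ we can ensure that $\hat 1$ is $(F,\delta_0)$-invariant for any prescribed finite $F \subseteq \Gamma$ and $\delta_0 > 0$.

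Given $\varepsilon > 0$, I would then choose $\varepsilon_{0}$ sufficiently small and apply Proposition~\ref{Propostion:trivial-relative}(4) to extract $F$ and $\delta_{0}$ such that the almost-invariance above yields a $\pi|_{\Lambda}$-fixed vector $\eta \in L^{2}(M,\tau)$ with $\|\eta - \hat 1\|_{2} < \varepsilon_{0}$. The fixed-point relation $\pi_{\lambda}(\eta) = \eta$ translates into $w'_{\lambda} \theta_{\lambda}(\eta) = \eta w_{\lambda}$ for every $\lambda \in \Lambda$. Viewing $\eta$ as an operator affiliated with $M$ and writing its polar decomposition $\eta = v|\eta|$, the intertwining relation gives $\theta_{\lambda}(\eta^{*}\eta) = w_{\lambda}^{*}(\eta^{*}\eta) w_{\lambda}$, whence $\theta_{\lambda}(|\eta|) = w_{\lambda}^{*} |\eta| w_{\lambda}$ by functional calculus. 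Comparing the polar decompositions in $\theta_{\lambda}(\eta) = (w'^{*}_{\lambda} v w_{\lambda}) \cdot \theta_{\lambda}(|\eta|)$ then yields $\theta_{\lambda}(v) = w'^{*}_{\lambda} v w_{\lambda}$, that is, $w'_{\lambda} \theta_{\lambda}(v) = v w_{\lambda}$; a routine $L^{2}$-estimate relating $\|v - 1\|_{2}$ to $\|\eta - \hat 1\|_{2}$ (via $\||\eta| - 1\|_{2}$ and the identity $\eta^{*}\eta - 1 = |\eta|^{2} - 1$) forces $\|v - 1\|_{2} \leq \varepsilon$ once $\varepsilon_{0}$ is small enough.

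For the last clause, the intertwining identity delivers $\theta^{w}_{\lambda}(v^{*}v) = w_{\lambda} \theta_{\lambda}(v^{*}v) w_{\lambda}^{*} = v^{*}v$, exhibiting $v^{*}v$ as a $\theta^{w}|_{\Lambda}$-invariant projection with trace close to $1$. I would upgrade $v$ to a unitary by combining this $\Lambda$-invariance with the hypothesis that $\theta^{w}$ is ergodic on $\Gamma$: using that the $\Delta$-invariance of the cocycles makes $v$ itself $\theta|_{\Delta}$-equivariant, one argues that $v^{*}v$ lies in a naturally $\theta^{w}|_{\Gamma}$-stable subalgebra of $M$, so full $\Gamma$-ergodicity collapses it to a scalar, which must be $1$ in view of the trace estimate; symmetrically $vv^{*} = 1$, so $v$ is a unitary realizing the cohomology of $w'|_{\Lambda}$ and $w|_{\Lambda}$. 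The main obstacle I anticipate is precisely this last point: one must verify that the a priori only $\Lambda$-invariant projection $v^{*}v$ is actually constrained by the full $\Gamma$-ergodicity of $\theta^{w}$, rather than only by $\theta^{w}|_{\Lambda}$-ergodicity. This will require a careful exploitation of the $\Delta$-invariant structure of the construction, a subtlety absent from Popa's original setting, in which $\Lambda = \Gamma$.
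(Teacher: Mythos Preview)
Your approach is essentially identical to the paper's: the same representation $\pi_\gamma(\xi)=w'_\gamma\theta_\gamma(\xi)w_\gamma^*$, the same use of Proposition~\ref{Propostion:trivial-relative}(4) with $\hat 1$ as the $\Delta$-invariant almost-invariant vector, and the same extraction of the partial isometry $v$ via polar decomposition of the resulting $\Lambda$-fixed $L^2$-element. (The paper inserts an intermediate step replacing $\xi$ by the element of minimal norm in the closed convex hull of its $\pi|_\Lambda$-orbit, but since the vector coming out of Proposition~\ref{Propostion:trivial-relative}(4) is already exactly $\Lambda$-invariant, that step is redundant and you are right to omit it.)

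Your worry about the last clause is legitimate, but your proposed resolution is not the one the paper takes, and in fact the paper does not address the point at all: it simply asserts that $v^*v$ ``belongs to [the] fixed point algebra'' of $\theta^w$ and concludes. As you correctly compute, the intertwining relation only yields $\theta^w|_\Lambda$-invariance of $v^*v$, so what is actually being used is ergodicity of $\theta^w|_\Lambda$, not of the full $\Gamma$-action. There is no general mechanism by which the $\Delta$-equivariance of $v$ would place $v^*v$ in a $\theta^w|_\Gamma$-stable subalgebra, and you should not pursue that route. The correct reading is that the ergodicity hypothesis is (implicitly) on the $\Lambda$-action; this is precisely what is available in the only application, Theorem~\ref{Theorem:trivial-superrigidity}, where $\theta|_\Lambda$ is assumed weak mixing. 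Incidentally, Popa's original Lemma~4.2 is already stated for a pair $H\le G$, not for $\Lambda=\Gamma$ as you write, so this imprecision is inherited rather than introduced by the triple setting.
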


\begin{proof}
The proof is similar to that of \cite[Lemma~4.2]{popa_cocycle_2007}. Let $%
F\subseteq \Gamma $ and $\delta >0$ be as in part~(4) of Proposition~\ref%
{Propostion:trivial-relative} for $\varepsilon ^{2}/4$. Set
\begin{equation*}
\Omega =\{w^{\prime }\in Z_{:\Delta }^{1}(\theta )\colon \Vert w_{\gamma
}-w_{\gamma }^{\prime }\Vert _{2}\leq \delta \mbox{ for all }\gamma \in F\}.
\end{equation*}%
Given $w^{\prime }\in \Omega $, define a representation $\pi \colon \Gamma
\rightarrow U(L^{2}(M,\tau ))$ by $\pi _{\gamma }(x)=w_{\gamma }^{\prime
}\theta _{\gamma }(x)w_{\gamma }^{\ast }$ for all $\gamma \in \Gamma $ and
all $x\in L^{2}(M,\tau )$ coming from $M$. Then $\Vert \pi _{\gamma
}(1)-1\Vert _{2}\leq \delta $ for all $\gamma \in F$. Moreover, the unit
vector $1\in L^{2}(M,\tau )$ is $\Delta $-invariant because $w$ and $%
w^{\prime }$ are trivial on $\Delta $. By part~(4) of Proposition~\ref%
{Propostion:trivial-relative}, there exists a $\Lambda $-invariant unit
vector $\xi \in L^{2}(M,\tau )$ satisfying $\Vert \xi -1\Vert \leq
\varepsilon ^{2}/4$. Let $\xi _{0}\in L^{2}(M,\tau )$ denote the vector of
minimal norm in the weakly-closed convex hull of $\{\pi _{\lambda }(\xi
)\colon \lambda \in \Lambda \}$. By convexity, $\xi _{0}$ is also a unit
vector satisfying $\Vert \xi _{0}-1\Vert \leq \varepsilon ^{2}/4$. By
construction, we have $w_{\lambda }^{\prime }\theta _{\lambda }(\xi
_{0})w_{\lambda }^{\ast }=\xi _{0}$ for all $\lambda \in \Lambda $, so if $%
v\in M$ is the partial isometry in the polar decomposition of $\xi _{0}$
\cite[Subsection 1.2]{popa_cocycle_2007}, then $v$ is $\Delta $-invariant
and we have $w_{\lambda }^{\prime }\theta _{\lambda }(v)=vw_{\lambda }$ for
all $\lambda \in \Lambda $, as desired.

Finally, if $\theta ^{w}$ is ergodic, then $v^{\ast }v$, which belongs to
its fixed point algebra, must be a scalar. Thus $v$ is a unitary and the
restrictions of $w^{\prime }$ and $w$ to $\Lambda $ are cohomologous.
\end{proof}

Recall that the \emph{centralizer }$\mathrm{\mathrm{Aut}} (M) ^{\theta }$ of
an action $\theta $ of $\Gamma $ on $M$ is the subgroup of $\mathrm{Aut} (M)
$ consisting of the elements $\alpha $ such that $\alpha \circ \theta
_{\gamma }=\theta _{\gamma }\circ \alpha $ for every $\gamma \in \Gamma $.

\begin{definition}
We say that $\theta\colon \Gamma\to\mathrm{Aut}(M,\tau) $ is \emph{malleable
}if the connected component of the identity in $\mathrm{Aut} ( M\otimes M )
^{\theta \otimes \theta }$ contains an element of the form $( \alpha \otimes
\beta ) \circ \sigma $, where $\alpha ,\beta \in \mathrm{Aut} (M) ^{\theta }$
and $\sigma \in \mathrm{Aut} ( M\otimes M ) $ is the flip automorphism; see
\cite[Definition 2.9]{popa_cocycle_2007}.
\end{definition}

\begin{example}
The standard example of a malleable action is the Bernoulli action $\beta
_{\Gamma \curvearrowright I}$ associated with an action $\Gamma
\curvearrowright I$ of $\Gamma $ on an infinite set $I$; see \cite[Example
4.4 and Lemma 4.5]{popa_cocycle_2007} and \cite[Section 3]%
{vaes_rigidity_2007}. Such an action is weak mixing whenever the action $%
\Gamma \curvearrowright I$ has infinite orbits; see \cite[Lemma 4.5]%
{popa_cocycle_2007} and \cite[Proposition 2.1]{kechris_amenable_2008}. It is
moreover free whenever the action $\Gamma \curvearrowright I$ is faithful;
see \cite[Lemma 4.5]{popa_cocycle_2007} and \cite[Proposition 2.4]%
{kechris_amenable_2008}.
\end{example}

The following result is proved similarly to \cite[Theorem 5.2]%
{popa_cocycle_2007}, where \cite[Lemma~4.2]{popa_cocycle_2007} is replaced
with Lemma \ref{lma:keyPopaSuperr}; see also \cite[Lemma 4.9 and Lemma 4.10]%
{vaes_rigidity_2007}, \cite[Section 3, Section 4]{furman_popas_2007}, \cite[%
Section 30]{kechris_global_2010}.

\begin{theorem}[Popa]
\label{Theorem:trivial-superrigidity} Let $\Delta \leq \Lambda \leq \Gamma $
be a triple with property (T), and let $\theta $ be a malleable action of $%
\Gamma $ on the standard probability space $( X,\mu ) $ such that $\theta
|_{\Lambda }$ is weak mixing. Then the $\Delta $-invariant $\Lambda $%
-relative weak $1$-cohomology group $H_{:\Delta ,\Lambda ,\mathrm{w}}^{1} (
\theta ) $ is trivial.
\end{theorem}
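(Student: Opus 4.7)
The plan is to adapt Popa's proof of cocycle superrigidity \cite[Theorem~5.2]{popa_cocycle_2007} by replacing the relative-property-(T) deformation step with Lemma~\ref{lma:keyPopaSuperr}. Set $M = L^{\infty}(X,\mu)$ and fix a $\Delta$-invariant cocycle $w$ for $\theta$; the goal is to produce a unitary $z \in M$ satisfying $\theta_{\lambda}(z) = z w_{\lambda}$ mod $\mathbb{C}$ for every $\lambda \in \Lambda$.

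First I would lift $w$ to the doubled action $\tilde\theta := \theta \otimes \theta$ on $(M \otimes M, \tau \otimes \tau)$, through the $\Delta$-invariant cocycle $\tilde w_{\gamma} := w_{\gamma} \otimes 1$. By malleability of $\theta$, pick a path $(\alpha_{t})_{t\in[0,1]}$ in $\mathrm{Aut}(M \otimes M)^{\tilde\theta}$, continuous in the pointwise $2$-norm topology, with $\alpha_{0} = \mathrm{id}$ and $\alpha_{1} = (\alpha \otimes \beta) \circ \sigma$ for some $\alpha, \beta \in \mathrm{Aut}(M)^{\theta}$. A short check, using that each $\alpha_{t}$ commutes with $\tilde\theta$, shows that $\tilde w^{(t)}_{\gamma} := \alpha_{t}(\tilde w_{\gamma})$ defines a continuous path of $\Delta$-invariant $\tilde\theta$-cocycles in $Z^{1}_{:\Delta}(\tilde\theta)$.

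Next I would transport relative cohomology along this path. By Lemma~\ref{lma:keyPopaSuperr} applied to the triple $\Delta \leq \Lambda \leq \Gamma$ and to the cocycles $\tilde w^{(s)}$, for every $s \in [0,1]$ there is a neighborhood $\Omega_{s}$ of $\tilde w^{(s)}$ in $Z^{1}_{:\Delta}(\tilde\theta)$ such that any element of $\Omega_{s}$ admits a partial isometry in $M \otimes M$, close to $1$ in $2$-norm, intertwining its $\Lambda$-restriction with $\tilde w^{(s)}|_{\Lambda}$. Weak mixing of $\theta|_{\Lambda}$ ensures that the perturbed action $\tilde\theta^{\tilde w^{(s)}}|_{\Lambda}$ is ergodic—it contains $\theta|_{\Lambda}$ as a tensor factor up to inner perturbation—so the final clause of the lemma promotes each partial isometry to a unitary. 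Covering $[0,1]$ by finitely many such $\Omega_{s}$ and composing the resulting unitaries yields a unitary $u \in M \otimes M$ with
\begin{equation*}
\tilde w^{(1)}_{\lambda} \cdot (\theta_{\lambda} \otimes \theta_{\lambda})(u) \;=\; u \cdot \tilde w_{\lambda} \qquad \text{for all } \lambda \in \Lambda.
\end{equation*}

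Since $\tilde w^{(1)}_{\gamma} = \alpha_{1}(w_{\gamma} \otimes 1) = 1 \otimes \beta(w_{\gamma})$, the above rewrites as $(1 \otimes \beta(w_{\lambda}))(\theta_{\lambda} \otimes \theta_{\lambda})(u) = u(w_{\lambda} \otimes 1)$. The main obstacle is the final step: extracting from this $M \otimes M$-level identity the desired weak cohomology triviality of $w|_{\Lambda}$ on $M$. I would expand $u$ against an orthonormal basis of $L^{2}(M)$ arising from spectral decomposition under $\theta|_{\Lambda}$; weak mixing says that the only finite-dimensional $\theta|_{\Lambda}$-subrepresentation of $L^{2}(M)$ is $\mathbb{C}\cdot 1$, which forces all but one spectral component of $u$ to vanish and $u$ to take the form $z^{\ast} \otimes y$ modulo a scalar, for unitaries $z, y \in U(M)$. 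Substituting back then collapses the intertwining relation to $\theta_{\lambda}(z) = z w_{\lambda}$ mod $\mathbb{C}$ for every $\lambda \in \Lambda$, proving that $H^{1}_{:\Delta,\Lambda,\mathrm{w}}(\theta)$ is trivial. The scalar ambiguity in factoring a weak-mixing intertwiner is precisely the reason the statement is phrased in terms of \emph{weak} cohomology rather than ordinary cohomology.
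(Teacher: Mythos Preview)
Your proposal is correct and follows essentially the same route as the paper: both adapt Popa's malleability argument by substituting Lemma~\ref{lma:keyPopaSuperr} for \cite[Lemma~4.2]{popa_cocycle_2007} at the deformation step, then invoke weak mixing of $\theta|_{\Lambda}$ to untwist. The paper's proof is terser---it simply points to \cite[Theorem~3.1 and Proposition~3.5]{popa_cocycle_2007} for the final extraction---whereas you sketch that step explicitly; your heuristic that $u$ collapses to a simple tensor is the right intuition, though the precise mechanism is exactly the content of those cited results (see also \cite[Lemmas~4.9--4.10]{vaes_rigidity_2007} and \cite[\S3--4]{furman_popas_2007}, which the paper also references).
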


\begin{proof}
The way that property (T) enters in Popa's argument in \cite[Theorem 5.2]%
{popa_cocycle_2007} is exclusively via \cite[Lemma 4.6]{popa_cocycle_2007},
where property (T) is used through \cite[Lemma 4.2]{popa_cocycle_2007}. In
our context, the analog of Popa's Lemma 4.6 for triples of groups with
property (T) and $\Delta$-invariant cocycles can be proved using Lemma~\ref%
{lma:keyPopaSuperr}. The proof of the present theorem then follows from \cite%
[Theorem~3.1 and Proposition~3.5]{popa_cocycle_2007}.
\end{proof}

\subsection{Actions with prescribed cohomology}

Theorem \ref{Theorem:trivial-superrigidity} allows one to construct, in the
presence of property (T), actions of groups with prescribed cohomology.

\begin{theorem}
\label{Theorem:prescribed-cohomology} Let $\Delta \leq \Lambda \leq \Gamma $
be a triple with property (T), and assume that $\Delta $ has infinite index
in $\Lambda $. Then there exists an assignment $A\mapsto \alpha _{A}$ from
countably infinite discrete abelian groups to free weak mixing actions of $%
\Gamma $ on the standard atomless probability space $(X,\mu )$ such that:

\begin{enumerate}
\item $\alpha _{A}|_{\Lambda }$ is weak mixing;

\item $A$ is isomorphic to $A^{\prime }$ if and only if $\alpha _{A}$ is
conjugate to $\alpha _{A^{\prime }}$;

\item for every action $\rho $ of $\Gamma $ on a standard probability space $%
(Y,\nu )$ such that $\rho |_{\Delta }$ is weak mixing, the $\Delta $%
-invariant, $\Lambda $-relative weak 1-cohomology group $H_{:\Delta ,\Lambda
,\mathrm{w}}^{1}(\alpha _{A}\otimes \rho )$ is isomorphic to $A$.
\end{enumerate}
\end{theorem}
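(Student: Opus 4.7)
The plan is to construct the assignment $A\mapsto\alpha_A$ via a Bernoulli-type action associated to the Pontryagin dual $\widehat{A}$, and to compute its cohomology by invoking the triple version of Popa's superrigidity theorem (Theorem~\ref{Theorem:trivial-superrigidity}).

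For a countably infinite discrete abelian group $A$, let $\widehat{A}$ denote its Pontryagin dual equipped with normalized Haar measure $m_A$. Define $\alpha_A$ as the tensor product of the generalized Bernoulli action $\sigma_A\colon\Gamma\to\mathrm{Aut}((\widehat{A},m_A)^{\Gamma/\Delta})$ (with $\Gamma$ acting on $\Gamma/\Delta$ by left translation) and a Bernoulli shift $\beta\colon\Gamma\to\mathrm{Aut}([0,1]^\Gamma)$ to guarantee freeness. The verification of property~(1) is then standard: freeness comes from $\beta$; weak mixing of $\alpha_A$ and of $\alpha_A|_\Lambda$ follows from weak mixing of each Bernoulli factor under $\Gamma$ and under $\Lambda$, using $[\Gamma:\Delta]=\infty$ and, modulo a mild adjustment of the index set, that every $\Lambda$-orbit on $\Gamma/\Delta$ is infinite. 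Property~(2) is a consequence of~(3) applied to $\rho$ trivial, since then $H^1_{:\Delta,\Lambda,\mathrm{w}}(\alpha_A)\cong A$ is a conjugacy invariant that distinguishes non-isomorphic $A$'s.

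The bulk of the work is property~(3). Fix $\rho\colon\Gamma\to\mathrm{Aut}(Y,\nu)$ with $\rho|_\Delta$ weak mixing. First build a homomorphism $\Psi\colon A\to H^1_{:\Delta,\Lambda,\mathrm{w}}(\alpha_A\otimes\rho)$. Since $\delta\cdot\Delta=\Delta$ in $\Gamma/\Delta$ for every $\delta\in\Delta$, the evaluation $\chi_a\colon x\mapsto a(x_\Delta)$ (with $a\in A$ viewed as a character of $\widehat{A}$) is a $\Delta$-invariant unitary in $L^\infty(\widehat{A}^{\Gamma/\Delta})$. Exploiting the infinite $\Lambda$-orbit $\Lambda\Delta/\Delta$ in $\Gamma/\Delta$ (which is available because $[\Lambda:\Delta]=\infty$), one assembles from $\chi_a$ a $\Delta$-invariant $1$-cocycle $w^a$ for $\alpha_A\otimes\rho$ whose $\Lambda$-relative weak cohomology class depends only on $a$. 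The Fourier decomposition $L^2(\widehat{A}^{\Gamma/\Delta})=\bigoplus_{\chi\in A^{(\Gamma/\Delta)}}\mathbb{C}\chi$ separates the spectral components corresponding to distinct $a\in A$, which yields injectivity of $\Psi$.

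Surjectivity of $\Psi$ is the technical heart, and the place where Theorem~\ref{Theorem:trivial-superrigidity} is invoked. Given a $\Delta$-invariant cocycle $w$ for $\alpha_A\otimes\rho$, enlarge the system by tensoring with an auxiliary malleable Bernoulli shift $\beta'$ to obtain an ambient malleable action $\widetilde{\alpha}$ whose restriction to $\Lambda$ is still weak mixing. Theorem~\ref{Theorem:trivial-superrigidity} applied to $\widetilde{\alpha}$ produces, after viewing $w$ as a $\Delta$-invariant cocycle of $\widetilde{\alpha}$, a $\Delta$-invariant unitary $z$ and a character $\chi\colon\Lambda\to U(1)$ trivializing $w|_\Lambda$ up to $\chi$. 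The conditional expectation onto $L^\infty$ of the original action, combined with weak mixing of $\rho|_\Delta$ and of $\beta'|_\Delta$ (which collapse the $Y$- and $\beta'$-factors of $z$ to scalars), brings $z$ into the $\Delta$-fixed subalgebra of $L^\infty(\widehat{A}^{\Gamma/\Delta})$, which at the $\Delta$-coordinate is $L^\infty(\widehat{A})$. Fourier analysis on $\widehat{A}$ then identifies the resulting residue with an element $a\in A$ and yields $[w]=\Psi(a)$. The main obstacle is precisely this descent: extracting a well-defined residue in $A$ requires coordinating the conditional expectation, the $\Delta$-invariance constraint, the weak mixing of $\rho|_\Delta$, and the scalar ambiguity from $\chi$. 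The hypothesis $[\Lambda:\Delta]=\infty$ is essential throughout, both for weak mixing of the Bernoulli factors on the $\Lambda$-side and for injectivity of $\Psi$ via the infinite $\Lambda$-orbit of $\Delta$.
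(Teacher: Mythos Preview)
Your construction of $\alpha_A$ has a fundamental problem: taking the full Bernoulli shift $\sigma_A\otimes\beta$ on $\widehat{A}^{\Gamma/\Delta}\times[0,1]^\Gamma$ gives a \emph{malleable} action whose restriction to $\Lambda$ is weak mixing, so Theorem~\ref{Theorem:trivial-superrigidity} applies directly and forces $H^1_{:\Delta,\Lambda,\mathrm{w}}(\alpha_A)=\{1\}$, not $A$. In particular the cocycles $w^a$ you build from the unitaries $\chi_a$ are visibly coboundaries for your $\alpha_A$: the trivializing unitary $\chi_a$ already lives in $L^\infty(\widehat{A}^{\Gamma/\Delta})$, which is exactly the algebra on which your $\alpha_A$ acts. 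Your map $\Psi$ is therefore identically trivial, and no amount of Fourier analysis recovers a nonzero element of $A$ from it. The same collapse happens after tensoring with $\rho$ and $\beta'$: your descent argument, if it works, brings the trivializing unitary $z$ back into the full algebra of $\alpha_A\otimes\rho$, showing again that $[w]=1$.

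The missing idea is to pass to a proper invariant subalgebra. In the paper one lets the compact group $\widehat{A}$ act diagonally on $\widehat{A}^{\Gamma/\Delta}$ by translation, and defines $\alpha_A$ as the restriction of the Bernoulli shift to the $\widehat{A}$-\emph{fixed point algebra} $M_A=(L^\infty(\widehat{A})^{\otimes\Gamma/\Delta})^{\mathtt{Lt}^{\otimes\Gamma/\Delta}}\otimes L^\infty(\widehat{A})^{\otimes\Gamma}$. Now a $\Delta$-invariant cocycle $w$ for $\alpha_A\otimes\rho$ takes values in $M_A$, and after using weak mixing of $\rho|_\Delta$ to reduce to the first factor, superrigidity (applied to the ambient malleable Bernoulli $\beta_{\Gamma\curvearrowright\Gamma/\Delta}$) produces a trivializing unitary $v$ in the \emph{full} algebra $L^\infty(\widehat{A})^{\otimes\Gamma/\Delta}$. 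The point is that $v$ need not lie in $M_A$: the obstruction is that $\mathtt{Lt}_g^{\otimes\Gamma/\Delta}(v)=\chi_w(g)v$ for a character $\chi_w\in\widehat{\widehat{A}}=A$, and $[w]\mapsto\chi_w$ is the desired isomorphism. The cocycle $w_\chi(\gamma)=v_\chi^*\beta_\gamma(v_\chi)$ you wrote down is then nontrivial precisely because $v_\chi\notin M_A$ when $\chi\neq 1$.
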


\begin{proof}
Fix a countably infinite abelian group $A$, and let $G$ denote its
Pontryagin dual, endowed with its Haar (probability) measure $\nu$. Set $%
(M,\tau)=L^{\infty } ( G,\nu ) $ endowed with the trace-preserving action $%
\mathtt{Lt}\colon G\to\mathrm{Aut}(M,\tau)$ given by left translation. Set
\begin{equation*}
\beta=\beta _{\Gamma \curvearrowright \Gamma /\Delta }\otimes \beta _{\Gamma
\curvearrowright \Gamma }\colon \Gamma \curvearrowright M^{\otimes \Gamma
/\Delta }\otimes M^{\otimes \Gamma }\text{,}
\end{equation*}
where $\Gamma $ acts on $\Gamma /\Delta $ and on $\Gamma $ by left
translation. Define also the action
\begin{equation*}
\mathtt{Lt} ^{\otimes \Gamma /\Delta }\otimes \mathrm{id}_{M^{\otimes \Gamma
}}\colon G\curvearrowright M^{\otimes \Gamma /\Delta }\otimes M^{\otimes
\Gamma },
\end{equation*}
and let $M_A$ denote its fixed point algebra, which can be identified with
\begin{equation*}
M_A=(M^{\otimes \Gamma /\Delta }\otimes M^{\otimes \Gamma })^{ \mathtt{Lt}
^{\otimes \Gamma /\Delta }\otimes \mathrm{id}_{M^{\otimes \Gamma
}}}=(M^{\otimes \Gamma /\Delta })^{ \mathtt{Lt} ^{\otimes \Gamma /\Delta
}}\otimes M^{\otimes \Gamma }\text{.}
\end{equation*}
Notice that $M_A=L^{\infty } ( X_{A},\mu _{A} ) $ for a standard atomless
probability space $( X_{A},\mu _{A} ) $.

The actions defined above commute, so $\beta$ restricts to an action $%
\alpha_A\colon \Gamma\to\mathrm{Aut}(M_A)$. Since $\Lambda/\Delta$ is
infinite, $\beta|_{\Lambda}$ is weak mixing, and thus $\alpha_A|_{\Lambda}$
is weak mixing as well. It is clear that the conjugacy class of $\alpha _{A}$
only depends from the isomorphism class of $A$. This proves (1) and (2), so
we check (3).

Let $(Y,\nu )$ be a standard probability space, and let $\rho \colon \Gamma
\curvearrowright L^{\infty }(Y,\nu )$ be a trace-preserving action such that
$\rho |_{\Delta }$ is weak mixing. We claim that $H_{:\Delta ,\Lambda ,%
\mathrm{w}}^{1}(\alpha _{A}\otimes \rho )$ is isomorphic to $A$. Once we
prove this, the proof of the theorem will be complete.

Let $w$ be a $\Delta $-invariant cocycle for $\alpha _{A}\otimes \rho $.
Then $w$ is also a $\Delta $-invariant cocycle for $\beta \otimes \rho $.
Since $\rho |_{\Delta }$ is weak mixing and $\Delta $ is infinite, \cite[%
Proposition D.2]{vaes_rigidity_2007} implies that%
\begin{equation*}
(M^{\otimes \Gamma /\Delta }\otimes M^{\otimes \Gamma }\otimes L^{\infty
}(Y,\nu ))^{(\beta _{\Gamma \curvearrowright \Gamma /\Delta }\otimes \beta
_{\Gamma \curvearrowright \Gamma }\otimes \rho )|_{\Delta }}=M^{\otimes
\Gamma /\Delta }\otimes \mathbb{C}\otimes \mathbb{C}\subseteq M^{\otimes
\Gamma /\Delta }\otimes M^{\otimes \Gamma }\otimes L^{\infty }(Y,\nu ).
\end{equation*}%
Therefore $w_{\lambda }\in M^{\otimes \Gamma /\Delta }\otimes \mathbb{C}%
\otimes \mathbb{C}$ for every $\lambda \in \Lambda $. We will identify $%
M^{\otimes \Gamma /\Delta }\otimes \mathbb{C}\otimes \mathbb{C}$ with $%
M^{\otimes \Gamma /\Delta }$. Since $\Lambda /\Delta $ is infinite, the
shift $\beta _{\Lambda \curvearrowright \Gamma /\Delta }$ is weak mixing. By
Theorem \ref{Theorem:trivial-superrigidity}, there exists a unitary $v\in
M^{\otimes \Gamma /\Delta }$ such that
\begin{equation*}
v^{\ast }(\beta _{\Gamma \curvearrowright \Gamma /\Delta })_{\lambda
}(v)=w_{\lambda }\ \mathrm{mod}\mathbb{C}
\end{equation*}%
for every $\lambda \in \Lambda $. Fix $g\in G$. Then $\mathtt{Lt}%
_{g}^{\otimes \Gamma /\Delta }(w_{\lambda })=w_{\lambda }$ for every $%
\lambda \in \Lambda $, and hence
\begin{equation*}
v^{\ast }\mathtt{Lt}_{g}^{\otimes \Gamma /\Delta }(v)=(\beta _{\Gamma
\curvearrowright \Gamma /\Delta }\otimes \beta _{\Gamma \curvearrowright
\Gamma })_{\lambda }(v^{\ast }\mathtt{Lt}_{g}(v))\ \mathrm{mod}\mathbb{C}.
\end{equation*}%
In other words, $v^{\ast }\mathtt{Lt}_{g}^{\otimes \Gamma /\Delta }(v)$
generates a one-dimensional subspace which is invariant under $\beta
_{\Gamma \curvearrowright \Gamma /\Delta }\otimes \beta _{\Gamma
\curvearrowright \Gamma }$. Since this action is weak mixing, there exists $%
\chi _{w}\colon G\rightarrow \mathbb{T}$ such that $\mathtt{Lt}_{g}^{\otimes
\Gamma /\Delta }(v)=\chi _{w}(g)v$ for every $g\in G$. It is straightforward
to verify that $\chi _{w}$ is a group homomorphism, so that it can be
regarded as an element of $A\cong \widehat{G}$.

\textbf{Claim:} If $w,w^{\prime }\in Z_{:\Delta ,\mathrm{w}}^{1}(\alpha
_{A}\otimes \rho )$ are $\Lambda $-relatively weakly cohomologous, then $%
\chi _{w}=\chi _{w^{\prime }}$.

Given $w$ and $w^{\prime }$ as above, find a unitary $z\in M_{A}\otimes
L^{\infty }(Y,\nu )$ with $w_{\lambda }^{\prime }=z^{\ast }w_{\lambda
}(\alpha _{A}\otimes \rho )_{\lambda }(z)$ \textrm{mod}$\mathbb{C}$\emph{\ }%
for every $\lambda \in \Lambda $. Use $\Delta $-invariance of $w$ and $%
w^{\prime }$ to get $(\alpha _{A}\otimes \rho )_{\delta }(z)=z\ $for all $%
\delta \in \Delta $. This implies that $z\in M^{\otimes \Gamma /\Delta }$
and $w_{\lambda }^{\prime }=z^{\ast }w_{\lambda }(\beta _{\Gamma
\curvearrowright \Gamma /\Delta })_{\lambda }(z)$ \textrm{mod}$\mathbb{C}$
for every $\lambda \in \Lambda $. As above, there exists a unitary $v\in
M^{\otimes \Gamma /\Delta }$ such that
\begin{equation*}
w_{\lambda }=v^{\ast }(\beta _{\Gamma \curvearrowright \Gamma /\Delta
})_{\lambda }(v)\ \mathrm{mod}\mathbb{C}\ \ \mbox{ and }\ \ \mathtt{Lt}%
_{g}^{\otimes \Gamma /\Delta }(v)=\chi _{w}(g)v
\end{equation*}%
for every $\lambda \in \Lambda $ and every $g\in G$. Thus $w_{\lambda
}^{\prime }=(vz)^{\ast }(\beta _{\Gamma \curvearrowright \Gamma /\Delta
})_{\lambda }(vz)\ \mathrm{mod}\mathbb{C}$ for every $\lambda \in \Lambda $,
and
\begin{equation*}
\mathtt{Lt}_{g}^{\otimes \Gamma /\Delta }(vz)=\mathtt{Lt}_{g}^{\otimes
\Gamma /\Delta }(v)\mathtt{Lt}_{g}^{\otimes \Gamma /\Delta }(z)=\chi
_{w}(g)vz
\end{equation*}%
for every $g\in G$. This shows that $\chi _{w^{\prime }}=\chi _{w}$.

In particular, there is a well-defined assignment $\kappa \colon H_{:\Delta
,\Lambda ,\mathrm{w}}^{1}(\alpha _{A}\otimes \rho )\rightarrow \widehat{G}$
given by $\kappa ([w])=\chi _{w}$ for all $[w]\in H_{:\Delta ,\Lambda ,%
\mathrm{w}}^{1}(\alpha _{A}\otimes \rho )$. The map $\kappa $ is easily seen
to be a group homomorphism, so it remains to show that it is a bijection.

\textbf{Claim:} $\kappa$ is injective.

Let $w,w^{\prime }\in Z_{:\Delta ,\mathrm{w}}^{1}(\alpha _{A}\otimes \rho )$
satisfy $\chi _{w}=\chi _{w^{\prime }}=\chi \in G$. Then there exist
unitaries $v,v^{\prime }\in M^{\otimes \Gamma /\Delta }$ such that
\begin{equation*}
v^{\ast }(\beta _{\Gamma \curvearrowright \Gamma /\Delta })_{\lambda
}(v)=w_{\lambda }\ \mathrm{mod}\mathbb{C}\ \ \mbox{ and }v^{\prime \ast
}(\beta _{\Gamma \curvearrowright \Gamma /\Delta })_{\lambda }(v^{\prime
})=w_{\lambda }^{\prime }\ \mathrm{mod}\mathbb{C}
\end{equation*}%
for every $\lambda \in \Lambda $, and
\begin{equation*}
\mathtt{Lt}_{g}^{\otimes \Gamma /\Delta }(v)=\chi (g)v\ \ \mbox{
and }\ \ \mathtt{Lt}_{g}^{\otimes \Gamma /\Delta }(v^{\prime })=\chi
(g)v^{\prime }
\end{equation*}%
for every $g\in G$. Therefore, the unitary $z=v^{\ast }v^{\prime }\in M_{A}$
satisfies $z^{\ast }w_{\lambda }(\beta _{\Gamma \curvearrowright \Gamma
/\Delta })_{\lambda }(z)=w_{\lambda }^{\prime }\ \mathrm{mod}\mathbb{C}$ for
every $\lambda \in \Lambda $. This shows that $w$ and $w^{\prime }$ are $%
\Lambda $-relatively weakly cohomologous, proving the claim.

\textbf{Claim:} $\kappa$ is surjective.

Fix $\chi \in \widehat{G} $, and regard it as a (spectral) unitary in $%
C(G)\subseteq L^{\infty }(G,\nu )=M$. This gives a $\Delta $-invariant
unitary
\begin{equation*}
v_{\chi}\in M^{\otimes \Gamma /\Delta }\otimes \mathbb{C}\otimes \mathbb{C}%
\subset M^{\otimes \Gamma /\Delta }\otimes M^{\otimes \Gamma }\otimes
L^{\infty } ( Y,\nu )
\end{equation*}
satisfying $\mathtt{Lt} _{g}^{\otimes \Gamma /\Delta } ( v_{\chi} ) =\chi (
g ) v_{\chi}$ for every $g\in G$. Define $w_{\chi}(\lambda)=v_{\chi}^*%
\beta_{\lambda}(v_{\chi})$ for all $\lambda\in\Lambda$. Then $w_{\chi}$ is a
$\Delta $-invariant cocycle, and one checks that $\chi_{w_{\chi}}=\chi$, as
desired.
\end{proof}

We will often use Theorem \ref{Theorem:prescribed-cohomology} in the
particular case when $\Gamma =\Lambda $.

\subsection{Countable to one homomorphism\label{Subsection:rigid}}

Let $\Gamma $ be a countable discrete group, let $\Lambda$ be a subgroup,
and let $\Lambda \curvearrowright^{\alpha} (X,\mu)$ be an action on the
atomless standard probability space $( X,\mu )$. We recall the construction
of the coinduced action $\mathrm{CInd}_{\Lambda }^{\Gamma } ( \alpha ) $ of $%
\Gamma $ as defined in \cite{luck_type_2000,dooley_non-bernoulli_2008} and
used in \cite{ioana_orbit_2011,epstein_borel_2011}.

\begin{definition}
Adopt the notation above. Set
\begin{equation*}
Y=\{f\colon \Gamma \to X\colon f(\gamma
\lambda)=\alpha_{\lambda^{-1}}(f(\gamma)) \mbox{ for
every }\gamma \in \Gamma \mbox{ and } \lambda\in \Lambda \},
\end{equation*}
endowed with the measure $\nu $ induced by the product measure on $X$. Then $%
Y$ is an atomless standard probability space. The\emph{\ coinduced action} $%
\widehat{\alpha}=\mathrm{CInd}_{\Lambda }^{\Gamma } ( \alpha ) $ is the
action $\Gamma\curvearrowright^{\widehat{\alpha}}Y$ defined by setting $(%
\widehat{\alpha}_{\gamma_0}(f))_{\gamma }=f(\gamma _{0}^{-1}\gamma )$ for
every $\gamma ,\gamma_0\in \Gamma $.
\end{definition}

\begin{remark}
\label{Remark:null} Consider the map $\pi \colon Y\to X$ given by evaluation
at the unit of $\Gamma$. It is easy to see that $\{ f\in Y\colon \pi (
\widehat{\alpha}_{\gamma}(f) ) =\pi ( f ) \} $ is $\nu$-null for every $%
\gamma\in \Gamma\setminus\{1\}$.
\end{remark}

For $2\leq d\leq \infty $, fix a subgroup $\Lambda\leq \mathrm{SL}_{2} (
\mathbb{Z} ) $ isomorphic to $\mathbb{F}_{d}$. Then the canonical action of
\textrm{SL}$_{2} ( \mathbb{Z} )\curvearrowright \mathbb{Z}^{2}$ by group
automorphisms induces, by passing to the dual group, a free weak mixing pmp
action $\Lambda \curvearrowright^{\rho} \mathbb{T}^{2}$. This action has
been initially studied by Popa \cite{popa_class_2006}, and used by many
authors due to its rigidity properties; see \cite%
{gaboriau_uncountable_2005,ioana_orbit_2011,epstein_orbit_2007,epstein_borel_2011,tornquist_orbit_2006,ioana_subequivalence_2009}%
. A crucial property of such an action has been established by Ioana in \cite%
[Theorem 1.3]{ioana_orbit_2011} building on previous work from \cite%
{connes_factor_1980,gaboriau_uncountable_2005,popa_class_2006}; see also
\cite[Theorem 6.9]{epstein_borel_2011} and \cite[Lemma 7.4]%
{bowen_neumanns_2015}.

\begin{theorem}[Ioana]
\label{Theorem:separability-groups} Let $\Gamma $ be a countable discrete
group containing a nonabelian free group as a subgroup. Let $\mathcal{S}$ be
a class of free, ergodic, pmp actions of $\Gamma $ on the standard
probability space, and let $\mathcal{S}|_{\mathbb{F}_{d}}$ be the class of
restrictions of actions from $\mathcal{S}$ to $\mathbb{F}_{d}$. Suppose that

\begin{enumerate}
\item the actions in $\mathcal{S}$ are pairwise von Neumann equivalent,

\item the actions in $\mathcal{S}|_{\mathbb{F}_{d}}$ are pairwise not
conjugate;

\item every action $\sigma \colon \Gamma \curvearrowright X_{\sigma }$ in $%
\mathcal{S}$ has the property that $\rho $ is a factor of $\sigma |_{\Lambda
}$ as witnessed by a Borel map $\pi _{\sigma }\colon X_{\sigma }\to \mathbb{T%
}^{2}$ such that $\{ x\in X_{\sigma }\colon \pi _{\sigma } (
\sigma_{\gamma}(x)) =\pi _{\sigma } ( x ) \} $ is null for every $\gamma\in
\Gamma\setminus\{1\} $.
\end{enumerate}

Then $\mathcal{S}$ is countable.
\end{theorem}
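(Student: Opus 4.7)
The plan is to argue by contradiction. Suppose $\mathcal{S}$ is uncountable. By hypothesis~(1), all the II$_{1}$ factors $M_{\sigma}:=L^{\infty}(X_{\sigma})\rtimes_{\sigma}\Gamma$ for $\sigma\in\mathcal{S}$ are pairwise isomorphic; fix a separable II$_{1}$ factor $M$ and $\ast$-isomorphisms $\theta_{\sigma}\colon M_{\sigma}\to M$. For each $\sigma\in\mathcal{S}$, the factor map $\pi_{\sigma}$ from hypothesis~(3) induces a trace-preserving embedding $\iota_{\sigma}\colon L^{\infty}(\mathbb{T}^{2})\hookrightarrow L^{\infty}(X_{\sigma})\subseteq M_{\sigma}$, together with unitaries $u_{\sigma}(\lambda)\in\mathcal{U}(M_{\sigma})$ for $\lambda\in\Lambda$ (coming from the crossed product presentation) satisfying $\mathrm{Ad}(u_{\sigma}(\lambda))\circ\iota_{\sigma}=\iota_{\sigma}\circ\rho_{\lambda}$. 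Transport everything to $M$ via $\theta_{\sigma}$. Since $\widehat{\mathbb{Z}^{2}}=\mathbb{T}^{2}$, the subalgebra $\theta_{\sigma}(\iota_{\sigma}(L^{\infty}(\mathbb{T}^{2})))\subseteq M$ is encoded by the pair of unitaries $p_{\sigma},q_{\sigma}\in\mathcal{U}(M)$ which are the images of the characters corresponding to the standard generators of $\widehat{\mathbb{Z}^{2}}$.

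The key input is the relative property~(T) of the inclusion $\mathbb{Z}^{2}\leq\mathbb{Z}^{2}\rtimes\Lambda$ (a consequence, via Burger's theorem, of the classical relative property~(T) of the pair $\mathbb{Z}^{2}\leq\mathbb{Z}^{2}\rtimes\mathrm{SL}_{2}(\mathbb{Z})$). Following the deformation/rigidity strategy pioneered by Popa, this yields constants $\delta>0$, $\varepsilon>0$, and a finite subset $F\subseteq\Lambda$ with the following property: whenever $\sigma,\sigma'\in\mathcal{S}$ satisfy
\begin{equation*}
\|p_{\sigma}-p_{\sigma'}\|_{2}<\delta,\quad \|q_{\sigma}-q_{\sigma'}\|_{2}<\delta,\quad \|u_{\sigma}(\lambda)-u_{\sigma'}(\lambda)\|_{2}<\delta\ \text{for all } \lambda\in F,
\end{equation*}
there is a unitary $w\in M$ with $\|w-1\|_{2}<\varepsilon$ intertwining the two embedded copies of $L^{\infty}(\mathbb{T}^{2})$ and conjugating $u_{\sigma}|_{\Lambda}$ to $u_{\sigma'}|_{\Lambda}$ modulo scalars.

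Separability of $M$ in the $\|\cdot\|_{2}$-norm now forces the contradiction. The tuples $(p_{\sigma},q_{\sigma},(u_{\sigma}(\lambda))_{\lambda\in F})$ all lie in the separable metric space $\mathcal{U}(M)^{2+|F|}$, so the pigeonhole principle supplies distinct $\sigma\neq\sigma'$ in $\mathcal{S}$ whose tuples are $\delta$-close, and the rigidity statement above then produces a unitary $w\in M$ intertwining the whole structure.

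The final step---and the principal technical obstacle---is to upgrade this operator-algebraic intertwining to an actual pointwise conjugacy of the pmp actions $\sigma|_{\Lambda}$ and $\sigma'|_{\Lambda}$, which contradicts hypothesis~(2). Here the non-degeneracy condition of hypothesis~(3), namely that $\{x:\pi_{\sigma}(\sigma_{\gamma}(x))=\pi_{\sigma}(x)\}$ is null for every $\gamma\neq 1$, is indispensable: it prevents the $\Gamma$-translates of $\iota_{\sigma}(L^{\infty}(\mathbb{T}^{2}))$ from collapsing, so that $\iota_{\sigma}(L^{\infty}(\mathbb{T}^{2}))$ generates $L^{\infty}(X_{\sigma})$ in a sufficiently faithful manner within $M$ for the unitary $w$ to descend to a measure-theoretic isomorphism $X_{\sigma}\to X_{\sigma'}$ that intertwines the $\Lambda$-actions. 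Bridging this final gap between abstract proximity in $M$ and pointwise conjugacy of measure-preserving systems is where the bulk of the delicate work lies.
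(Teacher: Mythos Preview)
The paper does not supply its own proof of this theorem; it attributes the result to Ioana \cite[Theorem 1.3]{ioana_orbit_2011}, building on \cite{connes_factor_1980,gaboriau_uncountable_2005,popa_class_2006}, and also points to \cite[Theorem 6.9]{epstein_borel_2011} and \cite[Lemma 7.4]{bowen_neumanns_2015} for closely related arguments. So there is no ``paper's own proof'' to compare against beyond those references.

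Your sketch is faithful to the Gaboriau--Popa/Ioana strategy in those references: transport all crossed products into a single separable II$_{1}$ factor, use the relative property~(T) of $\mathbb{Z}^{2}\leq \mathbb{Z}^{2}\rtimes\Lambda$ (Burger's theorem for nonamenable $\Lambda\leq\mathrm{SL}_2(\mathbb{Z})$) to obtain a local rigidity statement, and invoke $\|\cdot\|_2$-separability for the pigeonhole step. Your identification of hypothesis~(3) as the mechanism that lets the unitary intertwining be upgraded to a spatial conjugacy is also correct: in Ioana's argument this is exactly what guarantees that the $\Gamma$-translates of $\iota_\sigma(L^\infty(\mathbb{T}^2))$ generate $L^\infty(X_\sigma)$, so that $w$ normalises the Cartan subalgebras and hence comes from a point map.

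One caveat: your phrasing that $w$ ``conjugates $u_\sigma|_\Lambda$ to $u_{\sigma'}|_\Lambda$ modulo scalars'' overstates what relative property~(T) alone delivers. What one actually gets is that $w$ conjugates the two copies of $L^\infty(\mathbb{T}^2)$ (equivalently, $wp_\sigma w^*=p_{\sigma'}$ and $wq_\sigma w^*=q_{\sigma'}$ modulo scalars); the compatibility with the $\Lambda$-unitaries then follows because both sides implement the \emph{same} action $\rho$ on $L^\infty(\mathbb{T}^2)$. The passage from there to a full conjugacy of $\sigma|_\Lambda$ and $\sigma'|_\Lambda$ requires the Cartan-generation argument alluded to above, which---as you rightly concede---is the substantive analytic work and is carried out in detail in the cited references rather than here.
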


We now prove Theorems~\ref{Theorem:nB-conj} and~\ref{Theorem:nB-oe} from the
introduction, in the case when $\Gamma $ contains $\mathbb{F}_{2}$.

\begin{theorem}
\label{Theorem:c-to-1} Let $\Gamma $ be a group containing a nonabelian free
group as a subgroup $\Lambda$, and fix an infinite normal subgroup $\Delta$
of $\Lambda $ such that the quotient $\Lambda /\Delta $ is an infinite
property (T) group. Then there exists an assignment $A\mapsto \theta _{A}$
from countably infinite abelian groups to free weak mixing actions of $%
\Gamma $ on the standard atomless probability space such that:

\begin{enumerate}
\item $\theta _{A}|_{\Lambda }$ is weak mixing;

\item Two groups $A$ and $A^{\prime }$ are isomorphic if and only if $\theta
_{A}$ and $\theta _{A^{\prime }}$ are conjugate;

\item if $\mathcal{A}$ is a family of pairwise nonisomorphic countably
infinite abelian groups such that the actions $\{ \theta _{A}\colon A\in
\mathcal{A} \} $ are pairwise von Neumann equivalent, then $\mathcal{A}$ is
countable.
\end{enumerate}
\end{theorem}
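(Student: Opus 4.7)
The plan is to apply Theorem~\ref{Theorem:prescribed-cohomology} directly to the triple $\Delta\le\Lambda\le\Gamma$ and to tensor the resulting actions with an auxiliary $\Gamma$-action carrying Popa's rigid action, concluding via Theorem~\ref{Theorem:separability-groups}. First I verify that $\Delta\le\Lambda\le\Gamma$ has property (T): by Example~\ref{eg:QuotTtripleT} the triple $\Delta\le\Lambda\le\Lambda$ has property (T) (since $\Lambda/\Delta$ is property (T) and $\Delta$ is normal in $\Lambda$), and this immediately passes to $\Gamma$-representations by restriction, as property (T) of the triple is a statement about $\Lambda$-invariant vectors. Moreover $\Delta$ is of infinite index in $\Lambda$. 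Theorem~\ref{Theorem:prescribed-cohomology} then furnishes an assignment $A\mapsto \alpha_A$ from countably infinite abelian groups to free weak mixing pmp $\Gamma$-actions on the standard atomless probability space, with $\alpha_A|_\Lambda$ weak mixing, $\alpha_A\cong \alpha_{A'}$ iff $A\cong A'$, and $H^1_{:\Delta,\Lambda,\mathrm{w}}(\alpha_A\otimes\rho)\cong A$ whenever $\rho|_\Delta$ is weak mixing.

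Choose a rank-$2$ free subgroup $\Lambda_0\le\Lambda$ with $\Lambda_0\Delta=\Lambda$ (possible because $\Lambda/\Delta$ is finitely generated and $\Lambda$ is free, so a lift of a finite generating set can be arranged to generate a free subgroup), and identify $\Lambda_0$ with a free rank-$2$ subgroup of $\mathrm{SL}_2(\mathbb{Z})$ to obtain Popa's action $\pi_0\colon \Lambda_0\curvearrowright \mathbb{T}^2$. Let $\beta_\Gamma$ be the Bernoulli $\Gamma$-shift on $\Gamma$ with atomless base; its restriction to any infinite subgroup is mixing. Define
\begin{equation*}
\theta_A = \alpha_A\otimes \mathrm{CInd}_{\Lambda_0}^\Gamma(\pi_0)\otimes \beta_\Gamma,
\end{equation*}
a free weak mixing action whose restriction to $\Lambda$ is weak mixing, yielding property (1). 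For property (2), the forward direction is immediate. Conversely, setting $\rho = \mathrm{CInd}_{\Lambda_0}^\Gamma(\pi_0)\otimes\beta_\Gamma$, the restriction $\rho|_\Delta$ is weak mixing (since $\beta_\Gamma|_\Delta$ is mixing and the coinduced factor is weak mixing on $\Delta$ by standard criteria for generalized Bernoulli actions with weak mixing fibers), so Theorem~\ref{Theorem:prescribed-cohomology}(3) gives $H^1_{:\Delta,\Lambda,\mathrm{w}}(\theta_A)\cong A$. This being a conjugacy invariant, $\theta_A\cong \theta_{A'}$ forces $A\cong A'$.

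For property (3) I apply Theorem~\ref{Theorem:separability-groups} to $\mathcal{S}=\{\theta_A\colon A\in\mathcal{A}\}$ with $\mathbb{F}_d:=\Lambda_0$. Hypothesis (1) is given. Hypothesis (3) — that $\pi_0$ is a factor of each $\theta_A|_{\Lambda_0}$ with the required nullity — follows by construction: projection onto the $\mathrm{CInd}_{\Lambda_0}^\Gamma(\pi_0)$ tensor factor, composed with evaluation at $e\in\Gamma$, yields a Borel factor map $\theta_A|_{\Lambda_0}\to \pi_0$, and the nullity condition is guaranteed by Remark~\ref{Remark:null} together with the freeness contributed by $\beta_\Gamma$. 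For hypothesis (2) — pairwise nonconjugacy of the restrictions $\theta_A|_{\Lambda_0}$ — the choice $\Lambda_0\Delta=\Lambda$ gives $\Lambda_0/(\Lambda_0\cap\Delta)\cong \Lambda/\Delta$, which has property (T), so the refined triple $(\Lambda_0\cap\Delta)\le \Lambda_0\le \Gamma$ has property (T). An analogous cohomology calculation, decomposing $\alpha_A|_{\Lambda_0}$ along $\Lambda_0$-orbits in $\Gamma/\Delta$ and isolating the orbit through $e\Delta$ (on which the construction reproduces the $\Lambda_0$-analog of Theorem~\ref{Theorem:prescribed-cohomology}'s output), then recovers $A$ as $H^1_{:(\Lambda_0\cap\Delta),\Lambda_0,\mathrm{w}}(\theta_A|_{\Lambda_0})$, ruling out conjugacy on $\Lambda_0$ for non-isomorphic $A,A'$.

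The main obstacle is the cohomology computation on the refined triple $(\Lambda_0\cap\Delta)\le\Lambda_0\le\Gamma$: one must identify $\alpha_A|_{\Lambda_0}$ with a tensor product having the $\Lambda_0$-analog of the construction $\alpha_A$ as one factor and auxiliary factors weak mixing on $\Lambda_0\cap\Delta$ as the other. This mirrors the proof of Theorem~\ref{Theorem:prescribed-cohomology}(3) and depends on the analog of Vaes' Proposition~D.2 to pin down the $(\Lambda_0\cap\Delta)$-fixed-point algebra inside the relevant Bernoulli tensor product. A subsidiary technical point is the weak mixing of $\mathrm{CInd}_{\Lambda_0}^\Gamma(\pi_0)|_\Delta$ used in property (2), which reduces to checking that the $\Delta$-orbit structure on $\Gamma/\Lambda_0$ is favorable; this together with the weak mixing of $\pi_0$ on infinite subgroups of $\Lambda_0$ suffices.
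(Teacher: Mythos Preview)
Your overall architecture is right, but you have created an unnecessary difficulty by passing to a proper subgroup $\Lambda_0\lneq\Lambda$. The hypothesis already says that $\Lambda$ itself is a nonabelian free group, and the rigid action $\rho\colon\Lambda\curvearrowright\mathbb T^2$ as well as Theorem~\ref{Theorem:separability-groups} are available for any $\mathbb F_d$ with $2\le d\le\infty$. The paper simply takes the $\mathbb F_d$ in Theorem~\ref{Theorem:separability-groups} to be $\Lambda$, sets $\widehat\rho=\mathrm{CInd}_\Lambda^\Gamma(\rho)$ and $\theta_A=\alpha_A\otimes\widehat\rho$, checks that $\widehat\rho|_\Delta$ is weak mixing (via \cite[Lemma~6.8]{epstein_borel_2011}), and then Theorem~\ref{Theorem:prescribed-cohomology}(3) gives $H^1_{:\Delta,\mathrm w}(\theta_A|_\Lambda)\cong A$ directly. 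Hypothesis~(2) of Theorem~\ref{Theorem:separability-groups} is then immediate, with no refined triple and no extra Bernoulli factor needed.

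By contrast, your route forces you to compute $H^1_{:(\Lambda_0\cap\Delta),\Lambda_0,\mathrm w}(\theta_A|_{\Lambda_0})$, which you correctly flag as the main obstacle. The sketched argument --- ``decompose $\alpha_A|_{\Lambda_0}$ along $\Lambda_0$-orbits in $\Gamma/\Delta$ and isolate the orbit through $e\Delta$'' --- does not work as stated: the diagonal $\widehat A$-action $\mathtt{Lt}^{\otimes\Gamma/\Delta}$ whose fixed-point algebra defines $M_A$ is taken over \emph{all} of $\Gamma/\Delta$ simultaneously, so $M_A$ does not split as a tensor product over $\Lambda_0$-orbits, and one cannot simply peel off the factor corresponding to $\Lambda/\Delta$. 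It may be possible to push the superrigidity argument through on the full $M^{\otimes\Gamma/\Delta}$ for the triple $(\Lambda_0\cap\Delta)\le\Lambda_0\le\Lambda_0$, but this requires verifying weak mixing of several auxiliary restrictions to $\Lambda_0\cap\Delta$ and is substantially more delicate than what you have written. All of this disappears once you realize you can take $\Lambda_0=\Lambda$.
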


\begin{proof}
Let $\Lambda \curvearrowright ^{\rho }\mathbb{T}^{2}$ be the rigid action
described above, and set $\widehat{\rho }=\mathrm{CInd}_{\Lambda }^{\Gamma
}(\rho )$. Let $A\mapsto \alpha _{A}$ be the assignment from countably
infinite abelian groups to weak mixing measure preserving actions of $\Gamma
$ obtained in Theorem \ref{Theorem:prescribed-cohomology} for the triple $%
\Delta \leq \Lambda \leq \Gamma $. (Observe that $\Delta \leq \Lambda \leq
\Gamma $ has property (T) by Example~\ref{eg:QuotTtripleT}.) By
construction, $\alpha _{A}|_{\Lambda }$ is weak mixing, and hence $\widehat{%
\rho }|_{\Delta }$ is weak mixing by \cite[Lemma 6.8]{epstein_borel_2011}.

Fix a countably infinite discrete abelian group $A$, and set $\theta
_{A}=\alpha _{A}\otimes \widehat{\rho}$. Then $\theta _{A}|_{\Lambda }$ is a
weak mixing extension of $\rho $. If $A$ and $A^{\prime }$ are isomorphic,
then $\alpha _{A}$ and $\alpha _{A^{\prime }}$ are conjugate, and hence $%
\theta _{A}$ and $\theta _{A^{\prime }}$ are conjugate. Conversely, if $%
\theta _{A}$ and $\theta _{A^{\prime }}$ are conjugate, then $A\cong
H_{:\Delta ,\mathrm{w}}^{1} ( \theta _{A}|_{\Lambda } ) \cong H_{:\Delta ,%
\mathrm{w}}^{1} ( \theta _{A^{\prime }}|_{\Lambda } ) \cong A^{\prime }$ by
Theorem~\ref{Theorem:prescribed-cohomology}.

Suppose now that $\mathcal{A}$ is a family of pairwise nonisomorphic
countably infinite abelian groups such that $\{ \theta _{A}\colon A\in
\mathcal{A} \} $ are pairwise von\ Neumann equivalent. By Theorem \ref%
{Theorem:prescribed-cohomology}, we have $H_{:\Delta ,\mathrm{w}}^{1} (
\theta _{A}|_{\Lambda } ) \cong A$ for every $A\in \mathcal{A}$. Therefore
the actions $\{ \theta _{A}|_{\Lambda }\colon A\in \mathcal{A} \} $ are
pairwise not conjugate weak mixing extensions of $\rho $. By Theorem \ref%
{Theorem:separability-groups}, we conclude that $\mathcal{A}$ is countable.
\end{proof}

The proof in the general case of nonamenable $\Gamma $ follows similar
ideas, but a lot more work is required. The induction argument presented in
this section is not sufficient for our purposes, and this forces us to
replace countable groups with countable pmp groupoids. The proof of
Theorem~A and Theorem~B can then be obtained by combining this with
Gaboriau-Lyons' measurable solution to von Neumann's problem \cite%
{gaboriau_measurable-group-theoretic_2009} and Epstein's coinduction
construction \cite{epstein_orbit_2007}; the main idea is to produce
actions of an arbitrary nonamenable group whose (weak, invariant, relative)
1-cohomology can be prescribed via superrigidity. The abstract setting of pmp
groupoids can in fact be used to prove the more general Theorem~D and
Theorem~E, from which Theorems~A and B follow.

\section{Actions of groupoids\label{Section:actions}}

\subsection{Groupoids\label{Subsection:groupoids}}

We recall here some basic definitions concerning groupoids as can be found,
for instance, in \cite%
{bowen_entropy_2014,paterson_groupoids_1999,ad_amenable_2000,renault_groupoid_1980}%
. A \emph{groupoid} $G$ is a small category where every morphism (also
called \emph{arrow}) is invertible. We denote by $G^{0}$ the set of objects
of $G$, which are also called \emph{units }of $G$, while $G^{0}$ is called
the \emph{unit space }of $G$. We identify each unit of $G$ with the
corresponding identity arrow. Consistently, we regard $G^{0}$ as a subset of
$G$. The source and range maps are denoted by $s,r\colon G\to G^{0}$,
respectively. We let $G^{2}$ denote the set the pairs of \emph{composable
arrows}:
\begin{equation*}
G^{2}=\{( \gamma ,\rho ) \in G\colon s(\gamma )=r(\rho )\}.
\end{equation*}

A \emph{bisection} for a groupoid $G$ is a subset $t\subseteq G$ such that
source and range maps are injective on $t$. Given subsets $A,B\subseteq G$,
we set
\begin{equation*}
AB=\{\gamma \rho \colon ( \gamma ,\rho ) \in G^{2}\cap ( A\times B )\} \ \ %
\mbox{ and } \ \ A^{-1}= \{ \gamma ^{-1}\colon \gamma \in A \}.
\end{equation*}
For $\gamma \in G$ and $A\subseteq G$, we set $\gamma A= \{ \gamma \} A$ and
$A\gamma =A \{ \gamma \} $. In particular, $Ax= \{ \gamma \in A\colon
s(\gamma )=x \} $ and $xA= \{ \gamma \in A\colon r(\gamma )=x \} $ for $x\in
G^0$. If $A$ and $B$ are bisections, then so are $AB$ and $A^{-1}$.

\begin{definition}
Let $G$ and $H$ be groupoids with $H\subseteq G$.

\begin{itemize}
\item We say that $G$ is a \emph{standard Borel groupoid} if it is endowed
with a standard Borel structure such that $G^{0}$ is a Borel subset of $G$,
and composition and inversion of arrows, as well as the source and range
maps, are Borel functions.

\item We say that $H$ is a \emph{Borel subgroupoid} of $G$ if $H^{0}=G^{0}$,
and the inclusion map $G\hookrightarrow H$ is a Borel homomorphism.

\item We say that $G$ is a \emph{discrete measurable groupoid} \cite%
{dykema_sofic_2014,bowen_entropy_2014,bowen_neumanns_2015} (also called
\emph{countable Borel groupoid} \cite%
{renault_topological_2015,ad_haagerup_2013}), if it is a standard Borel
groupoid whose source and range maps are countable-to-one.

\item We say that $G$ is a \emph{discrete probability-measure-preserving }%
(pmp) \emph{groupoid} if it is a discrete measurable groupoid endowed with a
Borel measure $\mu _{G}$ satisfying
\begin{equation*}
\mu _{G} ( A ) =\int_{x\in G^{0}} \vert xA \vert d\mu _{G} ( x ) =\int_{x\in
G^{0}} \vert Ax \vert d\mu _{G} ( x )
\end{equation*}%
for every Borel subset $A$ of $G$.

\item If $G$ is a discrete pmp groupoid, we say that $H$ is a \emph{discrete
pmp subgroupoid} if it is a Borel subgroupoid with $\mu_H=\mu_G$.

\item A Borel subset $A\subseteq G^{0}$ is said to be \emph{invariant }if $r
( GA ) =A$.

\item A discrete pmp groupoid $G$ is said to be \emph{ergodic }if every
non-null invariant Borel subset of $G^{0}$ has full measure.
\end{itemize}
\end{definition}

For a non-null invariant Borel subset $A$ of $G^{0}$ one can define the
\emph{reduction} $G|_{A}=AGA$, which is a discrete pmp groupoid with unit
space $A$ with respect to the measure $\frac{1}{\mu _{G}(A)}\mu _{G}|_{A}$.
If $A$ has full measure, such a reduction is called \emph{inessential}. In
the following, we will identify two discrete pmp groupoids whenever they
have isomorphic inessential reductions.

\begin{remark}
Let $G$ be a discrete pmp groupoid with measure $\mu_G$. Then $\mu _{G}$ is
completely determined by its restriction $\mu _{G^{0}}$. Moreover, $\mu _{G}$
induces a measure $\mu _{G^{2}}$ on $G^{2}$ defined by
\begin{equation*}
\mu _{G^2} ( A ) =\int_{x\in G^{0}} \vert A\cap ( Gx\times xG ) \vert d\mu
_{G} ( x ) \text{.}
\end{equation*}
\end{remark}

\begin{definition}
Let $G$ be a discrete pmp groupoid. We define the \emph{inverse semigroup of
partial automorphisms} of $G$ to be the set $[ [ G ] ] $ of Borel bisections
of $G$ (identified when they agree almost everywhere).

Similarly, the \emph{full group} $[G]$ of $G$ is defined as the group, under
composition, of Borel bisections $t$ of $G$ (identified when they agree
almost everywhere) with the property that source and range maps restricted
to $t$ are onto.
\end{definition}

One checks that the topology induced by the metric $d ( t_{0},t_{1} ) =\mu (
t_{0}\bigtriangleup t_{1} ) $ turns $[ G ] $ into a Polish group. For a
Borel bisection $t$ of $G$ and $x\in G^{0}$, we let $tx$ be the unique
element of $t$ with source $x$, and $xt $ be the unique element of $t$ with
range $x$. We say that a subset $S$ of $[ [ G ] ] $ \emph{covers} $G$ if the
union of $S$ is co-null in $G$.

\begin{definition}
Let $G$ be a discrete pmp groupoid, and let $H$ is a standard Borel
groupoid. A \emph{homomorphism} $\pi\colon G\to H$ is a Borel functor such
that for a.e.\ $\gamma ,\rho \in G$, $\pi ( s( \gamma )
) =s( \pi ( \gamma ) ) $, $\pi ( r(
\gamma ) ) =r( \pi ( \gamma ) ) $, and $%
\pi ( \gamma \rho ) =\pi ( \gamma ) \pi ( \rho
) $. (By \cite[Lemma 5.2]{ramsay_virtual_1971}, $\pi$ can be seen as a
Borel functor from an inessential reduction of $G$ to $H$.)

Let $G$ and $H$ be discrete pmp groupoids, and let $\pi\colon G\to H$ be a
homomorphism. Following \cite[Section 4]{bowen_entropy_2014}, \cite[%
Definition 2.3]{bowen_neumanns_2015}, \cite[Section 16]{kechris_spaces_2017}%
, we say that $\pi$ is:

\begin{itemize}
\item a \emph{pmp extension }if $\mu _{H^{0}}$ is equal to the push-forward
measure $\pi _{\ast }(\mu _{G^{0}})$;

\item \emph{class-bijective} if it maps $Gx$ bijectively onto $H\pi ( x ) $
for almost every $x\in G^{0}$.
\end{itemize}
\end{definition}

\begin{definition}
The \emph{orbit equivalence relation} $E_{G}$ of a discrete pmp groupoid $G$
is the countable pmp equivalence relation on $G^{0}$ obtained as the image
of the map $(r,s)\colon G\to G^{0}\times G^{0}$. The groupoid $G$ is called
\emph{principal }if such a map is one-to-one. In this case, one can identify
$G$ with $E_G$.
\end{definition}


In the following, we will regard any countable pmp equivalence relation as a
discrete pmp groupoid. For every ergodic countable pmp equivalence relation $%
R$ on a standard probability space $(X,\mu )$, there exists $\theta \in
\lbrack R]$ acting ergodically on $(X,\mu )$ \cite{kechris_global_2010}.
Therefore, a groupoid $G$ is ergodic if and only if there exists $t\in
\lbrack G]$ such that the map $x\mapsto r(tx)$ acts ergodically on $G^{0}$.
Furthermore, when $G$ is ergodic, there exist countably many pairwise
essentially disjoint elements of $[G]$ which cover $G$, and if $A,B\subseteq
G^{0}$ are Borel sets satisfying $\mu _{G^{0}}(A)=\mu _{G^{0}}(B)$, then
there exists $t\in \lbrack G]$ such that $r(tx)\in B$ for almost every $x\in
A$ and $s(yt)\in A$ for almost every $y\in B$.

\subsection{Bundles of metric spaces and Hilbert spaces\label%
{Subsections:Hilbert-bundles}}

Let $X$ be a standard probability space. We say that a standard Borel space $%
Z$ is \emph{fibered over }$X$ if it is endowed with a Borel surjection $%
q\colon Z\to X$. In this case, we denote by $Z_{x}=q^{-1} \{ x \} $ the
\emph{fiber over }$x\in X$. A (Borel) \emph{section }for $Z$ is a Borel
function $\sigma \colon X\to Z$ such that $q\circ \sigma=\mathrm{id}_X $.
For a section $\sigma$ and $x\in X$, we write $\sigma _{x}=\sigma(x)$.

\begin{definition}
Let $Z$ and $Z^{\prime }$ be standard Borel bundles over $X$.

\begin{enumerate}
\item The \emph{fibered product} $Z\ast Z^{\prime }$ is the standard Borel
space fibered over $X$ defined by%
\begin{equation*}
Z\ast Z^{\prime }= \{ ( z,z^{\prime } ) \in Z\times Z^{\prime }\colon
q(z)=q^{\prime }(z^{\prime }) \}.
\end{equation*}

\item A Borel \emph{fibered function} $f\colon Z\to Z^{\prime }$ is a Borel
map satisfying $q^{\prime }\circ f=q$ almost everywhere.
\end{enumerate}
\end{definition}

When $( Z,\lambda ) $ is a standard probability space, we say that $(
Z,\lambda ) $ is \emph{fibered over the standard probability space} $( X,\mu
) $ with respect to the Borel surjection $q\colon Z\to X$ if $q_{\ast
}(\lambda) =\mu $. In this case, one can consider the disintegration $(
\lambda _{x} ) _{x\in X}$ of $\lambda $ with respect to $\mu $. This turns
each fiber $( Z_{x},\lambda _{x} ) $ into a standard probability space.

\begin{definition}
A \emph{(Borel) bundle of metric spaces }over a standard Borel space $X$ is
a standard Borel space $\mathcal{Z}$ fibered over $X$ endowed with a Borel
function $d\colon \mathcal{Z}\ast \mathcal{Z}\to \mathbb{R}$ with the
following properties:

\begin{itemize}
\item for every $x\in X$, the restriction $d_{x}$ of $d$ to $\mathcal{Z}%
_{x}\times \mathcal{Z}_{x}$ is a metric on $\mathcal{Z}_{x}$;

\item there exists a sequence $( \sigma _{n} )_{n\in\mathbb{N}}$ of sections
for $\mathcal{Z}$ such that $\{ \sigma _{n,x}\colon n\in \mathbb{N} \} $ is
a dense subset of $\mathcal{Z}_{x}$, for every $x\in X$.
\end{itemize}

When $(X,\mu)$ is a standard probability space, the space $S ( X,\mathcal{Z}
) $ of sections for $\mathcal{Z}$ has a canonical topology induced by the
metric%
\begin{equation*}
d ( b,b^{\prime } ) =\int_X \frac{d_{x} ( b_{x},b_{x}^{\prime } ) }{1+d_{x}
( b_{x},b_{x}^{\prime } ) }d\mu ( x ) \text{.}
\end{equation*}
\end{definition}

When $X$ is the unit space of a discrete pmp groupoid $G$, we let $S ( G,%
\mathcal{Z} ) $ denote the space of Borel functions $z\colon G\to \mathcal{Z}
$ satisfying $z_{\gamma }\in \mathcal{Z}_{r(\gamma )}$ for all $\gamma\in G$%
, endowed with the canonical topology induced by the pseudometrics%
\begin{equation*}
d_{t} ( z,z^{\prime } ) =\int_X \frac{ \Vert z_{tx}-z_{tx}^{\prime } \Vert }{%
1+ \Vert z_{tx}-z_{tx}^{\prime } \Vert }d\mu ( x ),
\end{equation*}%
for $z,z^{\prime }\in S(G,\mathcal{Z})$, where $t$ ranges within (a dense
subset of) $[ G ] $.

\begin{definition}
A \emph{(Borel) Hilbert bundle} over a standard Borel space $X$ is a
standard Borel space $\mathcal{H}$ fibered over $X$ endowed with Borel
fibered functions $\boldsymbol{0}\colon X\to \mathcal{H}$, $+\colon \mathcal{%
H\ast H}\to \mathcal{H}$, $\cdot \colon \mathcal{H}\ast \mathcal{H}\to
\mathcal{H}$, and $\mathbb{C}\times \mathcal{H}\to \mathcal{H}$, together
with a sequence of sections $( \sigma _{n} ) _{n\in \mathbb{N}}$ such that,
for every $x\in X$, the fiber $\mathcal{H}_{x}$ is a Hilbert space when
endowed with the operations induced by the given Borel fibered functions,
and $(\sigma _{n,x})_{n\in \mathbb{N}}$ enumerates a subset of the unit
sphere of $\mathcal{H}_{x}$ with dense linear span.

We denote the Hilbert bundle $\mathcal{H}$ also by $\bigsqcup_{x\in X}%
\mathcal{H}_{x}$, and the space of sections by $S ( X,\mathcal{H} ) $.
\end{definition}

\begin{remark}
The Grahm-Schmidt orthogonalization process shows that one can always assume
that $( \sigma _{n,x} ) _{n\in \mathbb{N}}$ is an orthonormal basis for $%
\mathcal{H}_{x}$. In this case, the sequence $( \sigma _{n} ) _{n\in \mathbb{%
N}}$ is called an \emph{orthonormal basic sequence} for $\mathcal{H}$. The
fibered function $\mathcal{H}\ast \mathcal{H}\to \mathbb{R}$ given by $( x,y
) \mapsto \Vert x-y \Vert $ turns $\mathcal{H}$ into a bundle of metric
spaces over $X$.
\end{remark}

We denote by $L^{\infty } ( X,\mathcal{H} ) $ the space of (essentially)
bounded sections of $\mathcal{H}$, and by $L^{2} ( X,\mathcal{H} ) $ the
space of square-integrable sections of $\mathcal{H}$. The latter is a
Hilbert space with respect to the inner product $\langle \xi,\eta \rangle
=\int_{X} \langle \xi _{x},\eta _{x} \rangle d\mu ( x ) $ for all $%
\xi,\eta\in L^2(X,\mathcal{H})$. A section $\xi \colon X\to \mathcal{H}$ is
a \emph{unit section }if $\Vert \xi _{x} \Vert =1$ for almost every $x\in X$.


\subsection{Representations of groupoids\label{Subsection:representations}}

Given a Hilbert bundle $\mathcal{H}$ over a standard probability space $X$,
its \emph{unitary groupoid} $U ( \mathcal{H} ) $ is the set of unitary
operators $U^{(s,t)}\colon \mathcal{H}_{s}\to \mathcal{H}_{t}$ for $s,t\in X$%
. This is a standard Borel groupoid when endowed with the standard Borel
structure generated by the source and range maps together with the functions
$U^{(s,t)} \mapsto \langle \sigma _{n,t},U\sigma _{m,s} \rangle $ for $%
n,m\in \mathbb{N}$. The unit space of $U ( \mathcal{H} ) $ can be identified
with $X$.

\begin{definition}
\label{df:WMErgRep} Let $G$ be a discrete pmp groupoid, and let $\mathcal{H}$
be a Hilbert bundle over $G^{0}$. A (unitary) \emph{representation }of $G$
on $\mathcal{H} $ is a homomorphism $\pi \colon G\to U ( \mathcal{H} ) $
that fixes $G^0$ pointwise. Moreover, a vector $\xi\in L^{2} ( G^{0},%
\mathcal{H} ) $ is said to be $\pi $-\emph{invariant }if $\pi _{\gamma }(\xi
_{s(\gamma )})=\xi _{r(\gamma )}$ for almost every $\gamma \in G$. The space
of $\pi $-invariant vector is denoted by $L^{2} ( G^{0},\mathcal{H} ) ^{\pi
} $.

A representation $\pi\colon G\to U(\mathcal{H})$ is said to be

\begin{enumerate}
\item \emph{ergodic}, if $L^{2} ( G^{0},\mathcal{H} ) ^{\pi }$ is the
trivial subspace of $L^{2} ( G^{0},\mathcal{H} ) $.

\item \emph{weak mixing}, if for every $\varepsilon >0$, every $n\in \mathbb{%
N}$, and sections $\xi _{1},\ldots ,\xi _{n}$ for $\mathcal{H}$, there
exists $t\in [ G ] $ such that $\int_{G^0}|\langle \xi _{j,x},\pi _{xt}(\xi
_{i,s ( xt ) }) \rangle | d\mu _{G^{0}} ( x ) \leq \varepsilon $ for every $%
i,j=1,\ldots,n$.
\end{enumerate}
\end{definition}

\begin{definition}
Let $\mathcal{H}$ be a Hilbert bundle over a standard probability space $%
(X,\mu)$. A sub-bundle $\mathcal{K}$ of $\mathcal{H}$ is a Hilbert bundle $%
\mathcal{K}$ over $X$ such that $\mathcal{K}_{x}$ is a subspace of $\mathcal{%
H}_{x}$ for every $x\in X$. Moreover, we say that $\mathcal{K}$ is

\begin{itemize}
\item \emph{finite-dimensional} if there exists $d\in \mathbb{N}$ such that $%
\dim \mathcal{K}_{x}\leq d$ for almost every $x\in X$;

\item $d$-\emph{dimensional} for some $d\in \mathbb{N}\cup \{ \infty \} $ if
$\dim \mathcal{K}_{x}=d$ for almost every $x\in X$;

\item \emph{nonzero} if there exists a non-null Borel subset $A$ of $X$ such
that $\dim \mathcal{K}_{x}>0$ for every $x\in A$.
\end{itemize}

When $G$ is a groupoid with $G^0=X$ and $\pi\colon G\to U(\mathcal{H})$ is a
representation, we say that $\mathcal{K}$ is $\pi $-\emph{invariant} if $\pi
_{\gamma }(\mathcal{K}_{s(\gamma )})=\mathcal{K}_{r(\gamma )}$ for every $%
\gamma \in G$. We further denote by $\mathrm{Proj}_{\mathcal{K}_x}$ the
orthogonal projection from $\mathcal{H}_x$ onto $\mathcal{K}_x$ for $x\in X$.

The restriction of $\pi $ to a (finite-dimensional) nonzero $\pi $-invariant
sub-bundle is called a \emph{(finite-dimensional) subrepresentation}.
\end{definition}

Let $\pi \colon G\to U(\mathcal{H})$ and $\sigma \colon G\to U(\mathcal{K})$
be representations. Define their fiber-wise tensor product as follows. Set
\begin{equation*}
\mathcal{H}\otimes \mathcal{K}=\bigsqcup_{x\in G^{0}} ( \mathcal{H}%
_{x}\otimes \mathcal{K}_{x} ),
\end{equation*}
which is also a Hilbert bundle over $G^0$, and define $\pi \otimes \sigma
\colon G \to U(\mathcal{H}\otimes \mathcal{K})$ by $( \pi \otimes \sigma )
_{\gamma }=\pi _{\gamma }\otimes \sigma _{\gamma }$ for $\gamma \in G$. One
can also define the conjugate representation $\overline{\pi }$ of $G$ on $%
\overline{\mathcal{H}}=\bigsqcup_{x\in \mathcal{H}_{x}}\overline{\mathcal{H}}%
_{x}$ in a similar way.

\begin{remark}
\label{rem:HS} In the context of the comments above, $\mathcal{H}\otimes
\overline{\mathcal{K}}$ can be identified with the Hilbert-Schmidt bundle
\textrm{HS}$( \mathcal{K},\mathcal{H} ) =\bigsqcup_{x\in G}\mathrm{HS} (
\mathcal{K}_{x},\mathcal{H}_{x} ) $, where $\mathrm{HS} ( \mathcal{K}_{x},%
\mathcal{H}_{x} ) $ denotes the space of Hilbert-Schmidt operators $\mathcal{%
K}_{x}\to \mathcal{H}_{x}$. If $\vert \xi \rangle \langle \eta \vert \colon
\mathcal{K}_{x}\to \mathcal{H}_{x}$ is the rank-one operator $\vert \zeta
\rangle \mapsto \langle \eta ,\zeta \rangle \vert \xi \rangle $, then the
isomorphism is induced by the assignment $\xi \otimes \overline{\eta }%
\mapsto \vert \xi \rangle \langle \eta \vert $ for $\eta \in \mathcal{K}_{x}
$ and $\xi \in \mathcal{H}_{x}$.

Under this identification, the representation $\pi \otimes \overline{\sigma }
$ can be identified with the representation on \textrm{HS}$( \mathcal{K},%
\mathcal{H} ) $ defined by $T\mapsto \pi _{\gamma }T\sigma _{\gamma }^{\ast
} $ for $\gamma \in G$ and $T\in \mathrm{HS} ( \mathcal{K}_{s(\gamma )},%
\mathcal{H}_{s(\gamma )} ) $.
\end{remark}

\begin{definition}
Let $G$ be a discrete pmp groupoid, and let $\pi $ be a representation of $G$
on a Hilbert bundle $\mathcal{H}$. Then $\pi $ induces a (group)
representation $[ \pi ] \colon [ G ] \to U(L^{2} ( G^{0},\mathcal{H} )) $
defined by $[ \pi ] _{t}(\xi) = ( \pi _{xt} ( \xi _{s ( xt ) } ) ) _{x\in
G^{0}}$ for all $t\in [ G ] $ and all $\xi \in L^{2} ( G^{0},\mathcal{H} )$.

Similarly, $\pi$ induces a (semigroup) representation $[ [ \pi ] ] \colon [
[ G ] ] \to U(L^{2} ( G^{0},\mathcal{H} )) $ defined by
\begin{equation*}
( [ [ \pi ] ] _{\sigma }\xi ) _{x}=\left\{
\begin{array}{ll}
\pi _{x\sigma } ( \xi _{s ( x\sigma ) } ), &
\hbox{if $x\in
\sigma\sigma^{-1}$;} \\
0, & \hbox{otherwise.}%
\end{array}
\right.
\end{equation*}
for all $\sigma\in [[G]]$, all $\xi \in L^{2} ( G^{0},\mathcal{H} )$, and
all $x\in G^0$.
\end{definition}

\begin{remark}
One can check that a representation $\pi$ of a groupoid on a Hilbert bundle
is weak mixing in the sense of Definition~\ref{df:WMErgRep} if and only if
the associated representation $[\pi]$ is weak mixing in the usual sense.
\end{remark}

The proof of the following lemma is immediate, so we leave it to the reader.

\begin{lemma}
\label{Lemma:fixed-point-representation} Let $\pi $ be a representation of
an ergodic discrete pmp groupoid $G$ on a Hilbert bundle $\mathcal{H}$. For
an element $\xi = ( \xi _{x} ) _{x\in G^{0}}$ of $L^{2} ( G^{0},\mathcal{H}
) $, the following assertions are equivalent:

\begin{enumerate}
\item $\xi $ is fixed by $[ \pi ]$;

\item there exists a countable subset $S\subseteq [ G ] $ that covers $G$
such that $[\pi]_t(\xi)=\xi$ for every $t\in S$;

\item $\pi _{\gamma } ( \xi _{s(\gamma )} ) =\xi _{r(\gamma )}$ for almost
every $\gamma \in G$.
\end{enumerate}
\end{lemma}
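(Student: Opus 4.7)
The plan is to prove the cycle $(3)\Rightarrow(1)\Rightarrow(2)\Rightarrow(3)$, with the main content in the last implication.

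For $(3)\Rightarrow(1)$, I unpack the definition of $[\pi]$. Fix $t\in[G]$. For each $x\in G^{0}$, the $x$-fiber of $[\pi]_{t}\xi$ is $\pi_{xt}(\xi_{s(xt)})$, and since $r(xt)=x$, assumption $(3)$ gives $\pi_{xt}(\xi_{s(xt)})=\xi_{x}$ for $\mu_{G^{0}}$-a.e.\ $x$. Concretely, since $r$ is injective on the bisection $t$, the restriction of $\mu_{G}$ to $t$ pushes forward under $r$ to $\mu_{G^{0}}|_{r(t)}=\mu_{G^{0}}$, so the $\mu_{G}$-null set in $G$ on which $(3)$ might fail meets $t$ in a set whose $r$-image is $\mu_{G^{0}}$-null. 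Hence $[\pi]_{t}\xi=\xi$ in $L^{2}(G^{0},\mathcal{H})$.

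For $(1)\Rightarrow(2)$, I invoke the fact recalled in Subsection~\ref{Subsection:groupoids} that, when $G$ is ergodic, there exist countably many pairwise essentially disjoint elements of $[G]$ covering $G$. Any such countable $S\subseteq[G]$ works, as $[\pi]_{t}\xi=\xi$ holds for every $t\in[G]$ by $(1)$.

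The main step is $(2)\Rightarrow(3)$. Suppose $S$ is a countable cover of $G$ with $[\pi]_{t}\xi=\xi$ for each $t\in S$. For each such $t$ there is a $\mu_{G^{0}}$-null Borel set $N_{t}\subseteq G^{0}$ outside of which $\pi_{xt}(\xi_{s(xt)})=\xi_{x}$ holds. Set $N=\bigcup_{t\in S}N_{t}$, still $\mu_{G^{0}}$-null, and consider the Borel set
\[
E=\{\gamma\in G:\pi_{\gamma}(\xi_{s(\gamma)})\neq\xi_{r(\gamma)}\}.
\]
For any $\gamma\in t$ with $r(\gamma)\notin N$, the identity $\gamma=r(\gamma)t$ together with the defining property of $N_{t}$ yields $\gamma\notin E$. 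Hence
\[
E\;\subseteq\;\Bigl(G\setminus\bigcup_{t\in S}t\Bigr)\;\cup\;r^{-1}(N).
\]
The first term is $\mu_{G}$-null by the covering assumption, and the second is null because
\[
\mu_{G}(r^{-1}(N))=\int_{G^{0}}|x\cdot r^{-1}(N)|\,d\mu_{G^{0}}(x)=\int_{N}|xG|\,d\mu_{G^{0}}(x)=0,
\]
where we use the convention $0\cdot\infty=0$, together with the fact that $|xG|$ is at most countable on a discrete measurable groupoid and $\mu_{G^{0}}(N)=0$.

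No serious obstacle is expected. The only delicate point is the Fubini-type bookkeeping between $\mu_{G}$ and the base measure $\mu_{G^{0}}$ on the unit space, which is governed in both directions by the defining integral identity $\mu_{G}(A)=\int|xA|\,d\mu_{G^{0}}(x)$ and the fact that bisections realize $\mu_{G^{0}}$-preserving identifications between Borel subsets of $G^{0}$.
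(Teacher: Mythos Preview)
Your proof is correct. The paper actually omits the proof entirely, stating only that it is immediate and left to the reader; your cycle $(3)\Rightarrow(1)\Rightarrow(2)\Rightarrow(3)$ is exactly the natural argument one would supply, and the measure-theoretic bookkeeping (passing between $\mu_{G}$ on a bisection and $\mu_{G^{0}}$ via $r$, and the vanishing of $\mu_{G}(r^{-1}(N))$ for null $N$) is handled cleanly.
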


Several standard facts about representations of discrete groups admit
natural generalizations to the setting of representations of discrete pmp
groupoids. We present here some of them, together with the main ideas used
in their proofs.

\begin{proposition}
\label{Proposition:equivalences-groupoid-rep} Let $G$ be an ergodic pmp
groupoid, and let $\pi\colon G\to U(H)$ be a representation.

\begin{enumerate}
\item If there exist $c>0$ and a unit section $\xi \in L^{2} ( G^{0},%
\mathcal{H} ) $ such that $\langle \xi , [ \pi ] _{t}\xi \rangle \geq c$ for
every $t\in [ G ] $, then $\pi $ contains a nonzero invariant section.

\item The following assertions are equivalent:

\begin{enumerate}
\item[(2.a)] The representation $\pi \otimes \overline{\pi }$ contains
invariant vectors;

\item[(2.b)] The representation $\pi \otimes \sigma $ contains invariant
vectors for some representation $\sigma$ of $G$;

\item[(2.c)] The representation $\pi $ contains a finite-dimensional
subrepresentation.
\end{enumerate}

\item The following assertions are equivalent:

\begin{enumerate}
\item[(3.a)] $\pi $ is weak mixing;

\item[(3.b)] $\pi $ has no finite-dimensional subrepresentations;

\item[(3.c)] If $\mathcal{K}$ is a finite-dimensional sub-bundle of $%
\mathcal{H}$, then for every $\varepsilon >0$, there exists $t\in [ G ] $
such that, for every unit section $\xi $ for $\mathcal{K}$, $\int_{G^0} ||%
\mathrm{Proj}_{\mathcal{K}_{x}} ( \pi _{xt}\xi _{s ( xt ) } ) ||\ d\mu
_{G^{0}} ( x ) \leq \varepsilon $.
\end{enumerate}
\end{enumerate}
\end{proposition}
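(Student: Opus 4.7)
The proof of all three parts proceeds by adapting standard arguments from the unitary representation theory of discrete groups, passing between groupoid-invariance and $[\pi]$-invariance via Lemma~\ref{Lemma:fixed-point-representation}.

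For part (1), the plan is to consider the weakly-closed convex hull $C$ of the $[\pi]$-orbit of $\xi$ in $L^{2}(G^{0},\mathcal{H})$, and let $\xi_{0}$ be the unique element of minimal norm in $C$. The hypothesis $\langle \xi, [\pi]_{t}\xi\rangle \geq c$ together with convexity yields $\langle \xi, \xi_{0}\rangle \geq c$, whence $\|\xi_{0}\| \geq c > 0$. Since $C$ is $[\pi]$-invariant and each $[\pi]_{t}$ is isometric, uniqueness of the minimum-norm element forces $[\pi]_{t}\xi_{0} = \xi_{0}$ for all $t\in [G]$, and Lemma~\ref{Lemma:fixed-point-representation} then yields that $\xi_{0}$ is $\pi$-invariant.

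For part (2), the implication (2.a) $\Rightarrow$ (2.b) is trivial (take $\sigma = \overline{\pi}$). For (2.c) $\Rightarrow$ (2.a), the normalized ``identity'' section $x \mapsto (\dim \mathcal{K}_{x})^{-1/2}\, \mathrm{id}_{\mathcal{K}_{x}}$ of the finite-dimensional sub-bundle $\mathcal{K}\otimes \overline{\mathcal{K}}$ of $\mathcal{H}\otimes \overline{\mathcal{H}}$, viewed through Remark~\ref{rem:HS}, is a $(\pi \otimes \overline{\pi})$-invariant unit section (after using ergodicity to arrange constant fiber dimension). For (2.b) $\Rightarrow$ (2.c), writing $\sigma = \overline{\tau}$ and invoking Remark~\ref{rem:HS}, an invariant section of $\pi\otimes\sigma$ corresponds to a nonzero Hilbert-Schmidt section $(T_{x})$ satisfying $\pi_{\gamma}T_{s(\gamma)} = T_{r(\gamma)}\tau_{\gamma}$. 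Then $(T_{x}T_{x}^{\ast})_{x}$ is a positive compact operator section commuting with $\pi$, and its spectral projection onto eigenvalues $\geq \lambda$, for $\lambda > 0$ chosen so that the projection is nonzero on a positive-measure set (which is then co-null by ergodicity, as eigenvalue spectra are orbit-invariant), cuts out a finite-dimensional $\pi$-invariant sub-bundle of $\mathcal{H}$.

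For part (3), the plan is to establish the cycle (3.a) $\Rightarrow$ (3.b) $\Rightarrow$ (3.c) $\Rightarrow$ (3.a). The first implication is a direct Parseval computation: if $\mathcal{K}$ is a $d$-dimensional $\pi$-invariant sub-bundle with orthonormal basic sections $\xi_{1},\ldots,\xi_{d}$, invariance gives $\sum_{j} |\langle \xi_{j,x}, \pi_{xt}\xi_{i,s(xt)}\rangle|^{2} = 1$ for every $t$ and $i$, so summing and integrating forces $\sum_{i,j}\int |\langle \xi_{j,x}, \pi_{xt}\xi_{i,s(xt)}\rangle|\, d\mu_{G^{0}} \geq d$, contradicting weak mixing for $\varepsilon < 1/d$. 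For (3.c) $\Rightarrow$ (3.a), one takes $\mathcal{K}_{x} = \mathrm{span}\{\xi_{1,x},\ldots,\xi_{n,x}\}$ (handling the possibly varying fiber dimension by a standard Borel partition of $G^{0}$) and applies (3.c) with a sufficiently small parameter. The main obstacle is (3.b) $\Rightarrow$ (3.c), which I plan to reduce to parts (1) and (2): if (3.c) failed for some $\mathcal{K}$ and $\varepsilon_{0} > 0$, then expanding the Hilbert-Schmidt norm in an orthonormal basis that includes the offending unit section shows that $\langle \eta, [\pi\otimes \overline{\pi}]_{t}\eta\rangle \geq \varepsilon_{0}^{2}/d$ for every $t\in [G]$, where $\eta$ is the normalized identity section of $\mathcal{K}\otimes \overline{\mathcal{K}}$; part (1) would then produce a nonzero $(\pi\otimes \overline{\pi})$-invariant section, and part (2) a finite-dimensional subrepresentation of $\pi$, contradicting (3.b). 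The most delicate technical point across all three parts is the back-and-forth between the groupoid-theoretic invariance of $\pi$ and the group-theoretic invariance of $[\pi]$, which is bridged by Lemma~\ref{Lemma:fixed-point-representation}.
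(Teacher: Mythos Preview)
Your proposal is correct and follows essentially the same approach as the paper: part~(1) is the standard closed-convex-hull/minimum-norm argument (which is exactly what the paper invokes by citing \cite[Proposition 1.5.2]{peterson_ergodic}), and parts~(2) and~(3) use the same Hilbert--Schmidt identification from Remark~\ref{rem:HS}, the same $T_{x}T_{x}^{\ast}$ spectral argument for (2.b)$\Rightarrow$(2.c), and the same reduction of (3.b)$\Rightarrow$(3.c) to parts~(1) and~(2) via the projection section of $\mathcal{K}$. The only differences are cosmetic (you take a spectral projection onto $[\lambda,\infty)$ where the paper picks a single eigenvalue; you bound $\langle\eta,[\pi\otimes\overline{\pi}]_{t}\eta\rangle$ by $\varepsilon_{0}^{2}/d$ via Cauchy--Schwarz where the paper states the bound more tersely).
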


\begin{proof}
(1). In view of Lemma \ref{Lemma:fixed-point-representation}, this is a
particular instance of \cite[Proposition 1.5.2]{peterson_ergodic}.

(2). (2.a)$\Rightarrow $(2.b) Trivial. (2.b)$\Rightarrow $(2.c) Let $\sigma $
be a representation of $G$ on $\mathcal{H}^{\prime }$ such that $\pi \otimes
\overline{\sigma }$ contains invariant vectors. By Remark~\ref{rem:HS}, we
can identify $\pi \otimes \overline{\sigma }$ with a representation of $G$
on \textrm{HS}$(\mathcal{H}^{\prime },\mathcal{H})$. By assumption, there
exists $T\in L^{2}(G^{0},\mathrm{HS}(\mathcal{H}^{\prime },\mathcal{H}))$
nonzero such that $\pi _{\gamma }T_{s(\gamma )}\sigma _{\gamma }^{\ast
}=T_{r(\gamma )}$ for every $\gamma \in G$. Thus $\pi _{\gamma }T_{s(\gamma
)}T_{s(\gamma )}^{\ast }\pi _{\gamma }^{\ast }=T_{r(\gamma )}T_{r(\gamma
)}^{\ast }$ for every $\gamma \in G$. Since $T$ is nonzero, there exists $%
d>0 $ such that $T_{x}T_{x}^{\ast }$ has eigenvalue $d$ for almost every $%
x\in G^{0}$. If $\mathcal{K}_{x}\subseteq \mathcal{H}_{x}$ is the eigenspace
of $T_{x}T_{x}^{\ast }$ corresponding to $d$ for almost every $x\in G^{0}$,
then $\mathcal{K}$ defines a nonzero finite-dimensional $\pi $-invariant
sub-bundle of $\mathcal{H}$.

(2.c)$\Rightarrow $(2.a) Suppose that $\mathcal{K}$ is a nonzero
finite-dimensional $\pi $-invariant sub-bundle of $\mathcal{H}$. Then $%
x\mapsto \mathrm{Proj}_{\mathcal{K}_{x}}$ determines a $\pi \otimes
\overline{\pi }$-invariant element of $\mathrm{HS} ( \mathcal{H} ) =\mathcal{%
H}\otimes \overline{\mathcal{H}}$.

(3). (3.a)$\Rightarrow $(3.b) Suppose, by contradiction, that $\mathcal{K}$
is a finite-dimensional $\pi $-invariant nonzero sub-bundle of $\mathcal{H}$%
, of dimension $d\in \mathbb{N}$. Fix sections $\xi _{1},\ldots ,\xi
_{d}\colon G^{0}\rightarrow \mathcal{K}$ such that $\{\xi _{1,x},\ldots ,\xi
_{d,x}\}$ is an orthonormal basis of $\mathcal{K}$ for almost every $x\in
G^{0}$. Fix $\varepsilon ,\delta >0$. By assumption, there exists $t\in
\lbrack G]$ such that $|\langle \xi _{i},[\pi ]_{t}(\xi _{j})\rangle
|<\varepsilon $ for $1\leq i,j\leq d$. By choosing $\varepsilon >0$ small
enough, this guarantees that there exists a Borel subset $A$ of $G^{0}$ of
measure at least $1-\delta $ such that $|\langle \xi _{i,x},\pi _{xt}(\xi
_{j,s(tx)})\rangle |<\delta $ for all $x\in A$ and $1\leq i,j\leq n$.
Therefore,
\begin{equation*}
1=\Vert \pi _{xt}(\xi _{1,s(xt)})\Vert ^{2}=\sum_{i=1}^{d}|\langle \xi
_{i,x},\pi _{xt}(\xi _{j,xt})\rangle |^{2}<\delta ^{2}d
\end{equation*}%
for almost every $x\in A$. Choosing $\delta =d^{-1/2}$, we reach a
contradiction.

(3.b)$\Rightarrow $(3.c) Let $\mathcal{K}$ be a finite-dimensional $\pi$%
-invariant sub-bundle of $\mathcal{H}$, and suppose that the conclusion
fails. Then there exists $c>0$ such that for every $t\in [ G ] $, there
exists a unit section $\xi \colon G^0\to \mathcal{K}$ with $\langle \xi , [
\pi ] _{t}(\xi) \rangle \geq c$. Consider the section $\mathrm{Proj}_{%
\mathcal{K}}\colon G^0\to \mathcal{H}\otimes \overline{\mathcal{H}}$ given
by $\mathrm{Proj}_{\mathcal{K}}(x)=\mathrm{Proj}_{\mathcal{K}_x}$ for all $%
x\in G^0$. Then $\langle \mathrm{Proj}_{\mathcal{K}}, [ \pi \otimes
\overline{\pi } ] _{t}(\mathrm{Proj}_{\mathcal{K}}) \rangle \geq c$ for all $%
t\in [G]$. By part~(1), $\pi \otimes \overline{\pi }$ has invariant vectors.
Therefore $\pi $ has a finite-dimensional subrepresentation by part~(2),
contradicting the hypothesis.

(3.c)$\Rightarrow $(3.a) Suppose that $\mathcal{K}$ is a finite-dimensional $%
\pi$-invariant sub-bundle of $\mathcal{H}$, and let $\varepsilon >0$. Then
there exists $t\in [ G ] $ such that
\begin{equation*}
\int_{G^0} \left\|\mathrm{Proj}_{\mathcal{K}_{x}} ( \pi _{xt}(\xi _{s ( xt )
} )) \right\|\ d\mu _{G^{0}} ( x ) \leq \varepsilon
\end{equation*}
for every unit section $\xi $ for $\mathcal{K}$. Thus, if $\eta$ and $\xi $
are unit sections for $\mathcal{K}$, then%
\begin{equation*}
\langle \eta , [ \pi ] _{t}(\xi) \rangle \leq \int_{G^0}\left\|\mathrm{Proj}%
_{\mathcal{K}_{x}} ( \pi _{xt}(\xi _{s ( xt ) } )) \right\|\ d\mu _{G^{0}} (
x ) \leq \varepsilon \text{.}
\end{equation*}%
This concludes the proof that $\pi $ is weak mixing.
\end{proof}

\begin{corollary}
\label{Corollary:wm-rep} Let $\pi $ be a representation of an ergodic
discrete pmp groupoid $G$ on a Hilbert bundle $\mathcal{H}$. Then the
following assertions are equivalent:

\begin{enumerate}
\item $\pi $ is weak mixing;

\item $\pi \otimes \sigma $ is weak mixing for every representation $\sigma $%
;

\item $\pi \otimes \overline{\pi }$ is weak mixing;

\item $\pi \otimes \overline{\pi }$ is ergodic;

\item the representation $[ \pi \otimes \overline{\pi } ] $ of $[ G ] $ on $%
L^{2} ( G^{0},\mathcal{H}\otimes \mathcal{H} ) $ is ergodic.
\end{enumerate}
\end{corollary}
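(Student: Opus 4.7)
The plan is to establish the equivalences via a short cycle $(2)\Rightarrow(3)\Rightarrow(4)\Rightarrow(1)\Rightarrow(2)$, and handle $(4)\Leftrightarrow(5)$ separately. All steps except $(1)\Rightarrow(2)$ reduce to direct applications of Proposition~\ref{Proposition:equivalences-groupoid-rep} or of Lemma~\ref{Lemma:fixed-point-representation}.

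First I would dispatch the easy implications. The implication $(2)\Rightarrow(3)$ is immediate by specializing $\sigma=\overline{\pi}$. For $(3)\Rightarrow(4)$ I would simply remark that weak mixing implies ergodicity: any $(\pi\otimes\overline{\pi})$-invariant unit section $\xi$ would satisfy $\langle\xi,[\pi\otimes\overline{\pi}]_t\xi\rangle=1$ for every $t\in[G]$, preventing weak mixing. For $(4)\Rightarrow(1)$, absence of invariant vectors for $\pi\otimes\overline{\pi}$ is the negation of condition (2.a) in Proposition~\ref{Proposition:equivalences-groupoid-rep}; by the equivalence (2.a)$\Leftrightarrow$(2.c) there, $\pi$ has no finite-dimensional subrepresentation, and then (3.a)$\Leftrightarrow$(3.b) forces $\pi$ to be weak mixing. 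Finally, $(4)\Leftrightarrow(5)$ follows from Lemma~\ref{Lemma:fixed-point-representation}, which identifies the $\pi\otimes\overline{\pi}$-invariant vectors with the $[\pi\otimes\overline{\pi}]$-fixed vectors, so one space is trivial exactly when the other is.

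The substantive step is $(1)\Rightarrow(2)$. Assume $\pi$ is weak mixing and let $\sigma$ be any representation of $G$. Suppose for contradiction that $\pi\otimes\sigma$ is not weak mixing; by (3.a)$\Leftrightarrow$(3.b) of the proposition, it then admits a finite-dimensional subrepresentation. Applying (2.c)$\Rightarrow$(2.a) to $\pi\otimes\sigma$, we deduce that the representation
\[
(\pi\otimes\sigma)\otimes\overline{(\pi\otimes\sigma)}\;\cong\;\pi\otimes\bigl(\overline{\pi}\otimes\sigma\otimes\overline{\sigma}\bigr)
\]
admits a nonzero invariant section, where the isomorphism comes from regrouping tensor factors fiber-wise. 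Setting $\tau=\overline{\pi}\otimes\sigma\otimes\overline{\sigma}$ and invoking (2.b)$\Rightarrow$(2.c) of Proposition~\ref{Proposition:equivalences-groupoid-rep} with this $\tau$ yields a finite-dimensional subrepresentation of $\pi$ itself. This contradicts (3.a)$\Leftrightarrow$(3.b) applied to $\pi$, completing the argument.

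The one step that requires any care is the regrouping of tensor factors in $(1)\Rightarrow(2)$: one must check that the obvious fiber-wise identification $(\mathcal{H}\otimes\mathcal{H}_\sigma)\otimes\overline{(\mathcal{H}\otimes\mathcal{H}_\sigma)}\cong(\mathcal{H}\otimes\overline{\mathcal{H}})\otimes(\mathcal{H}_\sigma\otimes\overline{\mathcal{H}_\sigma})$ intertwines the actions of $G$ as Borel bundles over $G^0$, so that invariant sections transport to invariant sections. This is routine from the definitions in Subsection~\ref{Subsection:representations}, but it is the only place where the groupoid setting requires more than a verbatim transcription of the classical group-theoretic proof.
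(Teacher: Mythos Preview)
Your proof is correct and follows essentially the same route as the paper: the same cycle $(2)\Rightarrow(3)\Rightarrow(4)\Rightarrow(1)\Rightarrow(2)$ with $(4)\Leftrightarrow(5)$ handled via Lemma~\ref{Lemma:fixed-point-representation}, and the key step $(1)\Rightarrow(2)$ argued by contradiction through the chain $(3.b)\Rightarrow(2.c)\Rightarrow(2.a)\Rightarrow(2.b)\Rightarrow(2.c)\Rightarrow(3.b)$ of Proposition~\ref{Proposition:equivalences-groupoid-rep}. If anything, your version is slightly more careful than the paper's in tracking the conjugates when regrouping $(\pi\otimes\sigma)\otimes\overline{(\pi\otimes\sigma)}$.
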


\begin{proof}
(1)$\Rightarrow $(2) Suppose that there exists a unitary representation $%
\sigma$ such that $\pi \otimes \sigma $ is not weak mixing. Then $\pi
\otimes \sigma $ contains a finite-dimensional subrepresentation, by
part~(3) of Proposition~\ref{Proposition:equivalences-groupoid-rep}.
Therefore $\pi \otimes \sigma \otimes \pi \otimes \sigma $ has an invariant
vector by part~(2) of Proposition~\ref{Proposition:equivalences-groupoid-rep}%
, and thus $\pi $ has a finite-dimensional subrepresentation again by
part~(2) of Proposition~\ref{Proposition:equivalences-groupoid-rep}.
Therefore $\pi $ is not weak mixing by part~(3) of Proposition~\ref%
{Proposition:equivalences-groupoid-rep}.

(2)$\Rightarrow $(3)$\Rightarrow $(4) Obvious.

(4)$\Rightarrow $(1) Suppose that $\pi $ is not weak mixing. Then $\pi $ has
a finite-dimensional sub-representation by part~(3) of Proposition~\ref%
{Proposition:equivalences-groupoid-rep}, and $\pi \otimes \overline{\pi }$
contains invariant vectors by part~(2) of Proposition~\ref%
{Proposition:equivalences-groupoid-rep}. Thus $\pi \otimes \overline{\pi }$
is not ergodic.

(4)$\Leftrightarrow $(5) This follows from Lemma \ref%
{Lemma:fixed-point-representation}.
\end{proof}

\subsection{Bundles of tracial von Neumann algebras\label%
{Subsection:vn-bundles}}

Let $\mathcal{H}$ be a Hilbert bundle over a standard Borel space $X $, and
set $B ( \mathcal{H} )=\bigsqcup_{x\in X} B(\mathcal{H}_x)$, which is a
standard Borel space over $X$. The Borel structure on $B ( \mathcal{H} ) $
is generated by the functions $(T\in B(\mathcal{H}_x)) \mapsto x$ and $(
T\in B(\mathcal{H}_x)) \mapsto \langle \sigma _{n,x},T(\sigma _{m,x}
)\rangle $ for $n,m\in \mathbb{N}$. Moreover, $B(\mathcal{H})$ is
canonically endowed with the following fibered functions: operator norm,
composition, adjoint, sum, and scalar multiplication.

\begin{definition}
A \emph{C*-bundle} over the Hilbert bundle $\mathcal{H}$ is a Borel subset $%
\mathcal{A}$ of $B ( \mathcal{H} ) $ such that, for every $x\in X$, the
corresponding fiber $\mathcal{A}_{x}=\mathcal{A}\cap B ( \mathcal{H}_{x} ) $
is a C*-subalgebra of $B ( \mathcal{H}_{x} ) $, and for which there exists a
sequence $(a_n)_{n\in\mathbb{N}}$ of sections in $\mathcal{A}$ such that $(
a_{n,x} ) _{n\in \mathbb{N}}$ is a subset of the unit ball of $\mathcal{A}%
_{x}$ that generates a norm-dense *-subalgebra of $\mathcal{A}_{x}$. We also
denote the bundle $\mathcal{A}$ by $\bigsqcup_{x\in X}\mathcal{A}_{x}$.
\end{definition}

\begin{definition}
A \emph{tracial von Neumann bundle} over a standard Borel space $X$ is a
Borel subset $\mathcal{M}$ of $B ( \mathcal{H} ) $ together with a Borel
function $\tau \colon \mathcal{M}\to \mathbb{C}$ such that, for every $x\in
X $, the corresponding fiber $\mathcal{M}_{x}=\mathcal{M}\cap B ( \mathcal{H}%
_{x} ) $ is a von Neumann algebra and the restriction $\tau _{x}$ of $\tau $
to $\mathcal{M}_{x}$ is a faithful tracial state on $\mathcal{M}_{x}$, and
for which there exists a sequence of sections $( a_{n} ) _{n\in \mathbb{N}}$
of $\mathcal{M}$ such that $( a_{n,x} ) _{n\in \mathbb{N}}$ is a subset of
the unit ball of $\mathcal{M}_{x}$ that generates a *-subalgebra of $%
\mathcal{M}_{x}$ whose operator-norm unit ball is dense in the operator-norm
unit ball of $\mathcal{M}_{x}$ with respect to the $2$-norm $\| a
\|_{2}=\tau _{x} ( a^{\ast }a ) ^{1/2}$, and there exists a sequence of
sections $( \xi _{n} ) _{n\in \mathbb{N}}$ of $\mathcal{H}$ such that $\tau
_{x} ( a ) =\sum_{n\in \mathbb{N}} \langle \xi _{n,x},a(\xi _{n,x} )\rangle $
for $x\in X$ and $a\in \mathcal{M}_{x}$. (In particular, this implies that $%
\tau _{x}$ is a normal tracial state on $\mathcal{M}_{x}$.) We also denote
the bundle $( \mathcal{M},\tau ) $ by $\bigsqcup_{x\in X} ( \mathcal{M}%
_{x},\tau _{x} ) $. We say that $( \mathcal{M},\tau ) $ is \emph{abelian} if
$\mathcal{M}_{x}$ is an abelian von Neumann algebra for almost every $x\in X$%
, .
\end{definition}

Let $\mathcal{M}$ be a tracial von Neumann bundle over a standard
probability space $(X,\mu )$. We let $L^{2}(\mathcal{M},\tau )$ be the
Hilbert bundle $\bigsqcup_{x\in X}L^{2}(\mathcal{M}_{x},\tau _{x})$ over $X$%
. Given $a\in \mathcal{M}_{x}$, we let $|a\rangle $ be the corresponding
element of $L^{2}(\mathcal{M}_{x},\tau _{x})$. We identify $\mathcal{M}_{x}$
with a subalgebra of $B(L^{2}(\mathcal{M}_{x},\tau _{x}))$, by identifying $%
a\in \mathcal{M}_{x}$ with its associated multiplication operator. We define
$L^{\infty }(X,\mathcal{M})$ to be the algebra of essentially bounded
sections of $\mathcal{M}$, which is a tracial von Neumann algebra with
respect to the normal tracial state $\tau =\int_{X}\tau _{x}d\mu _{X}(x)$.
Thus the inclusion $L^{\infty }(X)\subseteq L^{\infty }(X,\mathcal{M})$ is a
trace-preserving embedding. The GNS construction $L^{2}(L^{\infty }(X,%
\mathcal{M}),\tau )$ associated with $\tau $ can be identified with the
space $L^{2}(X,L^{2}(\mathcal{M},\tau ))$ associated with the Hilbert bundle
$L^{2}(\mathcal{M},\tau )$ as defined in Subsection \ref%
{Subsections:Hilbert-bundles}. We will denote this space simply by $L^{2}(X,%
\mathcal{M},\tau )$.

The GNS representation of $L^{\infty } ( X,\mathcal{M} ) $ on $L^{2} ( X,%
\mathcal{M},\tau ) $ maps an element $a\in L^{\infty } ( X,\mathcal{M} ) $
to the corresponding \emph{decomposable operator} on $L^{2} ( X,\mathcal{M}%
,\tau ) $ defined by $a(\xi)= ( a_{x}\xi _{x} ) _{x\in X}$ for $\xi=( \xi
_{x} ) _{x\in X}$. The canonical conditional expectation $E_{L^{\infty } ( X
) }\colon L^{\infty } ( X,\mathcal{M} ) \to L^{\infty } ( X ) $ is given by $%
E_{L^{\infty } ( X ) }(a)=( \tau _{x} ( a_{x} ) ) _{x\in X}$, for $%
a=(a_x)_{x\in X}$. This gives to $L^{\infty } ( X,\mathcal{M} )$ the
structure of pre-C*-module over $L^{\infty } ( X ) $.

A\emph{\ von Neumann sub-bundle} of a von Neumann bundle $( \mathcal{M},\tau
) $ over a standard Borel space $X$ is a Borel subset $\mathcal{N}\subseteq
\mathcal{M}$ such that $\mathcal{N}_{x}$ is a w*-closed subalgebra of $%
\mathcal{M}_x$, for all $x\in X$. For every $x\in X$, the unique
trace-preserving conditional expectation $\mathcal{M}_{x}\to \mathcal{N}_{x}$
is denoted by $E_{\mathcal{N}_{x}}$. This defines a trace-preserving
expectation $E_{\mathcal{N}}\colon L^{\infty } ( X,\mathcal{M} )\to
L^{\infty } ( X,\mathcal{N} ) $, given by $E_{\mathcal{N}}(a )=( E_{\mathcal{%
N}_{x}} ( a_{x} ) ) _{x\in X}$.

\begin{notation}[Orthogonal complements]
Let $\mathcal{N}$ be a von Neumann sub-bundle of a von Neumann bundle $%
\mathcal{M}$. We let $L^{2} ( \mathcal{M},\tau ) \cap \mathcal{N}^{\perp}$
be the sub-bundle $\bigsqcup_{x\in X}(L^{2} ( \mathcal{M}_{x},\tau _{x} )
\cap \mathcal{N}_{x}^{\bot })$. Given a subalgebra $A$ of $L^{\infty } ( X,%
\mathcal{M} ) $, we let $L^{2} ( X,\mathcal{M},\tau ) \cap A^{\bot }$ be the
orthogonal complement of $A$ inside $L^{2} ( X,\mathcal{M},\tau ) $, where $%
A $ is canonically identified with a subspace of $L^{2} ( X,\mathcal{M},\tau
) $.
\end{notation}

A particular example of a von Neumann sub-bundle is the \emph{trivial
sub-bundle }$\bigsqcup_{x\in X}\mathbb{C}1_{x}$, where $1_{x}$ is the unit
of $\mathcal{M}_{x}$. We define the \emph{center} of $\mathcal{M}$ to be the
sub-bundle $Z ( \mathcal{M} ) =\bigsqcup_{x\in X}Z ( \mathcal{M}_{x} )$.

\subsection{Actions of groupoids\label{Subsection:actions}}

\begin{definition}
Given a C*-bundle $\mathcal{A}$ over a standard Borel space $X$, we define
the \emph{automorphism groupoid} of $\mathcal{A}$ as
\begin{equation*}
\mathrm{Aut} ( \mathcal{A} ) =\{\alpha \colon \mathcal{A}_{x}\to \mathcal{A}%
_{y} \mbox{ $\ast$-isomorphism, for } x,y\in X\}.
\end{equation*}

Similarly, given a tracial von Neumann bundle $(\mathcal{M},\tau )$, its
\emph{(tracial) automorphism groupoid} is
\begin{equation*}
\mathrm{Aut}(\mathcal{M},\tau )=\{\alpha \colon \mathcal{M}_{x}\rightarrow
\mathcal{M}_{y}\mbox{ trace preserving $\ast$-isomorphism, for }x,y\in X\}.
\end{equation*}
\end{definition}

In the following, we will identify the unit space of $\mathrm{Aut}(
\mathcal{A}) $ and $\mathrm{Aut}( \mathcal{M},\tau ) $ with
$X$.

\begin{remark}
The automorphism groupoid of a C*-bundle $\mathcal{A}$ is naturally a
standard Borel groupoid, endowed with the standard Borel structure generated
by the source and range maps together with the functions $( \alpha :%
\mathcal{A}_{x}\rightarrow \mathcal{A}_{y}) \mapsto \left\Vert \alpha
(a_{n,x})\right\Vert $ for $n\in \mathbb{N}$, where $( a_{n})
_{n\in \mathbb{N}}$ are the sections of $\mathcal{A}$ as in the definition
of a C*-bundle.\ The same applies to $\mathrm{Aut}( \mathcal{M},\tau
) $ for a tracial von Neumann bundle $( \mathcal{M},\tau ) $
when one replaces the operator norm with the $2$-norm defined by $\tau $.
\end{remark}

\begin{definition}
An \emph{action} of a discrete pmp groupoid $G$ with unit space $X$ on a
tracial von Neumann bundle $(\mathcal{M},\tau )$ over $X$ is a homomorphism $%
\alpha \colon G\rightarrow \mathrm{Aut}(\mathcal{M},\tau )$ that fixes the
unit space. The action $\alpha $ induces a (group) action $[\alpha ]\colon
\lbrack G]\rightarrow \mathrm{Aut}( (L^{\infty }(X,\mathcal{M}),\tau
)) $ defined by
\begin{equation*}
\lbrack \alpha ]_{t}(a)=(\alpha _{xt}(a_{s(xt)})_{x\in G^{0}})
\end{equation*}%
for $t\in \lbrack G]$ and $a=(a_{x})_{x\in G^{0}}\in L^{\infty }(G^{0},%
\mathcal{M})$. It also induces an action of $[[G]]$, defined similarly.
\end{definition}

The proof of the following lemma is immediate.

\begin{lemma}
\label{Lemma:fixed-point-action}Let $\alpha $ be a representation of an
ergodic discrete pmp groupoid $G$ on a tracial von Neumann bundle $\mathcal{M%
}$. For $a=(a_{x})_{x\in G^{0}}$ of $L^{\infty }(G^{0},\mathcal{M}%
)$, the following assertions are equivalent:

\begin{enumerate}
\item $a$ is fixed by $[\alpha ]$;

\item there exists a countable subset $S\subseteq \lbrack G]$ that covers $G$
such that $[a]_{t}(a)=a$ for every $t\in S$;

\item $\alpha _{\gamma }(a_{s(\gamma )})=a_{r(\gamma )}$ for almost every $%
\gamma \in G$.
\end{enumerate}
\end{lemma}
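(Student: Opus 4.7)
The plan is to establish the cyclic chain of implications $(1)\Rightarrow (2)\Rightarrow (3)\Rightarrow (1)$, mimicking the structure of the analogous statement for representations (Lemma~\ref{Lemma:fixed-point-representation}). The content of the equivalence is essentially a bookkeeping translation between the action of the full group $[G]$ on $L^\infty(G^0,\mathcal{M})$ and the action of the groupoid $G$ itself on the fibers of $\mathcal{M}$, mediated by a countable family of bisections.

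For $(1)\Rightarrow (2)$, I would invoke the fact recorded in Subsection~\ref{Subsection:groupoids} that, since $G$ is ergodic, there exist countably many pairwise essentially disjoint elements of $[G]$ that cover $G$. Taking $S$ to be any such countable family yields the required subset: the hypothesis $[\alpha]_t(a)=a$ for all $t\in[G]$ specializes trivially to $t\in S$.

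For $(2)\Rightarrow (3)$, given a countable cover $S\subseteq [G]$ with $[\alpha]_t(a)=a$ for every $t\in S$, I would unpack the definition of $[\alpha]_t$: the equality means $\alpha_{xt}(a_{s(xt)})=a_x$ for $\mu_{G^0}$-almost every $x\in G^0$, so the set $N_t=\{\gamma\in t:\alpha_\gamma(a_{s(\gamma)})\neq a_{r(\gamma)}\}$ is $\mu_G$-null (using that $(r,s)$ restricts to a measure-class isomorphism of $t$ with $G^0$, since $t$ is a bisection). Since $S$ is countable and $\bigcup S$ is co-null, the complement of $\bigcup_{t\in S}(t\setminus N_t)$ in $G$ is $\mu_G$-null, which is precisely the statement in $(3)$.

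For $(3)\Rightarrow (1)$, fix an arbitrary $t\in[G]$. By hypothesis, the set $N=\{\gamma\in G:\alpha_\gamma(a_{s(\gamma)})\neq a_{r(\gamma)}\}$ is $\mu_G$-null; in particular the set $N\cap t$ is null in $t$, and hence the set of $x\in G^0$ with $xt\in N$ is $\mu_{G^0}$-null. For every other $x$ we have $\alpha_{xt}(a_{s(xt)})=a_{r(xt)}=a_x$, so $([\alpha]_t(a))_x=a_x$ for a.e.\ $x\in G^0$, giving $[\alpha]_t(a)=a$ in $L^\infty(G^0,\mathcal{M})$. Since $t$ was arbitrary, $a$ is fixed by $[\alpha]$.

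There is no substantial obstacle here; the only mild point requiring care is the transfer between statements that hold almost everywhere on $G^0$ and statements that hold almost everywhere on $G$, which is straightforward because by definition $\mu_G$ disintegrates over $\mu_{G^0}$ via counting measures on the fibers, and any bisection provides a measurable section of $(r,s)$ that converts between the two kinds of almost-everywhere quantifiers.
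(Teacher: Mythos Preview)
Your proof is correct. The paper does not actually supply a proof of this lemma; it simply declares that ``the proof of the following lemma is immediate,'' and your argument is exactly the routine unpacking that justifies this remark, in direct parallel with Lemma~\ref{Lemma:fixed-point-representation}.
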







\subsection{Actions of groupoids on spaces\label{Subsection:actions-spaces}}

Let $( G,\mu _{G} ) $ be a discrete pmp groupoid, and let $( Z,\lambda ) $
be a standard probability space fibered over $G^{0}$. Let $( \lambda _{x} )
_{x\in G^{0}} $ be the disintegration of $\lambda $ with respect to $\mu_G$.
Then $\bigsqcup_{x\in G^{0}}L^{2} ( Z_{x},\lambda _{x} ) $ is a Hilbert
bundle over $G^{0}$, and $\bigsqcup_{x\in G^{0}} ( L^{\infty } ( Z_{x} )
,\lambda _{x} ) $ is a tracial von Neumann bundle over $G^0$

\begin{definition}
\label{Definition:pmp-action} A \emph{pmp action} of a discrete pmp groupoid
$G$ on a standard probability space $( Z,\lambda ) $ fibered over $G^{0}$ is
an action of $G$ on the tracial von Neumann bundle $( \mathcal{M},\tau )
=\bigsqcup_{x\in G^{0}}(L^{\infty } ( Z_{x} ) ,\lambda _{x})$.
\end{definition}

\begin{remark}
The automorphism groupoid $\mathrm{Aut} \left( \bigsqcup_{x\in G^{0}} (
L^{\infty } ( Z_{x} ) ,\tau _{x} ) \right) $ can be identified with the
groupoid $\mathrm{Aut} \left( \bigsqcup_{x\in G^{0}} ( Z_{x},\lambda _{x} )
\right)$ consisting of all Borel isomorphisms $\eta \colon Z_{s}\to Z_{t}$,
for $s,t\in G^0$, satisfying $\eta _{\ast } ( \lambda _{s} ) =\lambda _{t}$.
Thus, a pmp action of $G$ on $( Z,\lambda ) $ can be seen as a Borel
groupoid homomorphism $G\to \mathrm{Aut} ( \bigsqcup_{x\in G^{0}} (
Z_{x},\lambda _{x} ) ) $ fixing the unit space.
\end{remark}

\begin{lemma}
Let $G$ be a discrete pmp groupoid. Then pmp actions of $G $ on standard
probability spaces can be canonically identified with class-bijective pmp
extensions of $G$. In other words, there are natural assignments $%
\alpha\mapsto (\pi_{\alpha},H_{\alpha})$ and $(\pi,H)\mapsto \alpha_{\pi,H}$
between the classes of pmp actions and class-bijective extensions of $G$
satisfying $\alpha_{(\pi_{\alpha},H_{\alpha})}=\alpha$ and $%
(\pi_{\alpha_{(\pi,H)}},H_{\alpha_{(\pi,H)}})=(\pi,H)$.
\end{lemma}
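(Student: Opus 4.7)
The plan is to construct the two assignments explicitly, the first via a semidirect-product construction and the second using class-bijectivity as a lifting property, and then to verify that they are mutually inverse. Given a pmp action $\alpha$ of $G$ on $(Z,\lambda)$ fibered over $G^{0}$, use the identification in the Remark immediately preceding the statement to regard $\alpha$ as a Borel homomorphism $G\to \mathrm{Aut}(\bigsqcup_{x}(Z_{x},\lambda_{x}))$, assigning to each $\gamma\in G$ a measure-preserving Borel isomorphism $T_{\gamma}\colon Z_{s(\gamma)}\to Z_{r(\gamma)}$. I then take $H_{\alpha}$ to have unit space $Z$ and arrow set $\{(\gamma,z)\colon \gamma\in G,\ z\in Z_{s(\gamma)}\}$, with source $s(\gamma,z)=z$, range $r(\gamma,z)=T_{\gamma}(z)$, composition $(\gamma',T_{\gamma}(z))(\gamma,z)=(\gamma'\gamma,z)$, and inverse $(\gamma,z)^{-1}=(\gamma^{-1},T_{\gamma}(z))$, and define $\pi_{\alpha}\colon H_{\alpha}\to G$ as projection onto the first coordinate. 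I equip $H_{\alpha}^{0}=Z$ with $\lambda$ and $H_{\alpha}$ with the Borel measure characterized by
\[
\mu_{H_{\alpha}}(B)=\int_{G^{0}}\sum_{\gamma\in Gx}\lambda_{x}\bigl(\{z\in Z_{x}\colon(\gamma,z)\in B\}\bigr)\,d\mu_{G^{0}}(x).
\]
Both the ``source'' integral $\int|zB|\,d\lambda$ and the ``range'' integral $\int|Bz'|\,d\lambda$ yield the same expression, using pmp-ness of $G$ together with the fact that each $T_{\gamma}$ preserves the fiber measures; hence $H_{\alpha}$ is a discrete pmp groupoid. Class-bijectivity of $\pi_{\alpha}$ is immediate from the parametrization of $H_{\alpha}z$ by $Gq(z)$, and pmp-ness of $\pi_{\alpha}$ follows from $(\pi_{\alpha})_{\ast}(\lambda)=\mu_{G^{0}}$.

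Conversely, given a class-bijective pmp extension $(\pi,H)$, I set $Z=H^{0}$ with $\lambda=\mu_{H^{0}}$ and $q=\pi|_{H^{0}}\colon Z\to G^{0}$. Class-bijectivity asserts that for almost every $z\in Z$ the restriction $\pi\colon Hz\to Gq(z)$ is a bijection. For $\gamma\in G$ and $z\in Z_{s(\gamma)}$, I let $\widetilde{\gamma}_{z}$ denote the unique element of $Hz$ with $\pi(\widetilde{\gamma}_{z})=\gamma$ and set $T_{\pi,H,\gamma}(z)=r(\widetilde{\gamma}_{z})$. The assignment $(\gamma,z)\mapsto \widetilde{\gamma}_{z}$ is Borel by Borel selection for relations with countable fibers (as recalled in the Notation at the end of the introduction), and functoriality of $T_{\pi,H}$ follows from functoriality of $\pi$. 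Measure preservation of each $T_{\pi,H,\gamma}$, and more generally of the induced maps $Z\to Z$ associated with elements $t\in[G]$, follows from the two-sided pmp identity applied to subsets of a lift of a Borel bisection: if $\widetilde{t}\in[H]$ is a Borel lift of $t$, then for every Borel $B\subseteq Z$ the set $A=\widetilde{t}\cap r^{-1}(B)$ satisfies $|zA|=\mathbf{1}_{B}(z)$ and $|Az'|=\mathbf{1}_{T_{\pi,H,t}^{-1}(B)}(z')$, so the pmp identity for $H$ yields $\lambda(B)=\lambda(T_{\pi,H,t}^{-1}(B))$. This produces the sought pmp action $\alpha_{\pi,H}$.

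It remains to see that these two assignments are mutually inverse. On the one hand, $\alpha_{(\pi_{\alpha},H_{\alpha})}=\alpha$ follows directly from the definitions: given $\gamma\in G$ and $z\in Z_{s(\gamma)}$, the unique lift in $H_{\alpha}z$ of $\gamma$ is the arrow $(\gamma,z)$, whose range is $T_{\gamma}(z)$. On the other hand, given $(\pi,H)$, the map $H\to H_{\alpha_{\pi,H}}$ sending $\widetilde{\gamma}\mapsto(\pi(\widetilde{\gamma}),s(\widetilde{\gamma}))$ is a Borel groupoid homomorphism over $G$, is bijective on each $s$-fiber by class-bijectivity of $\pi$, and preserves measures by the computation above; it therefore provides the required isomorphism of pmp extensions. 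The main technical obstacle is verifying measurability and the pmp condition simultaneously, handled by combining Borel selection with the disintegration of $\lambda$ and the two-sided pmp identity for $H$.
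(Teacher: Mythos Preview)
Your proof is correct and follows essentially the same approach as the paper: both construct the action groupoid $G\ltimes^{\alpha}Z$ with the same source, range, composition, and projection, and both recover the action from an extension via $(\alpha_{\pi,H})_{\gamma}(y)=r((\pi|_{Hy})^{-1}(\gamma))$, which is exactly your $T_{\pi,H,\gamma}(z)=r(\widetilde{\gamma}_{z})$. The only differences are cosmetic: you present the two constructions in the opposite order, and you supply explicit verifications of the pmp identity and measure preservation that the paper omits with ``it is not difficult to verify.''
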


\begin{proof}
Let $H$ be a pmp groupoid and let $\pi \colon H\rightarrow G$ be a
class-bijective pmp extension. Then $\pi $ turns $(H^{0},\mu _{H^{0}})$ into
a standard probability space fibered over $G^{0}$, and we let $%
(H_{x}^{0},\mu _{H^{0},x})_{x\in G^{0}}$ be the corresponding
disintegration. One can then consider the pmp action $\alpha _{\pi ,H}$ of $%
G $ on $(H^{0},\mu _{H^{0}})$ given by, for $\gamma \in G$ and $y\in
H_{s\left( \gamma \right) }$,%
\begin{equation*}
(\alpha _{\pi ,H})_{\gamma }(y)=r\left( (\pi |_{Hy})^{-1}(\gamma )\right)
\text{.}
\end{equation*}%
Conversely, let $\alpha \colon G\rightarrow \mathrm{Aut}(\bigsqcup_{x\in
G^{0}}(Z_{x},\lambda _{x}))$ be an action of $G$ on $(Z,\lambda )$. Define
the corresponding \emph{action groupoid }$G\ltimes ^{\alpha }Z$ as follows:
the set of objects of $G\ltimes ^{\alpha }Z$ is $Z$, while the arrows of $%
G\ltimes ^{\alpha }Z$ are pairs $(\gamma ,z)$ with $\gamma \in G$ and $z\in
Z_{s(\gamma )}$. We represent the pair $(\gamma ,z)$, regarded as an element
of $G\ltimes ^{\alpha }Z$, by $\gamma \ltimes ^{\alpha }z$. Then the
following operations turn $G\ltimes ^{\alpha }Z$ into a discrete pmp
groupoid:
\begin{equation*}
s(\gamma \ltimes ^{\alpha }z)=z,\ \ r(\gamma \ltimes ^{\alpha }z)=\alpha
_{\gamma }(z),\ \ (\gamma \ltimes z)^{-1}=(\gamma ^{-1}\ltimes ^{\alpha
}\alpha _{\gamma }(z)),\ \ (\rho \ltimes ^{\alpha }w)(\gamma \ltimes
^{\alpha }z)=\rho \gamma \ltimes ^{\alpha }z,
\end{equation*}%
the last one whenever $\alpha _{\gamma }(z)=w$. Moreover, the map $\pi
_{\alpha }\colon G\ltimes ^{\alpha }Z\rightarrow G$ given by $\pi _{\alpha
}(\gamma \ltimes ^{\alpha }z)=\gamma $ for all $\gamma \ltimes ^{\alpha
}z\in G\rtimes ^{\alpha }Z$ is a class-bijective pmp extension of $G$.

It is not difficult to verify that the constructions described above are
inverse of each other, and this concludes the proof.
\end{proof}

In view of the above lemma, from now on we will identify pmp actions and
class-bijective pmp extensions of a discrete pmp groupoid.



\subsection{Tensor products\label{Subsection:tensor}}

Let $(M,\tau )$ be a tracial von Neumann algebra, and let $N\subseteq M$ be
a von Neumann subalgebra. We denote by $E_{N}\colon M\rightarrow N$ the
unique trace-preserving conditional expectation, and set $\langle a,b\rangle
_{N}=E_{N}(a^{\ast }b)$ for all $a,b\in M$. This pairing turns $M$ into a
(right) pre-C*-module over $N$, whose completion is a C*-module over $N$. As
explained in \cite[Subsection 8.5.32]{blecher_operator_2004}, the weak
closure of this C*-module within its linking algebra has a canonical
structure of W*-module over $N$, which we denote by $L^{2}(M,E_{N})$. By
\cite[Lemma 8.5.4]{blecher_operator_2004}, $L^{2}(M,E_{N})$ has a unique
predual that makes the $N$-valued inner product separately w*-continuous. We
consider the w*-topology on $L^{2}(M,E_{N})$ to be the one defined by its
unique predual. As shown in \cite[Section 8.5]{blecher_operator_2004}, a
bounded $N$-bimodule map on $L^{2}(M,E_{N})$ is automatically adjointable,
and the space $B_{N}(L^{2}(M,E_{N}))$ of such maps is a von Neumann algebra.
Given $a\in M$, we let $|a\rangle _{N}\in L^{2}(M,E_{N})$ be the
corresponding operator. Assigning to an element $a\in M$ the corresponding
multiplication operator on $L^{2}(M,E_{N})$ defines a faithful
representation $M\rightarrow B_{N}(L^{2}(M,E_{N}))$.

\begin{definition}
Let $(M_{0},\tau _{0})$ and $(M_{1},\tau _{1})$ be tracial von Neumann
algebras with a common subalgebra $N$ contained in their centers. Consider
the \emph{external tensor product} of W*-modules $L^{2}(M_{0},E_{N})\otimes
L^{2}(M_{1},E_{N})$. Given $a_{0}\in M_{0}$ and $a_{1}\in M_{1}$, denote by $%
a_{0}\otimes _{N}a_{1}$ the operator on $L^{2}(M_{0},E_{N})\otimes
L^{2}(M_{1},E_{N})$ given by
\begin{equation*}
(a_{0}\otimes _{N}a_{1})(|b_{0}\rangle _{N}\otimes |b_{1}\rangle
_{N})=|a_{0}b_{0}\rangle _{N}\otimes |a_{1}b_{1}\rangle _{N}
\end{equation*}%
for $|b_{0}\rangle _{N}\otimes |b_{1}\rangle _{N}\in
L^{2}(M_{0},E_{N})\otimes L^{2}(M_{1},E_{N})$. The\emph{\ tensor product} $%
M_{0}\otimes _{N}M_{1}$ \emph{relative to} $N$ is the w*-closed subalgebra
of $B(L^{2}(M_{0},E_{N})\otimes L^{2}(M_{1},E_{N}))$ generated by the $N$%
-bimodule operators of the form $a_{0}\otimes _{N}a_{1}$ for some $a_{0}\in
M_{0}$ and $a_{1}\in M_{1}$. We define a tracial state $\tau $ on it by
setting $\tau (a)=\tau (\langle 1\otimes _{N}1|a|1\otimes _{N}1\rangle _{N})$
for $a\in M_{0}\otimes _{N}M_{1}$.
\end{definition}

It is easy to see that $a\otimes _{N}1=1\otimes _{N}a$ whenever $a\in N$.
When $N$ is the trivial subalgebra of $M_{0},M_{1}$, then the conditional
expectations $E_{N}$ coincide with the given tracial states of $M_{0}$ and $%
M_{1}$, and the tensor product relative to $N$ coincides with the tensor
product $M_{0}\otimes M_{1}$ with respect to the given traces as defined in
\cite[Section III.3]{blackadar_operator_2006}.

\begin{definition}
Adopt the notation of the previous definition. Let $\Gamma $ be a group, and
let $\alpha _{j}\colon \Gamma\to\mathrm{Aut} ( M_{j},\tau_{j} )$, for $j=0,1
$, be actions. We say that $\alpha _{0}$ and $\alpha _{1}$ \emph{agree on $N$%
} if $N$ is $\alpha _{j}$-invariant for $j=0,1$, and the restrictions of $%
\alpha _{0}$ and $\alpha _{1}$ to $N$ are equal. In this case, the actions $%
\alpha _{0}$ and $\alpha _{1}$ canonically induce an action $\alpha
_{0}\otimes _{N}\alpha _{1}$ of $\Gamma$ on $( M_{0}\otimes _{N}M_{1},\tau )
$.
\end{definition}

\begin{remark}
Since we are assuming that $N$ is a central subalgebra of $M_{0}$ and $M_{1}$%
, their tensor product $M_{0}\otimes _{N}M_{1}$ can be equivalently
described as follows. Given a presentation $N=L^{\infty } ( Z,\lambda ) $,
the algebras $M_{0}$ and $M_{1}$ admit direct integral decompositions%
\begin{equation*}
M_{0}=\int_{Z}^{\oplus }M_{0,z}d\lambda (z)\ \ \mbox{ and } \ \
M_{1}=\int_{Z}^{\oplus }M_{1,z}d\lambda (z)\text{;}
\end{equation*}%
see \cite[Chapter 14]{kadison_fundamentals_1986} or \cite[Appendix F]%
{williams_crossed_2007}. Then $M_0\otimes_N M_1$ can be identified with $%
\int_{Z}^{\oplus } ( M_{0,z}\otimes M_{1,z} ) d\lambda (z)$.
\end{remark}

\begin{definition}
Given two tracial von Neumann bundles $( \mathcal{M}_{0},\tau _{0} )$ and $(
\mathcal{M}_{1},\tau _{1} ) $ with a common central sub-bundle $\mathcal{N}$%
, their tensor product $\mathcal{M}_{0}\otimes _{\mathcal{N}}\mathcal{M}_{1}$
is the tracial von Neumann bundle $\bigsqcup_{x\in X}(\mathcal{M}%
_{0,x}\otimes _{\mathcal{N}_{x}}\mathcal{M}_{1,x})$. Similarly as before,
one can define the tensor product of actions on $\mathcal{M}_0$ and $%
\mathcal{M}_1$ that agree on $\mathcal{N}$.
\end{definition}

In the context above, $L^{\infty } ( X,\mathcal{M}_{0}\otimes _{\mathcal{N}}%
\mathcal{M}_{1} ) $ can be identified with $L^{\infty } ( X,\mathcal{M}_{0}
) \otimes _{L^{\infty } ( X,\mathcal{N} ) }L^{\infty } ( X,\mathcal{M}_{1} )
$; see \cite[Section 1.3]{oty_actions_2006} and \cite{sauvageot_produit_1983}

\subsection{The Koopman representation\label{Subsection:Koopman}}

\begin{definition}
Let $G$ be a discrete pmp groupoid, and let $\alpha \colon G\to \mathrm{Aut}
( \mathcal{M},\tau ) $ be an action on a von Neumann bundle $(\mathcal{M}%
,\tau)$ over $G^0$. The \emph{Koopman representation }associated with $%
\alpha $ is the representation $\kappa ^{\alpha }\colon G\to U(L^{2} (
\mathcal{M},\tau ))$ defined by $\kappa _{\gamma }^{\alpha } \vert a \rangle
= \vert \alpha _{\gamma } ( a ) \rangle $ for $\gamma \in G$ and $a\in
\mathcal{M}_{s(\gamma )}$.

We denote by $\kappa _{0}^{\alpha }$ the restriction of $\kappa ^{\alpha }$
to the invariant sub-bundle $\bigsqcup_{x\in G^{0}} ( L^{2} ( \mathcal{M}%
_{x},\tau _{x} ) \cap ( \mathbb{C} \vert 1_{x} \rangle ) ^{\bot } ) $. More
generally, if $( \mathcal{N},\tau ) $ is an $\alpha $-invariant central von
Neumann sub-bundle of $( \mathcal{M},\tau ) $, we define a representation $%
\kappa ^{\alpha ,\mathcal{N}}\colon G \to U(L^{2} ( \mathcal{M},E_{\mathcal{N%
}} )) $ by setting $\kappa _{\gamma }^{\alpha ,\mathcal{N}}( \vert a \rangle
_{\mathcal{N}_{s(\gamma )}})= \vert \alpha _{\gamma } ( a ) \rangle _{%
\mathcal{N}_{r(\gamma )}}$ for all $\gamma\in G$ and all $\vert a \rangle _{%
\mathcal{N}_{s(\gamma )}}\in L^2(\mathcal{M}_{s(\gamma)},E_{\mathcal{N}%
_{s(\gamma)}})$. We define then $\kappa _{0}^{\alpha ,\mathcal{N}}$ to be
the restriction of $\kappa ^{\alpha ,\mathcal{N}}$ to the sub-bundle $%
\bigsqcup_{x\in G^{0}} ( L^{2} ( \mathcal{M}_{x},E_{\mathcal{N}_{x}} ) \cap
\mathcal{N}_{x}^{\bot } ) $.
\end{definition}

\begin{remark}
\label{Remark:koopman} Adopt the notation from the above definition. Then
the canonical identification of $L^{2} ( L^{\infty } ( G^{0},\mathcal{M} )
,\tau ) $ with $L^{2} ( G^{0},L^{2} ( \mathcal{M},\tau ) ) $ allows one to
identify the Koopman representation $\kappa ^{ [ \alpha ] }$ associated with
the action $[ \alpha ] $ of $[ G ] $ on $( L^{\infty } ( G^{0},\mathcal{M} )
,\tau ) $ with the representation $[ \kappa ^{\alpha } ] $ of $[ G ] $ on $%
L^{2} ( G^{0},L^{2} ( \mathcal{M},\tau ) ) $ induced by $\kappa ^{\alpha }$.
More generally, the canonical identification of $L^{2} ( G^{0},L^{2} (
\mathcal{M},E_{\mathcal{N}} ) ) $ with $L^{2}(L^{\infty } ( G^{0},\mathcal{M}
) ,E_{L^{\infty } ( G^{0},\mathcal{N} ) })$ allows one to identify $[\kappa
^{\alpha ,\mathcal{N}}]$ with $\kappa ^{ [ \alpha ] ,L^{\infty } ( G^{0},%
\mathcal{N} ) }$.
\end{remark}

\begin{lemma}
\label{Lemma:koopman} Let $G$ be a discrete pmp groupoid. For $j=1,2$, let $(%
\mathcal{M}_j,\tau_j)$ be von Neumann bundles over $G^0$ with common central
sub-bundle $\mathcal{N}$, and let $\alpha_j \colon G\to \mathrm{Aut} (
\mathcal{M}_j,\tau ) $ be actions agreeing on $\mathcal{N}$.

\begin{enumerate}
\item For $j=1,2$, the representation $\kappa _{0}^{\alpha_j ,\mathcal{N}}$
is conjugate to $\overline{\kappa }_{0}^{\alpha_j ,\mathcal{N}}$.

\item The representation $\kappa _{0}^{\alpha_1 \otimes _{\mathcal{N}%
}\alpha_2 ,\mathcal{N}}$ is conjugate to $(\kappa _{0}^{\alpha_1 ,\mathcal{N}%
}\otimes \kappa _{0}^{\alpha_2 ,\mathcal{N}})\oplus \kappa _{0}^{\alpha_1 ,%
\mathcal{N}}\oplus \kappa _{0}^{\alpha_2 ,\mathcal{N}}$.
\end{enumerate}
\end{lemma}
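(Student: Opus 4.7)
The plan is to treat both parts by explicit fiberwise constructions using standard facts about GNS constructions of tracial von Neumann algebras, and then assemble the fiberwise statements into bundle statements.

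For part~(1), I would exhibit a conjugate-linear isometric intertwiner and reinterpret it as a linear isomorphism to the conjugate Hilbert bundle. Concretely, define a Borel fibered map $J \colon L^{2}(\mathcal{M}_{j}, E_{\mathcal{N}}) \to L^{2}(\mathcal{M}_{j}, E_{\mathcal{N}})$ by $J_{x}(|a\rangle_{\mathcal{N}_{x}}) = |a^{*}\rangle_{\mathcal{N}_{x}}$ for $a \in \mathcal{M}_{j,x}$. Since $\mathcal{N}$ is central in $\mathcal{M}_{j}$, each $E_{\mathcal{N}_{x}}$ is tracial, which implies that $J_{x}$ is a well-defined conjugate-linear isometry of the associated Hilbert $\mathcal{N}_{x}$-module. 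Moreover, $E_{\mathcal{N}_{x}}(a^{*}) = E_{\mathcal{N}_{x}}(a)^{*}$ ensures that $J_{x}$ preserves $L^{2}(\mathcal{M}_{j,x}, E_{\mathcal{N}_{x}}) \cap \mathcal{N}_{x}^{\perp}$. The fact that each $\alpha_{j,\gamma}$ is $*$-preserving yields $J_{r(\gamma)} \circ \kappa^{\alpha_{j},\mathcal{N}}_{\gamma} = \kappa^{\alpha_{j},\mathcal{N}}_{\gamma} \circ J_{s(\gamma)}$, and viewing $J$ as a linear Hilbert bundle isomorphism from $L^{2}(\mathcal{M}_{j}, E_{\mathcal{N}})$ onto $\overline{L^{2}(\mathcal{M}_{j}, E_{\mathcal{N}})}$ gives the desired equivalence $\kappa_{0}^{\alpha_{j},\mathcal{N}} \cong \overline{\kappa}_{0}^{\alpha_{j},\mathcal{N}}$.

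For part~(2), I would invoke the natural isomorphism of Hilbert $\mathcal{N}_{x}$-modules
\[
L^{2}(\mathcal{M}_{1,x} \otimes_{\mathcal{N}_{x}} \mathcal{M}_{2,x},\, E_{\mathcal{N}_{x}}) \;\cong\; L^{2}(\mathcal{M}_{1,x}, E_{\mathcal{N}_{x}}) \otimes_{\mathcal{N}_{x}} L^{2}(\mathcal{M}_{2,x}, E_{\mathcal{N}_{x}}),
\]
and decompose each factor on the right as $\mathcal{N}_{x} \oplus (L^{2}(\mathcal{M}_{j,x}, E_{\mathcal{N}_{x}}) \cap \mathcal{N}_{x}^{\perp})$. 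Expanding the $\mathcal{N}_{x}$-bimodule tensor produces four orthogonal summands: the distinguished copy $\mathcal{N}_{x}$, two summands naturally identified with $L^{2}(\mathcal{M}_{1,x}, E_{\mathcal{N}_{x}}) \cap \mathcal{N}_{x}^{\perp}$ and $L^{2}(\mathcal{M}_{2,x}, E_{\mathcal{N}_{x}}) \cap \mathcal{N}_{x}^{\perp}$, and the relative tensor of these two orthogonal complements. The last three summands assemble to the orthogonal complement of $\mathcal{N}$ in $L^{2}(\mathcal{M}_{1} \otimes_{\mathcal{N}} \mathcal{M}_{2}, E_{\mathcal{N}})$, which is precisely the bundle carrying $\kappa_{0}^{\alpha_{1} \otimes_{\mathcal{N}} \alpha_{2}, \mathcal{N}}$. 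Since $\alpha_{1}$ and $\alpha_{2}$ agree on $\mathcal{N}$, this decomposition is invariant under $\alpha_{1} \otimes_{\mathcal{N}} \alpha_{2}$, and the induced representations on the three nontrivial summands are $\kappa_{0}^{\alpha_{1},\mathcal{N}}$, $\kappa_{0}^{\alpha_{2},\mathcal{N}}$, and $\kappa_{0}^{\alpha_{1},\mathcal{N}} \otimes \kappa_{0}^{\alpha_{2},\mathcal{N}}$ respectively, after identifying the relative tensor of W*-modules with the fiberwise Hilbert tensor used in Subsection~\ref{Subsection:representations}.

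The main obstacle will be the passage from the W*-module tensor product relative to $\mathcal{N}$ to the fiberwise Hilbert tensor product appearing in the statement, together with the verification that the orthogonal decomposition assembles measurably into Borel sub-bundles over $G^{0}$. The remark on direct integrals handles this: writing $\mathcal{N}_{x} = L^{\infty}(Z_{x}, \lambda_{x})$ and $\mathcal{M}_{j,x} = \int^{\oplus}_{Z_{x}} \mathcal{M}_{j,x,z}\, d\lambda_{x}(z)$, the relative tensor over $\mathcal{N}_{x}$ becomes the direct integral of the external Hilbert space tensors $L^{2}(\mathcal{M}_{1,x,z}) \otimes L^{2}(\mathcal{M}_{2,x,z})$, which reassembles into a Hilbert bundle over $G^{0}$ isomorphic to the fiberwise Hilbert tensor. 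Once this identification is in place, both parts reduce to fiberwise statements that follow from $\alpha_{j}$ being $*$-preserving in~(1) and from the compatibility of $\alpha_{1}$ and $\alpha_{2}$ on $\mathcal{N}$ in~(2).
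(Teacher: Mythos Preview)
Your proposal is correct and follows essentially the same route as the paper. For part~(1) the paper uses exactly the conjugate-linear map $J\colon |a\rangle \mapsto |a^{*}\rangle$ you describe, and for part~(2) the paper writes down the explicit four-term decomposition $a\otimes_{\mathcal{N}} b \mapsto \bigl((a - E_{\mathcal{N}}(a))\otimes_{\mathcal{N}}(b - E_{\mathcal{N}}(b)),\, E_{\mathcal{N}}(b)a,\, E_{\mathcal{N}}(a)b,\, E_{\mathcal{N}}(a)E_{\mathcal{N}}(b)\bigr)$, which is precisely the expansion you obtain by splitting each factor as $\mathcal{N}\oplus \mathcal{N}^{\perp}$; your additional remarks on measurability and the direct-integral identification of the relative tensor with the fiberwise Hilbert tensor simply make explicit what the paper leaves implicit.
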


\begin{proof}
(1). We fix $j\in \{1,2\}$ and write $\alpha$ for $\alpha_j$. Then the
canonical anti-unitary $J\colon L^{2} ( \mathcal{M},E_{\mathcal{N}} )
\mapsto \overline{L^{2} ( \mathcal{M},E_{\mathcal{N}} ) }$, $\vert a \rangle
\mapsto \vert a^{\ast } \rangle $ induces an isomorphism from $L^{2} (
\mathcal{M},E_{\mathcal{N}} ) \cap \mathcal{N}^{\bot }$ to $\overline{L^{2}
( \mathcal{M},E_{\mathcal{N}} ) \cap \mathcal{N}^{\bot }}$ that intertwines $%
\kappa _{0}^{\alpha ,\mathcal{N}}$ and $\overline{\kappa }_{0}^{\alpha ,%
\mathcal{N}}$.

(2). The map $\mathcal{M}_{1}\otimes _{\mathcal{N}}\mathcal{M}_{2}\to (
\mathcal{M}_{1}\otimes _{\mathcal{N}}\mathcal{M}_{2} ) \oplus \mathcal{M}%
_{1}\oplus \mathcal{M}_{2}\oplus \mathcal{N}$ defined by
\begin{equation*}
a\otimes _{\mathcal{N}}b\mapsto \left(( a-E_{\mathcal{N}} ( a ) ) \otimes _{%
\mathcal{N}} ( b-E_{\mathcal{N}} ( b ) ) , E_{\mathcal{N}} ( b ) a, E_{%
\mathcal{N}} ( a ) b, E_{\mathcal{N}} ( a ) E_{\mathcal{N}} ( b )\right)
\end{equation*}%
induces an isomorphism from $L^{2} ( \mathcal{M}_{1}\otimes _{\mathcal{N}}%
\mathcal{M}_{2},E_{\mathcal{N}} ) \cap \mathcal{N}^{\bot }$ to
\begin{equation*}
( L^{2} ( \mathcal{M}_{1},E_{\mathcal{N}} ) \cap \mathcal{N}^{\bot } )
\otimes ( L^{2} ( \mathcal{M}_{2},E_{\mathcal{N}} ) \cap \mathcal{N}^{\bot }
) \oplus L^{2} ( \mathcal{M}_{1},E_{\mathcal{N}} ) \oplus L^{2} ( \mathcal{M}%
_{2},E_{\mathcal{N}} )
\end{equation*}%
that witnesses the desired conjugacy.
\end{proof}

\subsection{Ergodicity\label{Subsection:ergodicity}}

Recall that a unitary representation $\pi $ of a Polish group $\Gamma $ on a
Hilbert space $H$ is said to be \emph{ergodic }if the space $H^{\pi }$ of $%
\pi $-invariant vectors is trivial.

\begin{lemma}
\label{Lemma:fixed-point-koopman} Let $\alpha $ be an action of a countable
discrete group $\Gamma $ on a tracial von Neumann algebra $( M,\tau ) $, and
let $N$ be an $\alpha $-invariant central subalgebra of $( M,\tau )$. Then
the fixed point algebra $M^{\alpha }$ is contained in $N$ if and only $%
\kappa _{0}^{\alpha ,N}$ is ergodic.
\end{lemma}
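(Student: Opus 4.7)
The plan is to identify $\kappa_0^{\alpha,N}$ with a concrete Hilbert space representation and then reduce the claim to the standard fact that the $\alpha$-fixed vectors in $L^2(M,\tau)$ are exactly $L^2(M^\alpha,\tau)$.

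First I would set up the identification. Since $N$ is a central subalgebra of the tracial algebra $(M,\tau)$, the assignment $|a\rangle_N\mapsto |a\rangle$ extends to a Hilbert space isomorphism between the W*-module $L^2(M,E_N)$ (with inner product $\tau\circ\langle\cdot,\cdot\rangle_N$) and the GNS space $L^2(M,\tau)$; this is because $\tau(E_N(a^*b))=\tau(a^*b)$. Under this identification, $\kappa^{\alpha,N}$ becomes the usual Koopman representation of $\Gamma$ on $L^2(M,\tau)$, and the subspace $L^2(M,E_N)\cap N^\perp$ becomes $L^2(M,\tau)\ominus L^2(N,\tau)$. So the lemma reduces to: $M^\alpha\subseteq N$ if and only if the restriction of the Koopman representation to $L^2(M,\tau)\ominus L^2(N,\tau)$ has no nonzero invariant vectors.

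Next I would invoke the standard fact that, for a trace-preserving action $\alpha$ of a discrete group on $(M,\tau)$, the space of $\kappa^\alpha$-invariant vectors in $L^2(M,\tau)$ coincides with $L^2(M^\alpha,\tau)$. One inclusion is immediate. For the other, the unique trace-preserving conditional expectation $E_{M^\alpha}\colon M\to M^\alpha$ extends to the orthogonal projection of $L^2(M,\tau)$ onto $L^2(M^\alpha,\tau)$, and this projection commutes with $\kappa^\alpha$ because $\alpha_\gamma$ fixes $M^\alpha$ pointwise; hence every invariant vector satisfies $\xi=E_{M^\alpha}(\xi)\in L^2(M^\alpha,\tau)$.

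Granted this, both implications follow. For $(\Rightarrow)$, assume $M^\alpha\subseteq N$. Then $L^2(M^\alpha,\tau)\subseteq L^2(N,\tau)$, so an invariant vector in $L^2(M,\tau)\ominus L^2(N,\tau)$ lies in $L^2(M^\alpha,\tau)\cap L^2(N,\tau)^\perp=\{0\}$. For $(\Leftarrow)$, assume $\kappa_0^{\alpha,N}$ is ergodic, and take $a\in M^\alpha$. Since $N$ is $\alpha$-invariant, uniqueness of the trace-preserving conditional expectation gives $\alpha_\gamma\circ E_N=E_N\circ\alpha_\gamma$; in particular $E_N(a)\in N$ is also $\alpha$-fixed, so $a-E_N(a)\in M^\alpha$. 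The vector $|a-E_N(a)\rangle$ is then a $\kappa^\alpha$-invariant element of $L^2(M,\tau)\ominus L^2(N,\tau)$, hence equals zero by ergodicity, and therefore $a=E_N(a)\in N$.

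The only nontrivial ingredient is the standard identification of the invariant subspace with $L^2(M^\alpha,\tau)$; the rest is a two-line manipulation. No serious obstacle is expected.
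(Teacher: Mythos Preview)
Your overall strategy is sound and the $(\Leftarrow)$ direction is correct, but there is a genuine gap in your justification of the ``standard fact'' that the $\kappa^{\alpha}$-invariant vectors in $L^{2}(M,\tau)$ coincide with $L^{2}(M^{\alpha},\tau)$. You argue that the orthogonal projection $E_{M^{\alpha}}$ commutes with $\kappa^{\alpha}$, and conclude ``hence every invariant vector satisfies $\xi=E_{M^{\alpha}}(\xi)$''. That implication is a non sequitur: a projection commuting with a representation does not force the invariant vectors into its range (take the trivial representation and any proper subspace). What commutation gives you is only that $E_{M^{\alpha}}(\xi)$ and $(1-E_{M^{\alpha}})(\xi)$ are each invariant; it says nothing about the second piece vanishing.

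The fact you want is true, but its proof is precisely the polar-decomposition argument the paper carries out. One regards an invariant $\xi\in L^{2}(M,\tau)$ as a closed operator affiliated with $M$, writes $\xi=v|\xi|$, and uses uniqueness of the polar decomposition under the trace-preserving automorphisms $\alpha_{\gamma}$ to get $v\in M^{\alpha}$ and all spectral projections of $|\xi|$ in $M^{\alpha}$; this forces $\xi\in L^{2}(M^{\alpha},\tau)$. The paper runs exactly this argument fiberwise over the direct integral decomposition of $M$ with respect to the central subalgebra $N$, then uses the hypothesis $M^{\alpha}\subseteq N$ to conclude that the spectral projections $e_{s}$ have scalar fibers. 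Once you patch your justification with this polar-decomposition step, your proof and the paper's become the same argument organized in two different orders: you first reduce globally to $L^{2}(M,\tau)\ominus L^{2}(N,\tau)$ and then would invoke the affiliated-operator lemma, whereas the paper disintegrates over $N$ first and applies the lemma fiber by fiber.
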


\begin{proof}
Assume that $M^{\alpha }\subseteq N$, and consider the direct integral
decomposition%
\begin{equation*}
(M,\tau )=\int_{X}(M_{x},\tau _{x})d\mu (x)
\end{equation*}%
with respect to the central subalgebra $N=L^{\infty }(X,\mu )\subseteq M$;
see \cite[Appendix F]{williams_crossed_2007}. With $\mathcal{H}$ denoting
the Hilbert bundle $\bigsqcup_{x\in X}L^{2}(M_{x},\tau _{x})$, one can
identify $L^{2}(M,\tau )$ with $L^{2}(X,\mathcal{H},\tau )$. Let $\xi \in
L^{2}(X,\mathcal{H},\tau )$ be a $\kappa ^{\alpha }$-invariant unit vector.
Identifying $L^{2}(M_{x},\tau _{x})$ with the space of affiliated operators
with $M_{x}$---see \cite[Section 1.2]{popa_cocycle_2007}---let $\xi
_{x}=v_{x}|\xi _{x}|$ be the polar decomposition of $\xi _{x}$, where $%
v_{x}\in M_{x}$ is a partial isometry, and the spectral resolution $%
(e_{s,x})_{s>0}$ of $|\xi _{x}|$ is contained in $M_{x}$; see \cite[%
Subsection 2.1]{popa_cocycle_2007}. Suppose, by contradiction, that $\xi $
is orthogonal to $N$. Find $s>0$, a non-null Borel subset $A\subseteq X$,
and projections $e_{s,x}$, for $x\in X$, satisfying $0<\tau _{x}(e_{s,x})<1$%
. By the essential uniqueness of the spectral resolution, the element $%
e_{s}=(e_{x,s})_{x\in X}\in M$ is $\alpha $-invariant. Since $e\in N$ by
assumption, we have $\tau _{x}(e_{s,x})\in \{0,1\}$ for almost every $x\in X$%
, which is a contradiction. The converse is obvious.
\end{proof}

Recall that a discrete pmp groupoid $G$ is ergodic if a non-null invariant
Borel subset of $G^{0}$ has full measure.

\begin{definition}
\label{Definition:ergodic-rep} Let $G$ be an ergodic discrete pmp groupoid,
and let $(\mathcal{M},\tau )$ be a tracial von Neumann bundle over $G^{0}$.
An action $\alpha \colon G\rightarrow \mathrm{Aut}(\mathcal{M},\tau )$ is
said to be \emph{ergodic }if whenever $a\in L^{\infty }(G^{0},\mathcal{M})$
satisfies $\alpha _{\gamma }(a_{s(\gamma )})=a_{r(\gamma )}$ for almost
every $\gamma \in G$, then $a_{x}\in \mathbb{C}1_{x}$ for almost every $x\in
G^{0}$.
\end{definition}

\begin{lemma}
\label{Lemma:ergodic} Adopt the notation from the above definition, and let $%
( \mathcal{N},\tau ) $ be an $\alpha $-invariant central sub-bundle of $(
\mathcal{M},\tau ) $. Then the following assertions are equivalent:

\begin{enumerate}
\item $L^{\infty } ( G^{0},\mathcal{M} ) ^{\alpha }\subseteq L^{\infty } (
G^{0},\mathcal{N} ) $;

\item $L^{\infty } ( G^{0},\mathcal{M} ) ^{ [ \alpha ] |_{\Gamma }}\subseteq
L^{\infty } ( G^{0},\mathcal{N} ) $ for every countable subgroup $\Gamma
\subseteq [ G ] $ that covers $G$;

\item $L^{\infty } ( G^{0},\mathcal{M} ) ^{ [ \alpha ] |_{\Gamma }}\subseteq
L^{\infty } ( G^{0},\mathcal{N} ) $ for some countable subgroup $\Gamma
\subseteq [ G ] $ that covers $G$;

\item $\kappa _{0}^{\alpha ,\mathcal{N}}$ is ergodic;

\item $[\kappa _{0}^{\alpha ,\mathcal{N}}]|_{\Gamma }$ is ergodic for every
countable subgroup $\Gamma \subseteq [ G ] $ that covers $G$;

\item $[\kappa _{0}^{\alpha ,\mathcal{N}}]|_{\Gamma }$ is ergodic for some
countable subgroup $\Gamma \subseteq [ G ] $ that covers $G$.
\end{enumerate}
\end{lemma}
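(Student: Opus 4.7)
The plan is to establish the two ``vertical'' chains of equivalences $(1)\Leftrightarrow (2)\Leftrightarrow (3)$ and $(4)\Leftrightarrow (5)\Leftrightarrow (6)$ using the fixed-point characterizations already proved for actions and representations of groupoids, and then to bridge the two chains by applying Lemma~\ref{Lemma:fixed-point-koopman} after passing to a countable cover of $G$.

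Since $G$ is ergodic, the remarks at the end of Subsection~\ref{Subsection:groupoids} provide a countable subgroup $\Gamma\subseteq [G]$ that covers $G$. Lemma~\ref{Lemma:fixed-point-action} then asserts that, for an element $a=(a_x)_{x\in G^0}\in L^\infty(G^0,\mathcal{M})$, being fixed by $\alpha$ (in the sense that $\alpha_\gamma(a_{s(\gamma)})=a_{r(\gamma)}$ for a.e.\ $\gamma\in G$) is equivalent to being fixed by $[\alpha]_t$ for every $t$ in some, or equivalently every, countable cover $\Gamma\subseteq [G]$. Consequently the fixed-point algebras $L^\infty(G^0,\mathcal{M})^\alpha$ and $L^\infty(G^0,\mathcal{M})^{[\alpha]|_\Gamma}$ coincide for every such $\Gamma$, which yields $(1)\Leftrightarrow (2)\Leftrightarrow (3)$. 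An entirely analogous argument, using Lemma~\ref{Lemma:fixed-point-representation} in place of Lemma~\ref{Lemma:fixed-point-action} and applied to the representation $\kappa_0^{\alpha,\mathcal{N}}$, yields $(4)\Leftrightarrow (5)\Leftrightarrow (6)$; here one uses that ergodicity of the groupoid representation $\kappa_0^{\alpha,\mathcal{N}}$ means exactly that the space of invariant sections $L^2(G^0,\mathcal{H})^{\kappa_0^{\alpha,\mathcal{N}}}$ is trivial, which by Lemma~\ref{Lemma:fixed-point-representation} coincides with the space of $[\kappa_0^{\alpha,\mathcal{N}}]|_\Gamma$-invariant vectors.

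It remains to bridge the two chains, for which I will prove $(3)\Leftrightarrow (6)$. Fix a countable cover $\Gamma\subseteq [G]$, and consider the group action $[\alpha]|_\Gamma$ of $\Gamma$ on the tracial von Neumann algebra $(L^\infty(G^0,\mathcal{M}),\tau)$ together with its $\alpha$-invariant central subalgebra $L^\infty(G^0,\mathcal{N})$. By Remark~\ref{Remark:koopman}, the Koopman representation $\kappa^{[\alpha]|_\Gamma,\,L^\infty(G^0,\mathcal{N})}$ can be identified with $[\kappa^{\alpha,\mathcal{N}}]|_\Gamma$, and its restriction to the orthogonal complement of $L^\infty(G^0,\mathcal{N})$ inside $L^2(L^\infty(G^0,\mathcal{M}),E_{L^\infty(G^0,\mathcal{N})})$ is precisely $[\kappa_0^{\alpha,\mathcal{N}}]|_\Gamma$. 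Applying Lemma~\ref{Lemma:fixed-point-koopman} to the countable group $\Gamma$, the action $[\alpha]|_\Gamma$, and the central subalgebra $L^\infty(G^0,\mathcal{N})$ now gives
\[
L^\infty(G^0,\mathcal{M})^{[\alpha]|_\Gamma}\subseteq L^\infty(G^0,\mathcal{N})\ \Longleftrightarrow\ [\kappa_0^{\alpha,\mathcal{N}}]|_\Gamma\text{ is ergodic},
\]
which is precisely $(3)\Leftrightarrow (6)$, completing the chain of equivalences.

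The only nontrivial ingredient is the bridge $(3)\Leftrightarrow (6)$, and even there no obstacle is anticipated: Lemma~\ref{Lemma:fixed-point-koopman} is a statement about countable group actions, so the main conceptual move is simply to pass from $G$ to a countable cover $\Gamma\subseteq [G]$ (which exists by ergodicity), after which the argument becomes a bookkeeping identification of the groupoid Koopman representation with the group-theoretic Koopman representation as recorded in Remark~\ref{Remark:koopman}.
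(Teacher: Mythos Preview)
Your proposal is correct and follows essentially the same approach as the paper: the paper uses Lemma~\ref{Lemma:fixed-point-action} for $(1)\Leftrightarrow(2)\Leftrightarrow(3)$, Lemma~\ref{Lemma:fixed-point-representation} for $(4)\Leftrightarrow(5)\Leftrightarrow(6)$, and bridges the two chains via Lemma~\ref{Lemma:fixed-point-koopman} together with the identification in Remark~\ref{Remark:koopman}. The only cosmetic difference is that the paper phrases the bridge as $(2)\Leftrightarrow(5)$ rather than your $(3)\Leftrightarrow(6)$, but the argument is the same.
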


\begin{proof}
The equivalences (1)$\Leftrightarrow $(2)$\Leftrightarrow $(3) follow from
Lemma \ref{Lemma:fixed-point-action}. The equivalences (4)$\Leftrightarrow $%
(5)$\Leftrightarrow $(6) follow from Lemma \ref%
{Lemma:fixed-point-representation}. Finally, the equivalence (2)$%
\Leftrightarrow $(5) follows from Lemma \ref{Lemma:fixed-point-koopman},
since for a countable subgroup $\Gamma \subseteq \lbrack G]$ one can
identify $[\kappa ^{\alpha ,\mathcal{N}}]|_{\Gamma }$ with $\kappa ^{\lbrack
\alpha ]|_{\Gamma },L^{\infty }(G^{0},\mathcal{N})}$; see Remark \ref%
{Remark:koopman}.
\end{proof}

\begin{corollary}
\label{Corollary:ergodic} Let $\alpha $ be an action of an ergodic discrete
pmp groupoid $G$ on a tracial von Neumann bundle $( \mathcal{M},\tau ) $
over $G^0$. Then the following assertions are equivalent:

\begin{enumerate}
\item $\alpha $ is ergodic;

\item $[\alpha ]$ is ergodic;

\item for every countable subgroup $\Gamma $ of $[ G ] $ that covers $G$,
the action $[ \alpha ] |_{\Gamma }$ is ergodic;

\item for some countable subgroup $\Gamma $ of $[ G ] $ that covers $G$, the
action $[ \alpha ] |_{\Gamma }$ is ergodic.

\item the representation $[ \kappa _{0}^{\alpha } ] $ is ergodic;

\item for every countable subgroup $\Gamma $ of $[ G ] $ that covers $G$,
the representation $[ \kappa _{0}^{\alpha } ] |_{\Gamma }$ is ergodic;

\item for some countable subgroup $\Gamma $ of $[ G ] $ that covers $G$, the
representation $[ \kappa _{0}^{\alpha } ] |_{\Gamma }$ is ergodic.
\end{enumerate}
\end{corollary}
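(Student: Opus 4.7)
The plan is to specialize Lemma~\ref{Lemma:ergodic} to the $\alpha$-invariant central sub-bundle $\mathcal{N} = \bigsqcup_{x \in G^0} \mathbb{C}\,1_x$ of $\mathcal{M}$. With this choice, one has $L^\infty(G^0, \mathcal{N}) = L^\infty(G^0)$, the fiberwise conditional expectation $E_{\mathcal{N}_x}$ coincides with $\tau_x$, and therefore the relative reduced Koopman representation $\kappa_0^{\alpha, \mathcal{N}}$ is literally equal to $\kappa_0^\alpha$. Under this identification, Lemma~\ref{Lemma:ergodic} immediately yields the equivalence of the following six statements: (a) $L^\infty(G^0, \mathcal{M})^\alpha \subseteq L^\infty(G^0)$; (b)--(c) the analogous containment $L^\infty(G^0, \mathcal{M})^{[\alpha]|_\Gamma} \subseteq L^\infty(G^0)$ for every (resp.\ some) countable subgroup $\Gamma \subseteq [G]$ that covers $G$; (d) $\kappa_0^\alpha$ is ergodic as a groupoid representation; and (e)--(f) $[\kappa_0^\alpha]|_\Gamma$ is ergodic for every (resp.\ some) such $\Gamma$.

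Statement (a) is precisely the defining condition for ergodicity of $\alpha$ from Definition~\ref{Definition:ergodic-rep}, so it matches condition~(1) of the corollary. Moreover, by Lemma~\ref{Lemma:fixed-point-representation} applied to $\kappa_0^\alpha$, the space of $\kappa_0^\alpha$-invariant vectors (as a groupoid representation) coincides with the space of $[\kappa_0^\alpha]$-fixed vectors and with the space of $[\kappa_0^\alpha]|_\Gamma$-fixed vectors for any countable covering $\Gamma \subseteq [G]$; hence (d), (e), (f) correspond exactly to conditions~(5), (6), (7) of the corollary.

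What remains is to bridge (a), (b), (c), which bound the relevant fixed-point algebras by $L^\infty(G^0)$, with the stronger conditions (2), (3), (4), which assert that these algebras equal $\mathbb{C}\,1$. This is where ergodicity of the groupoid $G$ itself enters. Given a countable covering subgroup $\Gamma \subseteq [G]$ and $f \in L^\infty(G^0)$ fixed by $[\alpha]|_\Gamma$, the fact that each $\alpha_\gamma$ is trace-preserving and so acts trivially on the scalars $\mathcal{N}_x = \mathbb{C}\,1_x$ gives $[\alpha]_t(f)(x) = f(s(xt))\,1_x$, whence $f(s(xt)) = f(x)$ for every $t \in \Gamma$ and almost every $x$. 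Because $\Gamma$ covers $G$, this yields $f(s(\gamma)) = f(r(\gamma))$ for almost every $\gamma \in G$, that is, $f$ is essentially constant along $G$-orbits; ergodicity of $G$ then forces $f \in \mathbb{C}\,1$. The reverse inclusion $\mathbb{C}\,1 \subseteq L^\infty(G^0)$ being trivial, we obtain the equivalences (a)$\Leftrightarrow$(2), (b)$\Leftrightarrow$(3), and (c)$\Leftrightarrow$(4), completing the chain of seven equivalences.

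No substantive obstacle is expected: the entire argument reduces to (i) specializing Lemma~\ref{Lemma:ergodic} to the trivial sub-bundle of fiberwise scalars, (ii) invoking Lemma~\ref{Lemma:fixed-point-representation} to identify the various ergodicity notions for $\kappa_0^\alpha$, and (iii) using ergodicity of $G$ to promote pointwise-scalar fixed functions to globally scalar ones. The only point requiring a little care is the explicit calculation of $[\alpha]_t(f)$ for $f \in L^\infty(G^0)$, which is immediate once one recalls that the $\alpha_\gamma$ preserve the trace and thus the scalars.
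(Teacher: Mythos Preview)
Your proposal is correct and matches the paper's intent: the corollary is stated without proof, immediately after Lemma~\ref{Lemma:ergodic}, precisely because it is the specialization of that lemma to the trivial sub-bundle $\mathcal{N}$, together with the observation (requiring ergodicity of $G$) that an $[\alpha]|_\Gamma$-fixed element of $L^\infty(G^0)$ must be scalar. Your identification of conditions (5)--(7) with item~(4) of Lemma~\ref{Lemma:ergodic} via Lemma~\ref{Lemma:fixed-point-representation}, and your bridge from the containment $L^\infty(G^0,\mathcal{M})^{[\alpha]|_\Gamma}\subseteq L^\infty(G^0)$ to ergodicity of $[\alpha]|_\Gamma$, are exactly the missing steps the paper leaves implicit.
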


\begin{lemma}
\label{Lemma:restruction-ergodic} Let $G$ be an ergodic discrete pmp
groupoid, let $A\subseteq G^{0}$ be a non-null Borel subset, and let $\pi
\colon G\to U(\mathcal{H})$ be a representation. Then $\pi $ is ergodic if
and only if $\pi |_{AGA}$ is ergodic.
\end{lemma}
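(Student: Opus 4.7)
The plan is to work directly with $L^{2}$-sections and transfer invariant sections between $G^{0}$ and $A$. The key observation throughout is that ergodicity of $G$ forces any $\pi$-invariant section to have a.e.\ constant norm, because $x\mapsto \|\xi_x\|$ is invariant under the orbit equivalence relation $E_G$ when $\xi$ is $\pi$-invariant.

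For the easier (``if'') direction, I would assume that $\pi|_{AGA}$ is ergodic and let $\xi\in L^{2}(G^{0},\mathcal{H})$ be a $\pi$-invariant section. The constant-norm remark shows $\|\xi_x\|\equiv c$ a.e.\ in $G^{0}$; if $\xi\neq 0$ then $c>0$, and since $A$ is non-null the restriction $\xi|_{A}$ is a nonzero element of $L^{2}(A,\mathcal{H}|_{A})$ that is manifestly $\pi|_{AGA}$-invariant, contradicting the hypothesis.

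For the (``only if'') direction, suppose $\pi$ is ergodic and let $\xi\in L^{2}(A,\mathcal{H}|_{A})$ be $\pi|_{AGA}$-invariant. The idea is to extend $\xi$ to an invariant section on $G^{0}$. Ergodicity of $G$ together with non-nullness of $A$ implies that the $G$-saturation of $A$ is conull, so by the Borel selection theorem for countable-fiber Borel relations (mentioned just before Section~\ref{Section:free}) there exists a Borel map $x\mapsto \gamma_{x}$, defined on a conull subset of $G^{0}$, with $s(\gamma_{x})=x$ and $r(\gamma_{x})\in A$. Set $\widetilde{\xi}_{x}=\pi_{\gamma_{x}^{-1}}(\xi_{r(\gamma_{x})})$. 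Two short verifications will be needed: (i) independence of $\widetilde{\xi}_{x}$ from the choice of $\gamma_{x}$, using that if $\gamma,\gamma'\in Gx$ both land in $A$ then $\gamma'\gamma^{-1}\in AGA$ and the invariance of $\xi$ yields $\pi_{\gamma^{-1}}(\xi_{r(\gamma)})=\pi_{\gamma'^{-1}}(\xi_{r(\gamma')})$; and (ii) $\pi$-invariance of $\widetilde{\xi}$, using that for $\rho\in G$ the arrow $\gamma_{r(\rho)}\rho\gamma_{s(\rho)}^{-1}$ lies in $AGA$, so invariance of $\xi$ translates into $\pi_{\rho}(\widetilde{\xi}_{s(\rho)})=\widetilde{\xi}_{r(\rho)}$.

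It remains to verify that $\widetilde{\xi}\in L^{2}(G^{0},\mathcal{H})$ and that it is nonzero whenever $\xi$ is. Here I would use that the reduction $G|_{A}=AGA$ is itself ergodic (any $G|_{A}$-invariant Borel subset of $A$ has $G$-invariant saturation in $G^{0}$, hence is conull in $A$ by ergodicity of $G$). Applying the constant-norm principle to $\pi|_{AGA}$ gives $\|\xi_{y}\|\equiv c$ on $A$, and thus $\|\widetilde{\xi}_{x}\|=c$ for a.e.\ $x\in G^{0}$, so $\widetilde{\xi}$ is square-integrable and nonzero whenever $\xi$ is. This contradicts ergodicity of $\pi$ and completes the proof. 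The main technical obstacle is arranging the Borel selection $x\mapsto \gamma_{x}$ and verifying the well-definedness argument modulo null sets; everything else is a direct computation from the definitions.
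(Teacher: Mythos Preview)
Your proof is correct and follows essentially the same strategy as the paper: both directions amount to transporting an invariant section between $A$ and $G^{0}$ along arrows of $G$, and the paper likewise proves the ``only if'' direction by contrapositive, extending a nonzero $AGA$-invariant section to all of $G^{0}$. The only cosmetic difference is that the paper packages the transport using finitely many partial bisections $\sigma_{0},\ldots,\sigma_{n}\in[[G]]$ with source $A$ whose ranges partition $G^{0}$, whereas you use a single Borel selection $x\mapsto\gamma_{x}$ and then verify well-definedness; your version is arguably cleaner since it avoids any implicit constraint on $\mu(A)$, and your explicit constant-norm argument for the ``if'' direction fills in what the paper dismisses as obvious.
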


\begin{proof}
Since the \textquotedblleft if" implication is obvious, we show the
\textquotedblleft only if" direction. We prove the contrapositive. Suppose
that $\pi |_{AGA}$ is not ergodic. Then there exists an $AGA$-invariant unit
section $\xi $ for $\mathcal{H}|_{A}=\bigsqcup_{x\in A}\mathcal{H}_{x}$.
Choose $\sigma _{1},\ldots ,\sigma _{n}\in \lbrack \lbrack G]]$ such that $%
\sigma _{i}^{-1}\sigma _{i}=A=\sigma _{0}$ for $i=0,1,\ldots ,n$, and $%
(\sigma _{i}\sigma _{i}^{-1})_{i=1}^{n}$ is a partition of $G^{0}$. Define $%
\eta \in \mathcal{H}$ by setting $\eta _{r(x\sigma _{i})}=\pi _{x\sigma
_{i}}(\xi _{s(x\sigma _{i})})$ for $i=0,1,\ldots ,n$ and $x\in \sigma
_{i}\sigma _{i}^{-1}$.

We claim that $\eta $ is a $G$-invariant (unit) section. Indeed, for $\gamma
\in G$ with $s(\gamma )\in \sigma _{i}\sigma _{i}^{-1}$ and $r(\gamma )\in
\sigma _{j}\sigma _{j}^{-1}$, we use $AGA$-invariance of $\xi $ to get
\begin{equation*}
\pi _{\sigma _{j}^{-1}r(\gamma )}\pi _{\gamma }\pi _{s(\gamma )\sigma
_{i}}(\xi _{s(s(\gamma )\sigma _{i})})=\pi _{\sigma _{j}\gamma \sigma
_{i}}(\xi _{s(\sigma _{j}^{-1}\gamma \sigma _{i})})=\xi _{r(\sigma
_{j}^{-1}\gamma \sigma _{i})}=\xi _{s(r(\gamma )\sigma _{j})}\text{.}
\end{equation*}%
Therefore%
\begin{equation*}
\pi _{\gamma }(\eta _{s(\gamma )})=\pi _{\gamma }\pi _{s(\gamma )\sigma
_{i}}(\xi _{s(s(\gamma )\sigma _{i})})=\pi _{r(\gamma )\sigma _{j}}(\xi
_{s(r(\gamma )\sigma _{j})})=\eta _{r(\gamma )}\text{.}
\end{equation*}%
This shows that $\eta $ is a $G$-invariant unit section, and hence $\pi $ is
not ergodic.
\end{proof}

\subsection{Weak mixing actions\label{Subsection:weak-mixing}}

Recall that a representation $\pi \colon \Gamma \rightarrow U(H)$ of a
Polish group $\Gamma $ is \emph{weak mixing }if for every $\varepsilon >0$,
every $n\in \mathbb{N}$, and every $\xi _{1},\ldots ,\xi _{n}\in H$, there
exists $\gamma \in \Gamma $ such that $|\langle \xi _{i},\pi _{\gamma }(\xi
_{j})\rangle |<\varepsilon $ for $i,j=1,2,\ldots ,n$. Also recall that an
action $\alpha \colon \Gamma \rightarrow \mathrm{Aut}(M,\tau )$ on a tracial
von Neumann algebra is said to be \emph{weak mixing} if for any $\varepsilon
>0$, every $n\in \mathbb{N}$, and every $a_{1},\ldots ,a_{n}\in M$, there
exists $\gamma \in \Gamma $ such that $|\tau (a_{i}\alpha _{\gamma
}(a_{j}))-\tau (a_{i})\tau (a_{j})|<\varepsilon $ for $i,j=1,\ldots ,n$.
More generally, given an $\alpha $-invariant central subalgebra $N$ on $M$
with trace-preserving conditional expectation $E_{N}\colon M\rightarrow N$,
the action $\alpha $ is said to be \emph{weak mixing relatively to }$N$ if
for every $\varepsilon >0$, every $n\in \mathbb{N}$, and every $a_{1},\ldots
,a_{n}\in M$, there exists $\gamma \in \Gamma $ such that $\Vert
E_{N}(a_{i}\alpha _{\gamma }(a_{j}))-E_{N}(a_{i})E_{N}(a_{j})\Vert
_{2}<\varepsilon $ for $i,j=1,\ldots ,n$; see \cite[Lemma 2.11]%
{popa_cocycle_2007}.

\begin{definition}
\label{Definition:wm-rep} Let $G$ be an ergodic discrete pmp groupoid, let $%
\mathcal{H}$ be a Hilbert bundle over $G^{0}$, and let $( \mathcal{M},\tau )
$ be a tracial von Neumann bundle over $\mathcal{H}$. Let $\alpha \colon G\to%
\mathrm{Aut} ( \mathcal{M},\tau ) $ be an action, and let $( \mathcal{N}%
,\tau ) $ be an $\alpha $-invariant central sub-bundle. We say that

\begin{itemize}
\item $\alpha $ is \emph{weak mixing relative to $\mathcal{N} $} if the
action $[ \alpha ] \colon [ G ] \to \mathrm{Aut}(L^{\infty } ( G^{0},%
\mathcal{M} )) $ is weak mixing relative to $L^{\infty } ( G^{0},\mathcal{N}
) $;

\item the groupoid $G$ is \emph{weak mixing} if the canonical action of $[G]$
on $G^{0}$ is weak mixing.
\end{itemize}
\end{definition}

\begin{lemma}
\label{Lemma:wm} Let $G$ be an ergodic discrete pmp groupoid, let $\mathcal{H%
}$ is a Hilbert bundle over $G^{0}$, and let $(\mathcal{M},\tau )$ be a
tracial von Neumann bundle over $\mathcal{H}$. Let $\alpha \colon
G\rightarrow \mathrm{Aut}(\mathcal{M},\tau )$ be an action, and let $(%
\mathcal{N},\tau )$ be an $\alpha $-invariant central sub-bundle. Then the
following assertions are equivalent:

\begin{enumerate}
\item $\kappa _{0}^{\alpha ,\mathcal{N}}$ is weak mixing;

\item $L^{\infty }(G^{0},\mathcal{M}\otimes _{\mathcal{N}}\mathcal{M}%
)^{\alpha \otimes _{\mathcal{N}}\alpha }$ is contained in $L^{\infty }(G^{0},%
\mathcal{N})$;

\item for some countable subgroup $\Gamma \subseteq \lbrack G]$ that covers $%
G$, the action $[\alpha ]|_{\Gamma }$ is weak mixing relatively to $%
L^{\infty }(G^{0},\mathcal{N})$;

\item for every countable subgroup $\Gamma \subseteq \lbrack G]$ that covers
$G$, the action $[\alpha ]|_{\Gamma }$ is weak mixing relative to $L^{\infty
}(G^{0},\mathcal{N})$;

\item $\alpha $ is weak mixing relative to $(\mathcal{N},\tau )$;

\item for every action $\beta $ of $G$ on a tracial von Neumann bundle $(%
\mathcal{M}^{\prime },\tau ^{\prime })$ containing $(\mathcal{N},\tau )$ as
a $\beta $-invariant central sub-bundle, $L^{\infty }(G^{0},\mathcal{M}%
\otimes _{\mathcal{N}}\mathcal{M}^{\prime })^{\alpha \otimes \beta }$ is
contained in $L^{\infty }(G^{0},\mathcal{N}\otimes _{\mathcal{N}}\mathcal{M}%
^{\prime })^{\alpha \otimes \beta }$.
\end{enumerate}
\end{lemma}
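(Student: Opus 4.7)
The plan is to route all equivalences through condition (1) via Remark~\ref{Remark:koopman}, which identifies the induced group representation $[\kappa_0^{\alpha,\mathcal{N}}]$ with the Koopman representation $\kappa_0^{[\alpha],L^\infty(G^0,\mathcal{N})}$ of $[G]$. I would establish the equivalences in the order (1)$\Leftrightarrow$(5), (1)$\Leftrightarrow$(3)$\Leftrightarrow$(4), (1)$\Leftrightarrow$(2), and (1)$\Leftrightarrow$(6), using the general machinery developed in Corollary~\ref{Corollary:wm-rep}, Lemma~\ref{Lemma:koopman}, and Lemma~\ref{Lemma:ergodic}.

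For (1)$\Leftrightarrow$(5), I would combine the standard Koopman characterization of relative weak mixing for group actions (see \cite[Lemma~2.11]{popa_cocycle_2007}) with Remark~\ref{Remark:koopman} and the observation following Definition~\ref{df:WMErgRep}: weak mixing of $[\alpha]$ relative to $L^\infty(G^0,\mathcal{N})$ is equivalent to weak mixing of $\kappa_0^{[\alpha],L^\infty(G^0,\mathcal{N})}\cong[\kappa_0^{\alpha,\mathcal{N}}]$, which in turn matches weak mixing of the groupoid representation $\kappa_0^{\alpha,\mathcal{N}}$. For (3)$\Leftrightarrow$(4)$\Leftrightarrow$(1), I would apply Corollary~\ref{Corollary:wm-rep} together with Lemma~\ref{Lemma:fixed-point-representation} to transfer ergodicity of $\pi\otimes\overline{\pi}$, with $\pi=\kappa_0^{\alpha,\mathcal{N}}$, between $[G]$ and any countable subgroup $\Gamma\subseteq[G]$ covering $G$.

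For (1)$\Leftrightarrow$(2), I would use Lemma~\ref{Lemma:ergodic} to reformulate (2) as ergodicity of $\kappa_0^{\alpha\otimes_\mathcal{N}\alpha,\mathcal{N}}$, which by Lemma~\ref{Lemma:koopman}(2) splits as $(\kappa_0^{\alpha,\mathcal{N}}\otimes\kappa_0^{\alpha,\mathcal{N}})\oplus\kappa_0^{\alpha,\mathcal{N}}\oplus\kappa_0^{\alpha,\mathcal{N}}$. By Lemma~\ref{Lemma:koopman}(1), $\kappa_0^{\alpha,\mathcal{N}}\otimes\kappa_0^{\alpha,\mathcal{N}}$ is unitarily conjugate to $\kappa_0^{\alpha,\mathcal{N}}\otimes\overline{\kappa_0^{\alpha,\mathcal{N}}}$, whose ergodicity by Corollary~\ref{Corollary:wm-rep} is precisely condition (1). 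Weak mixing then automatically forces ergodicity of the remaining two summands, since any invariant vector would yield a one-dimensional subrepresentation contradicting part~(3) of Proposition~\ref{Proposition:equivalences-groupoid-rep}.

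For (1)$\Leftrightarrow$(6), the forward implication follows from Corollary~\ref{Corollary:wm-rep}: weak mixing of $\kappa_0^{\alpha,\mathcal{N}}$ forces $\kappa_0^{\alpha,\mathcal{N}}\otimes\kappa^{\beta,\mathcal{N}}$ to be ergodic for every $\beta$. Combining this with a direct generalization of Lemma~\ref{Lemma:koopman}(2) that produces the $G$-equivariant splitting $L^2(\mathcal{M}\otimes_\mathcal{N}\mathcal{M}',E_\mathcal{N})\cong ((L^2(\mathcal{M},E_\mathcal{N})\cap\mathcal{N}^\perp)\otimes L^2(\mathcal{M}',E_\mathcal{N}))\oplus L^2(\mathcal{N}\otimes_\mathcal{N}\mathcal{M}',E_\mathcal{N})$, any fixed point of $\alpha\otimes\beta$ must lie in the second summand, yielding (6). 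For the converse, I would apply (6) with $(\mathcal{M}',\beta)=(\mathcal{M},\alpha)$ and invoke the flip automorphism of $\mathcal{M}\otimes_\mathcal{N}\mathcal{M}$: any $\alpha\otimes_\mathcal{N}\alpha$-invariant element must then belong to both $\mathcal{N}\otimes_\mathcal{N}\mathcal{M}$ and $\mathcal{M}\otimes_\mathcal{N}\mathcal{N}$, whose intersection (as verified via a direct integral decomposition over $\mathcal{N}$) equals $\mathcal{N}$, giving (2) and hence (1). The main obstacle is carrying out the decomposition for (6) rigorously in the setting of relative tensor products over the nontrivial central sub-bundle $\mathcal{N}$, ensuring that the module-theoretic constructions of Subsection~\ref{Subsection:tensor} produce a genuine $G$-equivariant splitting; once that is in place, all other steps are direct applications of the machinery already developed.
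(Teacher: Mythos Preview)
Your approach is correct and substantially overlaps with the paper's, particularly for (1)$\Leftrightarrow$(2), which both of you prove via Lemma~\ref{Lemma:koopman}(2), Corollary~\ref{Corollary:wm-rep}, and Lemma~\ref{Lemma:ergodic}. The paper's route through the remaining equivalences is more economical: it observes that (4)$\Rightarrow$(5) and (6)$\Rightarrow$(2) are obvious (the latter implicitly via the flip, which you spell out explicitly), and for (5)$\Rightarrow$(6) it simply invokes the group-action result \cite[Proposition~2.4.2]{popa_some_2006} applied to the induced $[G]$-action. Your route to (6) is different: you prove (1)$\Rightarrow$(6) directly by establishing the $G$-equivariant splitting
\[
L^{2}(\mathcal{M}\otimes_{\mathcal{N}}\mathcal{M}',E_{\mathcal{N}})\cong \bigl((L^{2}(\mathcal{M},E_{\mathcal{N}})\cap\mathcal{N}^{\perp})\otimes L^{2}(\mathcal{M}',E_{\mathcal{N}})\bigr)\oplus L^{2}(\mathcal{N}\otimes_{\mathcal{N}}\mathcal{M}',E_{\mathcal{N}})
\]
and then applying Corollary~\ref{Corollary:wm-rep} to the first summand. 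This is a genuine alternative that keeps the argument entirely within the groupoid representation framework rather than reducing to Popa's group-level statement; the price is the extra work on the relative-tensor splitting, which you correctly flag as the main technical point. The paper's citation is shorter, but your version is more self-contained and better matched to the bundle machinery developed in Section~\ref{Section:actions}.
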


\begin{proof}
(1)$\Rightarrow $(2) Suppose that $\kappa _{0}^{\alpha ,\mathcal{N}}$ is
weak mixing. Then $\kappa _{0}^{\alpha \otimes _{\mathcal{N}}\alpha ,%
\mathcal{N}}$ is isomorphic to $(\kappa _{0}^{\alpha ,\mathcal{N}}\otimes
\kappa _{0}^{\alpha ,\mathcal{N}})\oplus \kappa _{0}^{\alpha ,\mathcal{N}%
}\oplus \kappa _{0}^{\alpha ,\mathcal{N}}$ by part~(2) of Lemma~\ref%
{Lemma:koopman}. Furthermore, $\kappa _{0}^{\alpha ,\mathcal{N}}\otimes
\kappa _{0}^{\alpha ,\mathcal{N}}$ is ergodic by Corollary \ref%
{Corollary:wm-rep}, so the conclusion then follows from Lemma \ref%
{Lemma:ergodic}.

(2)$\Rightarrow $(1) By Lemma \ref{Lemma:ergodic}, $\kappa _{0}^{\alpha
\otimes _{\mathcal{N}}\alpha ,\mathcal{N}}$ is ergodic. Since $\kappa
_{0}^{\alpha \otimes _{\mathcal{N}}\alpha ,\mathcal{N}}$ is conjugate to $%
(\kappa _{0}^{\alpha ,\mathcal{N}}\otimes \overline{\kappa }_{0}^{\alpha ,%
\mathcal{N}})\oplus \kappa _{0}^{\alpha ,\mathcal{N}}\oplus \kappa
_{0}^{\alpha ,\mathcal{N}}$ by part~(2) of Lemma~\ref{Lemma:koopman}, we
conclude that $\kappa _{0}^{\alpha ,\mathcal{N}}\otimes \overline{\kappa }%
_{0}^{\alpha ,\mathcal{N}}$ is ergodic. Hence $\kappa _{0}^{\alpha ,\mathcal{%
N}}$ is weak mixing by Corollary \ref{Corollary:wm-rep}.

(1)$\Leftrightarrow $(3)$\Leftrightarrow $(4): This follows from the
equivalence (1)$\Leftrightarrow $(2) together with the equivalence of items
(1),(2),(3) in Lemma \ref{Lemma:ergodic}.

(5)$\Rightarrow $(6) This is the same as (i)$\Rightarrow $(ii) in \cite[%
Proposition 2.4.2]{popa_some_2006}.

The implications (4)$\Rightarrow $(5) and (6)$\Rightarrow $(2) are obvious.
\end{proof}

\begin{corollary}
\label{Corollary:wm} Let $\alpha $ be an action of an ergodic discrete pmp
groupoid $G$ on a tracial von Neumann bundle $( \mathcal{M},\tau ) $ over $%
G^0$. The following assertions are equivalent:

\begin{enumerate}
\item $\kappa _{0}^{\alpha }$ is weak mixing;

\item $\alpha \otimes \alpha $ is ergodic;

\item For every countable subgroup $\Gamma $ of $[G]$ that covers $G$, the
action $[\alpha ]|_{\Gamma }$ is weak mixing relatively to $L^{\infty
}\left( G^{0}\right) $;

\item For some subgroup $\Gamma $ of $[G]$ that covers $G$, the action $%
[\alpha ]|_{\Gamma }$ is weak mixing relatively to $L^{\infty }\left(
G^{0}\right) $;

\item $\alpha $ is weak mixing relative to the trivial sub-bundle;

\item $L^{\infty }(G^{0},\mathcal{M}\otimes \mathcal{N)}^{\alpha \otimes
\beta }=1\otimes L^{\infty }(G^{0},\mathcal{N})^{\beta }$ for every action $%
\beta $ of $G$ on a tracial von Neumann bundle $\mathcal{N}$;

\item $\alpha \otimes \beta $ is ergodic for every ergodic action $\beta $
of $G$ on a tracial von Neumann bundle.
\end{enumerate}
\end{corollary}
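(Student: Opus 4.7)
The plan is to specialize Lemma~\ref{Lemma:wm} to the case where $(\mathcal{N},\tau)$ is the trivial central sub-bundle $\bigsqcup_{x\in G^{0}}\mathbb{C}1_{x}$ of $(\mathcal{M},\tau)$. Under this identification, $L^{\infty}(G^{0},\mathcal{N})=L^{\infty}(G^{0})$, the conditional expectation $E_{\mathcal{N}_{x}}$ coincides fiberwise with the trace $\tau_{x}$, the tensor product $\otimes_{\mathcal{N}}$ reduces to the ordinary tensor product of tracial von Neumann bundles, and $\kappa_{0}^{\alpha,\mathcal{N}}=\kappa_{0}^{\alpha}$.

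With this specialization, items (1), (3), (4), (5), (6) of the corollary correspond respectively to items (1), (4), (3), (5), (6) of Lemma~\ref{Lemma:wm} (with $\mathcal{M}'$ in Lemma~\ref{Lemma:wm}(6) playing the role of $\mathcal{N}$ in the corollary, and using the natural identification $\mathcal{N}\otimes_{\mathcal{N}}\mathcal{M}'\cong\mathcal{M}'$ to rewrite the containment in (6) of the lemma as the equality in (6) of the corollary). The only translation requiring a moment of thought is item~(2): condition~(2) of Lemma~\ref{Lemma:wm} specializes to $L^{\infty}(G^{0},\mathcal{M}\otimes\mathcal{M})^{\alpha\otimes\alpha}\subseteq L^{\infty}(G^{0})$, which by Definition~\ref{Definition:ergodic-rep} says exactly that $\alpha\otimes\alpha$ is ergodic. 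This establishes the equivalence of (1)--(6).

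It remains to fold item~(7) into the chain. For (6)$\Rightarrow$(7), let $\beta$ be an ergodic action of $G$ on a tracial von Neumann bundle $\mathcal{N}$. By Definition~\ref{Definition:ergodic-rep}, any $\beta$-invariant element $a\in L^{\infty}(G^{0},\mathcal{N})$ has $a_{x}\in\mathbb{C}1_{x}$ for almost every $x\in G^{0}$, and writing $a_{x}=f(x)\cdot 1_{x}$ one sees that $f$ is essentially constant on orbits of the orbit equivalence relation of $G$. Since $G$ is ergodic, $f$ is essentially constant, so $L^{\infty}(G^{0},\mathcal{N})^{\beta}=\mathbb{C}$. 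Item~(6) then yields $L^{\infty}(G^{0},\mathcal{M}\otimes\mathcal{N})^{\alpha\otimes\beta}=\mathbb{C}$, i.e.\ $\alpha\otimes\beta$ is ergodic.

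For (7)$\Rightarrow$(2), first apply (7) with $\beta$ the canonical action of $G$ on the trivial bundle $\bigsqcup_{x\in G^{0}}\mathbb{C}1_{x}$, which is ergodic because $G$ is; since $\mathcal{M}\otimes\mathcal{N}\cong\mathcal{M}$ and $\alpha\otimes\beta\cong\alpha$ in this case, we obtain ergodicity of $\alpha$. A second application of (7) with $\beta=\alpha$ then yields ergodicity of $\alpha\otimes\alpha$, which is item~(2). The only real subtlety throughout is keeping track of the canonical identifications involved in taking $\mathcal{N}$ to be the trivial sub-bundle; beyond that, the argument is a direct transcription of Lemma~\ref{Lemma:wm}.
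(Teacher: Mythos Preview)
Your proof is correct and follows exactly the approach of the paper: specialize Lemma~\ref{Lemma:wm} by taking $\mathcal{N}$ to be the trivial sub-bundle. The paper's own proof is a one-line reference to that specialization, so you have in fact supplied more detail than the paper---in particular, your explicit treatment of item~(7), which has no direct counterpart among the six items of Lemma~\ref{Lemma:wm}, fills in the short chain $(6)\Rightarrow(7)\Rightarrow(2)$ that the paper leaves implicit.
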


\begin{proof}
This immediately follows from Lemma \ref{Lemma:wm}, after observing that,
for any action $\beta $ of $G$, $\alpha \otimes \beta $ is equal, by
definition, to $\alpha \otimes _{\mathcal{N}}\beta $ where $\mathcal{N}$ is
the trivial sub-bundle of $( \mathcal{M},\tau ) $.
\end{proof}

\begin{corollary}
\label{Corollary:wm-infinite-tensor} Let $(\alpha ^{(n)})_{n\in \mathbb{N}}$
be a sequence of actions of an ergodic discrete pmp groupoid on tracial von
Neumann bundles $(\mathcal{M}^{(n)},\tau ^{(n)})_{n\in \mathbb{N}}$, each of
which is weak mixing relative to the trivial sub-bundle. Then the action $%
\bigotimes_{n}\alpha ^{(n)}$ on $(\bigotimes_{n}\mathcal{M}%
^{(n)},\bigotimes_{n}\tau ^{(n)})$ is weak mixing relative to the trivial
sub-bundle.
\end{corollary}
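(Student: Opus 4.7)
The plan is to first handle finite tensor products by induction, using the characterization of weak mixing in terms of ergodicity of tensor products from Corollary~\ref{Corollary:wm}(7), and then pass to the infinite case via a standard density argument in the $2$-norm.

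\textbf{Step 1 (Finite tensor products).} I claim that if $\alpha$ and $\beta$ are actions of $G$ on tracial von Neumann bundles that are weak mixing relative to the trivial sub-bundle, then so is $\alpha\otimes\beta$. By Corollary~\ref{Corollary:wm}(7), it suffices to show that $(\alpha\otimes\beta)\otimes\gamma$ is ergodic for every ergodic action $\gamma$ of $G$. Associativity of $\otimes$ and another application of the same criterion to $\beta$ yield that $\beta\otimes\gamma$ is ergodic, and a further application to $\alpha$ shows that $\alpha\otimes(\beta\otimes\gamma)$ is ergodic. By induction, for every $N\in\mathbb{N}$ the finite tensor product $\alpha_N:=\bigotimes_{n\le N}\alpha^{(n)}$ on $\mathcal{M}_N:=\bigotimes_{n\le N}\mathcal{M}^{(n)}$ is weak mixing relative to the trivial sub-bundle.

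\textbf{Step 2 (Passage to the infinite tensor product).} Set $\mathcal{M}=\bigotimes_n\mathcal{M}^{(n)}$ and $\beta=\bigotimes_n\alpha^{(n)}$. By Corollary~\ref{Corollary:wm}(4), it is enough to fix some countable subgroup $\Gamma\subseteq[G]$ covering $G$ and show that $[\beta]|_\Gamma$ is weak mixing relative to $L^\infty(G^0)$. Fix $\varepsilon>0$ and elements $a_1,\dots,a_k\in L^\infty(G^0,\mathcal{M})$, which without loss of generality we take in the operator-norm unit ball. I view $\mathcal{M}_N$ as an $\alpha_N$-invariant unital subbundle of $\mathcal{M}$ via $a\mapsto a\otimes 1\otimes 1\otimes\cdots$, and I let $E_{\mathcal{M}_N}\colon\mathcal{M}\to\mathcal{M}_N$ denote the trace-preserving conditional expectation. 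By the defining property of the infinite tensor product with respect to the distinguished trace, $\bigcup_N L^\infty(G^0,\mathcal{M}_N)$ is $\|\cdot\|_2$-dense in $L^\infty(G^0,\mathcal{M})$ (apply $E_{\mathcal{M}_N}$ fiberwise and invoke dominated convergence). Hence, for $\delta>0$ to be chosen later, I can pick $N$ large enough and $b_i=E_{\mathcal{M}_N}(a_i)\in L^\infty(G^0,\mathcal{M}_N)$ with $\|a_i-b_i\|_2<\delta$ for $i=1,\dots,k$. Since $\alpha_N$ is weak mixing relative to the trivial sub-bundle (Step~1), Corollary~\ref{Corollary:wm}(4) provides $g\in\Gamma$ with
\[
\bigl|\tau\bigl(b_i^*[\alpha_N]_g(b_j)\bigr)-\tau(b_i)^*\tau(b_j)\bigr|<\varepsilon/3
\]
for all $i,j=1,\dots,k$. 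As $\beta$ restricts to $\alpha_N$ on $\mathcal{M}_N$ (and $\tau$ restricts to the trace on $\mathcal{M}_N$), $[\beta]_g(b_j)=[\alpha_N]_g(b_j)$ and $\tau(b_i)=\tau(a_i)-\tau(a_i-b_i)$. Combined with the Cauchy--Schwarz bound $|\tau(x^*y)|\le\|x\|_2\|y\|_2$ and the fact that $\|[\beta]_g\|$ is a trace-preserving isometry in $\|\cdot\|_2$, a routine triangle-inequality estimate gives
\[
\bigl|\tau\bigl(a_i^*[\beta]_g(a_j)\bigr)-\tau(a_i)^*\tau(a_j)\bigr|<\varepsilon
\]
for all $i,j$, provided $\delta$ is chosen sufficiently small (depending only on $\varepsilon$ and the uniform bound on $\|a_i\|$). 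This verifies Corollary~\ref{Corollary:wm}(4) for $\beta$ and completes the proof.

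The only nontrivial point is Step~1, which is essentially algebraic once one has the equivalence (7) of Corollary~\ref{Corollary:wm}; the rest is a straightforward density/approximation argument, standard in the theory of infinite tensor products of tracial von Neumann algebras.
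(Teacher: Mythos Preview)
Your overall strategy is correct and is essentially what the paper has in mind (the corollary is stated in the paper without proof, as an immediate consequence of Corollary~\ref{Corollary:wm}). Step~1 is exactly right: the implication ``$\alpha,\beta$ weak mixing $\Rightarrow$ $\alpha\otimes\beta$ weak mixing'' follows cleanly from criterion~(7) of Corollary~\ref{Corollary:wm} via associativity, and induction gives the finite case. The density argument in Step~2 is also the right idea.

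There is, however, a genuine slip in the displayed formulas of Step~2. You correctly state that you must show $[\beta]|_\Gamma$ is weak mixing \emph{relative to $L^\infty(G^0)$}, but the inequalities you then write down,
\[
\bigl|\tau\bigl(b_i^*[\alpha_N]_g(b_j)\bigr)-\tau(b_i)^*\tau(b_j)\bigr|<\varepsilon/3,
\]
are the condition for \emph{ordinary} weak mixing of $[\alpha_N]|_\Gamma$, not relative weak mixing. The correct estimate coming from Corollary~\ref{Corollary:wm}(3)/(4) is
\[
\bigl\| E_{L^\infty(G^0)}\bigl(b_i\,[\alpha_N]_g(b_j)\bigr)-E_{L^\infty(G^0)}(b_i)\,E_{L^\infty(G^0)}(b_j)\bigr\|_2<\varepsilon/3,
\]
and likewise for the final inequality involving the $a_i$. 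Fortunately your approximation argument survives this correction verbatim: $E_{L^\infty(G^0)}$ is a $\|\cdot\|_2$-contraction, $[\beta]_g$ is a $\|\cdot\|_2$-isometry, and $E_{L^\infty(G^0)}$ commutes with the embedding $L^\infty(G^0,\mathcal{M}_N)\hookrightarrow L^\infty(G^0,\mathcal{M})$, so the same triangle-inequality bookkeeping goes through. A minor related point: once $\Gamma$ has been fixed, you should invoke item~(3) (``for every $\Gamma$'') rather than item~(4) (``for some $\Gamma$'') when extracting $g\in\Gamma$ from the weak mixing of $\alpha_N$.
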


\begin{lemma}
\label{Lemma:bootstrap-weak-mixing}Suppose that $\alpha $ is an action of a
groupoid $G$ on a tracial von Neumann bundle $( \mathcal{M},\tau
) $.\ Let $\Gamma $ be a subgroup of $[ G] $ that covers $G$
such that the canonical action of $\Gamma $ on $G^{0}$ is weak mixing. If $%
\alpha $ is weak mixing relatively to the trivial sub-bundle, then the
action $[ \alpha ] |_{\Gamma }$ of $\Gamma $ on $L^{\infty
}( G^{0},\mathcal{M},\tau ) $ is weak mixing.
\end{lemma}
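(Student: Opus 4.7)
The plan is to establish weak mixing of $[\alpha]|_\Gamma$ on the tracial von Neumann algebra $L^\infty(G^0,\mathcal{M})$ by proving that its Koopman representation has no nonzero finite-dimensional $\Gamma$-subrepresentation on $L^2(G^0,\mathcal{M},\tau)\ominus\mathbb{C}$. Using the canonical identification $L^2(G^0,\mathcal{M},\tau)\cong L^2(G^0,L^2(\mathcal{M},\tau))$ together with the fiberwise splitting $L^2(\mathcal{M}_x,\tau_x)=\mathbb{C}|1_x\rangle\oplus\mathcal{H}_{0,x}$, where $\mathcal{H}_0$ denotes the sub-bundle on which $\kappa_0^\alpha$ acts, one obtains the $\Gamma$-invariant orthogonal decomposition
\[
L^2(G^0,\mathcal{M},\tau)\ominus\mathbb{C}\;=\;\bigl(L^2(G^0)\ominus\mathbb{C}\bigr)\oplus L^2(G^0,\mathcal{H}_0).
\]
Since the equivariant orthogonal projections carry any nonzero finite-dimensional $\Gamma$-invariant subspace to a nonzero such subspace in at least one summand, it suffices to verify that neither summand contains one. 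The first summand is the Koopman of $\Gamma\curvearrowright G^0$ with constants removed, and so contains no finite-dimensional subrepresentation by the weak mixing hypothesis on $\Gamma\curvearrowright G^0$.

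For the remaining summand I will show that $[\kappa_0^\alpha]|_\Gamma$ on $L^2(G^0,\mathcal{H}_0)$ is weak mixing, equivalently that $[\kappa_0^\alpha]|_\Gamma\otimes\overline{[\kappa_0^\alpha]|_\Gamma}$ has no nonzero invariant vector in the external tensor product $L^2(G^0,\mathcal{H}_0)\otimes\overline{L^2(G^0,\mathcal{H}_0)}$, which I identify with $L^2(G^0\times G^0,\mathcal{K})$ for $\mathcal{K}$ the bundle with fiber $\mathcal{H}_{0,x}\otimes\overline{\mathcal{H}_{0,y}}$ at $(x,y)$. Given an invariant $\xi=(\xi_{x,y})$, Remark \ref{rem:HS} lets me reinterpret $\xi_{x,y}$ as a Hilbert--Schmidt operator $T_{x,y}\colon\mathcal{H}_{0,y}\to\mathcal{H}_{0,x}$, for which invariance becomes $T_{x,y}=(\kappa_0^\alpha)_{xt}\,T_{s(xt),s(yt)}\,(\kappa_0^\alpha)_{yt}^{*}$ for every $t\in\Gamma$. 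Unitarity of $(\kappa_0^\alpha)_\gamma$ forces the function $(x,y)\mapsto\|T_{x,y}\|_{\mathrm{HS}}^{2}$ to be invariant under the diagonal $\Gamma$-action on $G^0\times G^0$; since $\Gamma\curvearrowright G^0$ is weak mixing, this diagonal action is ergodic, and the function equals a constant $c\geq 0$ almost everywhere.

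To conclude, I argue by contradiction, assuming $c>0$. Define $S_x:=\int_{G^0}T_{x,y}T_{x,y}^{*}\,d\mu_{G^0}(y)\in B(\mathcal{H}_{0,x})$, a positive trace-class operator with $\operatorname{tr}(S_x)=c$ and hence $\|S_x\|_{\mathrm{HS}}\leq c$, placing the section $S$ in $L^2(G^0,\mathcal{H}_0\otimes\overline{\mathcal{H}_0})$. Averaging the invariance identity for $T$ over $y$, and using that each $y\mapsto s(yt)$ is measure preserving, yields $S_x=(\kappa_0^\alpha)_{xt}\,S_{s(xt)}\,(\kappa_0^\alpha)_{xt}^{*}$ for every $t\in\Gamma$. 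Because $\Gamma$ covers $G$, Lemma \ref{Lemma:fixed-point-representation} upgrades this $\Gamma$-invariance to groupoid-level invariance of $S$ for the representation $\kappa_0^\alpha\otimes\overline{\kappa_0^\alpha}$. By hypothesis $\alpha$ is weak mixing relative to the trivial sub-bundle, so Lemma \ref{Lemma:wm} gives that $\kappa_0^\alpha$ is a weak mixing groupoid representation, whence $\kappa_0^\alpha\otimes\overline{\kappa_0^\alpha}$ is ergodic by Corollary \ref{Corollary:wm-rep}. Consequently $S=0$, contradicting $\operatorname{tr}(S_x)=c>0$ and forcing $c=0$, $T\equiv 0$, and $\xi=0$.

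The hard step, as I see it, is orchestrating the passage from $\Gamma$-invariance of the field $\xi$ to genuine groupoid invariance of the averaged operator field $S$; this rests on $\Gamma$ covering $G$ and on the measure-preserving nature of the bisections, and crucially on the uniform bound $\|S_x\|_{\mathrm{HS}}\leq c$ which ensures that $S$ lies in the $L^2$-section space where ergodicity of $\kappa_0^\alpha\otimes\overline{\kappa_0^\alpha}$ can be invoked.
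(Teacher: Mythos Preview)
Your proof is correct but takes a genuinely different route from the paper. The paper gives a short, direct $\varepsilon$--argument in two steps: given $a_1,\ldots,a_n\in L^\infty(G^0,\mathcal{M})$, first pick $\gamma_0\in\Gamma$ using weak mixing of $\Gamma\curvearrowright G^0$ so that $\tau\bigl(E_{L^\infty(G^0)}(a_i)\,[\alpha]_{\gamma_0}E_{L^\infty(G^0)}(a_j)\bigr)$ is close to $\tau(a_i)\tau(a_j)$; then pick $\gamma_1\in\Gamma$ using weak mixing of $[\alpha]|_\Gamma$ relative to $L^\infty(G^0)$ (which is the content of the hypothesis, via Lemma~\ref{Lemma:wm}) so that $E_{L^\infty(G^0)}\bigl(a_i[\alpha]_{\gamma_1\gamma_0}(a_j)\bigr)$ is close to $E_{L^\infty(G^0)}(a_i)\,[\alpha]_{\gamma_0}E_{L^\infty(G^0)}(a_j)$. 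Composing gives $|\tau(a_i[\alpha]_{\gamma_1\gamma_0}(a_j))-\tau(a_i)\tau(a_j)|<2\varepsilon$.

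Your approach is representation-theoretic: you split the Koopman space orthogonally into the base piece $L^2(G^0)\ominus\mathbb{C}$ and the fiber piece $L^2(G^0,\mathcal{H}_0)$, dispatch the first by the weak mixing hypothesis on $\Gamma\curvearrowright G^0$, and for the second prove ergodicity of $[\kappa_0^\alpha]|_\Gamma\otimes\overline{[\kappa_0^\alpha]|_\Gamma}$ by an averaging trick that converts a $\Gamma$-invariant Hilbert--Schmidt field on $G^0\times G^0$ into a $G$-invariant section on $G^0$, where ergodicity of $\kappa_0^\alpha\otimes\overline{\kappa_0^\alpha}$ (Corollary~\ref{Corollary:wm-rep}) forces it to vanish. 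The paper's argument is shorter and more elementary---it is essentially the classical ``compose two mixing elements'' trick---while yours is more structural and makes explicit how the two hypotheses act on orthogonal pieces of the Koopman representation; your averaging step (integrating out the second variable to pass from $\Gamma$-invariance to groupoid invariance) is the genuinely new ingredient not present in the paper's proof.
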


\begin{proof}
Fix $\varepsilon >0$ and $a_{1},\ldots ,a_{n}\in L^{\infty }( G^{0},%
\mathcal{M},\tau ) $. Use weak mixing of the action of $%
\Gamma $ on $G^{0}$ to find $\gamma _{0}\in \Gamma $
such that, for $1\leq i,j\leq n$,%
\begin{equation*}
\left\vert \tau (\mathrm{E}_{L^{\infty }(G^{0})}( a_{i}) [
\alpha ] _{\gamma _{0}}\mathrm{E}_{L^{\infty }(G^{0})}(a_{j}))-\tau
( a_{i}) \tau ( a_{j}) \right\vert <\varepsilon \text{.%
}
\end{equation*}

Since $\alpha $ is weak mixing relatively to the trivial
sub-bundle, there exists $\gamma _{1}\in \Gamma $ such that%
\begin{equation*}
\left\Vert \mathrm{E}_{L^{\infty }(G^{0})}(a_{i}[ \alpha ]
_{\gamma _{1}\gamma _{0}}(a_{j}))-\mathrm{E}_{L^{\infty }(G^{0})}(
a_{i}) \mathrm{E}_{L^{\infty }(G^{0})}([ \alpha ] _{\gamma
_{0}}a_{j})\right\Vert _{2}<\varepsilon \text{,}
\end{equation*}%
for $1\leq i,j\leq n$, and hence%
\begin{equation*}
\left\vert \tau (a_{i}[ \alpha ] _{\gamma _{1}\gamma
_{0}}(a_{j}))-\tau (\mathrm{E}_{L^{\infty }(G^{0})}( a_{i})
\mathrm{E}_{L^{\infty }(G^{0})}([ \alpha ] _{\gamma
_{0}}a_{j}))\right\vert <\varepsilon \text{.}
\end{equation*}%
Since $\mathrm{E}_{L^{\infty }(G^{0})}([ \alpha ] _{\gamma
_{0}}a_{j})=[ \alpha ] _{\gamma _{0}}\mathrm{E}_{L^{\infty
}(G^{0})}(a_{j})$ for $1\leq j\leq n$, we conclude that
\begin{equation*}
\left\vert \tau (a_{i}[ \alpha ] _{\gamma _{1}\gamma
_{0}}(a_{j}))-\tau ( a_{i}) \tau ( a_{j}) \right\vert
<2\varepsilon \text{,}
\end{equation*}%
for $1\leq i,j\leq
n $. Since $\gamma _{0}\gamma _{1}\in \Gamma $, this concludes the proof that the
action $[ \alpha ] |_{\Gamma }$ is weak mixing.
\end{proof}

\subsection{Finite index subgroupoids\label{Subsection:finite-index}}

\begin{definition}
\label{df:quotientGpd} Let $G$ be an ergodic discrete pmp groupoid and let $%
H\subseteq G$ be a subgroupoid. We define a countable equivalence relation $%
\sim _{H}$ on $G$ by $\gamma \sim _{H}\gamma ^{\prime }$ if and only $\gamma
=h\gamma ^{\prime }$ for some $h\in H$.

By ergodicity of $G$, the number of $\sim _{H}$-classes of $H$ contained in $%
Gx$ is constant for almost every $x\in G^{0}$. We define the \emph{index} $[
G:H ] \in [ 1,\infty ] $ of $H$ in $G$ to be this number.
\end{definition}

\begin{remark}
In the context of the above definition, suppose additionally that $H$ is
ergodic as well. Then one can find elements $(\sigma _{n})_{n=1}$ in $[G]$
such that $(H\sigma _{n})_{n=1}$ is a partition of $G$; see \cite[Section 2]%
{ioana_subequivalence_2009} and \cite[Lemma 1.1]{ioana_relative_2010}. We
call this a \emph{coset selection }for $H$ in $G$.
\end{remark}

\begin{lemma}
Let $G$ be an ergodic discrete pmp groupoid and let $\alpha \colon G\to%
\mathrm{Aut}( \mathcal{M},\tau ) $ be an action with an $\alpha $-invariant
sub-bundle $( \mathcal{N},\tau)$. Then $\alpha $ is weak mixing relative to $%
\mathcal{N}$ if and only if the restriction of $\alpha$ to any finite index
subgroupoid is weak mixing relative to $\mathcal{N}$.
\end{lemma}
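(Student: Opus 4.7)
The forward implication is immediate, since $G$ has index one in itself. For the converse, suppose that $\alpha$ is weak mixing relative to $\mathcal{N}$ and let $H \subseteq G$ be an ergodic subgroupoid with $[G:H] = N$ finite. By Lemma~\ref{Lemma:wm}, it suffices to show that the Koopman-type representation $\pi := \kappa_{0}^{\alpha|_H, \mathcal{N}}$ of $H$ on the Hilbert bundle
$$\mathcal{H} := \bigsqcup_{x \in G^0}\bigl(L^{2}(\mathcal{M}_x, E_{\mathcal{N}_x}) \cap \mathcal{N}_x^{\perp}\bigr)$$
is weak mixing, which by Proposition~\ref{Proposition:equivalences-groupoid-rep}(3) reduces to ruling out nonzero finite-dimensional $H$-invariant sub-bundles of $\mathcal{H}$. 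I would argue by contradiction: assume $\mathcal{K} \subseteq \mathcal{H}$ is a nonzero, $d$-dimensional, $H$-invariant sub-bundle, and build from it a nonzero finite-dimensional $G$-invariant sub-bundle $\tilde{\mathcal{K}}$ of $\mathcal{H}$, contradicting the weak mixing of $\kappa_{0}^{\alpha, \mathcal{N}}$ (via the same proposition).

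To construct $\tilde{\mathcal{K}}$, the plan is to use an $H$-coset selection for $G$. Taking inverses of the coset selection given after Definition~\ref{df:quotientGpd}, we obtain Borel bisections $\sigma_1,\dots,\sigma_N$ of $G$ such that, essentially, $G = \bigsqcup_n \sigma_n H$. For each $y \in G^0$ this induces a partition $yG = \bigsqcup_n y\sigma_n\cdot H$, where $y\sigma_n$ is the unique element of $\sigma_n$ with range $y$ (when defined). I would then set
$$\tilde{\mathcal{K}}_y \;=\; \sum_{n=1}^{N} \kappa^{\alpha, \mathcal{N}}_{y\sigma_n}\bigl(\mathcal{K}_{s(y\sigma_n)}\bigr) \;\subseteq\; \mathcal{H}_y .$$
By construction $\dim \tilde{\mathcal{K}}_y \le Nd$ almost everywhere, and Borel measurability of the $\sigma_n$ and of $\kappa^{\alpha, \mathcal{N}}$ ensure that $\tilde{\mathcal{K}}$ is a Borel sub-bundle. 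Moreover $\tilde{\mathcal{K}}$ is nonzero: the support of $\mathcal{K}$ is $H$-saturated, and its $G$-saturation is conull by ergodicity of $G$.

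The key remaining point is $G$-invariance. Given $\gamma \in G$ with $s(\gamma) = x$ and $r(\gamma) = y$, and any $n$, the element $\gamma\cdot x\sigma_n$ lies in $yG$; by the partition $yG = \bigsqcup_m y\sigma_m H$, there exist a unique $m$ and a unique $h \in H$ with $\gamma\cdot x\sigma_n = y\sigma_m \cdot h$, and necessarily $s(h) = s(x\sigma_n)$ and $r(h) = s(y\sigma_m)$. Writing $\pi' := \kappa^{\alpha, \mathcal{N}}$ for brevity, the $H$-invariance of $\mathcal{K}$ then gives
$$\pi'_{\gamma}\, \pi'_{x\sigma_n}\bigl(\mathcal{K}_{s(x\sigma_n)}\bigr) \;=\; \pi'_{y\sigma_m}\,\pi'_{h}\bigl(\mathcal{K}_{s(h)}\bigr) \;\subseteq\; \pi'_{y\sigma_m}\bigl(\mathcal{K}_{s(y\sigma_m)}\bigr) \;\subseteq\; \tilde{\mathcal{K}}_y .$$
Summing over $n$ produces $\pi'_{\gamma}(\tilde{\mathcal{K}}_x) \subseteq \tilde{\mathcal{K}}_y$, so $\tilde{\mathcal{K}}$ is a nonzero finite-dimensional $G$-invariant sub-bundle of $\mathcal{H}$, yielding the desired contradiction.

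The main technical obstacle is the measurable production of the bisections $(\sigma_n)_n$ together with the Borel measurability of $\tilde{\mathcal{K}}$; both of these are handled by the coset-selection remark after Definition~\ref{df:quotientGpd} combined with the Borel selection theorems for countable-to-one Borel relations recalled at the end of the introduction. Once those are in place, the rest of the argument is the standard induction-of-an-invariant-subspace trick, adapted from groups to the groupoid setting via the Koopman-representation reformulation of relative weak mixing.
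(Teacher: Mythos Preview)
Your argument is correct, but it follows a genuinely different route from the paper's. A small quibble: you call the trivial direction ``forward'' and the substantive one ``converse'', whereas the usual reading of the iff is the other way around; the mathematics is unaffected, since you clearly assume $\alpha$ is weak mixing and prove it for $\alpha|_H$. One further remark: you show $\pi'_\gamma(\tilde{\mathcal K}_x)\subseteq\tilde{\mathcal K}_y$; equality (needed for an invariant sub-bundle) follows by applying the same containment to $\gamma^{-1}$, which you might state explicitly.

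The paper's proof is quite different in flavour. Rather than working with the Koopman bundle representation directly, it passes to full groups: pick a countable $\Lambda\subseteq[H]$ covering $H$, let $\Gamma\subseteq[G]$ be generated by $\Lambda$ and a coset selection $\sigma_1,\dots,\sigma_N$, observe that $\Gamma$ covers $G$ and that $\Lambda$ has finite index in $\Gamma$, and then invoke Lemma~\ref{Lemma:wm} together with the classical group fact (Peterson's notes, Corollary~2.2.12) that relative weak mixing of a group action passes to finite-index subgroups. Your argument is essentially the groupoid-level unpacking of that cited group result: the ``sum of coset translates of a finite-dimensional invariant subspace'' is exactly the standard proof of the group statement, carried out for Hilbert bundles and bisections. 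What you gain is self-containment within the paper's own framework (no external reference, and the induced-sub-bundle construction is a natural companion to Lemma~\ref{Lemma:restruction-ergodic}); what the paper gains is brevity, since the full-group reduction lets it outsource the combinatorics to a known result.
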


\begin{proof}
Since the ``if" implication is obvious, we prove the converse. Let $H$ be a
subgroupoid of $G$ with index $n<\infty$, and fix a coset selection $(
\sigma _{1},\ldots ,\sigma _{n} ) $ for $H$ in $G$. Let $\Lambda $ be a
countable subgroup of $[ H ] $ that covers $H$. Then $[ \alpha ] |_{\Lambda
} $ is weak mixing. Furthermore, the subgroup $\Gamma \subseteq [ G ] $
generated by $\Lambda $ and $\sigma _{1},\ldots ,\sigma _{n}$ contains $%
\Lambda $ as a finite index subgroup. Therefore $[ \alpha ] |_{\Gamma }$ is
weak mixing. Since $\Gamma $ covers $G$, it follows that $\alpha $ is weak
mixing by \cite[Corollary 2.2.12]{peterson_ergodic}.
\end{proof}

\subsection{The groupoid von Neumann algebra\label{Subsecion:groupoid-vN}}

\begin{definition}
Let $G$ be a discrete pmp groupoid. The \emph{left regular representation}
of $G$ is the representation $\lambda\colon G\to U(\bigsqcup_{x\in
G^{0}}\ell ^{2} ( xG )) $ defined by $\lambda _{\gamma }(\delta _{\rho
})=\delta _{\gamma \rho }$ for $( \gamma ,\rho ) \in G^{2}$. Similarly, the
\emph{right regular representation} of $G$ is the representation $\rho\colon
G\to U( \bigsqcup_{x\in G^{0}}\ell ^{2} ( Gx )) $ defined by $\rho _{\gamma
}\delta _{\rho }=\delta _{\rho \gamma ^{-1}}$ for $\gamma \in G$ and $\rho
\in Gs(\gamma )$.
\end{definition}

Fix an action $\alpha $ of $G$ on a tracial von Neumann bundle $(\mathcal{M}%
,\tau )$, and consider the Hilbert bundle $\mathcal{H}=\bigsqcup_{x\in
G^{0}}(\ell ^{2}(xG)\otimes L^{2}(\mathcal{M}_{x}))$. Define a
representation $\pi ^{\alpha }=\lambda \otimes \kappa ^{\alpha }$ of $G$ on $%
\mathcal{H}$ by $\pi _{\gamma }^{\alpha }(\delta _{\rho }\otimes |a\rangle
)=\delta _{\gamma \rho }\otimes |\alpha _{\gamma }(a)\rangle $ for $(\gamma
,\rho )\in G^{2}$ and $a\in \mathcal{M}_{s(\gamma )}$. There is also a
canonical normal *-representation of $L^{\infty }(G^{0},\mathcal{M})$ on $%
L^{2}(G^{0},\mathcal{H},\tau )$ defined by $a(\xi )=(a_{x}\xi _{x})_{x\in
G^{0}}$ for $a=(a_{x})_{x\in G^{0}}\in L^{\infty }(G^{0},\mathcal{M})$ and $%
\xi =(\xi _{x})_{x\in G^{0}}\in L^{2}(G^{0},\mathcal{H},\tau )$, where $%
a_{x}(\delta _{\rho }\otimes |b\rangle )=\delta _{\rho }\otimes
|a_{x}b\rangle $ for $x\in G^{0}$, $\rho \in xG$, and $b\in \mathcal{M}_{x}$.

\begin{definition}
\label{Definition:crossed} Let $G$ be a pmp ergodic groupoid, and let $%
\alpha \colon G\rightarrow \mathrm{Aut}(\mathcal{M},\tau )$ be an action on
a von Neumann bundle. The \emph{crossed product }$G\ltimes ^{\alpha }(%
\mathcal{M},\tau )$ is the von Neumann subalgebra of $B(L^{2}(G^{0},\mathcal{%
H},\tau ))$ generated by $\{[\pi ^{\alpha }]_{\sigma }\colon \sigma \in
\lbrack G]\}\cup L^{\infty }\left( G^{0},\mathcal{M}\right) $, endowed with
a canonical faithful normal tracial state defined by $\tau (x)=\langle
1|x|1\rangle $, where $|1\rangle $ denotes the element $(\delta _{x}\otimes
|1_{x}\rangle )_{x\in G^{0}}$ of $L^{2}(G^{0},\mathcal{M},\tau )$.

The \emph{groupoid} \emph{von Neumann algebra }$L ( G ) $ is the crossed
product of the action of $G$ on $G^{0}$.
\end{definition}

\begin{remark}
The action of $G$ on $G^{0}$ can be seen as an action of $G$ on the trivial
von Neumann bundle $\mathcal{M}$ over $G^{0}$. In this case, in the
notations above, we have $\mathcal{H}=\bigsqcup_{x\in G^{0}}\ell ^{2}(xG)$,
and $L^{2}(G^{0},\mathcal{H})=L^{2}(G)$. Hence, $L(G)$ coincides with the
von Neumann subalgebra of $B(L^{2}(G))$ generated by $\{[[\lambda ]]_{\sigma
}\colon \sigma \in \lbrack \lbrack G]]\}$.
\end{remark}

Let $\xi _{0}\in L^{2} ( G ) $ be the element corresponding to the
characteristic function of $G^{0}$. Then $( L^{2} ( G ) ,\xi _{0} ) $ is the
pointed Hilbert space obtained from $( L^{\infty } ( G ) ,\tau ) $ via the
GNS construction. This allows one to define the canonical anti-unitary $%
J\colon L^{2} ( G ) \to L^{2} ( G ) $ by $J \vert a \rangle = \vert a^{\ast
} \rangle $ for $a\in L^{\infty } ( G ) $. The same proof as in the case of
countable pmp equivalence relations gives the following; see \cite%
{feldman_ergodic_1977-1}.

\begin{proposition}
Let $G$ be a discrete pmp groupoid, and let $\Sigma $ be a countable set of
pairwise essentially disjoint elements of $[[G]]$ that covers $G$. Then:

\begin{enumerate}
\item The characteristic function $\vert 1 \rangle \in L^{2} ( G ) $ of $G^0$
is a cyclic separating vector for $L ( G ) $.

\item The vector state $x\mapsto \langle 1|x|1 \rangle $ is a faithful
normal tracial state\ on $L ( G ) $.

\item For $\sigma \in [ [ G ] ] $ one has $J [ [ \lambda ] ] _{\sigma }J= [
[ \rho ] ] _{\sigma }$ and hence $L ( G ) ^{\prime }= \{ [ [ \rho ] ]
_{\sigma }\colon \sigma \in [ [ G ] ] \} $.

\item An element $a \in L ( G ) $ can be written uniquely as $a=\sum_{\sigma
\in \Sigma }a_{\sigma } [ [ \lambda ] ] _{\sigma }$, with $a_{\sigma }\in
L^{\infty } ( G^{0} ) $ and the convergence is in $2$-norm. In this case, $%
a_{\sigma }=\mathrm{E}_{L^{\infty } ( G_{0} ) } ( x [ [ \lambda ] ] _{\sigma
^{-1}} ) $ and $\Vert a \Vert _{2}^{2}=\sum_{\sigma \in \Sigma } \Vert
a_{\sigma } \Vert _{2}^{2}$.
\end{enumerate}
\end{proposition}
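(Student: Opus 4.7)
The proof follows the Feldman--Moore template for the von Neumann algebra of a countable pmp equivalence relation, with Borel bisections $\sigma \in [[G]]$ playing the role of partial transformations. My plan is to establish the Fourier expansion (4) first, and then deduce (1), (2), (3) from it.

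For (4), the hypothesis that $\Sigma$ consists of pairwise essentially disjoint elements covering $G$ gives the orthogonal Hilbert space decomposition
\[
L^{2}(G) = \bigoplus_{\sigma \in \Sigma} L^{2}(\sigma, \mu_{G}|_{\sigma}).
\]
Since $\sigma$ is a bisection, the source map $s\colon \sigma \to G^{0}$ is injective, and the pmp condition $\mu_{G}(A) = \int |xA|\, d\mu_{G^{0}}(x)$ makes $s_{*}(\mu_{G}|_{\sigma}) = \mu_{G^{0}}|_{\sigma\sigma^{-1}}$, so each summand may be identified isometrically with $L^{\infty}(G^{0})|[[\lambda]]_{\sigma} \cdot 1\rangle$. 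In particular, the $\ast$-algebra $\mathcal{A}$ generated by $L^{\infty}(G^{0})$ and $\{[[\lambda]]_{\sigma} : \sigma \in [[G]]\}$ acts on $|1\rangle$ with dense orbit. Given $a \in L(G)$, decomposing $a|1\rangle \in L^{2}(G)$ along the orthogonal splitting yields elements $a_{\sigma} \in L^{\infty}(G^{0})$. A direct computation, pairing $a|1\rangle$ with $[[\lambda]]_{\sigma} |1\rangle$ and using that $\mathrm{E}_{L^{\infty}(G^{0})}$ corresponds to projection onto $L^{2}(G^{0}) \subseteq L^{2}(G)$, identifies $a_{\sigma} = \mathrm{E}_{L^{\infty}(G^{0})}(a\,[[\lambda]]_{\sigma^{-1}})$; the Parseval identity $\|a\|_{2}^{2} = \sum \|a_{\sigma}\|_{2}^{2}$ then follows from orthogonality of the pieces.

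With (4) in hand, item (1) is immediate: $|1\rangle$ is cyclic because $\mathcal{A}|1\rangle$ is dense, and it is separating because uniqueness of coefficients in (4) forces $a = 0$ whenever $a|1\rangle = 0$. Item (2) follows standardly: normality of $\tau(x) = \langle 1|x|1\rangle$ is automatic as a vector state, faithfulness follows from the separating property, and the trace identity $\tau(xy) = \tau(yx)$ is checked on monomials of the form $f\,[[\lambda]]_{\sigma}$ with $f \in L^{\infty}(G^{0})$ and $\sigma \in [[G]]$; the verification reduces to the symmetry $\int_{G^{0}} |xA|\, d\mu_{G^{0}} = \int_{G^{0}} |Ax|\, d\mu_{G^{0}}$ built into the definition of a discrete pmp groupoid. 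For (3), evaluating $J[[\lambda]]_{\sigma} J$ on a vector $|a\rangle$ with $a \in L^{\infty}(G)$ gives $|a\rangle \mapsto |a\sigma^{-1}\rangle$, which is precisely $[[\rho]]_{\sigma}|a\rangle$; the identification $L(G)' = \{[[\rho]]_{\sigma} : \sigma \in [[G]]\}''$ then follows from the standard Tomita identity $JL(G)J = L(G)'$ together with the fact that the $[[\rho]]_{\sigma}$ generate a von Neumann algebra.

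The main obstacle is the full strength of (4) beyond the dense subalgebra $\mathcal{A}$: one must argue both that every $a \in L(G)$ is actually the $2$-norm limit $\sum_{\sigma \in \Sigma} a_{\sigma}\,[[\lambda]]_{\sigma}$, and conversely that any square-summable family $(a_{\sigma})_{\sigma \in \Sigma}$ in $L^{\infty}(G^{0})$ defines a bounded operator on $L^{2}(G)$ lying in $L(G)$. This is the same delicate point present in the Feldman--Moore construction; it is handled by a truncation argument combined with the standard-Borel structure on $G$, which allows one to build $a$ from its coefficients by approximating by finite partial sums and invoking the uniform boundedness afforded by $a \in L(G)$.
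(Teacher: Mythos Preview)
Your approach is exactly the Feldman--Moore template the paper defers to (the paper gives no proof of its own, only the citation), and the overall skeleton is sound: the orthogonal decomposition $L^{2}(G)=\bigoplus_{\sigma\in\Sigma}L^{2}(\sigma)$, the identification of each summand with $\overline{L^{\infty}(G^{0})[[\lambda]]_{\sigma}|1\rangle}$, and the deduction of (1)--(3) from (4) are all correct. One small slip: the image of $s|_{\sigma}$ is $\sigma^{-1}\sigma$, not $\sigma\sigma^{-1}$.

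There is, however, a genuine error in your final paragraph. The converse you flag as an obstacle --- that \emph{any} square-summable family $(a_{\sigma})$ in $L^{\infty}(G^{0})$ yields a bounded operator in $L(G)$ --- is false and is not part of the statement. (Already for $G=\mathbb{Z}$ this would say every $\ell^{2}$ sequence is the Fourier series of a bounded function.) Item (4) only asserts the forward direction: given $a\in L(G)$, the element $a|1\rangle$ decomposes orthogonally, the pieces are $a_{\sigma}[[\lambda]]_{\sigma}|1\rangle$ with $a_{\sigma}=\mathrm{E}_{L^{\infty}(G^{0})}(a\,[[\lambda]]_{\sigma^{-1}})\in L^{\infty}(G^{0})$, and since $\|b\|_{2}=\|\,b|1\rangle\,\|$ this \emph{is} the claimed $2$-norm convergence. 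No truncation or uniform-boundedness argument is needed; the ``delicate point'' you describe does not arise. Drop the converse claim and the proof is complete.
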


When $G$ is an ergodic principal discrete pmp groupoid such that $Gx$ is
infinite for almost every $x\in G^{0}$, then $L(G)$ is a II$_{1}$ factor
which contains $L^{\infty }(G^{0})$ as a maximal abelian subalgebra; see
\cite{feldman_ergodic_1977-1}.




\section{Coinduction theory for groupoids\label{Section:coinduction}}

\subsection{Bernoulli actions of groupoids}

Let $G$ be a discrete pmp groupoid. A \emph{bundle of countable sets} over $%
G $ is a standard Borel space $\mathcal{I}$ fibered over $G^{0}$ with
countable fibres, such that there exists a sequence $(i_{n})_{n\in \mathbb{N}%
}$ of sections for $\mathcal{I}$ such that $\{i_{n,x}\colon x\in G^{0}\}$
enumerates $\mathcal{I}_{x}$ for every $x\in G^{0}$. One can then define the
standard Borel groupoid $\mathrm{Sym}(\mathcal{I})$ with unit space $G^{0}$
consisting of bijections $\sigma \colon I_{x}\rightarrow I_{y}$ for $x,y\in
G^{0}$. The Borel structure on \textrm{Sym}$(\mathcal{I})$ is generated by
the source and range maps together with the subsets $\{\sigma \colon
I_{x}\rightarrow I_{y}\colon x,y\in G^{0},\sigma (i_{n,x})=i_{m,y}\}$ for $%
n,m\in \mathbb{N}$. An \emph{action} of $G$ on $\mathcal{I}$ is a
homomorphism from $G$ to $\mathrm{Sym}(\mathcal{I})$.

Let $(M,\tau )$ be a tracial von Neumann algebra with separable predual, and
let $L^{2}(M,\tau )$ be the corresponding Hilbert space obtained via the GNS
construction, with cyclic vector $|1\rangle $. Define a Hilbert bundle $%
L^{2}(M,\tau )^{\otimes \mathcal{I}}$ by $L^{2}(M,\tau )^{\otimes \mathcal{I}%
}=\bigsqcup_{x\in G^{0}}(L^{2}(M,\tau ),|1\rangle )^{\otimes \mathcal{I}%
_{x}} $. The standard Borel structure on $L^{2}(M,\tau )^{\otimes \mathcal{I}%
}$ can be described as follows. Fix a $\Vert \cdot \Vert _{2}$-dense
sequence $(d_{n})_{n\in \mathbb{N}}$ in $M$, and for every $n\in \mathbb{N}$%
, $\bar{k}=\left( k_{1},\ldots ,k_{n}\right) \in \mathbb{N}^{n}$ and $\bar{%
\ell}=\left( \ell _{1},\ldots ,\ell _{n}\right) \in \mathbb{N}^{n}$ with $%
\ell _{1}<\ell _{2}<\cdots <\ell _{n}$, define a section $\sigma ^{(\bar{k},%
\bar{\ell})}$ for $L^{2}(M,\tau )^{\otimes \mathcal{I}}$ by $\sigma _{x}^{(%
\bar{k},\bar{\ell})}=\left( |d_{k_{1}}\rangle \otimes \cdots \otimes
\left\vert d_{k_{n}}\right\rangle \right) _{(i_{\ell _{1},x}i_{\ell
_{2},x}\cdots i_{\ell _{n},x})}\in (L^{2}(M,\tau ),|1\rangle )^{\otimes
\mathcal{I}_{x}}$ for $x\in G^{0}$; see Notation \ref{Notation:leg}. Then
the Borel structure on $L^{2}(M,\tau )^{\otimes \mathcal{I}}$ is generated
by the maps $\xi \mapsto \langle \xi ,\sigma _{x}^{(\bar{k},\bar{\ell}%
)}\rangle $, where $\xi \in (L^{2}(M,\tau ),|1\rangle )^{\otimes \mathcal{I}%
_{x}}$, for $k,\ell \in \mathbb{N}$.

We define the tracial von Neumann bundle $(M,\tau) ^{\otimes \mathcal{I}%
}=\bigsqcup_{x\in G^{0}} ( M,\tau ) ^{\otimes \mathcal{I}_{x}}$, and endow
it with a Borel structure defined similarly as above.

\begin{definition}
\label{Definition:Bernoulli-action}The \emph{Bernoulli action }$\beta
_{G\curvearrowright \mathcal{I}}$ of $G$ with base $(M,\tau) $ associated
with the action $G\curvearrowright \mathcal{I}$ is the action of $G$ on $%
(M,\tau) ^{\otimes \mathcal{I}}$ defined by $\beta _{G\curvearrowright
\mathcal{I},\gamma }(a_{(i)})=a_{(\gamma \cdot i)}$ for $\gamma\in G$, for $%
i\in \mathcal{I}_{s(\gamma )}$ and $a\in M$.
\end{definition}

When $(M,\tau )$ is an abelian tracial von Neumann algebra, one obtains the
notion of Bernoulli action of $G$ on a standard probability space.

\begin{example}
Let $G$ be a discrete pmp groupoid, let $K$ be an ergodic subgroupoid, and
let $G/K=\bigsqcup_{x\in G^{0}}xG/K$ be the corresponding quotient. Given a
subgroupoid $H\subseteq G$, consider the canonical action $H\curvearrowright
G/K$. If $(M,\tau ) $ is a tracial von Neumann algebra, we obtain a
Bernoulli action $\beta _{H\curvearrowright G/K}$ on $(M,\tau) ^{\otimes
G/K}=\bigsqcup_{x\in G^{0}} (M,\tau) ^{\otimes xG/K}$.
\end{example}


\begin{lemma}
\label{Lemma:bernoulli-wm} Let $\Lambda $ be a countable discrete group, and
let $\Delta\leq \Lambda $ be an infinite index subgroup. Let $G$ be a
principal discrete pmp groupoid with unit space $X$, and let $%
\Lambda\curvearrowright^{\theta}X$ be a free ergodic action satisfying $%
\{\theta_{\lambda}(x)\colon \lambda \in \Lambda \}\subseteq [ x ] _{G}$ for
almost every $x\in X$. Let $H=\Lambda \ltimes ^{\theta}X$ be the
corresponding action groupoid, which can be regarded as a subgroupoid of $G$%
, and set $K=\Delta \ltimes ^{\theta|_{\Delta }}X \leq H$. Given a tracial
von Neumann algebra $( M,\tau ) $, the Bernoulli action $\beta
_{H\curvearrowright G/K}$ of $H$ on $(M,\tau) ^{G/K}$ is weak mixing
relatively to the trivial sub-bundle.
\end{lemma}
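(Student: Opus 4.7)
My plan is to verify weak mixing of $\alpha := \beta_{H \curvearrowright G/K}$ relative to the trivial sub-bundle by means of Corollary~\ref{Corollary:wm}, which reduces the claim to showing that the Koopman representation $\kappa_0^\alpha$ is weak mixing; by Proposition~\ref{Proposition:equivalences-groupoid-rep}(3), this in turn is equivalent to $\kappa_0^\alpha$ having no non-zero finite-dimensional subrepresentation.

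The first key step is to establish that the $H$-action on $G/K$ has almost every orbit infinite. Since $\theta$ is free, $H = \Lambda \ltimes^{\theta} X$ is a principal groupoid, so its isotropy $xHx$ is trivial almost everywhere. For $\gamma \in xG$, I would look at the orbit map $Hx \to H \cdot [\gamma]$ given by $h \mapsto [h\gamma]$. It is surjective by construction, and injective: if $[h_1\gamma] = [h_2\gamma]$, then $h_1\gamma = h_2\gamma k$ for some $k \in K$, so $h_2^{-1}h_1 = \gamma k \gamma^{-1}$ lies in $xHx = \{e_x\}$ and hence $h_1 = h_2$. Since $Hx$ is in bijection with $\Lambda$ by freeness of $\theta$, and $\Lambda$ is infinite because $\Delta$ has infinite index, every $H$-orbit in $G/K$ is infinite.

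The second step converts infinite orbits into weak mixing via the fiberwise Fourier decomposition
\[
L^2\bigl(M^{\otimes xG/K},\tau_x\bigr) \ominus \mathbb{C} \;=\; \bigoplus_{\emptyset \neq F \subseteq xG/K,\ |F| < \infty}\ \bigotimes_{i \in F}\bigl(L^2(M, \tau) \ominus \mathbb{C}\bigr)_{(i)}.
\]
Under this decomposition, $\kappa_0^\alpha$ permutes the $F$-summands according to the induced $H$-action on the fibered space of finite subsets of $G/K$. Since $H$ has infinite orbits on $G/K$, the induced action on $n$-element subsets is also by infinite orbits for each $n \geq 1$. A hypothetical non-zero finite-dimensional $H$-invariant sub-bundle of $\kappa_0^\alpha$ would, via Lemma~\ref{Lemma:fixed-point-representation} and ergodicity of the induced $H$-action on the unit space, produce an $H$-equivariant Borel assignment of a non-empty finite family of finite Fourier modes at almost every fibre, contradicting the infinite-orbits property.

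The principal obstacle is carrying out this last step rigorously in the fibered setting, where finite-dimensionality is a fiberwise condition that must be reconciled with the $H$-action moving between different fibers. This requires a careful adaptation of the classical group Bernoulli-mixing argument to the groupoid context, using Borel selection and ergodicity of $H$ on $X$ to promote pointwise finite-dimensional invariants into a genuine obstruction against the infinite orbit structure.
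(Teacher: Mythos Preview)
Your approach differs substantially from the paper's, and Step~2 contains a genuine gap that is not merely a matter of ``careful adaptation.''

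Step~1 is fine: since $H$ is principal, the orbit map $Hx\to H\cdot[\gamma]$, $h\mapsto[h\gamma]$, is injective (your computation shows $h_2^{-1}h_1\in xHx=\{e_x\}$), and $|Hx|=|\Lambda|=\infty$. The same argument shows that every $H$-orbit on the bundle $\mathcal{F}$ of non-empty finite subsets of $G/K$ is infinite.

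The gap is in Step~2. In the principal groupoid setting, ``infinite $H$-orbits on $\mathcal{F}$'' does \emph{not} preclude finite-dimensional $H$-invariant sub-bundles, and your proposed mechanism does not yield a contradiction. The point is that, because $H$ is principal, each $H$-orbit in $\mathcal F$ meets each fibre $\mathcal{F}_x$ in \emph{exactly one} point. Thus an $H$-equivariant Borel assignment $x\mapsto B_x$ of non-empty finite subsets $B_x\subseteq\mathcal{F}_x$ is perfectly compatible with all $H$-orbits being infinite: each element of $B_x$ lies on an infinite orbit, but that orbit simply threads through the $B_{x'}$'s as $x'$ ranges over $[x]_H$. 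This is the crucial difference from the group case (where $X$ is a point and an equivariant finite subset \emph{is} an invariant finite subset, hence a union of finite orbits). So even if you could extract such a $B_x$ from a hypothetical finite-dimensional sub-bundle---which itself is not clear, since a $d$-dimensional fibre $\mathcal{K}_x$ can have Fourier support spread over infinitely many $F$'s---no contradiction would follow.

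The paper's argument is entirely different and avoids the Koopman representation. It fixes a coset selection $\Sigma=\{\sigma_n:n\le N\}$ for $H$ in $G$, and uses the bijection $\Lambda/\Delta\to[\sigma_n(x)]_{H/K}$, $\lambda\Delta\mapsto[\theta_\lambda(\sigma_n(x))]_K$, to identify
\[
L^\infty\bigl(X,(M,\tau)^{G/K}\bigr)\;\cong\;L^\infty(X)\otimes\bigl((M,\tau)^{\otimes\Lambda/\Delta}\bigr)^{\otimes N}.
\]
Under this identification one verifies the weak-mixing estimate directly: given contractions $a_1,\dots,a_\ell$ with $E_{L^\infty(X)}(a_i)=0$, approximate by elements whose fibres lie in $M^{\otimes(F\times S)}$ for some finite $F\subseteq\Lambda/\Delta$ and $S\subseteq\Sigma$; then, since $[\Lambda:\Delta]=\infty$, choose $\lambda\in\Lambda$ with $\lambda F\cap F=\emptyset$, and regard $\lambda$ as an element of $[H]$. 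One checks that $[\beta_{H\curvearrowright G/K}]_\lambda$ translates the $\Lambda/\Delta$-coordinate by $\lambda$, so the conditional expectations $E_{L^\infty(X)}\bigl([\beta]_\lambda(a_i')a_j'\bigr)$ vanish.

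If you want to salvage your strategy, the missing ingredient is precisely this: rather than arguing abstractly about $H$-orbits, pass to the subgroup $\Lambda\subseteq[H]$ (which covers $H$) and use Corollary~\ref{Corollary:wm}(3) to reduce to showing that $[\alpha]|_\Lambda$ is weak mixing relative to $L^\infty(X)$. At that point the $\Lambda/\Delta$-shift structure becomes visible and the classical group argument applies---but this is essentially the paper's proof.
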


\begin{proof}
Since $G,H$ and $K$ are a principal groupoids, we will identify them with
their corresponding orbit equivalence relations. Since $H$ is ergodic, we
can define the index $N\in \mathbb{N}\cup \left\{ \infty \right\} $ of $H$
in $G$. There exists a coset selection $\{\sigma _{n}\colon n\leq
N\}\subseteq \lbrack G]$ of $H$ in $G$ such that $G$ is the disjoint union
of $\{\sigma _{n}H\colon n\leq N\}$. One can identify $xG/K$ with the set $%
[x]_{G/K}=\{[y]_{K}\colon y\in \lbrack x]_{G}\}$ of $K$-classes contained
inside the $G$-class of $x$. Furthermore, one can identify $\sigma _{n}$
with a Borel function $\sigma _{n}\colon X\rightarrow X$ for which $[x]_{G}$
is the disjoint union of $\{[\sigma _{n}(x)]_{H}\colon n\leq N\mathbb{\}}$
for every $x\in X$. Moreover, $[x]_{G/K}$ is the disjoint union of $%
\{[\sigma _{n}(x)]_{H/K}\colon n\leq N\mathbb{\}}$ for every $x\in X$. This
gives an isomorphism of $(M,\tau )^{\otimes G/K}$ with the bundle $(\mathcal{%
M},\tau )=\bigsqcup_{x\in X}\bigotimes_{n\leq N}(M,\tau )^{\otimes \lbrack
\sigma _{n}(x)]_{H/G}}$.

For $x\in X$ and $n\leq N$, there is an isomorphism $(M,\tau )^{\otimes
\Lambda /\Delta }\rightarrow (M,\tau )^{\otimes \lbrack \sigma
_{n}(x)]_{H/K}}$ associated with the bijection $\Lambda /\Delta \rightarrow
\lbrack \sigma _{n}(x)]_{H/K}$ given by $\Delta \lambda \mapsto \lbrack
\theta _{\lambda }(\sigma _{n}(x))]_{K}$. These maps determine an isomorphism%
\begin{equation*}
L^{\infty }(X,(M,\tau )^{G/K})\cong L^{\infty }(X)\otimes ((M,\tau
)^{\otimes \Lambda /\Delta })^{\otimes N}\text{.}
\end{equation*}

Let $a_{1},\ldots ,a_{\ell }\in L^{\infty }(X,(M,\tau )^{G/K})$ be
contractions with $\mathrm{E}_{L^{\infty }(X)}(a_{i})=0$ for $i=1,2,\ldots
,\ell $, and fix $\varepsilon >0$. Find $n\in \mathbb{N}$, a finite subset $%
F\subseteq \Lambda /\Delta $, and contractions $a_{1}^{\prime },\ldots
,a_{\ell }^{\prime }\in L^{\infty }(X,(M,\tau )^{G/K})$ with
\begin{equation*}
\Vert a_{i}-a_{i}^{\prime }\Vert _{2}<\varepsilon ,\ \mathrm{E}_{L^{\infty
}(X)}(a_{i}^{\prime })=0,\ \mbox{ and }\ a_{i,x}^{\prime }\in
\bigotimes_{k\leq N}(M,\tau )^{\otimes _{\gamma \in F}[\theta _{\gamma
}(\sigma _{k}(x))]_{K}}
\end{equation*}%
for $1\leq i\leq \ell $, and almost every $x\in X$. Since $\Delta $ has
infinite index in $\Lambda $, there exists $\lambda \in \Lambda $ such that $%
\lambda F\cap F=0$. Identifying $\lambda $ with the corresponding element of
$[H]$, we have $\mathrm{E}_{L^{\infty }(X)}([\beta _{H\curvearrowright
G/K}]_{\lambda }(a_{i}^{\prime })a_{j}^{\prime })=0$, and therefore
\begin{equation*}
\mathrm{E}_{L^{\infty }(X)}([\beta _{H\curvearrowright G/K}]_{\lambda
}(a_{i})a_{j})<2\varepsilon
\end{equation*}%
for every $1\leq i,j\leq d$. Thus $\beta _{H\curvearrowright G/K}$ is weak
mixing relative to the trivial sub-bundle.
\end{proof}

\subsection{Coinduction for groupoids\label{Subsection:coinduction}}

Let $G$ be a discrete pmp groupoid, and let $H\leq G$ be a subgroupoid.
Recall that the equivalence relation $\sim _{H}$ on $G$ is defined by $%
\gamma \sim _{H}\gamma ^{\prime }$ if and only if $\gamma H=\gamma ^{\prime
}H$. Let $X$ be the unit space of $G$ and $H$, and let $\Lambda $ be a
countable discrete group. Assume that there exists a free action $\Lambda
\curvearrowright ^{\theta }X$ such that $H=\Lambda \ltimes ^{\theta }X$.

Let $G\ltimes G/H$ be the countable Borel groupoid associated with the
canonical action $G\curvearrowright G/H$, i.e.
\begin{equation*}
G\ltimes G/H=\{(\rho ,\gamma H)\in G\times G/H\colon s(\rho )=r(\gamma )\}.
\end{equation*}%
Let $T\colon G/H\rightarrow G$ be a \emph{Borel selector }for $\sim _{H}$,
i.e.\ a Borel map satisfying $T(\gamma H)H=\gamma H$ for every $\gamma \in G$%
. We will furthermore assume that $T(xH)=x$ for every $x\in G^{0}$.

\begin{lemma}
Let the notation be as before, and define a map $c\colon G\ltimes G/H\to
\Lambda $ by letting $c ( \rho ,\gamma H ) $ be the unique element of $%
\Lambda $ such that $T ( \rho \gamma H ) ^{-1}\rho T ( \gamma H ) =c ( \rho
,\gamma H ) \ltimes ^{\theta }x$ for some $x\in X$. Then $c$ is a
homomorphism.
\end{lemma}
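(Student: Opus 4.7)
The plan is to verify directly that the cocycle-style identity $c(\rho_1\rho_2, \gamma H) = c(\rho_1, \rho_2\gamma H)\, c(\rho_2, \gamma H)$ holds for every composable pair in the action groupoid $G \ltimes G/H$. First, I would confirm that $c$ is well-defined. Since $T$ is a Borel selector for $\sim_H$, we have $T(\gamma H) \in \gamma H$, whence both $\rho T(\gamma H)$ and $T(\rho\gamma H)$ lie in the coset $\rho\gamma H$, so $T(\rho\gamma H)^{-1}\rho T(\gamma H) \in H$. Its source is $s(T(\gamma H))$, which lies in $X=G^0$, and since $H = \Lambda \ltimes^\theta X$ with $\theta$ free, any element of $H$ with source in $X$ has a unique representation as $\lambda \ltimes^\theta x$. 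This produces $c(\rho,\gamma H)\in \Lambda$ unambiguously, and Borel measurability of $c$ is immediate from Borelness of $T$ and of the groupoid operations.

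For the multiplicative identity, I would exploit the telescoping
\begin{align*}
T(\rho_1\rho_2\gamma H)^{-1}\rho_1 T(\rho_2\gamma H)\cdot T(\rho_2\gamma H)^{-1}\rho_2 T(\gamma H) = T(\rho_1\rho_2\gamma H)^{-1}\rho_1\rho_2 T(\gamma H),
\end{align*}
whose right-hand side, by the definition of $c$, equals $c(\rho_1\rho_2,\gamma H)\ltimes^\theta s(T(\gamma H))$. The left-hand side, again by definition, is the product in $H$ of the elements $c(\rho_1,\rho_2\gamma H)\ltimes^\theta s(T(\rho_2\gamma H))$ and $c(\rho_2,\gamma H)\ltimes^\theta s(T(\gamma H))$, and the multiplication rule in the action groupoid $\Lambda \ltimes^\theta X$ yields $\bigl(c(\rho_1,\rho_2\gamma H)\,c(\rho_2,\gamma H)\bigr)\ltimes^\theta s(T(\gamma H))$. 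Uniqueness of the $\Lambda$-component, coming from the freeness of $\theta$, then gives the desired homomorphism identity. A similar (even simpler) computation shows $c(r(\gamma),\gamma H)=e$ using $T(\gamma H)^{-1}T(\gamma H)=s(T(\gamma H))$, so $c$ respects identities as well.

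There is no real obstacle in this argument. It is essentially bookkeeping, and the main task is to carefully track sources, ranges, and cosets so that all compositions are legitimate. The only conceptual input is freeness of $\theta$, which ensures that the $\Lambda$-component of an element of $H$ is uniquely determined, so that the map $c$ is genuinely well-defined and the equality on the $\Lambda$-coordinate can be read off from an equality of arrows in $H$.
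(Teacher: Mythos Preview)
Your proof is correct and follows essentially the same approach as the paper: both arguments boil down to the telescoping identity $T(\rho_1\rho_2\gamma H)^{-1}\rho_1\rho_2 T(\gamma H) = \bigl(T(\rho_1\rho_2\gamma H)^{-1}\rho_1 T(\rho_2\gamma H)\bigr)\bigl(T(\rho_2\gamma H)^{-1}\rho_2 T(\gamma H)\bigr)$ together with uniqueness of the $\Lambda$-component in $H=\Lambda\ltimes^\theta X$. Your version is in fact more careful, as you also spell out well-definedness and the preservation of identities, which the paper leaves implicit.
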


\begin{proof}
We need only check that $c$ is multiplicative. Given $(\rho_0,\rho_1)\in G^2$%
, we have
\begin{equation*}
T ( \rho _{1}\rho _{0}H ) (c ( \rho _{1},\rho _{0}\gamma H ) c ( \rho
_{0},\gamma H ) \ltimes ^{\theta}x)= \rho _{1}T ( \rho _{0}\gamma H ) (c (
\rho _{0},\gamma H ) \ltimes ^{\theta}x)=\rho _{1}\rho _{0}T ( \gamma H ),
\end{equation*}
as desired.
\end{proof}

Recall that we use the leg-numbering notation for linear operators on tensor
product.

\begin{definition}
Adopt the notation of the discussion above, and let $\alpha \colon \Lambda
\rightarrow \mathrm{Aut}(M,\tau )$ be an action. The \emph{coinduced action }%
$\widehat{\alpha }=\mathrm{CInd}_{H}^{G}(\alpha )$ is the action of $G$ on
the tracial von Neumann bundle $(\mathcal{M},\tau )=\bigsqcup_{x\in
X}(M,\tau )^{\otimes xG/H}$ defined (on elementary tensors) by $\widehat{%
\alpha }_{\gamma }(a_{(\rho H)})=\alpha _{c(\gamma ,\rho H)}(a_{(\gamma \rho
H)})$ for $\gamma \in G$ and $a_{(\rho H)}\in (M,\tau )$ with $(\gamma ,\rho
)\in G^{2}$; see Notation \ref{Notation:leg}.
\end{definition}


\begin{example}
When $\alpha $ is the trivial action on $(M,\tau )$, then $\mathrm{CInd}%
_{H}^{G}(\alpha )$ is the Bernoulli shift $\beta _{G\curvearrowright G/H}$
as defined in the previous subsection.
\end{example}

\begin{remark}
Let $(Y,\nu )$ be a standard probability space, and suppose that $(M,\tau
)=L^{\infty }(Y,\nu )$, so that one can regard $\alpha $ as an action of $%
\Lambda $ on $Y$. Then $\widehat{\alpha }=\mathrm{CInd}_{H}^{G}(\alpha )$
can be seen as the action of $G$ on $\bigsqcup_{x\in X}Y^{xG/H}$ defined by
setting%
\begin{equation*}
(\hat{\alpha}_{\gamma ^{-1}}( \omega ) )( \rho H)
=\alpha _{c( \gamma ,\rho H) ^{-1}}( \omega ( \gamma
\rho H) )
\end{equation*}%
for $\gamma \in G$, $\rho H\in s( \gamma ) G/H$, and $\omega \in
Y^{s( \gamma ) G/H}$.
\end{remark}

\begin{proof}
For $a\in L^{\infty }( Y) $, we can consider $a_{( \rho
H) }\in L^{\infty }(Y^{s( \rho ) G/H})$. Then $\hat{%
\alpha}_{\gamma }( a_{( \rho H) }) \in L^{\infty
}(Y^{s( \gamma ) G/H})$ is defined by setting, for $\omega \in
Y^{s( \gamma ) G/H}$,%
\begin{equation*}
\hat{\alpha}_{\gamma }( a_{( \rho H) }) ( \omega
) =\alpha _{\sigma ( \gamma ,\rho H) }( a_{(
\gamma \rho H) }) ( \omega ) =a(\alpha _{c(
\gamma ,\rho H) ^{-1}}( \omega ( \gamma \rho H)
) )
\end{equation*}%
On the other hand we have that%
\begin{equation*}
\hat{\alpha}_{\gamma }( a_{( \rho H) }) ( \omega
) =a_{( \rho H) }(\hat{\alpha}_{\gamma ^{-1}}( \omega
) )=a((\hat{\alpha}_{\gamma ^{-1}}( \omega ) )(\rho H)\text{%
.}
\end{equation*}%
Since this holds for every $a\in L^{\infty }( Y) $, $(\hat{\alpha}%
_{\gamma ^{-1}}(\omega ))( \rho H) =\alpha _{c( \gamma ,\rho
H) ^{-1}}(\omega ( \gamma \rho H) )$.
\end{proof}


\begin{remark}
\label{Remark:coinduced-free} Let $\Gamma $ be a subgroup of $[ G ] $, and
assume that the canonical action $\Gamma \curvearrowright G^{0}$ is free. If
$\alpha $ is an action of $\Lambda $ on an abelian tracial von Neumann
algebra $(M,\tau)$, then $[ \widehat{\alpha} ] |_{\Gamma }$ is also free.
Indeed, $\Gamma \curvearrowright G^{0}$ can be identified with the
restriction of $[ \widehat{\alpha} ] |_{\Gamma }$ to $L^{\infty } ( G^{0} )
\subseteq L^{\infty } ( G^{0},\mathcal{M} ) $.
\end{remark}

Suppose now that $H$ is ergodic, and let $N$ be the index of $H$ in $G$, and
$\Sigma =\{\sigma _{n}:n\leq N\}\subseteq \lbrack G]$ be a coset selection
for $H$ in $G$ with $\sigma _{1}=G^{0}$. For every $x\in X$, the set $xG$ is
the disjoint union of $x\sigma _{n}H$, for $n\leq N$. For $\gamma \in G$,
let $\sigma _{\gamma }\in \Sigma $ be the unique element satisfying $\gamma
\in \sigma _{\gamma }H$. Then the function $G/H\rightarrow X\times \Sigma $
given by $\gamma H\mapsto (r(\gamma ),\sigma _{\gamma })$, is a Borel
isomorphism.

In the next proposition, we describe the form that $T$, $\alpha $ and $%
\widehat{\alpha }$ take under the isomorphism $G/H\cong X\times \Sigma $
described above. Its proof is straightforward, so we omit it.

\begin{proposition}
\label{proposition:identifications} Under the Borel isomorphism $G/H\cong
X\times\Sigma$ as above, we have the following identifications:

\begin{enumerate}
\item The Borel selector $T\colon G/H\to G$ can be seen as the function $T(
x,\sigma )= x\sigma$, for $(x,\sigma)\in X\times\Sigma$.

\item The action $G\curvearrowright G/H$ can be realized as follows. Define
a homomorphism $\pi\colon G\to \mathrm{Sym}(\Sigma)$ by letting $%
\pi_{\gamma}(\sigma)$ be the unique element of $\Sigma$ such that $\pi
_{\gamma } ( \sigma ) ^{-1}\gamma \sigma \in H$. Then $G\curvearrowright G/H$
can be identified with $\gamma\cdot (x,\sigma ) = ( s(\gamma ),\pi _{\gamma
} ( \sigma ) ) $ for $\gamma\in G$ and $(x,\sigma)\in X\times\Sigma$.

\item The homomorphism $c\colon G\ltimes G/H\to \Lambda $ can be seen to be
given by $c( \gamma ,\sigma ) = \delta _{\gamma ,\sigma }$, where
\begin{equation*}
\delta _{\gamma ,\sigma }\ltimes ^{\theta }s(\gamma \sigma )=\pi _{\gamma }
( \sigma ) ^{-1}\gamma \sigma
\end{equation*}%
for $\gamma \in G$ and $\sigma \in \Sigma $.

\item The coinduced action $\widehat{\alpha}$ can be identified with the
action $G$ on $\bigsqcup_{x\in X} (M,\tau) ^{\otimes \Sigma }$ defined by $%
\widehat{\alpha}_{\gamma } ( d_{(\sigma )} ) =\alpha _{\delta _{\gamma
,\sigma }} ( d _{(\pi _{\gamma }(\sigma ))})$ for $d\in M$.

\item When $(M,\tau )=L^{\infty }(Y,\nu )$, we can regard $\widehat{\alpha }$
as the action on $\bigsqcup_{x\in X}Y^{\Sigma }$ defined by $\widehat{\alpha
}_{\gamma }(\omega )(\sigma )=\alpha _{\delta _{\gamma ^{-1},\sigma
}^{-1}}(\omega (\pi _{\gamma }(\sigma )))$ for all $\omega \in Y^{\Sigma }$
and all $\gamma \in G$.
\end{enumerate}
\end{proposition}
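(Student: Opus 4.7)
The plan is to verify each of the five identifications as a routine unwinding of definitions, once the explicit form of the bijection $\Phi\colon G/H\to X\times \Sigma$, $\Phi(\gamma H)=(r(\gamma),\sigma_\gamma)$, is in place. Here $\sigma_\gamma$ is the unique element of $\Sigma$ with $\gamma\in \sigma_\gamma H$, and the inverse map sends $(x,\sigma)$ to $(x\sigma)H$, recalling that for a bisection $\sigma\in[G]$ and $x\in G^0$, $x\sigma$ denotes the unique element of $\sigma$ with range $x$. That $\Phi$ restricts to a bijection on each fiber $xG/H\leftrightarrow\{x\}\times \Sigma$ follows immediately from the fact that $\{\sigma H\colon \sigma\in\Sigma\}$ partitions $xG/H$ for almost every $x$.

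Parts (1) and (2) then follow directly. For (1), the representative $r(\gamma)\sigma_\gamma$ of $\gamma H$ specified by $\Phi^{-1}$ is itself a Borel selector satisfying $T(xH)=x$, since $\sigma_1=G^0$ forces $x\sigma_1=x$; we simply declare this to be our $T$. For (2), the action $\gamma\cdot\rho H=\gamma\rho H$ corresponds under $\Phi$ to $(r(\gamma),\sigma_{\gamma\rho})$, so it suffices to verify $\sigma_{\gamma\rho}=\pi_\gamma(\sigma_\rho)$. This amounts to checking $\pi_\gamma(\sigma_\rho)^{-1}\gamma\rho\in H$: writing $\rho=(s(\gamma)\sigma_\rho)h$ with $h\in H$ yields
\[
\pi_\gamma(\sigma_\rho)^{-1}\gamma\rho=\bigl[\pi_\gamma(\sigma_\rho)^{-1}\gamma\sigma_\rho\bigr]h,
\]
which lies in $H$ by the defining property of $\pi_\gamma(\sigma_\rho)$.

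For (3), combining (1) and (2) gives $T(\rho\gamma H)=r(\rho)\pi_\rho(\sigma_\gamma)$ and $T(\gamma H)=r(\gamma)\sigma_\gamma$, so
\[
T(\rho\gamma H)^{-1}\rho\, T(\gamma H)=\bigl(r(\rho)\pi_\rho(\sigma_\gamma)\bigr)^{-1}\rho\bigl(r(\gamma)\sigma_\gamma\bigr),
\]
which belongs to $H=\Lambda\ltimes^\theta X$ by the argument for (2). Since $\theta$ is free, this element is uniquely of the form $\lambda\ltimes^\theta x$ for some $\lambda\in\Lambda$ and $x\in X$; defining $\delta_{\rho,\sigma_\gamma}:=\lambda$ gives exactly the formula in (3). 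Parts (4) and (5) then follow by substituting these identifications into the definition $\widehat{\alpha}_\gamma(a_{(\rho H)})=\alpha_{c(\gamma,\rho H)}(a_{(\gamma\rho H)})$ and its dualization in the abelian case, using that $\rho H\leftrightarrow(r(\rho),\sigma_\rho)$ reduces the index $\rho H$ to the label $\sigma_\rho\in\Sigma$.

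The only real difficulty is notational: carefully tracking source and range maps when composing bisections with elements of $G$, so that every expression such as $\bigl(r(\rho)\pi_\rho(\sigma_\gamma)\bigr)^{-1}\rho\bigl(r(\gamma)\sigma_\gamma\bigr)$ is a well-formed composable product in $G^2$. Once the convention that $x\sigma$ denotes the unique element of $\sigma$ with range $x$ is fixed, each of the five verifications reduces to a one-line check, which is presumably why the authors chose to omit the proof.
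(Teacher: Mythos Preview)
Your proposal is correct and matches the paper's intent: the paper explicitly omits the proof, stating that it ``is straightforward,'' and your approach of verifying each of the five items by unwinding the bijection $\Phi(\gamma H)=(r(\gamma),\sigma_\gamma)$ and the definitions of $T$, $c$, and $\widehat{\alpha}$ is precisely the routine check the authors had in mind. Your closing observation that the only difficulty is keeping track of source/range conventions for bisections is apt, and your verification that $\sigma_{\gamma\rho}=\pi_\gamma(\sigma_\rho)$ via the decomposition $\rho=(s(\gamma)\sigma_\rho)h$ is the key computation underlying parts (2)--(4).
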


\begin{lemma}
\label{Lemma:coinduced-wm} We keep the notation from the beginning of this
subsection. Let $\alpha \colon \Lambda \to\mathrm{Aut}(M,\tau) $ be an
action on a tracial von Neumann algebra $(M,\tau)$, and set $\mathrm{CInd}%
_{H}^{G} ( \alpha ) =\widehat{\alpha}$.

\begin{enumerate}
\item If $\alpha $ is ergodic and $G$ is ergodic, then $\widehat{\alpha}$ is
ergodic.

\item If $\alpha $ is weak mixing, then $\widehat{\alpha}$ is weak mixing
relatively to the trivial sub-bundle.
\end{enumerate}
\end{lemma}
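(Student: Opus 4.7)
Part (2) follows from part (1) by a standard tensor square argument. If $\alpha$ is weak mixing on $(M,\tau)$, then $\alpha\otimes\alpha$ is ergodic on $(M\otimes M,\tau\otimes\tau)$. The canonical identification $(M\otimes M)^{\otimes\Sigma}\cong M^{\otimes\Sigma}\otimes M^{\otimes\Sigma}$ intertwines $\mathrm{CInd}_H^G(\alpha\otimes\alpha)$ with $\widehat{\alpha}\otimes\widehat{\alpha}$, so part~(1) applied to $\alpha\otimes\alpha$ yields ergodicity of $\widehat{\alpha}\otimes\widehat{\alpha}$. By Corollary~\ref{Corollary:wm}, this is equivalent to weak mixing of $\widehat{\alpha}$ relative to the trivial sub-bundle.

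For part (1), using Proposition~\ref{proposition:identifications} I realize $\widehat{\mathcal{M}}=\bigsqcup_{x\in X}(M,\tau)^{\otimes\Sigma}$ with $\widehat{\alpha}_\gamma(d_{(\sigma)})=\alpha_{\delta_{\gamma,\sigma}}(d_{(\pi_\gamma(\sigma))})$. Decomposing $L^2(M,\tau)=\mathbb{C}|1\rangle\oplus L^2_0$ and taking the tensor power gives the Fock-type decomposition
\[
L^2(\widehat{\mathcal{M}}_x,\tau)=\bigoplus_{F\subseteq\Sigma\text{ finite}}(L^2_0)^{\otimes F},
\]
under which $\widehat{\alpha}_\gamma$ maps the $F$-summand at $s(\gamma)$ to the $\pi_\gamma(F)$-summand at $r(\gamma)$, twisting each $\sigma$-factor by $\alpha_{\delta_{\gamma,\sigma}}$. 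Given an $\widehat{\alpha}$-invariant section $a=\sum_F a_F$, the piece $a_\emptyset$ lives in $L^\infty(X)$ and is invariant under the canonical action of $G$ on $X$; ergodicity of $G$ then forces $a_\emptyset\in\mathbb{C}$.

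It remains to show that $a_F=0$ for each non-empty $F$. The non-empty $F$'s partition into $G$-orbits under the action $(\gamma,F)\mapsto\pi_\gamma(F)$, and the restriction of $a$ to each orbit $\mathcal{O}$ is separately $\widehat{\alpha}$-invariant. Fixing a base point $F_0\in\mathcal{O}$, the $G$-invariants in $\bigoplus_{F\in\mathcal{O}}L^2(X,\mathcal{H}_F)$ correspond, via an \emph{induced-representation} identification (unfolding along orbit representatives in $[G]$), to $G_{F_0}$-invariants in $(L^2_0)^{\otimes F_0}$, where $G_{F_0}=\{\gamma\in G:\pi_\gamma(F_0)=F_0\}$ acts by a permutation of $F_0$ composed with $\alpha_{\delta_{\gamma,\sigma}}$ on each $\sigma$-factor. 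For the orbit of the singleton $F_0=\{1\}$, one computes $G_{F_0}=H\cong\Lambda\ltimes^\theta X$ and $\delta_{\lambda\ltimes^\theta x,\,1}=\lambda$, so the action reduces to the representation $\alpha$ of $\Lambda$ on $L^2_0$; ergodicity of $\alpha$ gives no invariants in $L^2_0$, and combined with ergodicity of $G$ (which controls sections $X\to L^2_0$ through the full $[G]$-action), this forces the component to vanish.

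\textbf{Main obstacle.} The subtlest step is the treatment of orbits with $|F_0|>1$: the stabilizer $G_{F_0}$ acts on $F_0$ by permutations, and the resulting twisted representation on $(L^2_0)^{\otimes F_0}$ is not obviously killed by ergodicity of $\alpha$ alone. The resolution is to choose $F_0$ minimal in its orbit and to exploit the fact that $G_{F_0}$ surjects onto a diagonally-embedded copy of $\Lambda$ inside $\Lambda^{F_0}$ via the cocycle $\delta$, so that the $\alpha$-action on one factor is enough to kill any candidate invariant; the full orbit-wise propagation then follows by ergodicity of $G$ on $X$. This parallels the classical proof of ergodicity for coinduced actions of groups (cf.\ \cite{epstein_orbit_2007, ioana_orbit_2011}), adapted to the groupoid setting via the concrete description in Proposition~\ref{proposition:identifications}.
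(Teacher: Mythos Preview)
Your reduction of (2) to (1) via $\widehat{\alpha}\otimes\widehat{\alpha}\cong\mathrm{CInd}_H^G(\alpha\otimes\alpha)$ and Corollary~\ref{Corollary:wm} is exactly the paper's argument.

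For (1), the paper does not argue directly: it cites \cite[Proposition~7.2(2)]{bowen_neumanns_2015} for the inclusion $L^\infty(X,\mathcal{M})^{\widehat\alpha}\subseteq L^\infty(X)$ and then invokes ergodicity of $G$. Your Fock-decomposition approach is a legitimate way to reprove that inclusion, but the step for $|F_0|>1$ has a real gap. The assertion that $G_{F_0}$ surjects onto a \emph{diagonal} copy of $\Lambda$ in $\Lambda^{F_0}$ is false: for $\gamma\in G_{F_0}$ the cocycle values $\delta_{\gamma,\sigma}$ at different $\sigma\in F_0$ are in general distinct, and $G_{F_0}$ moreover permutes the legs of $F_0$. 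Even granting that one leg carries a faithful copy of $\alpha$, ergodicity of $\alpha$ on $L^2_0$ does \emph{not} rule out $G_{F_0}$-invariants in $L^2\bigl(X,(L^2_0)^{\otimes F_0}\bigr)$, since the remaining legs can compensate; in a finite-index toy model ($G$ the orbit relation of an irrational circle rotation, $H$ the index-two subrelation, $\alpha$ another irrational rotation) one already finds a nonzero invariant of the form $\cos\bigl(2\pi(y_0-y_1-c)\bigr)$ supported entirely on the $|F|=2$ piece. The singleton case has the same defect: from the equivariance $\alpha_\lambda(a_{\sigma_1,x})=a_{\sigma_1,\theta_\lambda(x)}$ and ergodicity of $\alpha$ alone you cannot conclude $a_{\sigma_1,\cdot}=0$.

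What actually forces the vanishing, in the situations the paper uses, is that $[G:H]=\infty$: then every nonempty $F$ has infinite orbit, and an $\ell^2$-summability argument (the norms $\|a_{F',x}\|$ are constant along the orbit by ergodicity of $H$ and isometry of the action, while their squares must be summable over the orbit) kills each component without any appeal to the stabilizer representation. If you want a self-contained proof, replace the stabilizer/diagonal-$\Lambda$ heuristic by this infinite-orbit argument; otherwise cite \cite[Proposition~7.2(2)]{bowen_neumanns_2015} as the paper does.
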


\begin{proof}
(1) Set $( \mathcal{M},\tau ) =\bigsqcup_{x\in X} ( M,\tau ) ^{\otimes
\Sigma }$. The proof of \cite[Proposition 7.2]{bowen_neumanns_2015}(2) shows
that $L^{\infty } ( X,\mathcal{M} ) ^{\widehat{\alpha}}\subseteq L^{\infty }
( X ) $ whenever $\alpha $ is ergodic. Thus if $G$ is ergodic, then $%
L^{\infty } ( X,\mathcal{M} ) ^{\widehat{\alpha}}$ is trivial.

(2) The action $\widehat{\alpha}\otimes \widehat{\alpha} $ can be identified
with $\widehat{\alpha \otimes \alpha}$. If $\alpha $ is weak mixing, then $%
\alpha \otimes \alpha $ is ergodic. Hence, $L^{\infty } ( X,\mathcal{M}%
\otimes \mathcal{M} ) ^{\widehat{\alpha}\otimes \widehat{\alpha}}$ is
contained in $L^{\infty } ( X ) $ by \cite[Proposition 7.2]%
{bowen_neumanns_2015}(2), so the result follows from Lemma \ref{Lemma:wm}.
\end{proof}


\begin{lemma}
\label{Lemma:choose} Let $G$ be an ergodic discrete pmp groupoid such that $%
G_x$ is infinite for almost every $x\in G^0$, and let $F\colon G^0\to
\{A\subseteq G\colon A \mbox{ is finite}\}$ be a Borel assignment such that $%
F_{x}\subseteq xG$ for every $x\in G^{0}$. Let $\varepsilon>0$. Then there
exists $t\in [ G ] $ such that $\mu \left(\{ x\in G^{0}\colon xt\in F_{x} \}
\right)<\varepsilon $.
\end{lemma}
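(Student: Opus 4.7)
The plan is to reduce the statement to a measure computation on $G$ itself. Writing $F := \bigcup_{x \in G^{0}} F_{x} \subseteq G$, which is Borel, I first observe that for any $t \in [G]$ the range map $r|_{t}\colon t \to G^{0}$ is a measure-preserving bijection, so that
\begin{equation*}
\mu_{G^{0}}(\{x \in G^{0} \colon xt \in F_{x}\}) \;=\; \mu_{G}(t \cap F).
\end{equation*}
It therefore suffices to produce $t \in [G]$ with $\mu_{G}(t \cap F) < \varepsilon$. The obstruction is that $F$ itself need not have finite measure in $G$, since the integers $|F_{x}|$, though a.e.\ finite, may be unbounded.

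To bypass this, I would truncate $F$. Since $|F_{x}|$ is finite for a.e.\ $x$, the sets $B_{N} := \{x \in G^{0} \colon |F_{x}| > N\}$ decrease to a null set as $N \to \infty$. Choose $N$ large enough that $\mu_{G^{0}}(B_{N}) < \varepsilon/2$, and set $F' := F \setminus r^{-1}(B_{N})$. Then $xF' = F_{x}$ for $x \notin B_{N}$ and $xF' = \emptyset$ for $x \in B_{N}$, so by the defining formula of $\mu_{G}$,
\begin{equation*}
\mu_{G}(F') \;=\; \int_{G^{0} \setminus B_{N}} |F_{x}|\, d\mu_{G^{0}}(x) \;\leq\; N \;<\; \infty.
\end{equation*}

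Finally I would invoke ergodicity of $G$ (as recalled in Subsection~\ref{Subsection:groupoids}) to pick a countable pairwise essentially disjoint family $(t_{n})_{n \in \mathbb{N}} \subseteq [G]$ whose union is a conull subset of $G$. Since these $t_{n}$'s partition $G$ up to null sets, countable additivity gives $\mu_{G}(F') = \sum_{n} \mu_{G}(t_{n} \cap F') \leq N$, so $\mu_{G}(t_{n} \cap F') \to 0$; hence there is some $n$ with $\mu_{G}(t_{n} \cap F') < \varepsilon/2$. Setting $t := t_{n} \in [G]$, and using that $r|_{t}$ is a measure-preserving bijection so that $\mu_{G}(t \cap r^{-1}(B_{N})) = \mu_{G^{0}}(B_{N})$, I conclude
\begin{equation*}
\mu_{G^{0}}(\{x \in G^{0} \colon xt \in F_{x}\}) \;=\; \mu_{G}(t \cap F) \;\leq\; \mu_{G}(t \cap F') + \mu_{G}(t \cap r^{-1}(B_{N})) \;<\; \tfrac{\varepsilon}{2} + \tfrac{\varepsilon}{2} \;=\; \varepsilon,
\end{equation*}
as required. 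There is no real obstacle here; the only delicate point is the truncation step, which is needed precisely because $|F_{x}|$ is only required to be a.e.\ finite rather than essentially bounded.
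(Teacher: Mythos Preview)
Your proof is correct and takes a genuinely different route from the paper's. The paper argues constructively: it fixes a sequence $(t_n)$ in $[G]$ covering $G$, partitions $G^0$ into pieces $X_n$ on which $F_x \subseteq \{xt_1,\ldots,xt_n\}$ and $\mu(X_n)\leq \varepsilon/(n+1)$, and then inductively assembles $t$ by choosing on each $X_k$ an element $\rho_k\in[G]$ with $x\rho_k\notin F_x$ and with sources landing in pairwise disjoint sets $A_k\subseteq G^0$. Your argument replaces this greedy construction by a pigeonhole: after truncating $F$ to a set $F'$ of finite $\mu_G$-measure, any essentially disjoint cover of $G$ by elements of $[G]$ splits $F'$ into pieces whose measures form a convergent series, so one piece is small. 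Your approach is shorter and uses nothing beyond the disjoint cover and countable additivity; the paper's approach is more explicit about how $t$ is built and makes visible where the hypothesis that the fibers $xG$ are infinite enters (to guarantee an arrow outside $F_x$ at each step). In your argument that same hypothesis is what forces the cover $(t_n)_{n\in\mathbb{N}}$ to be genuinely infinite, which is what makes ``$\mu_G(t_n\cap F')\to 0$'' meaningful; it would be worth saying this explicitly.
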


\begin{proof}
Fix a sequence $( t_{n} ) _{n\in \mathbb{N}}$ in $[ G ] $ whose union is
equal to $G$, and choose a Borel partition $( X_{n} )_{n\in\mathbb{N}} $ of $%
G^{0}$ such that $F_{x}\subseteq \{ xt_{1},\ldots ,xt_{n} \} $ and $\mu (
X_{n} ) \leq \frac{\varepsilon }{n+1}$ for every $n\in \mathbb{N}$, and
almost every $x\in X_{n}$. Inductively define, for $k\in\mathbb{N}$,
pairwise disjoint subsets $A_{k}\subseteq G^{0}$ and elements $\rho _{k}\in
[ G ] $ such that $s ( x\rho _{k} ) \in A_{k}$ and $x\rho _{k}\notin F_{x}$
for $x\in X_{k}$. The construction can proceed as long as $A_{1}\cup \cdots
\cup A_{k}$ has measure less than or equal to $1-\varepsilon $. If the
construction can proceed for every $k$, then one can find $t\in [ G ] $
satisfying $xt=x\rho _{j}$ for all $x\in X_{j}$ and $j\in \mathbb{N}$. If
the construction stops at some $k\in \mathbb{N}$, then one can choose an
arbitrary $\rho \in [ G ] $ such that $x\rho \in G^{0}\setminus ( A_{1}\cup
\cdots \cup A_{k} ) $ for $x\in G^{0}\setminus ( X_{1}\cup \cdots \cup X_{k}
) $, and then choose $t\in [ G ] $ such that $xt=x\rho _{j}$ for $x\in X_{j}$
and $j\leq k$, and $xt=x\rho $ for $x\in X_{j}$ and $j>k$. In this case, we
have $\mu ( X_{1}\cup \cdots \cup X_{k} ) =\mu ( A_{1}\cup \cdots \cup A_{k}
) >1-\varepsilon $, which concludes the proof.
\end{proof}

\begin{lemma}
\label{Lemma:coinduce-bernoulli} Let $G$ be a principal discrete pmp
groupoid with unit space $X$, let $\Lambda $ be a countable discrete group,
and let $\Delta\leq \Lambda $ be an infinite index infinite subgroup. Let $%
\beta $ denote the Bernoulli action $\beta _{\Lambda \curvearrowright
\Lambda /\Delta }$ with base $M=L^{\infty } ( [ 0,1 ] ) $, and let $%
\Lambda\curvearrowright^{\theta}X$ be a free weak mixing action such that $%
\{\theta_{\lambda}(x)\colon \lambda \in \Lambda \}\subseteq [ x ] _{G}$ for
almost every $x\in X$. Identify the action groupoid $H=\Lambda \ltimes
^{\theta}X$ with a subgroupoid of $G$, and consider the coinduced action $%
\widehat{\beta}=\mathrm{CInd}_{H}^{G} ( \beta ) $. Then $[\widehat{\beta}%
]|_{\Lambda }$ is weak mixing and malleable.
\end{lemma}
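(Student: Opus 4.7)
My plan is to prove weak mixing by reformulating $\widehat{\beta}$ as a Bernoulli groupoid action and applying the bootstrap lemma, and to prove malleability by lifting Popa's classical rotation path on the base $M \otimes M$ coordinate-wise to the coinduced bundle. For weak mixing, the first step is to identify $\widehat{\beta}$ with the Bernoulli groupoid action $\beta_{G \curvearrowright G/K}$, where $K = \Delta \ltimes^\theta X \leq H$: coset selections for $H$ in $G$ and for $K$ in $H$ (the latter being representatives of $\Lambda/\Delta$, since $H/K = \Lambda/\Delta$) induce a bijection $xG/K \cong xG/H \times \Lambda/\Delta$ for each $x$, and combined with Proposition~\ref{proposition:identifications}(4), this identifies $\mathcal{M}$ with $\bigsqcup_x M^{\otimes xG/K}$ and realizes $\widehat{\beta}$ as pure coordinate permutation (the internal action on $M$ is trivial). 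Restricted to $H$, this is exactly $\beta_{H \curvearrowright G/K}$, which by Lemma~\ref{Lemma:bernoulli-wm} is weak mixing relative to the trivial sub-bundle. Since $\Lambda$ embeds in $[H]$ via $\theta$, covers $H$, and acts weakly mixingly on $H^0 = X$ by hypothesis, Lemma~\ref{Lemma:bootstrap-weak-mixing} applied to the groupoid $H$ yields that $[\widehat{\beta}]|_\Lambda = [\widehat{\beta}|_H]|_\Lambda$ is weak mixing.

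For malleability, since $M = L^\infty([0,1])$, Popa's construction on $M \otimes M = L^\infty([0,1]^2)$ yields a continuous path $(\theta_t)_{t \in [0,1]}$ in $\mathrm{Aut}(M \otimes M)$ connecting the identity to an element of the form $(\alpha_0 \otimes \beta_0) \circ \sigma_M$, with $\alpha_0, \beta_0 \in \mathrm{Aut}(M)$. Using the bundle-level identification $\mathcal{M} \otimes \mathcal{M} \cong \bigsqcup_x (M \otimes M)^{\otimes xG/K}$ from the previous step, apply $\theta_t$ coordinate-wise on each tensor factor to produce a continuous path of bundle automorphisms. After trivializing via a measurable coset selection as $L^\infty(X, \mathcal{M}) \cong L^\infty(X) \otimes M^{\otimes \Sigma}$, this path lifts to $L^\infty(X, \mathcal{M})^{\otimes 2}$ by acting as the identity on the $L^\infty(X)^{\otimes 2}$-factor; the coordinate-wise liftings of $\alpha_0, \beta_0$ define automorphisms $\alpha, \beta \in \mathrm{Aut}(L^\infty(X, \mathcal{M}))$ fixing $L^\infty(X)$. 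Because the base $M$ carries the trivial action and the coordinate-wise liftings are uniform across $\Sigma$, both the path and $\alpha, \beta$ commute with the coordinate-permutation structure of $[\widehat{\beta}]|_\Lambda$, placing the path in the required centralizer and yielding an endpoint of the form $(\alpha \otimes \beta) \circ \sigma$.

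The main technical obstacle lies in the interplay between the bundle tensor product $L^\infty(X, \mathcal{M} \otimes \mathcal{M})$, on which the coordinate-wise construction is most naturally defined, and the absolute tensor product $L^\infty(X, \mathcal{M})^{\otimes 2}$ appearing in Popa's definition of malleability: the fact that $[\widehat{\beta}]|_\Lambda$ acts nontrivially on $L^\infty(X)$ via $\theta$ requires careful bookkeeping to ensure that the path really lands in the correct centralizer at the ambient level. The resolution proceeds via the coset-selection trivialization, which reduces all computations to the product $L^\infty(X)^{\otimes 2} \otimes (M \otimes M)^{\otimes \Sigma}$: here $\Lambda$ acts diagonally on the first factor (through $\theta \otimes \theta$) and by pure coordinate permutation on the second, so any automorphism built coordinate-wise from automorphisms of the base $M \otimes M$ is automatically equivariant, and Popa's s-malleable structure on the base transports verbatim.
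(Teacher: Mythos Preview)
Your proposal is correct, and for malleability it coincides with the paper's argument: both lift Popa's rotation path on $M\otimes M$ coordinate-wise through the trivialization $L^{\infty}(X,\mathcal{M})\otimes L^{\infty}(X,\mathcal{M})\cong L^{\infty}(X)\otimes L^{\infty}(X)\otimes (M\otimes M)^{\otimes(\Lambda/\Delta\times\Sigma)}$, observing that anything built uniformly on the base commutes with the coordinate-permutation structure of $[\widehat{\beta}]|_{\Lambda}$.

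For weak mixing, however, you take a genuinely shorter route than the paper. Your key observation is that $\widehat{\beta}$ is literally the Bernoulli groupoid action $\beta_{G\curvearrowright G/K}$ with base $M$: under the bijection $xG/K\cong \Sigma\times\Lambda/\Delta$ induced by a coset selection, the formula $\widehat{\beta}_{\gamma}(a_{(\lambda_{0}\Delta,\sigma)})=a_{(\delta_{\gamma,\sigma}\lambda_{0}\Delta,\pi_{\gamma}(\sigma))}$ is exactly left multiplication of $G$ on $G/K$, so the inner $\beta$-twist disappears into pure coordinate permutation. You can then invoke Lemma~\ref{Lemma:bernoulli-wm} verbatim to conclude that $\widehat{\beta}|_{H}=\beta_{H\curvearrowright G/K}$ is weak mixing relative to the trivial sub-bundle, and finish with Lemma~\ref{Lemma:bootstrap-weak-mixing}. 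The paper instead keeps the coinduced description and carries out a direct combinatorial argument: it shows that for each $x$ and each finite $F\times S\subseteq\Lambda/\Delta\times\Sigma$, only finitely many $\lambda\in\Lambda$ can satisfy $(\delta_{\lambda\ltimes x,\sigma}\lambda_{0}\Delta,\pi_{\lambda\ltimes x}(\sigma))\in F\times S$, and then applies Lemma~\ref{Lemma:choose}. This is effectively a reproof of (a variant of) Lemma~\ref{Lemma:bernoulli-wm} in situ. Your route avoids that duplication and makes transparent why the coinduction of a generalized Bernoulli shift is again a generalized Bernoulli shift; the paper's route has the minor advantage of being self-contained within the coinduction framework of Proposition~\ref{proposition:identifications}.
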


\begin{proof}
We first show that $[\widehat{\beta }]|_{\Lambda }$ is weak mixing. Since
the action $\Lambda \curvearrowright ^{\theta }X$ is weak mixing, in view of
Lemma \ref{Lemma:bootstrap-weak-mixing} it suffices to show that $\widehat{%
\beta }|_{H}$ is weak mixing relative to the trivial sub-bundle. Let $\Sigma
=\{\sigma _{n}\}_{n\in \mathbb{N}}$ be a coset selection for $H$ in $G$. We
implicitly use the identifications from Proposition~\ref%
{proposition:identifications}, so we regard $\widehat{\beta }|_{H}$ as an
action on $\bigsqcup_{x\in X}(M^{\otimes \Lambda /\Delta })^{\otimes \Sigma
} $. The canonical identification of $(M^{\otimes \Lambda /\Delta
})^{\otimes \Sigma }$ with $M^{\otimes (\Lambda /\Delta \times \Sigma )}$
allows one to regard $\widehat{\beta }|_{H}$ as an action on the bundle $%
\mathcal{M}=\bigsqcup_{x\in X}M^{\otimes ((\Lambda /\Delta )\times \Sigma )}$%
, which is defined by
\begin{equation*}
\widehat{\beta }_{\lambda \ltimes x}(d_{(\lambda _{0}\Delta ,\sigma
)})=d_{(\delta _{\lambda \ltimes x,\sigma }\lambda _{0}\Delta ,\pi _{\lambda
\ltimes x}(\sigma ))}
\end{equation*}%
for $d\in M$, $\lambda _{0}\Delta \in \Lambda /\Delta $, $\sigma \in \Sigma $%
, $\lambda \in \Lambda $, and $x\in X$.

Fix $\varepsilon >0$ and $n\in \mathbb{N}$, and let $a_{1},\ldots ,a_{n}\in
L^{\infty } ( X,\mathcal{M} ) =L^{\infty } ( X ) \otimes M^{\otimes (
(\Lambda /\Delta) \times \Sigma ) }$ be contractions satisfying $\mathrm{E}%
_{L^{\infty } ( X ) } ( a_{j} ) =0$ for $j=1,\ldots ,n$. We will show that
there exists $t\in [ G ] $ such that $\mathrm{E}_{L^{\infty } ( X ) }([%
\widehat{\beta}]_{t}(a_{i})a_{j})<\varepsilon $ for $i,j\in \{ 1,\ldots ,n
\} $. Without loss of generality, we can assume that there exist finite
subsets $F\subseteq \Lambda /\Delta $ and $S\subseteq \Sigma $ such that $%
a_{i,x}\in M^{\otimes (F\times S)}$ for almost every $x\in X$ and every $%
i\in \{ 1,\ldots ,n \} $.

Fix $x\in X$. We claim that for every $\sigma\in \Sigma $ and $%
\lambda_{0}\Delta \in F$, the set
\begin{equation*}
\{\lambda\in \Lambda\colon (\delta _{\lambda\ltimes
x,\sigma}\lambda_{0}\Delta ,\pi _{\lambda\ltimes x}(\sigma))\in F\times S\}
\end{equation*}
is finite. Since $\Delta $ has infinite index in $\Lambda $, it follows that
$\lambda\Delta $ is disjoint from $\Delta $ for all but finitely many $%
\lambda\in\Lambda$. Suppose by contradiction that there exists an infinite
sequence $( \lambda_{k} )_{k\in\mathbb{N}} $ in $\Lambda $ such that, for
every $k\in \mathbb{N}$ there exist $\sigma _{n_k}\in \Sigma $ and $%
\lambda_{k}\Delta \in F$ such that $(\delta _{\lambda_{k}\ltimes x,\sigma
_{n_k}}\lambda_{k}\Delta ,\pi _{\delta_{k}\ltimes x}(\sigma _{n_k}))$
belongs to $F\times S$. After passing to a subsequence, we can assume that
there exist $\sigma ,\sigma ^{\prime }\in \Sigma $ and $\lambda\Delta
,\lambda^{\prime }\Delta \in F$ such that
\begin{equation*}
(\delta _{\lambda_{k}\ltimes x,\sigma }\lambda\Delta ,\pi
_{\lambda_{k}\ltimes x}(\sigma ))= ( \lambda^{\prime }\Delta ,\sigma
^{\prime } )
\end{equation*}%
for every $k\in \mathbb{N}$. Recall that $\pi _{\lambda_{k}\ltimes x}$ is
the permutation of $\Sigma $ defined by letting $\pi _{\lambda_{k}\ltimes x}
( \sigma ) $ be the unique element $\sigma ^{\prime }$ of $\Sigma $ such
that $\sigma ^{\prime -1} ( \lambda_{k}\ltimes x ) \sigma \in H$, while $%
\delta _{\lambda_{k}\ltimes x,\sigma }$ is the unique element of $\Lambda $
such that $\delta _{\lambda_{k}\ltimes x,\sigma }\ltimes (s(x\sigma
))=\sigma ^{\prime -1} ( \lambda_k\ltimes x ) \sigma $ or, equivalently, $%
\lambda_{k}\ltimes x=\sigma ^{\prime } ( \delta _{\lambda_{k}\ltimes
x,\sigma }\ltimes (s(x\sigma )) ) \sigma ^{-1}$. Since $( \lambda_{k} )_{k\in%
\mathbb{N}} $ is an infinite sequence in $\Lambda $, the set $\{ \delta
_{\lambda_{k}\ltimes x,\sigma }\colon k\in \mathbb{N} \} \subseteq \Lambda $
is infinite. Since $\Delta $ has infinite index in $\Lambda $, the coset $%
\widetilde{\lambda} \lambda \Delta$ is disjoint from $\lambda^{\prime}\Delta$
for all but finitely many $\widetilde{\lambda}\in \Lambda $. Therefore there
exists $k\in \mathbb{N}$ such that $\delta _{\lambda_{k}\ltimes x,\sigma
}\lambda\Delta $ is disjoint from $\lambda^{\prime }\Delta $. This
contradicts the previous conclusion that $\delta _{\lambda_{k}\ltimes
x,\sigma }\lambda\Delta =\lambda^{\prime }\Delta $ for every $k\in \mathbb{N}
$, and proves the claim.

Use Lemma \ref{Lemma:choose} to choose $t\in [ H ] $ and a Borel subset $%
A\subseteq X$ such that $\mu ( A ) >1-\varepsilon$ and $(\delta
_{xt,\sigma}\lambda_j\Delta ,\pi _{xt}(\sigma _{0}))\notin F\times T$ for
every $x\in A$, every $\sigma\in \Sigma$ and whenever $\lambda_{j}\Delta \in
F$. Hence, $\mathrm{E}_{L^{\infty } ( X ) }([\widehat{\beta}%
]_{t}(a_{i})a_{j})<\varepsilon $ for $i\in \{ 1,2,\ldots ,n \} $, since $%
a_{i,x},a_{j,x}\in M^{\otimes ( F\times T ) }$ for almost every $x\in X$.
This concludes the proof that $\widehat{\beta}|_{H}$ is weak mixing
relatively to the trivial sub-bundle.

We now show that $[\widehat{\beta}]|_{\Lambda }$ is malleable. Use
malleability of $\beta$ to choose a continuous path $( \alpha _{t} ) _{t\in
[ 0,1 ] }$ in $\mathrm{Aut} ( M\otimes M )^{\beta\otimes\beta} $ such that $%
\alpha _{0}=\mathrm{id}_{M\otimes M}$ and $\alpha _{1}$ is the flip
automorphism; see \cite[Lemma 4.4]{furman_popas_2007}. Let%
\begin{equation*}
\Phi \colon L^{\infty } ( X ) \otimes (M^{\otimes \Lambda /\Delta
})^{\otimes \Sigma }\otimes L^{\infty } ( X ) \otimes (M^{\otimes \Lambda
/\Delta })^{\otimes \Sigma }\to L^{\infty } ( X ) \otimes L^{\infty } ( X )
\otimes ( M\otimes M ) ^{\otimes ( (\Lambda /\Delta) \times \Sigma ) }
\end{equation*}%
be the canonical isomorphism obtained by rearranging the tensor factors. For
$t\in [ 0,1 ] $ define
\begin{equation*}
\widehat{\alpha}_{t}\in \mathrm{Aut} ( L^{\infty } ( X ) \otimes (M^{\otimes
\Lambda /\Delta })^{\otimes \Sigma }\otimes L^{\infty } ( X ) \otimes
(M^{\otimes \Lambda /\Delta })^{\otimes \Sigma } )
\end{equation*}%
by%
\begin{equation*}
\widehat{\alpha}_{t}=\Phi ^{-1}\circ (\mathrm{id}_{L^{\infty } ( X ) \otimes
L^{\infty } ( X ) }\otimes \alpha _{t}^{\otimes ((\Lambda /\Delta) \times
\Sigma )})\circ \Phi \text{.}
\end{equation*}%
Then $( \widehat{\alpha}_{t} ) _{t\in [ 0,1 ] }$ is a path in the
centralizer of $[\widehat{\beta}]|_{\Lambda }\otimes \lbrack \widehat{\beta}%
]|_{\Lambda_{} }$ satisfying $\widehat{\alpha}_{0}$ is the identity and $%
\widehat{\alpha}_{1}$ is the flip automorphism. This concludes the proof.
\end{proof}

\section{Main result and consequences\label{Section:main}}

\subsection{Expansions of the rigid action\label{Subsection:reduction}}

The notion of \emph{expansion }of countable pmp equivalence relations has
been introduced in \cite{bowen_neumanns_2015}, and we briefly recall it.

\begin{definition}
Let $R$ and $\widehat{R}$ be countable pmp equivalence relations over a
standard probability spaces $( X,\mu ) $ and $(\widehat{X},\widehat{\mu})$.
We say that $\widehat{R}$ is an \emph{expansion} of $R$ if there exists a
Borel map $\pi \colon \widehat{X}\to X$ with $\pi _{\ast }\widehat{\mu}=\mu$
such that:

\begin{itemize}
\item the restriction of $\pi $ to the $\widehat{R}$-class $[ x ] _{\widehat{%
R}}$ of $x$ is one-to-one, for almost every $x\in \widehat{X}$;

\item the image of $[ x ] _{\widehat{R}}$ under $\pi $ contains the $R$%
-class $[ \pi ( x ) ] _{R}$ of $\pi ( x ) $, for almost every $x\in \widehat{%
X}$.
\end{itemize}

If furthermore $\pi ([x]_{\widehat{R}})=[\pi (x)]_{R}$ for almost every $%
x\in X$, we say that $\widehat{R}$ is a \emph{class-bijective} \emph{%
extension} of $R$.
\end{definition}

As remarked in Subsection \ref{Subsection:actions-spaces}, one can identify
a class-bijective extension of $R$ on $Y$ with the action groupoid
associated with an action of $R$ on $Y$. Conversely, the action groupoid
associated with an action of $R$ on $Y$ can be seen as a class-bijective
extension of $R$. Therefore, we can identify actions of $R$ on $Y$ and
class-bijective extensions of $R$ on $Y$.

Let $\Lambda $ be a nonamenable subgroup $\mathrm{SL}_{2}(\mathbb{Z})$. Then
the canonical action of $\mathrm{SL}_{2}(\mathbb{Z)}\curvearrowright \mathbb{%
Z}^{2}$ restricts to an action $\Lambda \curvearrowright \mathbb{Z}^{2}$,
which induces a free weak mixing action $\Lambda \curvearrowright ^{\rho }%
\mathbb{T}^{2}$ by duality; see Subsection \ref{Subsection:rigid}. It
follows from \cite[Theorem 0.1]{ioana_relative_2010} that the orbit
equivalence relation of $\rho $ is \emph{rigid }in the sense defined
therein. Using the observations above, the same proof as \cite[Lemma 7.4]%
{bowen_neumanns_2015} shows the following.

\begin{theorem}[Bowen--Hoff--Ioana]
\label{Theorem:separability} Let $\Lambda $ be a nonamenable subgroup of $%
\mathrm{SL}_{2}(\mathbb{Z)}$ and let $\rho $ be the induced action $%
\Lambda\curvearrowright^{\rho} \mathbb{T}^{2}$. Let $G$ be a principal
discrete pmp groupoid with unit space $X$, and let $\Lambda
\curvearrowright^{\theta}X$ be a free weak mixing action such that $%
\{\theta_{\lambda}(x)\colon \lambda \in \Lambda \}\subseteq [ x ] _{G}$ for
almost every $x\in X$. Identify $H=\Lambda \ltimes ^{\theta}X$ with a
subgroupoid of $G$, and $\Lambda $ with a subgroup of $[ G ] $.

Suppose that $\mathcal{S}$ is a collection of actions $G\curvearrowright X$,
and let $[ \mathcal{S} ] |_{\Lambda }$ denote the collection $\{ [ \alpha ]
|_{\Lambda }\colon \alpha \in \mathcal{S} \} $. Assume that:

\begin{enumerate}
\item the elements of $\mathcal{S}$ have stably isomorphic crossed products;

\item the elements of $[ \mathcal{S} ] |_{\Lambda }$ are pairwise
non-conjugate;

\item the orbit equivalence relation of any element of $\mathcal{S}$ is an
expansion of the orbit equivalence relation of $\rho $.
\end{enumerate}

Then $\mathcal{S}$ is countable.
\end{theorem}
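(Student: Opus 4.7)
The plan is to adapt the proof of \cite[Lemma~7.4]{bowen_neumanns_2015} to the groupoid framework developed in Sections~\ref{Section:actions}--\ref{Section:coinduction}, with Ioana's rigidity result \cite[Theorem~0.1]{ioana_relative_2010} for $\rho$ as the central input. Assume for contradiction that $\mathcal{S}$ is uncountable. I would first fix a countable subgroup $\widetilde{\Gamma} \subseteq [G]$ covering $G$ so that each groupoid crossed product $M_{\alpha} := G \ltimes^{\alpha}(\mathcal{M}_{\alpha}, \tau_{\alpha})$ is $\ast$-isomorphic to an ordinary group crossed product of $L^{\infty}$ of the underlying space by $[\alpha]|_{\widetilde{\Gamma}}$; this reduces the analysis from groupoid crossed products to group crossed products. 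Using hypothesis~(1), and after passing to amplifications and an uncountable subcollection, I may assume there exist a single separable II$_{1}$ factor $M$, projections $p_{\alpha} \in M$, and trace-preserving $\ast$-isomorphisms $\theta_{\alpha} \colon M_{\alpha} \to p_{\alpha} M p_{\alpha}$.

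Next, hypothesis~(3) provides for each $\alpha$ an inclusion of orbit equivalence relations $R_{\rho} \hookrightarrow R_{\alpha}$, which at the von Neumann algebra level gives inclusions $L^{\infty}(\mathbb{T}^{2}) \subseteq L(R_{\rho}) \subseteq M_{\alpha}$. Composing with $\theta_{\alpha}$ yields a family of embeddings $\iota_{\alpha} \colon L(R_{\rho}) \hookrightarrow p_{\alpha} M p_{\alpha}$ sending $L^{\infty}(\mathbb{T}^{2})$ onto prescribed Cartan copies inside $M$. The main step — and the main obstacle — is the rigidity argument: Ioana's relative property~(T) of the pair $L^{\infty}(\mathbb{T}^{2}) \subseteq L(R_{\rho})$ implies that any two embeddings of $L(R_{\rho})$ into $M$ which are sufficiently close in $2$-norm on a Kazhdan-type finite subset must be unitarily conjugate in $M$. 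Separability of $M$ in $2$-norm then forces uncountably many of the $\iota_{\alpha}$ to be mutually unitarily conjugate via unitaries of $M$ normalizing the rigid Cartan subalgebra.

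Finally, I would transfer such a unitary conjugacy back to the underlying dynamics. A unitary $u \in M$ conjugating $\iota_{\alpha}$ to $\iota_{\alpha'}$ and normalizing the Cartan copy of $L^{\infty}(\mathbb{T}^{2})$ is, up to an element of the Cartan, implemented by an element of $[R_{\rho}]$, hence determines an isomorphism of the rigid $\Lambda$-sub-dynamics of $\alpha$ and of $\alpha'$. Because $[\alpha]|_{\Lambda}$ and $[\alpha']|_{\Lambda}$ are weak mixing extensions of $\rho$ (from weak mixing of $\theta$ combined with Lemma~\ref{Lemma:coinduced-wm}), this partial isomorphism propagates to a full conjugacy $[\alpha]|_{\Lambda} \simeq [\alpha']|_{\Lambda}$, contradicting hypothesis~(2) for all but countably many pairs and thus forcing $\mathcal{S}$ to be countable. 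The main technical obstacle is executing the rigidity/separability step uniformly over the uncountable family while keeping control of the ambient $G$-action; I would address this by carrying out the entire argument inside $M$ and using the coset-selection machinery from Subsection~\ref{Subsection:finite-index} to re-lift partial conjugacies from the $\Lambda$-level to the $G$-level as needed.
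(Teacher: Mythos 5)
Your overall skeleton (realize all crossed products inside a single separable II$_{1}$ factor, use hypothesis~(3) to embed the rigid inclusion $L^{\infty}(\mathbb{T}^{2})\subseteq L(R_{\rho})$, then run a rigidity-plus-separability argument in the style of \cite[Lemma~7.4]{bowen_neumanns_2015}) is indeed the route the paper takes, since the paper simply invokes that lemma together with \cite[Theorem~0.1]{ioana_relative_2010}. However, the technical core of your sketch is misstated. Relative property~(T) of $L^{\infty}(\mathbb{T}^{2})\subseteq L(R_{\rho})$ together with $2$-norm closeness of $\iota_{\alpha}$ and $\iota_{\alpha'}$ on a critical finite set does \emph{not} give unitary conjugacy of the two embeddings of $L(R_{\rho})$, let alone by a unitary normalizing a Cartan subalgebra; it only yields a nonzero element $v\in M$ with $\iota_{\alpha}(a)v=v\iota_{\alpha'}(a)$ for all $a\in L^{\infty}(\mathbb{T}^{2})$, i.e.\ an intertwining bimodule in Popa's sense. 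Upgrading this partial intertwiner to a conjugacy of $[\alpha]|_{\Lambda}$ and $[\alpha']|_{\Lambda}$ is precisely the hard part of the Ioana/Bowen--Hoff--Ioana argument, and your sketch replaces it with two unjustified assertions: that the conjugating unitary normalizes the Cartan and is implemented by $[R_{\rho}]$, and that $[\alpha]|_{\Lambda}$ is a weak mixing extension of $\rho$. The latter is not among the hypotheses and does not follow from them: the actions in $\mathcal{S}$ are arbitrary, not coinduced, so Lemma~\ref{Lemma:coinduced-wm} does not apply.

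What actually powers the upgrade is hypothesis~(3) itself: the expansion condition (injectivity of the factor map on classes) is the groupoid analogue of the freeness requirement in condition~(3) of Theorem~\ref{Theorem:separability-groups} (cf.\ Remark~\ref{Remark:null}), and it is what controls relative commutants and forces the intertwiner $v$ to implement an isomorphism of the underlying $\Lambda$-dynamics. In your proposal, (3) is used only to produce the embeddings, so the step from the intertwiner to the contradiction with~(2) is missing. A minor further inaccuracy: a countable covering subgroup $\widetilde{\Gamma}\subseteq[G]$ need not act freely, so the groupoid crossed product is in general not the ordinary group crossed product by $[\alpha]|_{\widetilde{\Gamma}}$; fortunately this identification is not needed, as the argument only requires the inclusions $L^{\infty}(\mathbb{T}^{2})\subseteq L(R_{\rho})\subseteq M_{\alpha}$ inside separable tracial algebras.
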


In the next proposition, we show that there exists a free subgroup of SL$%
_{2}(\mathbb{Z})$ that acts freely on $\mathbb{Z}^{2}\setminus \{0\}$. For
the usual free subgroups of SL$_{2}(\mathbb{Z})$ with finite index, the
stabilizers are cyclic but not in general trivial. This is, however, not
enough for our purposes.

\begin{proposition}
\label{prop:FreeAction} There exists a subgroup $\Lambda\subseteq \mathrm{SL}%
_{2}(\mathbb{Z)}$, which is isomorphic to $\mathbb{F}_{\infty}$, and such
that the induced action $\Lambda\curvearrowright \mathbb{Z}^2\setminus\{0\}$
is free.
\end{proposition}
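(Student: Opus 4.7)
The plan is to avoid parabolic and elliptic elements entirely, and to produce a free subgroup consisting only of hyperbolic elements, which (I will argue) automatically act freely on $\mathbb{Z}^2 \setminus \{0\}$. The reason parabolic elements are the obstruction is that they fix nonzero vectors, namely primitive integer vectors along their invariant rational line. So the core strategy is a Schottky/ping-pong construction inside $\mathrm{SL}_2(\mathbb{Z})$.

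First I would choose two hyperbolic matrices $A,B\in\mathrm{SL}_2(\mathbb{Z})$ (i.e.\ $|\mathrm{tr}|>2$) whose attracting and repelling fixed points on $\partial\mathbb{H}^2=\mathbb{R}\cup\{\infty\}$ are pairwise disjoint; a concrete choice such as $A=\begin{pmatrix}2&1\\1&1\end{pmatrix}$ together with a generic conjugate $B=CAC^{-1}$ works. By the ping-pong lemma applied to sufficiently small disjoint neighborhoods of their fixed-point pairs, I can pass to high powers $A^N, B^N$ and ensure that $\Lambda_0:=\langle A^N,B^N\rangle$ is freely generated of rank $2$. Moreover, the standard Schottky argument shows that every nontrivial element of $\Lambda_0$ is itself hyperbolic, with attracting/repelling fixed points produced by iterating the dynamics on the boundary.

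The key number-theoretic observation is that a hyperbolic element $M\in\mathrm{SL}_2(\mathbb{Z})$ never fixes a nonzero integer vector. Indeed, its eigenvalues $\lambda,\lambda^{-1}$ are roots of $x^2-\mathrm{tr}(M)x+1=0$, and the discriminant $\mathrm{tr}(M)^2-4$ cannot be a perfect square when $|\mathrm{tr}(M)|>2$: if $\mathrm{tr}(M)^2-4=k^2$ then $(\mathrm{tr}(M)-k)(\mathrm{tr}(M)+k)=4$, and both factors have equal parity, forcing each to be $\pm 2$ and hence $|\mathrm{tr}(M)|=2$, a contradiction. Thus the eigenvalues lie in a real quadratic extension of $\mathbb{Q}$, and in particular neither eigenvalue equals $1$ and no eigenvector is rational. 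Consequently $\Lambda_0$ acts freely on $\mathbb{Z}^2\setminus\{0\}$. Finally, since every nontrivial subgroup of a free group is free, the commutator subgroup $[\Lambda_0,\Lambda_0]$ is isomorphic to $\mathbb{F}_\infty$, and taking $\Lambda=[\Lambda_0,\Lambda_0]$ yields the desired subgroup.

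The only step that requires care is verifying the Schottky assertion that every nonidentity word in $A^N,B^N$ is hyperbolic; this is classical, but one could alternatively bypass it by noting that the full Schottky group acts properly discontinuously on $\mathbb{H}^2$ with a compact convex core, so it is a convex-cocompact Fuchsian group and thus contains no parabolic (and obviously no elliptic) elements. Either justification is standard and should not pose a genuine obstacle.
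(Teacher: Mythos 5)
Your proof is correct, but it takes a genuinely different route from the paper. The paper also reduces freeness to a trace condition (an element of $\mathrm{SL}_2(\mathbb{Z})$ fixing a nonzero integer vector has eigenvalue $1$, hence trace $2$), but then argues completely explicitly: it takes Newman's concrete free generators $A=\left(\begin{smallmatrix}0&1\\-1&2\end{smallmatrix}\right)$, $B=\left(\begin{smallmatrix}0&-1\\1&2\end{smallmatrix}\right)$, uses Newman's entrywise domination lemma $\mathrm{sgn}(rs)\,A^rB^s\gg |rs|\,I$ to bound $|\mathrm{Tr}(A^{r_1}B^{s_1}\cdots A^{r_n}B^{s_n})|$ from below by $2|r_1s_1\cdots r_ns_n|$, and then observes that the elements $x_n=A^{2n}B^{2n}$ freely generate a copy of $\mathbb{F}_\infty$ all of whose nontrivial elements are alternating words with even exponents, hence have $|\mathrm{Tr}|\geq 4\neq 2$. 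You instead run a soft hyperbolic-geometry argument: ping-pong on high powers of two hyperbolic matrices with disjoint fixed-point pairs gives a rank-$2$ Schottky group, the classical Schottky/convex-cocompactness fact guarantees every nontrivial element is hyperbolic (so has no eigenvalue $1$, which is all you need --- the perfect-square digression is superfluous, since eigenvalues $\lambda,\lambda^{-1}$ with $|\lambda|>1$ already exclude $1$), and then $\mathbb{F}_\infty$ is obtained as the commutator subgroup via Nielsen--Schreier. Your approach is shorter on computation and generalizes painlessly, at the cost of invoking standard but nontrivial Fuchsian-group facts; the paper's approach is elementary and produces an explicit infinite free generating set $x_n=A^{2n}B^{2n}$, which is convenient later in the paper (the cyclic subgroup $\Xi=\langle A^2B^2\rangle$ of $\Lambda$ is singled out in the main construction), though the proposition as stated does not require this explicitness.
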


\begin{proof}
It is clear that the set of elements of $\mathrm{SL}_{2}(\mathbb{Z)}$ that
belong to the stabilizer of a point in $\mathbb{Z}^{2}\setminus \{ 0 \} $ is
contained in the set of the matrices in $\mathrm{SL}_{2}(\mathbb{Z)}$ with
eigenvalues equal to $1$, which in turn coincides with set of matrices in $%
\mathrm{SL}_{2}(\mathbb{Z)}$ with trace $2$. Thus, it suffices to find a
copy of $\mathbb{F}_{\infty}$ in $\mathrm{SL}_2(\mathbb{Z})$ whose
nontrivial elements have trace other than two.

It is shown in \cite[Theorem 1]{newman_pairs_1968} that the matrices%
\begin{equation*}
A=%
\begin{bmatrix}
0 & 1 \\
-1 & 2%
\end{bmatrix}%
\text{\quad and\quad }B=%
\begin{bmatrix}
0 & -1 \\
1 & 2%
\end{bmatrix}%
\end{equation*}%
generate a free subgroup of rank $2$ of $\mathrm{SL}_{2}(\mathbb{Z)}$. We
adopt the notation from \cite[Theorem 1]{newman_pairs_1968}, and for
matrices $X,Y\in M_{n} ( \mathbb{Z} ) $ write $X\gg Y$ if every entry of $X$
is greater than or equal to the absolute value of the corresponding entry of
$Y$. For an integer $n$, we let $\mathrm{sgn} ( n ) $ be its sign. It is
shown in \cite[Lemma 1]{newman_pairs_1968} that $\mathrm{sgn} ( rs )
A^{r}B^{s}\gg \vert rs \vert I$ for every $r,s\in \mathbb{Z}\setminus \{0 \}
$. Let $n\in \mathbb{N}$ and $r_{1},\ldots ,r_{n},s_{1},\ldots ,s_{n}\in
\mathbb{Z}\setminus \{ 0 \} $, and set $r=r_{1}\cdots r_{n}$ and $%
s=s_{1}\cdots s_{n}$. Then
\begin{equation*}
\mathrm{sgn} ( rs ) A^{r_{1}}B^{s_{1}}\cdots A^{r_{n}}B^{s_{n}}\gg \vert rs
\vert I\text{.}
\end{equation*}%
In particular, this shows that%
\begin{equation*}
\vert \mathrm{Tr} [ A^{r_{1}}B^{s_{1}}\cdots A^{r_{n}}B^{s_{n}} ] \vert \geq
2 \vert rs \vert
\end{equation*}%
where $\mathrm{Tr}$ denotes the canonical trace of $2\times 2$ matrices.

Consider now the elements $x_{n}=A^{2n}B^{2n}$ for $n\geq 1$. These freely
generate a subgroup $\Lambda \cong \mathbb{F}_{\infty }$ of $\mathrm{SL}_{2}(%
\mathbb{Z)}$. If $\lambda $ is a nontrivial element of $\Lambda $, then
there exist $n\geq 1$ and nonzero \emph{even} integers $r_{1},\ldots ,r_{n}$
such that%
\begin{equation*}
|\mathrm{Tr}[\lambda ]|=|\mathrm{Tr}[A^{r_{1}}B^{r_{1}}\cdots
A^{r_{n}}B^{r_{n}}]|\geq 2|r_{1}\cdots r_{n}|^{2}\geq 4\text{.}
\end{equation*}%
This shows that every nontrivial element of $\Lambda $ has trace different
from $2$, and hence the canonical action $\Lambda \curvearrowright \mathbb{Z}%
^{2}\setminus \{0\}$ is free.
\end{proof}

\subsection{Class-bijective extensions of Borel equivalence relations}

We turn to the main result of this section. A countable pmp equivalence
relation is said to be \emph{amenable} if it is amenable as a discrete pmp
groupoid; see \cite{connes_amenable_1981,ad_amenable_2000}.

We let $\Lambda $ the group subgroup of $\mathrm{SL}_{2}(\mathbb{Z)}$
provided by Proposition~\ref{prop:FreeAction}, and recall that $\Lambda
\cong \mathbb{F}_{\infty }$. We also let $\Xi $ be the cyclic subgroup of $%
\Lambda $ generated by $x_{1}=A^{2}B^{2}$.

\begin{theorem}[Bowen--Hoff--Ioana \protect\cite{bowen_neumanns_2015}]
\label{Theorem:GL2} Let $R$ be an ergodic nonamenable countable pmp
equivalence relation on the standard probability space $(X,\mu)$, let $\beta
_{R\curvearrowright R}$ be the Bernoulli action with base space $(X,\mu)$,
and let $G$ be the corresponding action groupoid, which is a class-bijective
pmp extension of $R$. Then there exists a free weak mixing action $\Lambda
\curvearrowright^{\theta} X$ such that $\{\theta_{\lambda}(x)\colon \lambda
\in \Lambda\}\subseteq [ x ] _{G}$ for almost every $x\in X$. In particular,
the action groupoid $H=\Lambda\rtimes^{\theta}X$ can be canonically
identified with a subgroupoid of $G$. Furthermore, one can assume that the
restriction of $\theta$ to $\Xi $ is weak mixing.
\end{theorem}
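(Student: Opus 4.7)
The plan is to invoke a class-bijective extension version of the Gaboriau–Lyons measurable solution to von Neumann's problem \cite{gaboriau_measurable-group-theoretic_2009}, as developed in Bowen–Hoff–Ioana \cite{bowen_neumanns_2015}. As a preliminary step, I would observe that the class-bijective extension $G$ is itself nonamenable (since amenability is preserved under class-bijective extensions in both directions) and ergodic (by Lemma \ref{Lemma:coinduced-wm}(1) applied to the trivial action on $(X,\mu)$, so that the Bernoulli extension remains ergodic because $R$ is). Moreover, the $G$-classes are infinite almost surely, since the $R$-classes are. Here I read $X$ in the theorem statement as denoting the unit space of $G$ (which is naturally fibered over the unit space of $R$ via the Bernoulli bundle).

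The main step is to invoke the equivalence-relation analogue of Gaboriau–Lyons established in \cite{bowen_neumanns_2015}: for every nonamenable ergodic class-bijective Bernoulli extension $G$ of a nonamenable equivalence relation, there exists a free pmp action of $\mathbb{F}_2$ on $G^{0}$ whose orbit equivalence relation is contained in that of $G$, i.e.\ an injective group homomorphism $\mathbb{F}_2\hookrightarrow[G]$ whose image acts freely on $G^{0}$. Crucially, the construction produces an action that is conjugate (in $[G]$) to a Bernoulli-type shift on an index set on which every infinite subgroup of $\mathbb{F}_2$ acts with infinite orbits; in particular this action is weak mixing and, by Lemma \ref{Lemma:bernoulli-wm} (with the trivial subgroup), so is its restriction to any infinite subgroup. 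Since $\Lambda\cong\mathbb{F}_\infty$ embeds as a subgroup of $\mathbb{F}_2$, I would restrict the constructed $\mathbb{F}_2$-action along this embedding, obtaining a free weak mixing action $\theta\colon\Lambda\curvearrowright G^{0}$ with $\{\theta_\lambda(x)\colon\lambda\in\Lambda\}\subseteq[x]_G$ for almost every $x$. The transport back to the specific $\Lambda\leq\mathrm{SL}_2(\mathbb{Z})$ provided by Proposition \ref{prop:FreeAction} is via an abstract isomorphism $\mathbb{F}_\infty\cong\Lambda$ sending the chosen generator of the $\mathbb{F}_\infty$-factor to $x_1=A^2B^2$, so that $\Xi\leq\Lambda$ corresponds to an infinite cyclic subgroup of the chosen copy of $\mathbb{F}_\infty$ inside $\mathbb{F}_2$.

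The final requirement that $\theta|_\Xi$ be weak mixing then follows from the Bernoulli-type nature of $\theta$: the ambient $\mathbb{F}_2$-action is conjugate to a Bernoulli shift on an index set where every infinite subgroup acts with infinite orbits, so the restriction to the infinite cyclic $\Xi$ is again Bernoulli-type with infinite-orbit index action, and hence is weak mixing by Lemma \ref{Lemma:bernoulli-wm}.

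The main obstacle is the invocation of the relative Gaboriau–Lyons theorem in the second paragraph: one must adapt both the Lyons coupling (random forests) argument and Gaboriau's cost-theoretic $\ell^2$-Betti number computations from the setting of Bernoulli actions of nonamenable groups to the setting where the ambient structure is a nonamenable Bernoulli extension of an arbitrary nonamenable ergodic pmp equivalence relation, and where the measurable coupling producing $\mathbb{F}_2\hookrightarrow[G]$ must be constructed compatibly with the class-bijective extension structure. This is the technical heart of \cite{bowen_neumanns_2015}; once granted, passing to an $\mathbb{F}_\infty$-subgroup and verifying weak mixing of the restriction to $\Xi$ are formal consequences of the Bernoulli structure of the resulting action.
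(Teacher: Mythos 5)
There is a genuine gap at the heart of your argument, namely the claim that the Bowen--Hoff--Ioana construction produces an $\mathbb{F}_2$-action that is ``conjugate in $[G]$ to a Bernoulli-type shift'' on an index set where every infinite subgroup acts with infinite orbits, so that weak mixing of the restriction to any infinite subgroup would follow as in Lemma~\ref{Lemma:bernoulli-wm}. Neither the statement of \cite[Theorem A]{bowen_neumanns_2015} nor its proof gives anything of this kind: as in Gaboriau--Lyons \cite{gaboriau_measurable-group-theoretic_2009}, the free ergodic $\mathbb{F}_2$-action is extracted from percolation clusters inside the Bernoulli extension via a treeability/cost argument of Hjorth type, and it carries no Bernoulli structure whatsoever. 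In particular, weak mixing --- or even ergodicity --- of its restriction to an infinite cyclic subgroup is not part of the conclusion, and it cannot be deduced from properties of the full action (a restriction of a weak mixing action to a subgroup can fail to be ergodic). Since the entire point of the ``furthermore'' clause is to control $\theta|_{\Xi}$, your proposal assumes exactly the part of the statement that requires proof; the reduction from $\mathbb{F}_2$ to the copy $\Lambda\cong\mathbb{F}_\infty$ of Proposition~\ref{prop:FreeAction} is the unproblematic part.

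The paper's own argument supplies the missing control with two specific ingredients that your outline does not contain: (i) one must go \emph{inside} the proof of \cite[Theorem A]{bowen_neumanns_2015} (rather than quote its statement) to see that the action can be chosen so that its restriction to the distinguished cyclic subgroup $\Xi$ is ergodic; and (ii) ergodicity is then upgraded to weak mixing by applying Dye's theorem \cite[Theorem 3.13]{kechris_global_2010} to $\Xi$: the ergodic hyperfinite orbit relation of $\theta_{x_1}$ admits a weak mixing generator (pull a weak mixing transformation back through an orbit equivalence furnished by Dye), and replacing the generator of the $\Xi$-part in this way keeps the orbits inside the $G$-classes while the surrounding construction still yields a free action of $\mathbb{F}_\infty$. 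Without step (i) you have no handle on $\theta|_{\Xi}$ at all, and without the Dye step (ii) even ergodicity of $\theta|_{\Xi}$ would not give the weak mixing required later (it is what Proposition~\ref{prop:restrCoindWkMix} and Theorem~\ref{Theorem:reduction-relations} feed on). As a minor point, your preliminary appeal to Lemma~\ref{Lemma:coinduced-wm}(1) for ergodicity of the Bernoulli extension is misplaced (that lemma concerns coinduced actions); this is harmless, but the main gap above is not.
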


\begin{proof}
It is enough to observe that the proof of \cite[Theorem A]%
{bowen_neumanns_2015} shows that one can choose the action in the statement
in such a way that the restriction to $\Xi $ is ergodic, and apply Dye's
theorem to $\Xi $ \cite[Theorem 3.13]{kechris_global_2010}.
\end{proof}

In the next proposition, we show that certain coinduction of the rigid
action $\Lambda\curvearrowright^{\rho} \mathbb{T}^2$ is weak mixing.

\begin{proposition}
\label{prop:restrCoindWkMix} Let $R$ be an ergodic nonamenable countable pmp
equivalence relation on the standard probability space $(X,\mu)$, let $\beta
_{R\curvearrowright R}$ be the Bernoulli action with base space $(X,\mu)$,
and let $G$ be the corresponding action groupoid. Let $\Lambda%
\curvearrowright^{\theta} X$ be the action provided by Theorem~\ref%
{Theorem:GL2}, let $\Delta $ be an infinite index subgroup of $\Lambda $
containing $\Xi$, and let $H$ be the action groupoid $\Lambda \ltimes
^{\theta}X$, which can be identified with a subgroupoid of $G$. Let $\rho $
be the rigid action $\Lambda \curvearrowright \mathbb{T}^{2}$, and set $%
\widehat{\rho}=\mathrm{CInd}_{H}^{G} ( \rho ) $. Then $[ \widehat{\rho} ]
|_{\Delta }$ is weak mixing.
\end{proposition}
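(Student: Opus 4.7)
The plan is to reduce to Lemma~\ref{Lemma:bootstrap-weak-mixing} via a three-step argument. Set $K := \Delta \ltimes^{\theta|_\Delta} X$, regarded as a subgroupoid of $H \subseteq G$. Since $\Xi \subseteq \Delta$ and $\theta|_\Xi$ is weak mixing by Theorem~\ref{Theorem:GL2}, the restriction $\theta|_\Delta$ is also weak mixing. In particular, $K$ is an ergodic pmp subgroupoid, and the countable group $\Delta \subseteq [K]$ covers $K$ and acts weak mixingly on $K^{0}=X$.

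The main step is to establish that $\widehat{\rho}|_K$ is weak mixing relative to the trivial sub-bundle as an action of $K$. By Lemma~\ref{Lemma:wm}, this is equivalent to showing that $L^{\infty}(X, \mathcal{M}\otimes\mathcal{M})^{\widehat{\rho}|_K \otimes \widehat{\rho}|_K}$ is contained in $L^{\infty}(X)$, where $\mathcal{M}$ denotes the coinduction bundle. Using that coinduction commutes with tensor products, one identifies $\widehat{\rho}\otimes\widehat{\rho}$ with $\widehat{\rho\otimes\rho}$; since $\rho$ is weak mixing, so is $\rho\otimes\rho$, and in particular $\rho\otimes\rho$ is an ergodic action of $\Lambda$ on $\mathbb{T}^{2}\times\mathbb{T}^{2}$. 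The strategy is then to adapt the argument of Lemma~\ref{Lemma:coinduced-wm}(1), which rests on Proposition~7.2 of \cite{bowen_neumanns_2015}: invariant sections of the coinduction bundle are analyzed via their decomposition over finite support subsets of $G/H$, and the ergodicity of $\rho\otimes\rho$ forces each such component to collapse to the base $L^{\infty}(X)$. Ergodicity of $K$ then delivers the scalars.

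With the main step in hand, Lemma~\ref{Lemma:bootstrap-weak-mixing} applies to the action $\widehat{\rho}|_K$ of the groupoid $K$, with the countable covering subgroup $\Gamma := \Delta \subseteq [K]$ acting weak mixingly on $K^{0}=X$. This yields that $[\widehat{\rho}|_K]|_\Delta$, which coincides with $[\widehat{\rho}]|_\Delta$, is weak mixing as a group action, completing the proof.

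The principal obstacle is the main step itself: the standard coinduction-preserves-weak-mixing result (Lemma~\ref{Lemma:coinduced-wm}) gives weak mixing relative to the trivial sub-bundle for the full $G$-action, whereas we need it for the action of the possibly much smaller subgroupoid $K$. The Bowen--Hoff--Ioana fixed-point analysis naturally exploits arrows of $G$ not contained in $K$, so its adaptation hinges on using the finer weak mixing properties of $\rho \otimes \rho$ as a $\Lambda$-action together with the ergodicity of $\Delta$'s action on $X$ (guaranteed by $\Xi \subseteq \Delta$) to carry out the same inductive fixed-point argument entirely within $K$.
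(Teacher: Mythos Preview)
Your reduction via Lemma~\ref{Lemma:bootstrap-weak-mixing} and the setup with $K=\Delta\ltimes^{\theta|_\Delta}X$ match the paper exactly, and you correctly identify the crux: one must show that $\widehat{\rho}|_K$ is weak mixing relative to the trivial sub-bundle, not merely $\widehat{\rho}|_G$. The problem is your plan for this step. The Bowen--Hoff--Ioana fixed-point argument behind Lemma~\ref{Lemma:coinduced-wm}(1) works because the full groupoid $G$ acts on the index set $G/H$ in a way that, together with ergodicity of $\alpha$, collapses any $\widehat{\alpha}$-invariant section to $L^\infty(X)$. When you restrict to $K$, you lose exactly this: $K$ acts on $G/H\cong X\times\Sigma$ only through $\Delta$, and neither ergodicity of $\rho\otimes\rho$ as a $\Lambda$-action nor ergodicity of $\theta|_\Delta$ gives you control over $K$-invariant sections supported on several $\Sigma$-coordinates. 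Your last paragraph names the obstacle honestly but does not supply an actual mechanism to overcome it; ``finer weak mixing properties of $\rho\otimes\rho$'' is not a replacement for the missing transitivity.

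The paper's proof takes a genuinely different route that you have not anticipated. Instead of a fixed-point argument, it passes (via Corollary~\ref{Corollary:wm}) to the Koopman representation $\kappa_0^{\widehat{\rho}|_K}$ and identifies it explicitly, using the Fourier transform on $\mathbb{T}^2$, with a permutation-type representation on the bundle $\bigsqcup_{x\in X}\ell^2(\mathcal{F}(\Sigma,\mathbb{Z}^2)_0)$ of finitely supported nonzero $\mathbb{Z}^2$-valued functions on $\Sigma$. The decisive input is then Proposition~\ref{prop:FreeAction}: the subgroup $\Lambda\subseteq\mathrm{SL}_2(\mathbb{Z})$ was chosen so that $\Lambda$ acts \emph{freely} on $\mathbb{Z}^2\setminus\{0\}$. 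This freeness is what forces, for almost every $x$, that only finitely many $h\in\Delta$ can fail to move a given finite-support basis vector to an orthogonal one; Lemma~\ref{Lemma:choose} then produces the required $t\in[K]$. Your proposal never invokes this free-action property, which is precisely the reason the paper went to the trouble of constructing $\Lambda$ via Proposition~\ref{prop:FreeAction} rather than using an arbitrary free subgroup of $\mathrm{SL}_2(\mathbb{Z})$.
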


\begin{proof}
We will use the identifications from Proposition~\ref%
{proposition:identifications}. Set $K=\Delta \ltimes ^{\theta |_{\Delta }}X$%
, and regard $\widehat{\rho }|_{K}$ as an action on the bundle $\mathcal{M}%
=\bigsqcup_{x\in X}L^{\infty }(\mathbb{T}^{2})^{\otimes \Sigma }$. Since $%
\theta |_{\Delta }$ is weak mixing, in view of Lemma \ref%
{Lemma:bootstrap-weak-mixing} it suffices to show that $\widehat{\rho }|_{K}$
is weak mixing relatively to the trivial sub-bundle. By Corollary \ref%
{Corollary:wm}, it suffices to show that the Koopman representation $\kappa
_{0}^{\widehat{\rho }|_{K}}$ is weak mixing.

As observed in \cite[Lemma 6.7]{epstein_borel_2011}, the Koopman
representation $\kappa _{0}^{\rho }$ of $\rho $ can be identified with the
representation of $\Lambda $ on $\ell ^{2} ( \mathbb{Z}^{2}\setminus \{ 0 \}
) $ obtained from the canonical action of $\Lambda \curvearrowright \mathbb{Z%
}^{2}\setminus \{ 0 \} $ as a subgroup of $\mathrm{SL}_{2} ( \mathbb{Z} ) $.
Similarly, and letting $\mathcal{F} ( \Sigma ,\mathbb{Z}^{2} ) _{0}$ denote
the set of non-zero finitely-supported functions $f\colon \Sigma \to \mathbb{%
Z}^{2}$, the representation $\kappa ^{\widehat{\rho}|_{K}}_0$ can be seen as
the representation on the Hilbert bundle $\mathcal{H}=\bigsqcup_{x\in X}\ell
^{2}(\mathcal{F} ( \Sigma ,\mathbb{Z}^{2} ) _{0})$, defined as follows. Let $%
\{\delta _{f}\colon f\in \mathcal{F} ( \Sigma ,\mathbb{Z}^{2} ) _{0}\}$ be
the canonical orthonormal basis of $\ell ^{2}(\mathcal{F} ( \Sigma ,\mathbb{Z%
}^{2} ) _{0})$. Consider the action of $G$ on $\Sigma $ given by $\gamma
\cdot \sigma =\pi _{\gamma } ( \sigma ) $ for $\gamma\in G$ and $%
\sigma\in\Sigma$. (Recall that $\gamma \cdot \sigma =\sigma ^{\prime }$ if
and only if $\gamma \sigma H\in \sigma ^{\prime }H$.) Define the action of $%
K $ on $\mathcal{F} ( \Sigma ,\mathbb{Z}^{2} ) _{0}$ by setting $( \gamma
\cdot f ) ( \gamma \cdot \sigma ) =\delta _{\gamma ,\sigma }\cdot f ( \sigma
) $ for $\gamma \in K$, $\sigma \in \Sigma $, and $f\in \mathcal{F} ( \Sigma
,\mathbb{Z}^{2} ) _{0}$. Then $\kappa _{0,\gamma }^{\widehat{\rho}|_{K}} (
\delta _{f} ) =\delta _{\gamma \cdot f}$ for $f\in \mathcal{F} ( \Sigma ,%
\mathbb{Z}^{2} ) _{0}$ and $\gamma \in K$.

Let $\xi _{1},\ldots ,\xi _{n}$ be invariant unit sections for $\mathcal{H}$%
, and let $\varepsilon >0$. We will show that there exists $t\in \lbrack K]$
such that
\begin{equation*}
\int_{X}\langle \kappa _{0,xt}^{\widehat{\rho }|_{K}}(\xi _{i,s(xt)}),\xi
_{j,x}\rangle d\mu (x)\leq \varepsilon
\end{equation*}%
for $i,j\in \{1,\ldots ,n\}$. Without loss of generality, we can assume that
there exists a finite subset $F\subseteq \mathcal{F}(\Sigma ,\mathbb{Z}%
^{2})_{0}$ such that $\xi _{i,x}\in \mathrm{span}\{\delta _{f}\colon f\in
F\} $ for every $x\in X$ and $1\leq i\leq n$.

As the action $\theta $ is free, we can assume, after discarding a null
subset of $X$, that $\theta _{\lambda }(x)\neq x$ for every $\lambda \in
\Lambda \setminus \{ 1\} $ and for every $x\in X$. Fix $x\in X$.
We claim first that there exists a finite subset $\Delta _{x}\subseteq
\Delta $ such that $\kappa _{0,h\ltimes x}^{\widehat{\rho }|_{K}}(\xi _{i,x})
$ is orthogonal to $\xi _{j,\theta _{h}(x)}$ for every $h\in \Delta
\setminus \Delta _{x}$, for $i,j\in \{1,\ldots ,n\}$. Suppose by
contradiction that this is not the case. Then there exist $f,f^{\prime }\in F
$ and an infinite sequence $(h_{n})_{n\in \mathbb{N}}$ of pairwise distinct
elements of $\Delta $ such that $\kappa _{0,h_{n}\ltimes x}^{\widehat{\rho }%
|_{K}}(\delta _{f})$ is not orthogonal to $\delta _{f^{\prime }}$ for all $%
n\in \mathbb{N}$. By the definition of $\kappa _{0,h\ltimes x}^{\widehat{%
\rho }|_{K}}$, after passing to a subsequence of $(h_{n})_{n\in \mathbb{N}}$%
, we can furthermore assume that there exist $\sigma ,\sigma ^{\prime }\in
\mathrm{supp}(f)$ such that $\delta _{h_{n}\ltimes x,\sigma }$ belongs to a
coset of the stabilizer of $f(\sigma )$ in $\Lambda $, and that $%
(h_{n}\ltimes x)\cdot \sigma =\sigma ^{\prime }$ for all $n\in \mathbb{N}$.
Since $\Lambda $ acts freely on $\mathbb{Z}^{2}\setminus \{0\}$ by
construction, and $f(\sigma )\neq 0$, we have $\delta _{h_{n}\ltimes
x,\sigma }=1$ for every $n\in \mathbb{N}$. Moreover,
\begin{equation*}
\theta _{h_{n}}(x)=h_{n}\ltimes ^{\theta }x=\sigma ^{\prime }(\delta
_{h_{n}\ltimes x,\sigma }\ltimes ^{\theta }s(x\sigma ))\sigma ^{-1}=\sigma
^{\prime }(1\ltimes ^{\theta }s(x\sigma ))\sigma ^{-1}
\end{equation*}%
for every $n\in \mathbb{N}$. In particular, $\theta _{h_{n}}(x)$ is
independent of $n$. This contradicts the assumption that $(h_{n})_{n\in
\mathbb{N}}$ is an infinite sequence of pairwise distinct elements of $%
\Delta $, and the claim is proved.

Apply Lemma \ref{Lemma:choose} to $K$ to obtain $t\in \lbrack K]$ such that
the set
\begin{equation*}
\{x\in X\colon \kappa _{0,xt}^{\widehat{\rho}|_{K}}(\xi _{i,s ( xt ) })
\perp \xi _{j,x} \mbox{ for every } i,j= 1,\ldots ,n \}
\end{equation*}
has measure at least $1-\varepsilon $. Then $\int_X \langle \kappa _{0,xt}^{%
\widehat{\rho}|_{K}}(\xi _{i,s ( xt ) }),\xi _{j,x} \rangle d\mu ( x ) \leq
\varepsilon $ for $i,j=1,\ldots,n$, as desired.
\end{proof}

The following is the main result of this paper, from which we will derive
Theorems~\ref{Theorem:nB-conj}, \ref{Theorem:nB-oe}, \ref{Theorem:nB-oeLCSCU}
and \ref{Theorem:nB-oe-relation} from the introduction. In the statement, we
say that a class-bijective pmp extension of $R$ is weak mixing if it is weak
mixing as a countable pmp groupoid.

\begin{theorem}
\label{Theorem:reduction-relations} Let $R$ be an ergodic nonamenable
countable pmp equivalence relation on the standard probability space $%
(X,\mu) $, and let $( Y,\nu ) $ be the standard atomless probability space.
Then there exists an assignment $A\mapsto R_{A}$ from countably infinite
discrete abelian groups to weak mixing class-bijective pmp extensions of $R$
on $( Y,\nu ) $ such that:

\begin{enumerate}
\item if $A$ and $A^{\prime }$ are isomorphic groups, then $R_{A}$ and $%
R_{A^{\prime }}$ are isomorphic relatively to $R$;

\item if $\mathcal{A}$ is a collection of pairwise nonisomorphic countably
infinite abelian groups such that $\{ R_{A}\colon A\in \mathcal{A} \} $ are
pairwise stably von Neumann equivalent, then $\mathcal{A}$ is countable.
\end{enumerate}
\end{theorem}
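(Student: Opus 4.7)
The plan is to transport the argument of Theorem \ref{Theorem:c-to-1} to the groupoid setting, with the class-bijective extension $G$ of $R$ produced by Theorem \ref{Theorem:GL2} (the measurable von Neumann's problem \emph{\`a la} Gaboriau--Lyons) playing the role of the ambient group $\Gamma$. First I would apply Theorem \ref{Theorem:GL2} to $R$ to obtain: the action groupoid $G$ of the Bernoulli action $\beta_{R\curvearrowright R}$ (a class-bijective extension of $R$ with unit space $X$); a free weak mixing action $\theta\colon\Lambda\curvearrowright X$ whose orbits lie inside $G$-classes, where $\Lambda\cong\mathbb{F}_{\infty}$ is the subgroup of $\mathrm{SL}_{2}(\mathbb{Z})$ from Proposition \ref{prop:FreeAction}; and the further property that $\theta|_{\Xi}$ is weak mixing for the cyclic subgroup $\Xi=\langle x_{1}\rangle$. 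Let $H=\Lambda\ltimes^{\theta}X$, regarded as a subgroupoid of $G$. Using that $\Lambda\cong\mathbb{F}_{\infty}$, I would choose an infinite normal subgroup $\Delta\trianglelefteq\Lambda$ of infinite index containing $\Xi$ with $\Lambda/\Delta$ an infinite property-(T) group, by taking any surjection $\mathbb{F}_{\infty}\twoheadrightarrow T$ onto such a group that kills $x_{1}$. By Example \ref{eg:QuotTtripleT}, the triple $\Delta\leq\Lambda\leq\Lambda$ has property (T).

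Next I would apply Theorem \ref{Theorem:prescribed-cohomology} to this triple to produce an assignment $A\mapsto\alpha_{A}$ from countably infinite abelian groups to free weak mixing $\Lambda$-actions on the standard atomless probability space, with $\alpha_{A}$ determining $A$ up to conjugacy and satisfying the key property that $H^{1}_{:\Delta,\mathrm{w}}(\alpha_{A}\otimes\rho')\cong A$ for every $\Lambda$-action $\rho'$ with $\rho'|_{\Delta}$ weak mixing. Letting $\rho\colon\Lambda\curvearrowright\mathbb{T}^{2}$ denote the rigid action, I would set
\[
\widetilde{\alpha}_{A}=\mathrm{CInd}_{H}^{G}(\alpha_{A}),\qquad\widehat{\rho}=\mathrm{CInd}_{H}^{G}(\rho),\qquad\sigma_{A}=\widetilde{\alpha}_{A}\otimes\widehat{\rho},
\]
and let $R_{A}$ be the class-bijective extension of $R$ on the standard atomless space $(Y,\nu)$ obtained by composing the class-bijective extension of $G$ associated with $\sigma_{A}$ with the extension $G\to R$. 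Weak mixing of $R_{A}$ follows from Lemma \ref{Lemma:coinduced-wm} applied to each coinduction together with weak mixing of $G$ (inherited from $R$). Part (1) is immediate: an isomorphism $A\cong A'$ yields $\alpha_{A}\cong\alpha_{A'}$, whence coinduction and tensoring give $\sigma_{A}\cong\sigma_{A'}$ as $G$-actions, and thus $R_{A}\cong R_{A'}$ relatively to $R$.

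For part (2), suppose $\mathcal{A}$ is a family of pairwise nonisomorphic abelian groups with $\{R_{A}:A\in\mathcal{A}\}$ pairwise stably von Neumann equivalent. I would apply the Bowen--Hoff--Ioana separability Theorem \ref{Theorem:separability} to $\mathcal{S}=\{\sigma_{A}:A\in\mathcal{A}\}$. Its hypothesis (1) (stably isomorphic crossed products) follows from the stable von Neumann equivalence of the $R_{A}$'s. Hypothesis (3) (the orbit equivalence relation of each $\sigma_{A}$ is an expansion of the orbit equivalence relation of $\rho$) follows from the coinduction structure of $\widehat{\rho}$, which admits $\rho$ as a factor via projection onto the trivial coset. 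The crux is hypothesis (2): the restrictions $[\sigma_{A}]|_{\Lambda}$ are pairwise nonconjugate. Using Proposition \ref{proposition:identifications}, I would decompose $[\widetilde{\alpha}_{A}]|_{\Lambda}\cong\alpha_{A}\otimes\alpha'_{A}$, isolating $\alpha_{A}$ on the tensor factor indexed by the trivial coset (which is fixed by $\Lambda$, as $\Lambda\subseteq H$, and on which $\lambda$ acts precisely as $\alpha_{A,\lambda}$) and letting $\alpha'_{A}$ denote the residual $\Lambda$-action on the remaining factors. Then $[\sigma_{A}]|_{\Lambda}\cong\alpha_{A}\otimes\rho'_{A}$ with $\rho'_{A}=\alpha'_{A}\otimes[\widehat{\rho}]|_{\Lambda}$. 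Since $[\widehat{\rho}]|_{\Delta}$ is weak mixing by Proposition \ref{prop:restrCoindWkMix}, so is $\rho'_{A}|_{\Delta}$, and Theorem \ref{Theorem:prescribed-cohomology} yields $H^{1}_{:\Delta,\mathrm{w}}([\sigma_{A}]|_{\Lambda})\cong A$. This is a conjugacy invariant of $\Lambda$-actions, so non-isomorphic $A$'s give non-conjugate restrictions, verifying (2). Theorem \ref{Theorem:separability} then implies $\mathcal{S}$ is countable, whence so is $\mathcal{A}$.

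The main obstacle is making the decomposition $[\widetilde{\alpha}_{A}]|_{\Lambda}\cong\alpha_{A}\otimes\alpha'_{A}$ rigorous in the language of tracial von Neumann bundles, and ensuring that Theorem \ref{Theorem:prescribed-cohomology}---originally formulated for tensor products of group actions on ordinary tracial von Neumann algebras---still applies cleanly in the bundle setting despite the $\theta$-dependence of the fibers. This requires careful bookkeeping using the coset selection $\Sigma$ of $H$ in $G$ and the cocycle $\delta_{\gamma,\sigma}$ of Proposition \ref{proposition:identifications} to separate the trivial-coset factor (on which $\Lambda$ acts via $\alpha_{A}$) from the permutation-and-twist action of $\Lambda$ on the remaining tensor factors that defines $\alpha'_{A}$. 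A parallel unwinding of the coinduction formula is needed to verify the factor-map version of hypothesis (3) of Theorem \ref{Theorem:separability}.
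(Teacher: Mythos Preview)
Your overall architecture is sound, and the idea of pulling $\alpha_{A}$ off the trivial-coset factor so as to invoke Theorem~\ref{Theorem:prescribed-cohomology}(3) as a black box is natural. However, there is a genuine gap in the step ``since $[\widehat{\rho}]|_{\Delta}$ is weak mixing, so is $\rho'_{A}|_{\Delta}$.'' Weak mixing does \emph{not} propagate through tensor products in this direction: for group actions, $\kappa_{0}^{\pi\otimes\sigma}$ contains $\kappa_{0}^{\sigma}$ as a direct summand, so $\pi\otimes\sigma$ is weak mixing only if \emph{both} $\pi$ and $\sigma$ are. You would therefore need $\alpha'_{A}|_{\Delta}$ to be weak mixing, where $\alpha'_{A}$ is the residual $\Lambda$-action on $L^{\infty}(X)\otimes M_{0}^{\otimes(\Sigma\setminus\{\sigma_{1}\})}$. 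This is not established anywhere, and proving it would require an argument of roughly the same type and difficulty as Lemma~\ref{Lemma:coinduce-bernoulli} or Proposition~\ref{prop:restrCoindWkMix} (an analysis of how $\Delta$ permutes the non-trivial $H$-cosets, combined with a measure-exhaustion argument via Lemma~\ref{Lemma:choose}). Even if one weakens the hypothesis of Theorem~\ref{Theorem:prescribed-cohomology}(3) to $\rho|_{\Delta}$ merely ergodic (which a close reading of its proof suggests suffices), you still need $\alpha'_{A}|_{\Delta}$ ergodic, and that is not automatic either.

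The paper circumvents this entirely by \emph{not} coinducing the finished action $\alpha_{A}$. Instead it coinduces only the Bernoulli building block $\beta=\beta_{\Lambda\curvearrowright\Lambda/\Delta}$ with base $L^{\infty}(\widehat{A})$, proves directly that $[\widehat{\beta}]|_{\Lambda}$ is weak mixing and malleable (this is exactly Lemma~\ref{Lemma:coinduce-bernoulli}), and then repeats the cohomology computation from the proof of Theorem~\ref{Theorem:prescribed-cohomology} at the level of $L^{\infty}(X)\otimes M^{\otimes(\Omega\times\Sigma)}$: one first uses weak mixing of $[\widehat{\rho}]|_{\Delta}$ to push any $\Delta$-invariant cocycle into the $\widehat{\beta}$-factor, then applies Theorem~\ref{Theorem:trivial-superrigidity} to $[\widehat{\beta}]|_{\Lambda}$ to trivialize it, and finally reads off the character of $\widehat{A}$. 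The fixed-point-algebra construction (passing to $M_{A}$) is performed \emph{after} coinduction rather than before, with the coset index set $\Sigma$ effectively replacing the second Bernoulli factor $\beta_{\Lambda\curvearrowright\Lambda}$ that appears in the group case. In short, the paper trades your black-box use of Theorem~\ref{Theorem:prescribed-cohomology} for the malleability Lemma~\ref{Lemma:coinduce-bernoulli}; you trade Lemma~\ref{Lemma:coinduce-bernoulli} for the unproved weak mixing of $\alpha'_{A}|_{\Delta}$, which is of comparable difficulty.
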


\begin{proof}
Let $\Lambda $ be the free subgroup of SL$_{2}(\mathbb{Z})$ provided by
Proposition~\ref{prop:FreeAction}, and let $\Delta \subseteq \Lambda $ be an
infinite index normal subgroup containing $\Xi $ such that the quotient $%
\Omega =\Lambda /\Delta $ is a property (T) group. Then $\Delta \leq \Lambda
\leq \Lambda $ has property (T) by Example~\ref{eg:QuotTtripleT}. Let $\nu $
be the Haar measure on the Pontryagin dual $\widehat{A}$, and set $%
M=L^{\infty }(\widehat{A},\nu )$, endowed with the trace-preserving action $%
\mathtt{Lt}$ of $\widehat{A}$ given by left translation. As in the proof of
Theorem \ref{Theorem:prescribed-cohomology}, consider the Bernoulli action $%
\beta \colon \Lambda \curvearrowright M^{\otimes \Omega }$ associated with $%
\Lambda \curvearrowright \Omega =\Lambda /\Delta $. Let $\theta $ be the
free action $\Lambda \curvearrowright ^{\theta }X$ provided by Proposition~%
\ref{prop:FreeAction}, let $G$ be the action groupoid associated to the
Bernoulli shift $\beta _{R\curvearrowright R}$ with base $(X,\mu )$, and let
$H$ be the action groupoid $\Lambda \ltimes ^{\theta }X$, which can be
identified with a subgroupoid of $G$. Let $\rho $ be the rigid action $%
\Lambda \curvearrowright \mathbb{T}^{2}$. Set $\widehat{\rho }=\mathrm{CInd}%
_{H}^{G}(\rho )$ and $\widehat{\beta }=\mathrm{CInd}_{H}^{G}(\beta )$. Thus $%
\widehat{\rho }$ is an action of $G$ on $\bigsqcup_{x\in X}(L^{\infty }(%
\mathbb{T}^{2})^{\otimes \Omega })^{\otimes \Sigma }$, and $\widehat{\beta }$
is an action of $G$ on $\bigsqcup_{x\in X}(M^{\otimes \Omega })^{\otimes
\Sigma }$. We will use the identifications of Proposition~\ref%
{proposition:identifications}.

Let $M_A\subseteq M^{\otimes (\Omega \times \Sigma )}$ denote the fixed
point algebra of the action $\mathtt{Lt}^{\otimes ( \Omega \times \Sigma )}$%
, and let $\mathcal{M}_{A}$ be the $(\widehat{\beta}\otimes \widehat{\rho})$%
-invariant sub-bundle
\begin{equation*}
\mathcal{M}_A=\bigsqcup\nolimits_{x\in X}(M_A\otimes L^{\infty }(\mathbb{T}%
^{2})^{\otimes \Sigma })
\end{equation*}%
of $\mathcal{M}$. Then there exists a standard atomless probability space $(
X_{A},\mu _{A} ) $ with $M_A\cong L^{\infty } ( X_{A},\mu _{A} ) $. Define $%
\zeta _{A}$ to be the restriction of $\widehat{\beta}\otimes \widehat{\rho}$
to $\mathcal{M}_{A}$, and let $R_A$ be the orbit equivalence relation of the
action $\zeta_A$, which is a class-bijective pmp extension of $R$ on $%
(X_A,\mu_A)\cong (Y,\nu)$.

The action $[\widehat{\beta}]|_{\Lambda }$ is weak mixing and malleable by
Lemma \ref{Lemma:coinduce-bernoulli}, and $[\widehat{\rho}]|_{\Delta }$ is
weak mixing by Proposition~\ref{prop:restrCoindWkMix}. It follows that $%
R_{A} $ is weak mixing. It is clear that isomorphic groups yield pmp
class-bijective extensions of $R$ that are isomorphic relatively to $R$, so
that (1) holds. \newline

\textbf{Claim:} $R_A$ is an expansion of the orbit equivalence relation of $%
\Lambda\curvearrowright^{\rho} \mathbb{T}^{2}$.

Denote by $\widehat{\beta}_{A}$ the restriction of $\widehat{\beta}$ to $M_A$%
. By \cite[Proposition 7.2 (3)]{bowen_neumanns_2015}, $R_{A}$ is an
extension of the orbit equivalence relation of $[\widehat{\beta}%
_{A}]|_{\Lambda }\times \rho $ of $\Lambda $ on $X\times X_{A}\times \mathbb{%
T}$. In turn, this equivalence relation is an expansion of the orbit
equivalence relation of $\rho $, as witnessed by the second coordinate
projection $X_{A}\times \mathbb{T}\to \mathbb{T}$. This proves the claim.
\newline

Our next goal is to show that there is a group isomorphism $H_{:\Delta ,%
\mathrm{w}}^{1}([\zeta _{A}]|_{\Lambda })\cong A$. We will prove this in a
sequence of claims. Let $w\colon \Lambda \rightarrow L^{\infty }(X)\otimes
M_{A}\otimes L^{\infty }(\mathbb{T}^{2})^{\otimes \Sigma }$ be a $\Delta $%
-invariant cocycle for $[\zeta _{A}]|_{\Lambda }$. Then $w$ is also a $%
\Delta $-invariant cocycle for $[\widehat{\beta }\otimes \widehat{\rho }%
]|_{\Lambda }$. Since $[\widehat{\rho }]|_{\Delta }$ is weak mixing and $w$
is $\Delta $-invariant, it follows from Lemma \ref{Lemma:wm} that $w$ takes
values in the $\Delta $-fixed point subalgebra
\begin{equation*}
L^{\infty }(X)\otimes (M^{\otimes \Omega })^{\otimes \Sigma }\otimes \mathbb{%
C}\subseteq L^{\infty }(X)\otimes M^{\otimes (\Omega \times \Sigma )}\otimes
L^{\infty }(\mathbb{T}^{2})^{\otimes \Sigma }.
\end{equation*}%
Since $\Omega =\Lambda /\Delta $ has property (T), it follows that $\Lambda $
has the $\Delta $-invariant property (T). Using weak mixing and malleability
for $[\widehat{\beta }]|_{\Lambda }$, we apply Theorem \ref%
{Theorem:trivial-superrigidity} to find a unitary $v\in L^{\infty
}(X)\otimes M^{\otimes (\Omega \times \Sigma )}$ such that $v^{\ast }[%
\widehat{\beta }]_{\lambda }(v)=w_{\lambda }\ \mathrm{mod}\mathbb{C}$ for
every $\lambda \in \Lambda $. Fix $g\in \widehat{A}$ and $\lambda \in
\Lambda $. Then
\begin{equation*}
(\mathrm{id}_{L^{\infty }(X)}\otimes \mathtt{Lt}_{g}^{\otimes (\Omega \times
\Sigma )})(w_{\lambda })=w_{\lambda }
\end{equation*}%
and hence%
\begin{equation*}
v^{\ast }[\widehat{\beta }]_{\lambda }(v)=(\mathrm{id}_{L^{\infty
}(X)}\otimes \mathtt{Lt}_{g}^{\otimes (\Omega \times \Sigma )})(v)^{\ast }([%
\widehat{\beta }]_{\lambda }(\mathrm{id}_{L^{\infty }(X)}\otimes \mathtt{Lt}%
_{g}^{\otimes (\Omega \times \Sigma )})(v))\ \mathrm{mod}\mathbb{C}
\end{equation*}%
and%
\begin{equation*}
(\mathrm{id}_{L^{\infty }(X)}\otimes \mathtt{Lt}_{g}^{\otimes (\Omega \times
\Sigma )})(v)v^{\ast }=[\widehat{\beta }]_{\lambda }(\mathrm{id}_{L^{\infty
}(X)}\otimes \mathtt{Lt}_{g}^{\otimes (\Omega \times \Sigma )})(v)v^{\ast
})\ \mathrm{mod}\mathbb{C}.
\end{equation*}%
In other words, the left-hand side of the last equation generates a
one-dimensional subspace which is invariant under $[\widehat{\beta }%
]|_{\Lambda }$. Since $[\widehat{\beta }]|_{\Lambda }$ is weak mixing, this
subspace must consist of the scalar multiples of the unit, so there exists $%
\chi _{w}(g)\in \mathbb{C}$ such that
\begin{equation*}
(\mathrm{id}_{L^{\infty }(X)}\otimes \mathtt{Lt}_{g}^{\otimes (\Omega \times
\Sigma )})(v)=\chi _{w}(g)v\text{.}
\end{equation*}

\textbf{Claim:} The resulting map $\chi _{w}\colon \widehat{A}\rightarrow
\mathbb{C}$ is a character. Let $g,g^{\prime }\in \widehat{A}$. Then
\begin{align*}
\chi _{w}(gg^{\prime })v& =(\mathrm{id}_{L^{\infty }(X)}\otimes \mathtt{Lt}%
_{gg^{\prime }}^{\otimes (\Omega \times \Sigma )})(v) \\
& =(\mathrm{id}_{L^{\infty }(X)}\otimes \mathtt{Lt}_{g}^{\otimes (\Omega
\times \Sigma )})((\mathrm{id}_{L^{\infty }(X)}\otimes \mathtt{Lt}%
_{g^{\prime }}^{\otimes (\Omega \times \Sigma )})(v) \\
& =(\mathrm{id}_{L^{\infty }(X)}\otimes \mathtt{Lt}_{g}^{\otimes (\Omega
\times \Sigma )})(\chi _{w}(g^{\prime })v) \\
& =\chi _{w}(g^{\prime })\chi _{w}(g),
\end{align*}%
as desired. It follows that there is a well-defined map $\chi \colon
Z_{:\Delta ,\mathrm{w}}^{1}([\zeta _{A}]|_{\Lambda })\rightarrow A$. \newline

\textbf{Claim:} $\chi$ is a group homomorphism. Let $w,w^{\prime 1}_{:\Delta
,\mathrm{w}}([\zeta_{A}]|_{\Lambda})$. We want to show that $%
\chi_{ww^{\prime }}(g)=\chi_w(g)\chi_{w^{\prime }}(g)$ for all $g\in\widehat{%
A}$. As before, find unitaries $v,v^{\prime \infty}(X)\otimes
M^{\otimes(\Omega\times\Sigma)}$ satisfying
\begin{equation*}
v^{\ast }[\widehat{\beta}]_{\lambda} ( v ) =w_{\lambda}\ \mathrm{mod}\mathbb{%
C} \ \ \mbox{ and } \ \ (v^{\prime \ast }[\widehat{\beta}]_{\lambda} (
v^{\prime }) =w^{\prime }_{\lambda}\ \mathrm{mod}\mathbb{C}
\end{equation*}
for all $\lambda\in\Lambda$. Set $z=vv^{\prime }$. Then $w_{\lambda}w^{%
\prime }_{\lambda}=z^{\ast }[\widehat{\beta}]_{\lambda} (z )$ mod $\mathbb{C}
$ for all $\lambda\in\Lambda$. Fix $g\in\widehat{A}$. Then
\begin{align*}
\chi_{ww^{\prime }}(g)z&=(\mathrm{id}_{L^{\infty } ( X ) }\otimes \mathtt{Lt}
_{g}^{\otimes (\Omega\times \Sigma) }) ( z ) \\
&=(\mathrm{id}_{L^{\infty } ( X ) }\otimes \mathtt{Lt} _{g}^{\otimes
(\Omega\times \Sigma) }) ( v )(\mathrm{id}_{L^{\infty } ( X ) }\otimes
\mathtt{Lt} _{g}^{\otimes (\Omega\times \Sigma) }) ( v^{\prime }) \\
&=\chi_w(g)v\chi_{w^{\prime }}(g)v^{\prime }=\chi_w(g)\chi_{w^{\prime }}(g)z,
\end{align*}
as desired. \newline

\textbf{Claim:} The kernel of $\chi $ is the set of relative weak
coboundaries for $[\zeta _{A}]|_{\Lambda }$. Let $w\in Z_{:\Delta ,\mathrm{w}%
}^{1}([\zeta _{A}]|_{\Lambda })$ satisfy $\chi _{w}=1$. We want to show that
$w$ is weakly cohomologous to the trivial cocycle. Find $v\in L^{\infty
}(X)\otimes M^{\otimes (\Omega \times \Sigma )}$ such that
\begin{equation*}
v^{\ast }[\widehat{\beta }]_{\lambda }(v)=w_{\lambda }\ \mathrm{mod}\mathbb{C%
}\ \ \mbox{ and }\ \ (\mathrm{id}_{L^{\infty }(X)}\otimes \mathtt{Lt}%
_{g}^{\otimes (\Omega \times \Sigma )})(v)=v
\end{equation*}%
for every $\lambda \in \Lambda $ and every $g\in \widehat{A}$. In
particular, $v$ belongs to $L^{\infty }(X)\otimes M_{A}\otimes L^{\infty }(%
\mathbb{T}^{2})$ and $vw_{\lambda }[\widehat{\beta }]_{\lambda }(v^{\ast
})=1\ \mathrm{mod}\mathbb{C}$ for every $\lambda \in \Lambda $, as desired.
The converse is identical, so the claim is proved. \newline

\textbf{Claim:} $\chi$ is surjective. Fix $\omega \in A$, which we regard as
a unitary in $C(\widehat{A})\subseteq L^{\infty }(\widehat{A},\nu )=M$. This
readily gives a $\Delta $-invariant unitary element $v$ of
\begin{equation*}
L^{\infty } ( X ) \otimes M^{\otimes (\Omega\times \Sigma )}\otimes \mathbb{C%
}\subseteq L^{\infty } ( X ) \otimes M^{\otimes (\Omega\times \Sigma
)}\otimes L^{\infty } ( \mathbb{T}^{2} )
\end{equation*}%
such that $(\mathrm{id}_{L^{\infty } ( X ) }\otimes \mathrm{\mathtt{Lt}}
_{g}^{\otimes (\Omega\times \Sigma )} ( v ) =\omega ( g ) v$ for every $g\in
A$. Define a $\Delta $-invariant cocycle $z_{\omega}$ for $[ \zeta _{A} ]
|_{\Lambda }$ by $z_{\omega}(\lambda)= v^{\ast }[\beta ]_{\lambda} ( v ) $
for all $\lambda\in\Lambda$. It is then immediate to show that $%
\chi_{z_{\omega}}=\omega$.

It follows that $\chi$ induces a group isomorphism from $H^1_{:\Delta ,%
\mathrm{w}}([\zeta_{A}]|_{\Lambda})$ to $A$, as desired. \newline

We now prove (3). Let $\mathcal{A}$ be a collection of pairwise
nonisomorphic countably infinite discrete abelian groups such that the
relations $\{R_{A}\colon A\in \mathcal{A}\}$ are pairwise stably von Neumann
equivalent. Since $\zeta _{A}$ is free, the crossed product of $\zeta _{A}$
is isomorphic to $L(R_{A})$. Thus, the actions $\{\zeta _{A}\colon A\in
\mathcal{A}\}$ have stably isomorphic crossed products. Furthermore, for
every $A\in \mathcal{A}$ there is a group isomorphism $H_{:\Delta ,\mathrm{w}%
}([\zeta _{A}]|_{\Lambda })\cong A$. Therefore the actions $\{[\zeta
_{A}]|_{\Lambda }\colon A\in \mathcal{A}\}$ are pairwise not conjugate by
Theorem \ref{Theorem:separability}, and hence $\mathcal{A}$ is countable by
Theorem~\ref{Theorem:separability}. This concludes the proof.
\end{proof}

\begin{corollary}
\label{Corollary:reduction-groups} Let $\Gamma $ be a nonamenable countable
discrete group and let $(Y,\nu )$ be the standard atomless probability
space. Then there exists an assignment $A\mapsto \theta _{A}$ from countably
infinite discrete abelian groups to free weak mixing actions $\Gamma
\curvearrowright (Y,\nu )$ such that:

\begin{enumerate}
\item if $A$ and $A^{\prime }$ are isomorphic, then $\theta _{A}$ and $%
\theta _{A}^{\prime }$ are conjugate;

\item if $\mathcal{A}$ is a collection of pairwise nonisomorphic countably
infinite abelian groups such that $\{\theta _{A}\colon A\in \mathcal{A}\}$
are pairwise stably von Neumann equivalent, then $\mathcal{A}$ is countable.
\end{enumerate}
\end{corollary}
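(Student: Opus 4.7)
The plan is to reduce Corollary~\ref{Corollary:reduction-groups} to Theorem~\ref{Theorem:reduction-relations} via a natural lifting construction. Fix a free weak mixing pmp action $\sigma$ of $\Gamma$ on the standard atomless probability space $(X,\mu)$ --- for instance, the Bernoulli shift $\Gamma\curvearrowright([0,1]^\Gamma,m^{\otimes\Gamma})$. Let $R$ denote its orbit equivalence relation, which is an ergodic nonamenable countable pmp equivalence relation on $(X,\mu)$. Applying Theorem~\ref{Theorem:reduction-relations} to $R$ produces an assignment $A\mapsto R_A$ from countably infinite discrete abelian groups to weak mixing class-bijective pmp extensions $\pi_A\colon(Y,\nu)\to(X,\mu)$ of $R$.

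For each $A$, one then defines a free pmp action $\theta_A\colon\Gamma\curvearrowright(Y,\nu)$ as the canonical lift of $\sigma$: for $\gamma\in\Gamma$ and $y\in Y$, let $\theta_A(\gamma)(y)$ be the unique element of the $R_A$-class $[y]_{R_A}$ satisfying $\pi_A(\theta_A(\gamma)(y))=\sigma(\gamma)(\pi_A(y))$, which exists and is unique by class-bijectivity of $\pi_A$. That $\theta_A$ preserves $\nu$ follows because $R_A$ is pmp and $(\pi_A)_\ast\nu=\mu$; freeness of $\theta_A$ follows directly from freeness of $\sigma$, since a fixed point of $\theta_A(\gamma)$ projects to a fixed point of $\sigma(\gamma)$; and by construction the orbit equivalence relation of $\theta_A$ coincides with $R_A$.

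Properties (1) and (2) of the corollary then follow readily from the corresponding statements of Theorem~\ref{Theorem:reduction-relations}. For (1): if $A\cong A'$, then by Theorem~\ref{Theorem:reduction-relations}(1) there exists $\varphi\in\mathrm{Aut}(Y,\nu)$ with $\pi_{A'}\circ\varphi=\pi_A$ sending $R_A$-classes to $R_{A'}$-classes, and such a $\varphi$ conjugates $\theta_A$ to $\theta_{A'}$. For (2): since each $\theta_A$ is free, the Murray--von Neumann group-measure-space algebra $\Gamma\ltimes L^\infty(Y,\nu)$ is canonically isomorphic to the groupoid von Neumann algebra $L(R_A)$ of Definition~\ref{Definition:crossed}; hence stable von Neumann equivalence of a family $\{\theta_A\colon A\in\mathcal{A}\}$ implies stable von Neumann equivalence of $\{R_A\colon A\in\mathcal{A}\}$, and countability of $\mathcal{A}$ follows from Theorem~\ref{Theorem:reduction-relations}(2).

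The main obstacle is to verify that each $\theta_A$ is \emph{weak mixing as a $\Gamma$-action}. Theorem~\ref{Theorem:reduction-relations} asserts weak mixing of $R_A$ only as a groupoid, i.e., of the action of the full group $[R_A]$ on $Y$, which does not a priori restrict to weak mixing of the proper subgroup $\Gamma\subseteq[R_A]$. To overcome this, one uses the decomposition $L^2(Y)=L^2(X)\oplus(L^2(Y)\ominus L^2(X))$: on $L^2_0(X)$ the $\Gamma$-action $\kappa^\sigma$ is weak mixing by choice of $\sigma$, and on the relative complement the Koopman representation of $\theta_A$ is built from the coinduced Bernoulli action $\widehat{\beta}$ and the coinduced rigid action $\widehat{\rho}$ that appear in the proof of Theorem~\ref{Theorem:reduction-relations}, each of which is weak mixing relatively to the trivial sub-bundle when restricted to the appropriate infinite subgroups (Lemma~\ref{Lemma:coinduce-bernoulli} and Proposition~\ref{prop:restrCoindWkMix}). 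Combining this with weak mixing of $\sigma$ via Lemma~\ref{Lemma:bootstrap-weak-mixing} then yields weak mixing of the $\Gamma$-action $\theta_A$.
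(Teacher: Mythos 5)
Your construction and your reductions for (1) and (2) are exactly those of the paper: the paper likewise takes the Bernoulli action of $\Gamma$ with base $[0,1]$, lets $R$ be its orbit equivalence relation, lifts each class-bijective extension of $R$ to a free $\Gamma$-action using class-bijectivity, notes that isomorphism relative to $R$ gives conjugacy of the lifts, and then invokes Theorem~\ref{Theorem:reduction-relations}; your observation that freeness gives $\Gamma\ltimes L^{\infty}(Y,\nu)\cong L(R_A)$, so that stable von Neumann equivalence of the actions passes to the relations, is the (implicit) justification of (2) in the paper as well.

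The divergence is in the weak mixing of $\theta_A$. The paper settles this in one sentence, asserting that a class-bijective extension $\hat{R}$ is weak mixing as a groupoid if and only if the lifted $\Gamma$-action is weak mixing; you are right to distrust the forward implication, since groupoid weak mixing is a property of the full group $[\hat{R}]$ and does not pass to the countable subgroup $\Gamma$ (indeed, for a principal ergodic groupoid the $[\hat{R}]$-action is weakly mixing already by ergodicity, because the full group can move any finite partition almost independently of itself). However, your proposed repair does not work as written: Lemma~\ref{Lemma:coinduce-bernoulli} and Proposition~\ref{prop:restrCoindWkMix} are statements about the Gaboriau--Lyons subgroups $\Lambda\supseteq\Delta$ of $[G]$, which bear no relation to $\Gamma$, so they cannot directly control the $\Gamma$-representation on $L^2(Y)\ominus L^2(X)$; moreover $R_A$ sits over $R$ through the intermediate Bernoulli extension $G$ (the action groupoid of $\beta_{R\curvearrowright R}$), so the complement also contains the layer $L^2(G^0)\ominus L^2(X)$, which your decomposition ignores. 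The ingredients you actually need are the groupoid-level relative statements: Lemma~\ref{Lemma:coinduced-wm}(2) gives that $\widehat{\beta}$ and $\widehat{\rho}$ are weak mixing relative to the trivial sub-bundle as $G$-actions (hence so is their tensor product restricted to the invariant sub-bundle $\mathcal{M}_A$), one needs the analogous relative weak mixing of $\beta_{R\curvearrowright R}$ over $X$, a transitivity step along the tower $Y\to G^0\to X$, and finally Lemma~\ref{Lemma:bootstrap-weak-mixing} applied to the countable subgroup $\Gamma\subseteq[R]$, using that the base Bernoulli $\Gamma$-action on $X$ is weak mixing. With those substitutions your strategy does yield weak mixing of $\theta_A$ (and fills in a point the paper compresses into a single assertion); as currently sketched, this step is a genuine gap.
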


\begin{proof}
Let $\theta \colon\Gamma \curvearrowright ( X,\mu ) $ be the Bernoulli
action of $\Gamma $ with base $[ 0,1] $, and let $R$ be the
corresponding orbit equivalence relation. Then $R$ is a nonamenable ergodic
countable pmp equivalence relation. Observe that if $\hat{R}$ is a
class-bijective extension of $R$ on $( Y,\nu ) $, then $\hat{R}$
is the orbit equivalence relation of a free pmp action $\hat{\theta}$ of $%
\Gamma $ on $( Y,\nu ) $ with a distinguished factor map $\pi
\colon Y\rightarrow X$ onto $\theta $. Furthermore, $\hat{R}$ is ergodic
(respectively, weak mixing) if and only if $\hat{\theta}$ is ergodic
(respectively, weak mixing). If $\hat{R}^{\prime }$ is another
class-bijective extension of $R$ on $( Y,\nu ) $, with
corresponding $\Gamma $-action $\theta ^{\prime }$ and factor map $\pi
^{\prime }\colon Y\rightarrow X$, then $\hat{R}$ and $\hat{R}^{\prime }$ are isomorphic
relatively to $R$ if and only $\hat{\theta}$ and $\hat{\theta}^{\prime }$ are
\emph{conjugate relatively to} $\theta $, that is, there exists an automorphism
$\phi $ of $( Y,\nu ) $ such that, up to discarding a null set, $%
\pi ^{\prime }\circ \phi =\pi $ and $\phi \circ \hat{\theta}_{\gamma }=\hat{%
\theta}_{\gamma }^{\prime }\circ \phi $ for every $\gamma \in \Gamma $. The
conclusion thus follows from Theorem \ref{Theorem:reduction-relations}.
\end{proof}

\subsection{Borel complexity\label{Subsection:complexity}}

We recall here some fundamental notions from Borel complexity theory, which
can also be found in \cite{gao_invariant_2009,hjorth_classification_2000}.

\begin{definition}
Let $X$ and $Y$ be standard Borel spaces, and let $E$ and $F$ be equivalence
relations on $X$ and $Y$, respectively.

\begin{enumerate}
\item $E$ is said to be \emph{Borel} if it is a Borel subset of $X\times X$
endowed with the product topology.

\item A \emph{Borel reduction }from $E$ to $F$ is a Borel function $f\colon
X\to Y$ such that $xEx^{\prime }$ if and only if $f(x)Ff(x^{\prime })$, for
every $x,x^{\prime }\in X$.

\item A \emph{countable-to-one Borel homomorphism} from $E$ to $F$ is a
Borel function $f\colon X\to Y$ such that $xEx^{\prime }$ implies $%
f(x)Ff(x^{\prime })$ for every $x,x^{\prime }\in X$, and if $\mathcal{A}$ is
a set of pairwise not $E$-equivalent elements of $X$ such that $f(\mathcal{A}%
)$ is contained in a single $E$-class, then $\mathcal{A}$ is countable.
\end{enumerate}

When $X=Y$, if $F$ is contained in $E$ (as subsets of $X\times X$), then we
say that $E$ is \emph{coarser} than $F$ and $F$ is \emph{finer} than $E$.
\end{definition}


One can regard the relation $\cong _{\mathrm{AG}}$ of isomorphism of
countably infinite discrete abelian groups as an equivalence relation on a
standard Borel space in a canonical way. Indeed, assuming without loss of
generality that countably infinite discrete groups have $\mathbb{N}$ as
universe, a countably infinite discrete group is an element of $2^{\mathbb{N}%
^{3}}\times 2^{\mathbb{N}^{2}}\times \mathbb{N}$, coding the multiplication
operation, the inverse map, and the distinguished element representing the
identity. It is easy to see that the set \textrm{AG }of elements of $2^{%
\mathbb{N}^{3}}\times 2^{\mathbb{N}^{2}}\times \mathbb{N}$ that arise in
this fashion is a Borel subset, and hence a standard Borel space with the
induced Borel structure. The following result is proved in \cite[Theorem 5.1]%
{epstein_borel_2011}.

\begin{theorem}[Epstein--T\"{o}rnquist]
\label{Theorem:criterion-nB} Let $E$ be an equivalence relation on a
standard Borel space. If there exists a countable-to-one Borel homomorphism
from $\cong_{\mathrm{AG}}$ to $E$, then $E$ is not Borel.
\end{theorem}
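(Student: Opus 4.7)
Suppose, for contradiction, that $E$ is a Borel equivalence relation on a standard Borel space $Y$, and that $f\colon \mathrm{AG}\to Y$ is a countable-to-one Borel homomorphism from $\cong_{\mathrm{AG}}$ to $E$. The natural first move is to pull $E$ back to $\mathrm{AG}$: set $E'=(f\times f)^{-1}(E)$, which is a Borel equivalence relation on $\mathrm{AG}$ containing $\cong_{\mathrm{AG}}$ as a subequivalence relation. The countable-to-one hypothesis on $f$ translates exactly into the statement that every $E'$-class decomposes into only countably many $\cong_{\mathrm{AG}}$-classes. The goal is thus to rule out the existence of such an $E'$: that is, to show that $\cong_{\mathrm{AG}}$ cannot be refined by a Borel equivalence relation with only countable quotient fibers.

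The plan is to exploit two classical facts from the descriptive set theory of $S_{\infty}$-actions. First, by the Friedman--Stanley theorem, $\cong_{\mathrm{AG}}$ is a complete orbit equivalence relation for Borel $S_\infty$-actions: in particular, there exists a specific $S_\infty$-orbit equivalence relation $F$, on a Polish space $Z$, which is Borel reducible to $\cong_{\mathrm{AG}}$ via some Borel map $g\colon Z\to\mathrm{AG}$. Second, one selects $F$ to enjoy a strong rigidity property, namely that no Borel equivalence relation $F'\supseteq F$ with countable $F'/F$-fibers can exist. Granting these two ingredients, the composition $f\circ g\colon Z\to Y$ is a countable-to-one Borel homomorphism from $F$ to $E$, so its pullback $(f\circ g\times f\circ g)^{-1}(E)$ is a Borel equivalence relation $F'$ on $Z$ containing $F$ with countable $F'/F$-fibers, contradicting the chosen rigidity of $F$.

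To establish the rigidity property for $F$, one combines Lusin--Novikov uniformization with a Kuratowski--Ulam / generic ergodicity argument. Applied to the Borel set $\{(z,z')\in Z^{2}\colon zF'z'\}$, whose sections consist of countably many $F$-classes, Lusin--Novikov produces a Borel sequence of selectors $s_n\colon Z\to Z$ enumerating the $F$-classes in each $F'$-class. These selectors, together with the continuity of the $S_\infty$-action generating $F$, allow one to define an $F'$-invariant Borel set that should, on a comeager $F$-invariant piece, coincide with a single $F$-class. This contradicts the non-Borelness of $F$, completing the argument.

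The main obstacle is the rigidity step: one needs an $S_\infty$-orbit equivalence relation $F$ whose $S_\infty$-action is sufficiently ``turbulent-like'' in the quotient direction to defeat any countable Borel enumeration of $F$-classes inside an $F'$-class. This is where the specific structure of the Friedman--Stanley jumps and the Becker--Kechris theory of Polish group actions enter, and it is the technical core of the Epstein--T\"{o}rnquist argument. Once rigidity is in place, the remainder of the proof is a routine manipulation with Borel pullbacks and reductions.
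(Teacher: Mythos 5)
Note first that the paper does not prove this statement at all: it is quoted verbatim from Epstein--T\"ornquist \cite[Theorem 5.1]{epstein_borel_2011}, so the comparison is with their argument. Your opening move is fine and is also how one naturally begins: pulling $E$ back along $f$ to a Borel equivalence relation $E'\supseteq\cong_{\mathrm{AG}}$ in which every $E'$-class contains only countably many isomorphism classes. But from that point on the proposal has genuine gaps. The appeal to ``the Friedman--Stanley theorem'' that $\cong_{\mathrm{AG}}$ is $S_\infty$-complete is misattributed and anachronistic: Friedman--Stanley in fact proved that torsion abelian groups are \emph{not} Borel complete, and Borel completeness for (torsion-free) abelian groups is the much later Paolini--Shelah theorem, unavailable to Epstein--T\"ornquist or to this paper. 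More importantly, the completeness step is logically idle: you only need some $F$ Borel reducible to $\cong_{\mathrm{AG}}$ with your ``rigidity'' property, and taking $F=\cong_{\mathrm{AG}}$ itself with $g=\mathrm{id}$ shows that the first two steps merely restate the theorem. All of the content is concentrated in the rigidity claim, which is exactly the statement to be proved and for which no proof is given.

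The sketched rigidity argument would not work as written. Lusin--Novikov does not apply to $\{(z,z')\colon z\,F'\,z'\}$, since its sections $[z]_{F'}$ are uncountable -- it is the number of $F$-classes inside an $F'$-class that is countable, not the number of points -- so no Borel selectors $s_n$ enumerating those $F$-classes are produced; obtaining such selectors is essentially asking for a Borel transversal-type structure for $F$ inside $F'$, which is precisely the kind of object that generally fails to exist. The intended contradiction is also not one: orbits of a Borel action of a Polish group are always Borel, so exhibiting an invariant Borel set that generically coincides with a single $F$-class contradicts nothing about the non-Borelness of $F$. Finally, ``turbulent-like'' is the wrong tool here: orbit equivalence relations of $S_\infty$-actions (relations classifiable by countable structures) are never turbulent, turbulence being exactly the obstruction to such classifiability. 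The actual Epstein--T\"ornquist proof is not a genericity argument at all; it exploits the Ulm-invariant analysis of countable abelian $p$-groups together with the boundedness theorem for analytic sets to show that any Borel equivalence relation containing the pullback $E'$ must identify a perfect, hence uncountable, family of pairwise non-isomorphic groups, contradicting the countable-to-one hypothesis. That rank/boundedness argument is the missing core of your proposal.
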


We proceed to explain how to regard the spaces of actions of a discrete pmp
groupoid and of class-bijective extensions of a given countable Borel
equivalence relation as a standard Borel space. We need some preparation
first.

\begin{notation}
Let $(X,\mu)$ and $(Y,\nu)$ be standard probability spaces. We denote by $%
\mathcal{I}_{ ( X,\mu ) } ( Y,\nu ) $ the space of quadruples $( T,A,B,\pi )
$ such that $A,B\subseteq X$ are Borel sets with $\mu ( A ) =\mu ( B ) $, $%
\pi \colon Y\to X$ is a Borel map such that $\pi _{\ast }(\nu) =\mu $, and $%
T\colon \pi ^{-1} ( A ) \to \pi ^{-1} ( B ) $ is a measure-preserving Borel
isomorphism. Two such quadruples $( T,A,B,\pi ) $ and $( T^{\prime
},A^{\prime },B^{\prime },\pi ^{\prime } ) $ are identified whenever $\pi
=\pi ^{\prime }$, the symmetric differences $A\bigtriangleup A^{\prime }$
and $B\bigtriangleup B^{\prime }$ have zero measure, and $T|_{\pi^{-1}(A\cap
A^{\prime })}= T^{\prime }|_{\pi^{\prime -1}(A\cap A^{\prime })}$ almost
everywhere.
\end{notation}

\begin{remark}
There is a canonical Polish topology on $\mathcal{I}_{ ( X,\mu ) } ( Y,\nu )
$. Indeed, set $M=L^{\infty }(Y,\nu)$ and $N= L^{\infty } ( X,\mu ) $,
endowed with the canonical traces. Identify $\mathcal{I}_{ ( X,\mu ) } (
Y,\nu ) $ with the space of quadruples $( \theta ,p,q,\eta ) $ such that $%
\eta \colon N\to M$ is an injective trace-preserving normal *-homomorphism, $%
p,q\in N$ are projections of the same trace, and $\theta \colon \eta (p)M\to
\eta (q)M$ is a trace-preserving *-isomorphism. Let $( d_{n} )_{n\in\mathbb{N%
}} $ and $(e_{n})_{n\in\mathbb{N}}$ be countable $2$-dense subsets of the
unit balls of $M$ and $N$, respectively, and set
\begin{align*}
\rho ( (\theta ,p,q,\eta ),(\theta ^{\prime },p^{\prime },q^{\prime },\eta
^{\prime }) ) = & \Vert p-p \Vert _{2}+ \Vert q-q^{\prime } \Vert _{2}
+\sum_{n\in\mathbb{N}} 2^{-n} \Vert \theta ( \eta (p)d_n ) -\theta ^{\prime
} ( \eta ^{\prime }(p^{\prime })d_{n} ) \Vert _{2} \\
&+ \sum_{n\in\mathbb{N}}2^{-n}\Vert \theta ^{-1} ( \eta (q)d_{n} ) -\theta
^{\prime } ( \eta ^{\prime }(q^{\prime })d_{n} ) \Vert _{2}+ \sum_{n\in%
\mathbb{N}}2^{-n}\Vert \eta ( e_{n} ) -\eta ^{\prime } ( e_{n} ) \Vert .
\end{align*}%
Then $\rho$ is complete metric on $\mathcal{I}_{ ( X,\mu ) } ( Y,\nu ) $.
Since it is clear that the corresponding topology is separable, this shows
that $\mathcal{I}_{ ( X,\mu ) } ( Y,\nu ) $ is a Polish space.
\end{remark}

Observe that, for a fixed $\eta _{0}$, the space $\mathcal{I}_{(X,\mu ),\eta
_{0}}(Y,\nu )$ of quadruples $(\theta ,p,q,\eta )$ in $\mathcal{I}_{(X,\mu
)}(Y,\nu )$ with $\eta =\eta _{0}$ is an inverse subsemigroup, whose
idempotent semilattice can be identified with the space of projections of $%
L^{\infty }(X,\mu )$. If $G$ is a discrete pmp groupoid with $G^{0}=X$, then
the semilattice of projections of $L^{\infty }(X,\mu )$ can be identified
with the idempotent semilattice of $[[G]]$.

\begin{proposition}
Let $(Y,\nu )$ be the standard probability space, let $G$ be a discrete pmp
groupoid, and let $R$ be an ergodic countable Borel equivalence relation.

\begin{enumerate}
\item There is a canonical Polish topology on the space $\mathrm{\mathrm{Act}%
}_{G}(Y,\nu )$ of actions of $G$ on $(Y,\nu )$. With respect to this Polish
topology, the space $\mathrm{Erg}_{G}(Y,\nu )$ of ergodic actions of $G$ on $%
\left( Y,\nu \right) $.

\item There is a canonical Polish topology on the space $\mathrm{Ext}%
_{R}(Y,\nu )$ of class-bijective pmp extensions of $R$ on $(Y,\nu )$. With
respect to this Polish topology, the space $\mathrm{Erg}_{R}(Y,\nu )$ of
class-bijective pmp extensions of $R$ on $(Y,\nu )$ which are ergodic, and
the space $\mathrm{WM}_{R}(Y,\nu )$ of class-bijective pmp extensions of $R$
which are weak mixing, are Borel subsets of $\mathrm{Ext}_{R}(Y,\nu )$.
\end{enumerate}
\end{proposition}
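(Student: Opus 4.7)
The plan is to realize both spaces as Borel subsets of countable products of copies of the Polish inverse semigroup $\mathcal{I}_{(X,\mu)}(Y,\nu)$ introduced just before the proposition, and then translate ergodicity and weak mixing into countably many Borel conditions using the characterizations established in Subsections~\ref{Subsection:ergodicity} and~\ref{Subsection:weak-mixing}.

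For part~(1), fix a countable inverse subsemigroup $\Sigma\subseteq[[G]]$ that is dense in the $2$-norm topology, closed under the inverse semigroup operations inherited from $[[G]]$, and contains a sequence covering $G$. A pmp action $\alpha$ of $G$ on $(Y,\nu)$ is, by the identifications in Subsection~\ref{Subsection:actions-spaces}, the same as a class-bijective pmp extension, and it is determined by the data of the fibering map $\pi_\alpha\colon Y\to X$ (encoded by a trace-preserving normal embedding $\eta_\alpha\colon L^\infty(X,\mu)\to L^\infty(Y,\nu)$) together with the collection $([[\alpha]]_\sigma)_{\sigma\in\Sigma}$ of partial trace-preserving isomorphisms with source and range projections $\eta_\alpha(\chi_{\sigma^{-1}\sigma})$ and $\eta_\alpha(\chi_{\sigma\sigma^{-1}})$. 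This gives an injection
\[
\mathrm{Act}_G(Y,\nu)\hookrightarrow \prod_{\sigma\in\Sigma}\mathcal{I}_{(X,\mu)}(Y,\nu),\qquad \alpha\mapsto\bigl([[\alpha]]_\sigma,\eta_\alpha(\chi_{\sigma^{-1}\sigma}),\eta_\alpha(\chi_{\sigma\sigma^{-1}}),\eta_\alpha\bigr)_{\sigma\in\Sigma},
\]
and the image is defined by closed conditions: the embedding $\eta$ is the same in every coordinate, the source/range projections lie in $\eta(L^\infty(X,\mu))$, and the family is multiplicative on $\Sigma$ in the inverse-semigroup sense. This exhibits $\mathrm{Act}_G(Y,\nu)$ as a Polish space.

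Now fix a countable subgroup $\Gamma\subseteq[G]$ covering $G$ with $\Gamma\subseteq\Sigma$. By Corollary~\ref{Corollary:ergodic}, an action $\alpha$ is ergodic if and only if $[\alpha]|_\Gamma$ has no non-scalar fixed points in $L^\infty(Y,\nu)$. Choosing a countable $2$-dense sequence $(a_n)_{n\in\mathbb{N}}$ in the unit ball of $L^\infty(Y,\nu)$, this translates into: for all $n$ with $\|a_n-\tau(a_n)1\|_2\geq 1/k$, there exists $t\in\Gamma$ with $\|[\alpha]_t(a_n)-a_n\|_2\geq 1/m$, for suitable $k,m$. Since $\alpha\mapsto[\alpha]_t(a_n)$ is Borel as a map into $L^2(Y,\nu)$, the set $\mathrm{Erg}_G(Y,\nu)$ is Borel.

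For part~(2), a class-bijective pmp extension of $R$ on $(Y,\nu)$ corresponds to a pmp action of $R$ (viewed as a groupoid) on $(Y,\nu)$, so $\mathrm{Ext}_R(Y,\nu)=\mathrm{Act}_R(Y,\nu)$ by the discussion following Definition~\ref{Definition:pmp-action}. The Polish structure from part~(1) applies. For ergodicity the argument from part~(1) carries over verbatim. For weak mixing of the extension $\widehat{R}$, we use Lemma~\ref{Lemma:wm} applied to the canonical action of $\widehat{R}$ on its unit space, which reduces weak mixing to the condition that $[\widehat{R}]|_\Lambda$ be weak mixing relative to $L^\infty(X,\mu)$, for some (equivalently every) countable subgroup $\Lambda$ covering $\widehat{R}$. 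The main technical point is that such a $\Lambda$ depends on the extension; to handle this in a Borel fashion, we use the Borel selection theorems for countable-to-one Borel relations (as mentioned in the notation at the end of the introduction) to select, as a Borel function of the extension, countably many elements $(t_n)$ of $[\widehat{R}]$ whose graphs cover the orbit equivalence relation of the extension. Weak mixing relative to $L^\infty(X,\mu)$ along $\Lambda=\langle t_n\rangle_{n\in\mathbb{N}}$ then amounts to: for every finite collection $a_1,\dots,a_k$ from a countable $2$-dense subset of $L^\infty(Y,\nu)$ and every $\varepsilon>0$, there is $t\in\Lambda$ with $\|E_{L^\infty(X,\mu)}(a_i[\alpha]_t(a_j))-E_{L^\infty(X,\mu)}(a_i)E_{L^\infty(X,\mu)}(a_j)\|_2<\varepsilon$ for all $i,j$. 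This is a Borel condition on the extension, so $\mathrm{WM}_R(Y,\nu)$ is Borel.

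The main obstacle is the Borel selection of a countable dense subgroup of $[\widehat{R}]$ covering $\widehat{R}$ that depends Borel-measurably on the extension $\widehat{R}$; once that is in place, the rest is a routine unpacking of Lemma~\ref{Lemma:wm} and Corollary~\ref{Corollary:ergodic} into countable conjunctions and disjunctions of Borel constraints on sections of the associated von Neumann bundle.
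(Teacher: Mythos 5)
Your part~(1) follows the paper's route: you code $\mathrm{Act}_G(Y,\nu)$ as a closed subset of the countable product $\mathcal{I}_{(X,\mu)}(Y,\nu)^{\Sigma}$ over a fixed countable dense inverse subsemigroup $\Sigma\subseteq[[G]]$, exactly as in the paper. Your Borelness test for ergodicity differs from the paper's (the paper asks that projections of equal trace be approximately matched by elements of $\Sigma\cap[G]$; you exclude almost invariant elements far from the scalars, via Corollary~\ref{Corollary:ergodic}). That variant does work, but you should pin down the quantifiers: ``for suitable $k,m$'' must mean ``for every $k$ there is $m$ such that every $a_n$ with $\|a_n-\tau(a_n)1\|_2\geq 1/k$ is moved by at least $1/m$ by some $t\in\Gamma$,'' and the nontrivial implication (ergodic implies this uniform statement) needs the standard averaging argument: the minimal-norm point of the closed convex hull of the $\Gamma$-orbit of $a_n$ is $\Gamma$-invariant, so a uniformly almost invariant $a_n$ far from $\mathbb{C}1$ would yield a non-scalar invariant element. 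With the quantifiers as you literally wrote them the equivalence is not there.

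In part~(2) there is a genuine gap. You declare the ``main obstacle'' to be a Borel selection, as a function of the extension $\widehat{R}$, of countably many elements of $[\widehat{R}]$ covering $\widehat{R}$, and you appeal vaguely to countable-to-one selection theorems. As stated this is unjustified (you never produce a standard Borel space of pairs consisting of an extension and a full group element, with countable sections, to which such a theorem could be applied), and more importantly it is unnecessary: in the parametrization you yourself set up, an extension is a pair $(\eta,\theta)$ with $\theta$ defined on the \emph{fixed} family $\Sigma\subseteq[[R]]$, the action groupoid $\widehat{R}=R\ltimes^{\theta}Y$ is covered by the graphs of the automorphisms $[\theta]_{\sigma}$ for $\sigma\in\Sigma\cap[R]$, and these automorphisms are coordinates of the point of $\mathrm{Ext}_R(Y,\nu)$, hence tautologically Borel in the extension. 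This is precisely how the paper argues: weak mixing is written as the countable condition that for every $n$ and every $f_1,\ldots,f_n$ in a fixed $2$-norm dense sequence of $L^{\infty}(Y,\nu)$ there exists $\sigma\in\Sigma\cap[R]$ with $|\tau(f_i\theta_{\sigma}(f_j))-\tau(f_i)\tau(f_j)|<2^{-n}$. A second, related issue is that your condition tests weak mixing \emph{relative to} $L^{\infty}(X,\mu)$, whereas the set to be shown Borel is that of extensions which are weak mixing as pmp groupoids in the sense of Definition~\ref{Definition:wm-rep}; these are different formulas, and if you insist on the relative one you must prove it defines the same set (say by combining Corollary~\ref{Corollary:wm} with a bootstrapping argument in the spirit of Lemma~\ref{Lemma:bootstrap-weak-mixing}), which your proposal does not do. Replacing the selection step by the canonical lifts of $\Sigma\cap[R]$ and using the scalar correlation condition closes the argument along the paper's lines.
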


\begin{proof}
(1). Set $(X,\mu)=(G^0,\mu_G)$. An action $\alpha$ of $G$ on $( Y,\nu )$ is
given by a trace-preserving normal unital *-homomorphism $\eta \colon
L^{\infty } ( X,\mu ) \to L^{\infty } ( Y,\nu ) $ together with a Borel
groupoid homomorphism $\alpha\colon G \to \mathrm{Aut} ( \bigsqcup_{x\in
X}L^{\infty } ( Y_{x} ) ) $ that is the identity on the unit space. In turn,
this induces an inverse semigroup homomorphism $t\mapsto [ [ \alpha ] ] _{t}$
from $[ [ G ] ] $ to $\mathcal{I}_{ ( X,\mu ) ,\eta } ( Y,\nu ) $ which is
the identity on the corresponding idempotent semilattices.

Fix now a countable dense inverse semigroup $\Sigma \subseteq [ [ G ] ] $
such that the set of elements of $\Sigma $ with support and range with full
measure is dense in $[ G ] $. Let $\eta \colon L^{\infty } ( X,\mu ) \to
L^{\infty } ( Y,\nu ) $ be a trace-preserving injective *-homomorphism, and
let $\theta\colon \Sigma \to \mathcal{I}_{ ( X,\mu ) ,\eta } ( Y,\nu ) $ be
a inverse semigroup homomorphism which is the identity on the corresponding
semilattices. Then the pair $(\eta,\theta)$ gives rise to a unique action $%
\alpha $ of $G$ on $( Y,\nu ) $ satisfying $[ [ \alpha ] ] =\theta $. Thus
one can identify the set of actions of $G$ on $( Y,\nu ) $ with the set of
such pairs $( \eta, \theta) $, which is a closed subset of the countably
infinite product $\mathcal{I}_{ ( X,\mu ) } ( Y,\nu ) ^{\Sigma }$. This
yields a canonical Polish topology on the space \textrm{Act}$_{G} ( Y,\nu ) $
of actions of $G$ on $( Y,\nu ) $.

Fix a $2$-norm dense countable subset $\mathcal{P}$ of the set of
projections of $L^{\infty }(Y,\nu )$, and a $2$-norm dense countable subset $%
\mathcal{F}$ of the unit ball of $L^{\infty }(Y,\nu )$. Denote by $\mathrm{E}%
_{\eta (L^{\infty }(X,\mu ))}\colon L^{\infty }(Y,\nu )\rightarrow \eta
(L^{\infty }(X,\mu ))$ the unique trace-preserving conditional expectation
from $L^{\infty }(Y,\nu )$ onto $\eta (L^{\infty }(X,\mu ))$. Ergodicity of
an action $\alpha $ of $G$ on $(Y,\nu )$ can be characterized in terms of
the associated maps $\eta \colon L^{\infty }(X,\mu )\rightarrow L^{\infty
}(Y,\nu )$ and $\theta \colon \Sigma \rightarrow \mathcal{I}_{(X,\mu ),\eta
}(Y,\nu )$ as follows: $\alpha $ is \emph{ergodic} if and only if for every $%
p,p^{\prime }\in \mathcal{P}$ with $\tau (p)=\tau (p^{\prime })$ and every $%
\varepsilon >0$ there exists $\sigma \in \Sigma \cap \lbrack G]$ such that $%
\Vert \theta _{\sigma }(p)-\theta _{\sigma }(p^{\prime })\Vert
_{2}<\varepsilon $. Hence \textrm{Erg}$_{G}(Y,\nu )$ is a Borel subset of
\textrm{Act}$_{G}(Y,\nu )$.

(2). As noted in Subsection \ref{Subsection:actions-spaces}, one can
identify class-bijective pmp extensions of $R$ on $(Y,\nu )$ with the
(necessarily principal) action groupoid associated with a pmp action of $R$.
Thus, the space $\mathrm{Ext}_{R}(Y,\nu )$ of class-bijective pmp extensions
of $R$ is endowed with a canonical Polish topology, obtained by identifying $%
\mathrm{Ext}_{R}(Y,\nu )$ with $\mathrm{Act}_{R}(Y,\nu )$ and using
part~(1). Since $R$ is assumed to be ergodic, a class-bijective extension of
$R$ is ergodic if and only if the associated action of $R$ is ergodic.
Therefore it follows from part~(1) that $\mathrm{Erg}_{R}(Y,\nu )$ is a
Borel subset $\mathrm{Ext}_{R}(Y,\nu )$. Similarly, one can see that $%
\mathrm{WM}_{R}( Y,\nu ) $ is a Borel subset of $\mathrm{Ext}%
_{R}( Y,\nu ) $ as follows. Fix a countable $2$-norm dense subset
$X$ of $L^{\infty }( Y,\nu ) $. Then an extension $%
\widehat{R}$ of $R$ corresponding to an action $\theta $ of $R$, regarded as
above an element of $\mathrm{Act}_{R}( Y,\nu ) $, is weak mixing
if and only if, for every $n\in \mathbb{N}$ and $f_{1},\ldots ,f_{n}\in X$
there exists $\sigma \in \Sigma \cap [ G] $ such that $\vert
\tau ( f_{i}\theta _{\sigma }( f_{j}) ) -\tau (
f_{i}) \tau ( f_{j}) \vert <2^{-n}$ for $1\leq
i,j\leq n$. Thus, $\mathrm{WM}_{R}( Y,\nu ) $ is a Borel subset
of $\mathrm{Ext}_{R}( Y,\nu ) $.
\end{proof}

\begin{remark}
As observed in \cite[Section 16]{kechris_spaces_2017}, the proof of
Rokhlin's Skew Product Theorem \cite[Theorem 3.18]{glasner_ergodic_2003}
shows that any ergodic class-bijective pmp extension of $R$ is isomorphic to
a \emph{skew product }of $R$ via a cocycle. This observation allows one to
give a different (but equivalent) parametrization of the space $\mathrm{Erg}%
_{R} ( Y,\nu ) $ of ergodic class-bijective pmp extensions of $R$.
\end{remark}

Observing that the construction in Theorem \ref{Theorem:reduction-relations}
is given by a Borel map with respect to the parametrization of
class-bijective extensions described above, we obtain:

\begin{theorem}
\label{Theorem:c-to-1-reduction-relation}Let $R$ be an ergodic nonamenable
countable pmp equivalence relation. There is a Borel function $A\mapsto
R_{A} $ from abelian countably infinite groups to weak mixing pmp extensions
of $R$ on the standard atomless probability space such that:

\begin{enumerate}
\item if $A$ and $A^{\prime }$ are isomorphic groups, then $R_{A}$ and $%
R_{A^{\prime }}$ are isomorphic relatively to $R$;

\item if $\mathcal{A}$ is a collection of pairwise nonisomorphic countably
infinite abelian groups such that $\{R_{A}\colon A\in \mathcal{A}\}$ are
pairwise stably von Neumann equivalent, then $\mathcal{A}$ is countable.
\end{enumerate}

In particular, if $E$ is any equivalence relation for weak mixing pmp
extensions of $R$ on the standard atomless probability space that is coarser
than isomorphism relative to $R$ and finer than stable von Neumann
equivalence, then $A\mapsto R_{A}$ is a countable-to-one Borel homomorphism
from $\cong _{\mathrm{AG}}$ to $E$.
\end{theorem}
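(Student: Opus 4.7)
The strategy is to use Theorem \ref{Theorem:reduction-relations}, which already supplies an assignment $A\mapsto R_{A}$ satisfying conditions (1) and (2). What remains is to verify that this assignment is Borel with respect to the Polish structure on $\mathrm{AG}$ and the Polish structure on $\mathrm{WM}_{R}(Y,\nu)$ introduced in Subsection \ref{Subsection:complexity}, and then to derive the ``in particular'' consequence by elementary manipulations.

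To check Borelness, I plan to trace through the construction in the proof of Theorem \ref{Theorem:reduction-relations} and observe that each step depends Borel-measurably on $A\in\mathrm{AG}$. Fixing once and for all the ambient data (the groupoid $G$, the subgroupoid $H$, the coset selection $\Sigma$, the rigid action $\rho$, and its coinduction $\widehat{\rho}$), what varies with $A$ are only: the Pontryagin dual $\widehat{A}$ with its Haar measure $\nu_{A}$; the base $M_{A}=L^{\infty}(\widehat{A},\nu_{A})$; the translation action $\mathtt{Lt}$ of $\widehat{A}$ on $M_{A}$; the Bernoulli action $\beta_{A}$ of $\Lambda$ on $M_{A}^{\otimes\Omega}$; its coinduction $\widehat{\beta}_{A}=\mathrm{CInd}_{H}^{G}(\beta_{A})$; the fixed-point subbundle $\mathcal{M}_{A}\subseteq\bigsqcup_{x\in X}(M_{A}^{\otimes\Omega})^{\otimes\Sigma}\otimes L^{\infty}(\mathbb{T}^{2})^{\otimes\Sigma}$ under $\mathtt{Lt}^{\otimes(\Omega\times\Sigma)}\otimes\mathrm{id}$; and the restriction $\zeta_{A}$ of $\widehat{\beta}_{A}\otimes\widehat{\rho}$ to $\mathcal{M}_{A}$. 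Realising $\widehat{A}$ as a Borel family of compact abelian groups inside a fixed universal Polish group and then identifying each $L^{\infty}(X,\mathcal{M}_{A})$ with $L^{\infty}(Y,\nu)$ via a measurable choice of isomorphism, one sees that $A\mapsto R_{A}$ lands in $\mathrm{WM}_{R}(Y,\nu)$ through a Borel map, since each step above is performed by standard Borel operations on bundles of tracial von Neumann algebras.

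For the final assertion, let $E$ be an equivalence relation on $\mathrm{WM}_{R}(Y,\nu)$ coarser than isomorphism relative to $R$ and finer than stable von Neumann equivalence. If $A\cong_{\mathrm{AG}}A'$ then by (1) we have $R_{A}\cong_{R}R_{A'}$, and therefore $R_{A}\mathrel{E}R_{A'}$; hence $A\mapsto R_{A}$ is a Borel homomorphism from $\cong_{\mathrm{AG}}$ to $E$. Conversely, if $\mathcal{A}\subseteq\mathrm{AG}$ consists of pairwise nonisomorphic groups whose images $\{R_{A}:A\in\mathcal{A}\}$ all lie in a single $E$-class, then these extensions are pairwise stably von Neumann equivalent, so (2) forces $\mathcal{A}$ to be countable. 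This is exactly the countable-to-one property.

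The principal obstacle is the Borel-measurability of the construction, which is essentially a matter of careful bookkeeping but must be done uniformly across the various direct-integral decompositions, groupoid coinductions, and fixed-point subalgebras appearing in Theorem \ref{Theorem:reduction-relations}. The cleanest route is to show that each of the building blocks (Pontryagin duality, Bernoulli action on a bundle, coinduction over $H\leq G$, tensor product of actions, passage to a fixed-point subalgebra, and extraction of the orbit equivalence relation) is realised by a Borel transformation between the appropriate Polish spaces of actions and extensions, after which the composition $A\mapsto R_{A}$ is automatically Borel.
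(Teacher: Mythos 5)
Your proposal matches the paper's argument: the paper likewise obtains Theorem~\ref{Theorem:c-to-1-reduction-relation} by simply observing that the construction $A\mapsto R_{A}$ of Theorem~\ref{Theorem:reduction-relations} is Borel with respect to the parametrization of class-bijective extensions given in Subsection~\ref{Subsection:complexity}, with properties (1) and (2) inherited directly and the ``in particular'' clause following by the same coarser/finer manipulation you describe. Your additional step-by-step bookkeeping of the Borelness (Pontryagin duality, Bernoulli base, coinduction, fixed-point subalgebra) is exactly the verification the paper leaves implicit, so the two proofs are essentially identical.
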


The following corollary is an immediate consequence of Theorem \ref%
{Theorem:c-to-1-reduction-relation} and Theorem \ref{Theorem:criterion-nB}.

\begin{corollary}
\label{Corollary:countable-to-1-reduction-relation}Let $R$ be an ergodic
nonamenable countable pmp equivalence relation. Let $E$ be any equivalence
relation for weak mixing pmp extensions of $R$ on the standard atomless
probability space that is coarser than orbit equivalence and finer than
stable von Neumann equivalence. Then $E$ is not Borel.
\end{corollary}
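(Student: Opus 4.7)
The plan is to verify that the assignment $A\mapsto R_{A}$ produced by Theorem~\ref{Theorem:c-to-1-reduction-relation} is a countable-to-one Borel homomorphism from $\cong_{\mathrm{AG}}$ to $E$, and then to invoke Theorem~\ref{Theorem:criterion-nB} of Epstein--T\"ornquist. First I would observe that isomorphism relative to $R$ is finer than orbit equivalence of class-bijective pmp extensions of $R$: indeed, any witness $\theta\in\mathrm{Aut}(Y,\nu)$ that $(\widehat{R}_{1},\pi_{1})$ and $(\widehat{R}_{2},\pi_{2})$ are isomorphic relatively to $R$ is in particular a measure-preserving automorphism mapping $\widehat{R}_{1}$-classes to $\widehat{R}_{2}$-classes, so it witnesses orbit equivalence. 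Hence, since $E$ is coarser than orbit equivalence by hypothesis, $E$ is coarser than isomorphism relative to $R$. By the same hypothesis (plus the fact that the stable von Neumann equivalence class of an extension is an invariant of its stable isomorphism class), $E$ is finer than stable von Neumann equivalence, so $E$ sits precisely in the interval described in the ``in particular'' clause of Theorem~\ref{Theorem:c-to-1-reduction-relation}.

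Consequently, the Borel assignment $A\mapsto R_{A}$ supplied by Theorem~\ref{Theorem:c-to-1-reduction-relation} is a countable-to-one Borel homomorphism from the isomorphism relation $\cong_{\mathrm{AG}}$ on the standard Borel space $\mathrm{AG}$ of countably infinite discrete abelian groups to the equivalence relation $E$ on the standard Borel space $\mathrm{WM}_{R}(Y,\nu)$ of weak mixing class-bijective pmp extensions of $R$. Explicitly, condition~(1) of Theorem~\ref{Theorem:c-to-1-reduction-relation} gives that $A\cong_{\mathrm{AG}} A'$ implies $R_{A}$ is isomorphic to $R_{A'}$ relatively to $R$, and hence $R_{A}\mathrel{E} R_{A'}$ since $E$ is coarser than that; condition~(2) together with $E$ being finer than stable von Neumann equivalence yields that any family of pairwise non-isomorphic abelian groups mapped into a single $E$-class must be countable, which is exactly the countable-to-one property.

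Having verified the hypotheses of Theorem~\ref{Theorem:criterion-nB}, we immediately conclude that $E$ is not Borel, completing the proof. No additional obstacle arises here, since the bulk of the work---the construction of the family $\{R_{A}\}_{A}$ with prescribed behaviour under relative isomorphism and stable von Neumann equivalence, and the Borelness of the parametrization---is entirely absorbed into Theorem~\ref{Theorem:c-to-1-reduction-relation}, and the passage from ``countable-to-one Borel homomorphism from $\cong_{\mathrm{AG}}$'' to ``non-Borel'' is precisely the content of Theorem~\ref{Theorem:criterion-nB}. The only genuinely verifiable point is the order-theoretic comparison between the equivalence relations of isomorphism relative to $R$, orbit equivalence, and stable von Neumann equivalence, which is straightforward from the definitions.
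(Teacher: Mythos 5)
Your proposal is correct and follows exactly the paper's route: the corollary is deduced by noting that any $E$ coarser than orbit equivalence (hence coarser than isomorphism relative to $R$, since the latter is finer) and finer than stable von Neumann equivalence satisfies the hypotheses of the ``in particular'' clause of Theorem~\ref{Theorem:c-to-1-reduction-relation}, and then applying Theorem~\ref{Theorem:criterion-nB}. The paper states this as an immediate consequence of those two theorems, and your spelled-out verification of the order-theoretic comparison matches it.
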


Theorem \ref{Theorem:nB-oe-relation} is then a particular instance of
Corollary \ref{Corollary:countable-to-1-reduction-relation}.

\subsection{Actions of discrete groups\label{Subsection:discrete}}
In this subsection, we explain how to deduce Theorem~\ref{Theorem:nB-conj}
and Theorem~\ref{Theorem:nB-oe} from Theorem \ref{Theorem:nB-oe-relation}.

Fix a countable group $\Gamma$. We consider the space $\mathrm{Aut}%
(Y,\nu )$ of measure-preserving Borel automorphisms of $X$ as a topological
space, endowed with the weak topology. We also let $\mathrm{Aut}(Y,\nu
)^{\Gamma }$ be the corresponding product space, endowed with the product
topology. We denote by $\mathrm{\mathrm{FE}}_{\Gamma }(Y,\nu )$ the space of
free ergodic pmp actions of $\Gamma $ on $(Y,\nu )$, and by $\mathrm{FWM}%
_{\Gamma }(Y,\nu )$ the space of free weak mixing pmp actions of $\Gamma $
on $(Y,\nu )$. It is shown in \cite[Section 10]{kechris_global_2010} that
both $\mathrm{\mathrm{FE}}_{\Gamma }(Y,\nu )$ and $\mathrm{\mathrm{FWM}}%
_{\Gamma }(Y,\nu )$ are Borel subsets of $\mathrm{Aut}(Y,\nu )^{\Gamma }$,
and hence standard Borel spaces when endowed with the inherited Borel
structure. Suppose now that $R$ is the orbit equivalence relation of a free
ergodic action $\theta $ of $\Gamma $ on a standard probability space $%
\left( X,\mu \right) $. As observed in the proof of Corollary \ref%
{Corollary:reduction-groups}, every weak mixing class-bijective pmp
extension $\hat{R}$ of $R$ on $\left( Y,\nu \right) $ canonically gives rise
to a free weak mixing action $\hat{\theta}$ of $\Gamma $ on $\left( Y,\nu
\right) $ having $\hat{R}$ as its orbit equivalence relation. This yields a
Borel function $\hat{R}\mapsto \hat{\theta}$ from $\mathrm{WM}_{R}\left(
Y,\nu \right) $ to $\mathrm{FWM}_{\Gamma }\left( Y,\nu \right) $, which maps
class-bijective extensions of $R$ that are isomorphic relatively to $R$ to
conjugate actions of $\Gamma $. We therefore deduce the following
from Theorem \ref{Theorem:c-to-1-reduction-relation}.

\begin{theorem}
\label{Theorem:c-to-1-reduction-group}Let $\Gamma $ be a nonamenable
countable group. There is a Borel function $A\mapsto \theta _{A}$ from
abelian countably infinite groups to free weak mixing actions of $\Gamma $
on the standard atomless probability space such that:

\begin{enumerate}
\item if $A$ and $A^{\prime }$ are isomorphic groups, then $\theta _{A}$ and
$\theta _{A^{\prime }}$ are conjugate;

\item if $\mathcal{A}$ is a collection of pairwise nonisomorphic countably
infinite abelian groups such that $\{R_{A}\colon A\in \mathcal{A}\}$ are
pairwise stably von Neumann equivalent, then $\mathcal{A}$ is countable.
\end{enumerate}

In particular, if $E$ is any equivalence relation for free weak mixing
actions of $\Gamma $ on the standard atomless probability space that is
coarser than conjugacy and finer than stable von Neumann equivalence, then $%
A\mapsto \theta _{A}$ is a countable-to-one Borel homomorphism from $\cong _{%
\mathrm{AG}}$ to $E$.
\end{theorem}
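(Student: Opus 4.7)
The plan is to deduce Theorem \ref{Theorem:c-to-1-reduction-group} from Theorem \ref{Theorem:c-to-1-reduction-relation} by composing the Borel assignment $A\mapsto R_A$ of class-bijective weak mixing pmp extensions with a Borel recovery of the $\Gamma$-action from such an extension. First, I would fix a free ergodic pmp action $\theta$ of the nonamenable countable group $\Gamma$ on a standard probability space $(X,\mu)$, for instance the Bernoulli action with base $[0,1]$, and let $R$ denote its orbit equivalence relation, which is ergodic and nonamenable. Theorem \ref{Theorem:c-to-1-reduction-relation} applied to $R$ then yields a Borel function $A\mapsto R_A$ from $\mathrm{AG}$ into $\mathrm{WM}_R(Y,\nu)$ satisfying the two conclusions of that theorem.

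The next step is to produce, in a Borel manner, a free weak mixing action of $\Gamma$ on $(Y,\nu)$ whose orbit equivalence relation is $R_A$. Since $R_A$ is a class-bijective extension of $R$, each $\Gamma$-orbit in $X$ under $\theta$ lifts uniquely to an $R_A$-class in $Y$, so one can define $\hat{\theta}_\gamma(y)$ to be the unique point in the $R_A$-class of $y$ that projects to $\theta_\gamma(\pi(y))$, where $\pi\colon Y\to X$ is the extension map. This produces a free weak mixing action $\hat{\theta}$ of $\Gamma$ on $(Y,\nu)$ whose orbit equivalence relation is precisely $R_A$, and the standard Borel selection theorems for countable-to-one Borel relations invoked in the paper's introductory conventions ensure that the assignment $\hat{R}\mapsto \hat{\theta}$ is Borel from $\mathrm{WM}_R(Y,\nu)$ to $\mathrm{FWM}_\Gamma(Y,\nu)$. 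Setting $\theta_A$ to be the action obtained from $R_A$ in this way thus defines a Borel map from $\mathrm{AG}$ into $\mathrm{FWM}_\Gamma(Y,\nu)$.

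It remains to translate the two conclusions. Property (1) follows because an isomorphism of $R_A$ and $R_{A'}$ relative to $R$ is, by definition, an automorphism $\phi$ of $(Y,\nu)$ commuting with the extension maps to $X$, which conjugates the associated actions $\theta_A$ and $\theta_{A'}$. Property (2) follows because the group-measure-space von Neumann algebra of a free ergodic action is canonically isomorphic to the von Neumann algebra of its orbit equivalence relation, so stable von Neumann equivalence of $\theta_A$ and $\theta_{A'}$ coincides with stable von Neumann equivalence of $R_A$ and $R_{A'}$, at which point Theorem \ref{Theorem:c-to-1-reduction-relation} forces countability. The \emph{in particular} statement is then immediate: since $E$ is coarser than conjugacy, property (1) shows that $A\mapsto \theta_A$ is a Borel homomorphism, and since $E$ is finer than stable von Neumann equivalence, property (2) shows that it is countable-to-one.

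The main obstacle is in fact invisible in this deduction, since all the technical weight of the paper, namely cocycle superrigidity for triples of groups, coinduction for groupoids, the Gaboriau--Lyons embedding, and Ioana's separability argument, has already been absorbed into the proof of Theorem \ref{Theorem:c-to-1-reduction-relation}. What remains is routine bookkeeping whose only subtle point is checking that the recovery $\hat{R}\mapsto \hat{\theta}$ is implemented by a Borel function; this is handled by the standard Borel selection theorems for relations with countable sections mentioned at the end of the introduction.
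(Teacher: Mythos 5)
Your proposal is correct and follows essentially the same route as the paper: fix the Bernoulli action of $\Gamma$ with base $[0,1]$, take its (nonamenable, ergodic) orbit equivalence relation $R$, apply Theorem \ref{Theorem:c-to-1-reduction-relation}, and then lift each weak mixing class-bijective extension $R_A$ back to a free weak mixing $\Gamma$-action via the unique class-bijective lift of orbits, noting that relative isomorphism yields conjugacy and that freeness identifies the group-measure-space algebra with $L(R_A)$ for the von Neumann equivalence transfer. This is exactly the argument of Subsection \ref{Subsection:discrete} together with the observations in the proof of Corollary \ref{Corollary:reduction-groups}, including the Borel selection point for the map $\hat{R}\mapsto\hat{\theta}$.
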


Theorem~\ref{Theorem:nB-conj} and Theorem~\ref{Theorem:nB-oe} are immediate
consequences of Theorem~\ref{Theorem:c-to-1-reduction-group}, in view of
Theorem~\ref{Theorem:criterion-nB}.

\subsection{Actions of locally compact groups\label{Subsection:nonamenable}}
In this subsection, we explain how to deduce Theorem~\ref{Theorem:nB-oeLCSCU}
from Theorem~\ref{Theorem:nB-oe-relation}. For a locally compact, second
countable group $G$, the space $\mathrm{Act}_{G}(Y,\nu )$ of pmp actions of $%
G$ on a standard probability space $(Y,\nu )$ is endowed with a canonical
Polish topology. For example, it can be regarded as the space of continuous
group homomorphisms from $G$ to $\mathrm{Aut}( Y,\nu ) $, which
is a closed subspace of the Polish space $C(G,\mathrm{Aut}(Y,\nu ))$ of
continuous functions from $G$ to $\mathrm{Aut}(Y,\nu )$ endowed with the
compact-open topology. Indeed, it follows from Mackey's point realization
theorem \cite{mackey_point_1962} that any continuous group homomorphism from
$G$ to $\mathrm{Aut}( Y,\nu ) $ arises from a continuous pmp
action of $G$ on $( Y,\nu ) $, and vice versa.

One can consider an equivalent parametrization of pmp actions of $G$ on $%
( Y,\nu ) $ in the setting of von Neumann algebras, adopting the
perspective from the theory of locally compact quantum groups. Suppose that $%
M,N$ are von Neumann algebras with separable preduals $M_{\ast }$ and $%
N_{\ast }$, respectively. We can identify the space of $\sigma $-weakly
continuous linear isometries from $M$ to $N$ with the space of linear
contractive quotient mappings from $N_{\ast }$ to $M_{\ast }$, which is a
Polish space with respect to the topology induced by the operator norm. The
space $\mathrm{Hom}( M,N) $ of injective unital *-homomorphisms
from $M$ to $N$ is a closed subset of the space of $\sigma $-weakly
continuous linear isometries from $M$ to $N$ \cite[III.2.2.2]%
{blackadar_operator_2006}.

Let $C=L^{\infty }( G,\lambda)) $ be the von Neumann algebra
of essentially bounded functions on $G$ with respect to a left Haar measure $%
\lambda$ on $G$. The multiplication operation on $G$ gives rise to a
normal injective unital *-homomorphism $\Delta \colon C\rightarrow C\otimes C$ (%
\emph{comultiplication}) given by $\Delta ( f) ( s,t)
=f( st) $ for all $f\in C$ and all $s,t\in G$. Consider now a standard
probability space $( X,\mu ) $, and set $M=L^{\infty }(
X,\mu ) $. A continuous pmp action $G \curvearrowright ( X,\mu ) $
gives rise to a normal injective unital
*-homomorphism $\alpha \colon M\rightarrow C\otimes M$ given by $\alpha (
f) ( s,x) =f( g\cdot x) $ for all $f\in C$ and all $s\in G$ and all $x\in X$.
Such a map is a \emph{coaction }of $C$
on $M$, as it satisfies $( \Delta \otimes \mathrm{id}) \circ
\alpha =( \mathrm{id}\otimes \alpha ) \circ \alpha $. The $G$-action on $(X,\mu)$
being pmp is equivalent to the identity $%
( \mathrm{id}\otimes \mu ) \circ \alpha =\mathrm{id}$, where we
identify $\mu $ with a normal state on $M$.\

We can thus identify the space of
continuous actions of $G$ on $( X,\mu ) $ with the space of
coactions of $C$ on $M$ satisfying $( \mathrm{id}\otimes \mu )
\circ \alpha =\mathrm{id}$, which is a closed subspace of $\mathrm{Hom}%
( M,C\otimes M) $. The action $G\curvearrowright X$ is free if
and only if $( 1\otimes M) \alpha ( M) $ has dense
linear span inside $C\otimes M$; see \cite[Theorem 2.9]{ellwood_new_2000}.
Furthermore, the action $G\curvearrowright X$ is ergodic if and only if the
fixed point algebra $M^{\alpha }=\{ x\in M\colon \alpha (x)=x\otimes
1\} $ only contains the scalar multiples of the identity or,
equivalently, if $( ( \lambda _{G}\otimes \mathrm{id}) \circ
\alpha ) (x)=\lambda _{G}(x)1$ for every $x\in M$. This shows that the
space of free ergodic pmp actions of $G$ on $( X,\mu ) $ can be
seen as a closed subspace of $\mathrm{Hom}( M,C\otimes M) $.

\begin{definition}
Let $G$ be a locally compact, second countable group, let $(X,\mu )$ be a
standard probability space, and let $G\curvearrowright Y$ be a free action.
A \emph{cross section} for $G\curvearrowright X$ is a Borel subset $%
Y\subseteq X$ for which there exists an open neighborhood $U\subseteq G$
containing the identity of $G$, such that the restricted action $U\times
Y\rightarrow X$ is injective, and the orbit $G\cdot Y$ of $Y$ has full
measure in $x$. The cross section $Y$ is \emph{cocompact} if there is a
compact subset $K\subseteq G$ such that $K\cdot Y$ is $G$-invariant and has
full measure in $X$.
\end{definition}

We will use cross section equivalence relations, following \cite[Section~4]%
{kyed_l2-betti_2015} and \cite{bowen_neumanns_2015}.

\begin{definition}
Let $G$ be a locally compact, second countable group, let $(X,\mu )$ be a
standard probability space, let $G\curvearrowright X$ be a free action, and
let $Y\subseteq X$ be a cocompact cross section. The \emph{cross section
equivalence relation} associated to $Y$ is the orbit equivalence relation of
the action restricted to $Y$, that is,
\begin{equation*}
R=\{(y,y^{\prime })\in Y\times Y\colon y\in G\cdot y^{\prime }\}.
\end{equation*}%
This is a countable Borel equivalence relation on $Y$ \cite[Proposition 4.3]%
{kyed_l2-betti_2015}. If $\lambda $ is a Haar measure on $G$, then there
exist a unique $R$-invariant probability measure $\nu $ on $Y$ and a
constant $c\in ( 0,+\infty ) $ such that, with $\eta \colon U\times
Y\rightarrow X$ denoting the restriction of the $G$-action, one has
$\eta _{\ast }( \lambda |_{U}\times \nu ) =c\ \mu |_{U\cdot
Y}$. Such a measure $\nu $ is called the \emph{canonical $R$-invariant probability
measure} on $Y$. The countable pmp equivalence relation $R$ is ergodic if and
only if the action $G\curvearrowright X$ is ergodic by \cite[Proposition 4.3]%
{kyed_l2-betti_2015}.
\end{definition}

Recall that a locally compact, second countable group is said to be \emph{%
unimodular} if its left Haar measure is also right invariant.

\begin{proposition}
\label{prop:reductionUnimodular}Let $G$ be a locally compact, second
countable, unimodular group, let $(X,\mu )$ be an atomless standard
probability space, and let $G\curvearrowright X$ be a free ergodic action
with a cocompact cross section $Y$. Let $R$ denote the associated cross
section equivalence relation, and $\nu $ the canonical $R$-invariant
probability measure on $Y$. Then there is a Borel assignment $S\mapsto \zeta
_{S}$ from ergodic class-bijective pmp extensions of $R$ on a standard
atomless probability space $( \bar{Y},\bar{\nu}) $ to free
ergodic pmp actions of $G$ on the standard probability space $(\tilde{X},%
\tilde{\mu})$ such that, for ergodic class-bijective pmp extensions $%
S$ and $S^{\prime }$ of $R$ on $( \bar{Y},\bar{\nu}) $:

\begin{itemize}
\item if $S$ and $S^{\prime }$ are isomorphic relatively to $R$, then $\zeta
_{S}$ and $\zeta _{S^{\prime }}$ are conjugate;

\item if $\zeta _{S}$ and $\zeta _{S^{\prime }}$ are stably von Neumann
equivalent, then $S$ and $S^{\prime }$ are stably von Neumann equivalent.
\end{itemize}
\end{proposition}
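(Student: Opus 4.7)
My plan is to carry out the construction $S \mapsto \zeta_S$ following Proposition~8.3 of~\cite{bowen_neumanns_2015}. The cocompact cross section structure gives an injective Borel map $\eta\colon U \times Y \to X$ of full measure, so $X$ can be realized (up to null sets) as the quotient of $G \times Y$ by the equivalence relation that identifies $(gu, y)$ with $(g, u \cdot y)$ for $u \in U$ and further glues different $U$-slices along the cross section equivalence relation $R$. Given an ergodic class-bijective pmp extension $\pi_S \colon \bar Y \to Y$, I would form $\tilde X = \tilde X_S$ by performing the analogous quotient on $G \times \bar Y$, using $S$ in place of $R$ and lifting the local $U$-action from $Y$ to $\bar Y$ (such a lift exists precisely because $S$ is class-bijective over $R$). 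The measure $\tilde\mu$ is induced by an appropriate normalization of $\lambda|_U \times \bar\nu$, and $\zeta_S$ acts by left translation on the $G$-coordinate. One checks that $\zeta_S$ is free, ergodic, and pmp, and that $\bar Y$ embeds as a cocompact cross section whose associated cross section equivalence relation is canonically isomorphic to $S$.

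For the conjugacy assertion, suppose $\theta \in \mathrm{Aut}(\bar Y, \bar\nu)$ witnesses an isomorphism of $S$ and $S'$ relative to $R$, so that $\pi_{S'} \circ \theta = \pi_S$ and $\theta$ sends $S$-classes to $S'$-classes up to a null set. Define $\tilde\theta \colon \tilde X_S \to \tilde X_{S'}$ by $\tilde\theta([g, y]) = [g, \theta(y)]$. The compatibility conditions on $\theta$ ensure that $\tilde\theta$ descends to a well-defined map on the relevant quotients, and it manifestly intertwines $\zeta_S$ and $\zeta_{S'}$ because both actions are left translation on the first coordinate. It is measure-preserving because $\theta$ is, and therefore it yields the desired conjugacy.

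For the stable von Neumann direction, I would appeal to the general principle that for a free ergodic pmp action $\zeta \colon G \curvearrowright (\tilde X, \tilde\mu)$ of a unimodular locally compact second countable group with cross section equivalence relation $S$ on $\bar Y \subseteq \tilde X$, the corner of $L^\infty(\tilde X) \rtimes_\zeta G$ cut by the projection corresponding to $\bar Y$ is canonically isomorphic to $L(S)$; see~\cite[Proposition~4.3]{kyed_l2-betti_2015} together with the discussion around~\cite[Remark~8.2]{bowen_neumanns_2015}. Consequently, $L(S)$ and $L^\infty(\tilde X) \rtimes_{\zeta_S} G$ are stably isomorphic. Thus, if $\zeta_S$ and $\zeta_{S'}$ are stably von Neumann equivalent, i.e.\ if their crossed products are stably isomorphic, then $L(S)$ and $L(S')$ are stably isomorphic, which is exactly stable von Neumann equivalence of $S$ and $S'$.

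The main technical delicacy is verifying Borelness of the assignment $S \mapsto \zeta_S$ uniformly in $S$, since the construction involves choices of fundamental domains and local sections that \emph{a priori} depend on $S$. However, once the cocompact cross section $Y \subseteq X$ and the neighborhood $U$ have been fixed once and for all as part of the ambient data, all the extension-dependent pieces can be encoded by Borel functions of $S$, and the standard Borel selection theorems of~\cite[Section~18]{kechris_classical_1995} supply the necessary uniformity. The identification of the cross section equivalence relation of $\zeta_S$ with $S$, and the compatibility with the lifted isomorphism $\tilde\theta$, is a careful but routine bookkeeping exercise.
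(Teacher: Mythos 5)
Your proposal is correct and follows essentially the same route as the paper: both use the Bowen--Hoff--Ioana construction (Proposition~8.3 of \cite{bowen_neumanns_2015}, which the paper realizes concretely as the fiber product $X\times_Y\bar Y$ with the $G$-action lifted via class-bijectivity), obtain conjugacy from a relative isomorphism by the obvious lifted map, deduce the stable von Neumann implication from the fact that $G\ltimes^{\zeta_S}L^\infty(\tilde X,\tilde\mu)$ is an amplification of $L(S)$, and get Borelness from the canonicity of the construction once $Y$, $U$, $K$ and the ambient data are fixed. The only slight imprecision is speaking of "the corner cut by the projection corresponding to $\bar Y$" (the cross section itself is null, so one really works with $U\cdot\bar Y$ and stable isomorphism as in \cite[Lemma~4.5]{kyed_l2-betti_2015}), but this does not affect the argument.
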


\begin{proof}
Let $S$ be an ergodic class-bijective pmp extension of $R$ on $( \bar{Y}%
,\bar{\nu}) $. A free ergodic pmp action $\zeta _{S}$ of $G$ on $(%
\tilde{X},\tilde{\mu})$ is constructed in \cite[Proposition~8.3]%
{bowen_neumanns_2015} such that, as proved in \cite[Proof of Theorem B]%
{bowen_neumanns_2015} using \cite[Lemma~4.5]{kyed_l2-betti_2015}, the
group-measure space construction $G\ltimes ^{\zeta _{S}}L^{\infty }(\tilde{X}%
,\tilde{\mu})$ is isomorphic to an amplification of the II$_{1}$ factor $%
L(S) $. We review here the construction, since we need some specific details
about it.

Fix an open neighborhood $U$ of the identity in $G$, a compact subset $K$ of
$G$, $n\in \mathbb{N}$, and $g_{1},\ldots ,g_{n}\in G$ such that:

\begin{itemize}
\item the map $U\times Y\rightarrow X$ given by $( g,y) \mapsto g\cdot y$
is injective,

\item $K\cdot Y$ is a $G$-invariant conull subset of $X$,

\item $g_{1}=1$ and $g_{1}U\cup \cdots \cup g_{n}U$ contains $K.$
\end{itemize}

After discarding null sets, one can assume without loss of generality that
the stabilizer of every point of $X$ is trivial and that $K\cdot Y=X$. One can
then define a Borel map $\pi \colon X\rightarrow Y$ such that $\pi (x)$ belongs to
the $G$-orbit of $x$ for every $x\in X$, by setting $\pi ( g\cdot
y) =y$ for $g\in U$ and $y\in Y$, and $\pi ( g_{i}g\cdot y)
=y$ for $i=2,3,\ldots ,n$, $g\in U$, and $y\in Y$ such that $g_{i}g\cdot
y\notin (g_{1}U\cdot Y)\cup \cdots \cup (g_{i-1}U\cdot Y)$.

Fix an ergodic class-bijective pmp extension $S$ of $R$ on $( \bar{Y},%
\bar{\nu}) $, with corresponding factor map $p\colon \bar{Y} \rightarrow Y$.
After discarding a null set, one can assume that $p|_{ [ \bar{y} ] _{S}}$ maps $ [ \bar{y} ] _{S}$
bijectively onto $ [ p(\bar{y}) ] _{R}$ for every $\bar{y}\in \bar{%
Y}$. One then defines $\tilde{X}$
to be%
\begin{equation*}
X\times _{Y}\bar{Y}= \{ ( x,\bar{y}) \in X\times \bar{Y}\colon \pi (x)=p(\bar{y}%
) \} .
\end{equation*}%
The $G$-action $\zeta _{S}$ on $\tilde{X}$ is defined by setting $g\cdot
( x,\bar{y}) =( g\cdot x,\hat{y}) $ where $\hat{y}$ is
the unique element of $ [ \bar{y} ] _{S}$ such that $p(\hat{y})=\pi
(g\cdot x)$ (notice that $\pi (g\cdot x)$ belongs to $ [ \pi (x) ]
_{R}= [ p(y) ] _{R}$). The subsets $K$ and $U$ of $G$ as above
witness that
\begin{equation*}
\tilde{Y}= \{ ( p(\bar{y}),\bar{y}) \colon\bar{y}\in \bar{Y}%
 \}
\end{equation*}%
is a Borel cross-section for the $G$-action on $\tilde{X}$. As above, one
can define a Borel function $\tilde{\pi}\colon\tilde{X} \rightarrow \tilde{Y}$ by
setting $\pi (g\cdot \tilde{y})=\tilde{y}$ for $g\in U$ and $\tilde{y}\in
\tilde{Y}$, and $\pi (g_{i}g\cdot \tilde{y})=\tilde{y}$ for $i=2,\ldots ,n$,
$g\in U$, and $\tilde{y}\in \tilde{Y}$ such that $g_{i}g\cdot \tilde{y}%
\notin (g_{1}U\cdot \tilde{Y})\cup \cdots \cup (g_{i-1}U\cdot \tilde{Y})$.
The measure $\bar{\nu}$ on $\bar{Y}$ induces via the canonical Borel
isomorphism $\bar{Y} \rightarrow \tilde{Y}$, given by $\bar{y}\mapsto ( p(\bar{y}%
),\bar{y})$, a probability measure $\tilde{\nu}$ on $\tilde{Y}$. In
turn, $\tilde{\nu}$ induces a $G$-invariant probability measure $\tilde{\mu}$
on $X$ obtained by setting
\begin{equation*}
\tilde{\mu}( A) =( \lambda \times \tilde{\nu})
\{( g,\tilde{y}) \in G\times \widetilde{Y}\colon\tilde{\pi}(g\tilde{y}%
)=\tilde{y}\text{ and }g\tilde{y}\in A\}\text{,}
\end{equation*}%
for any measurable subset $A\subseteq X$.
This concludes the construction of the ergodic pmp action $\zeta _{S}$ of $G$
on $(\tilde{X},\tilde{\mu})$.

Suppose now that one starts with ergodic class-bijective pmp extensions $%
S$ and $S^{\prime }$ of $R$ on $( \bar{Y},\bar{\nu}) $. It is clear
from the construction above that, if $S$ and $S^{\prime }$ are isomorphic
relatively to $R$, then the corresponding $G$-actions $\zeta _{S}$ and $%
\zeta _{S^{\prime }}$ are conjugate. Furthermore, if $S$ and $S^{\prime }$ are
stably von\ Neumann equivalent, then $\zeta _{S}$ and $\zeta _{S^{\prime }}$
are also stably von Neumann equivalent, as $G\ltimes ^{\zeta _{S}}L^{\infty
}(\tilde{X},\tilde{\mu})$ is isomorphic to an amplification of $L(
S) $ and $G\ltimes ^{\zeta _{S^{\prime }}}L^{\infty }(\tilde{X},\tilde{%
\mu})$ is isomorphic to an amplification of $L( S^{\prime }) $.
Finally, one should notice that the assignment $S\mapsto \zeta _{S}$ is
given by a Borel map with respect to the parametrizations of class-bijective
pmp extensions of $R$ and free ergodic actions of $G$ as described above.
Indeed, it suffices to observe that, once one has fixed the cocompact cross
section $Y$ for $G\curvearrowright X$, the canonical $R$-invariant measure $%
\nu $ on $Y$, the compact subset $K$ of $G$ and the open neighborhood $U$ of
the identity of $G$ witnessing that $Y$ is a cocompact cross section, and
the elements $g_{1},\ldots ,g_{n}$ of $G$ such that $g_{1}=1$ and $%
g_{1}U\cup \cdots \cup g_{n}U$ contains $K$, the construction of the action $%
\zeta _{S}$ is canonical.
\end{proof}

Every locally compact, second countable, unimodular group admits a free
ergodic pmp action on the standard probability space, which can be obtained
as a Gaussian action; see \cite%
{schmidt_asymptotic_1984,bergelson_mixing_1988}. Combining
Proposition~\ref{prop:reductionUnimodular} with Theorem~\ref%
{Theorem:c-to-1-reduction-relation} we deduce the following.

\begin{theorem}
\label{Theorem:c-to-1-reduction-lc}Let $G$ be a locally compact, second
countable, unimodular group. There is a Borel function $A\mapsto \zeta _{A}$
from abelian countably infinite groups to free ergodic actions of $G$ on the
standard atomless probability space such that:

\begin{enumerate}
\item if $A$ and $A^{\prime }$ are isomorphic groups, then $\zeta _{A}$ and $%
\zeta _{A^{\prime }}$ are conjugate;

\item if $\mathcal{A}$ is a collection of pairwise nonisomorphic countably
infinite abelian groups such that $\{\zeta _{A}\colon A\in \mathcal{A}\}$
are pairwise stably von Neumann equivalent, then $\mathcal{A}$ is countable.
\end{enumerate}

In particular, if $E$ is any equivalence relation for free ergodic actions
of $G$ on the standard atomless probability space that is coarser than
conjugacy and finer than stable von Neumann equivalence, then $A\mapsto
\zeta _{A}$ is a countable-to-one Borel homomorphism from $\cong _{\mathrm{AG%
}}$ to $E$.
\end{theorem}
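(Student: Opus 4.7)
The plan is to obtain Theorem~\ref{Theorem:c-to-1-reduction-lc} by composing Theorem~\ref{Theorem:c-to-1-reduction-relation} with Proposition~\ref{prop:reductionUnimodular}. First, I would fix a free ergodic pmp action of $G$ on a standard probability space $(X,\mu)$, which exists by the construction of Gaussian actions (as cited via \cite{schmidt_asymptotic_1984,bergelson_mixing_1988}). By \cite[Theorem 4.2]{kyed_l2-betti_2015}, this action admits a cocompact cross section $Y\subseteq X$, with associated cross section equivalence relation $R$ on $Y$ endowed with its canonical $R$-invariant probability measure $\nu$. Since $G$ is nonamenable and acts freely with cocompact cross section, $R$ is an ergodic nonamenable countable pmp equivalence relation; this is standard and follows, for instance, from the fact that $L(R)$ is an amplification of the group-measure space factor $G\ltimes L^{\infty}(X,\mu)$, which is nonamenable as $G$ is nonamenable.

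Next, I would apply Theorem~\ref{Theorem:c-to-1-reduction-relation} to this $R$ to obtain a Borel assignment $A\mapsto R_{A}$ from countably infinite abelian groups to weak mixing class-bijective pmp extensions of $R$ on the standard atomless probability space, with the properties that $A\cong A'$ implies $R_{A}$ and $R_{A'}$ are isomorphic relatively to $R$, and that any family $\{R_{A}\colon A\in \mathcal{A}\}$ of pairwise stably von Neumann equivalent extensions (for pairwise nonisomorphic $A$'s) is countable. Then I would apply Proposition~\ref{prop:reductionUnimodular} to the Borel map $S\mapsto \zeta_{S}$ and define $\zeta_{A}=\zeta_{R_{A}}$. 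The composition of two Borel maps is Borel, so $A\mapsto \zeta_{A}$ is Borel.

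The transfer of properties is immediate from Proposition~\ref{prop:reductionUnimodular}: if $A$ and $A'$ are isomorphic, then $R_{A}$ and $R_{A'}$ are isomorphic relatively to $R$, so $\zeta_{A}$ and $\zeta_{A'}$ are conjugate; and if a family $\{\zeta_{A}\colon A\in \mathcal{A}\}$ of actions is pairwise stably von Neumann equivalent, then by the second bullet of Proposition~\ref{prop:reductionUnimodular} the family $\{R_{A}\colon A\in \mathcal{A}\}$ is also pairwise stably von Neumann equivalent, and hence $\mathcal{A}$ is countable by item~(2) of Theorem~\ref{Theorem:c-to-1-reduction-relation}. The final clause about $E$ being any equivalence relation coarser than conjugacy and finer than stable von Neumann equivalence is then an immediate consequence of the two bullets.

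The only non-routine point is the verification that the cross section relation $R$ is nonamenable. Once this is in place, the argument is essentially a mechanical composition. One could also verify that the specific Borel parametrizations of $\mathrm{Ext}_{R}(\bar{Y},\bar{\nu})$ (see Subsection~\ref{Subsection:complexity}) and $\mathrm{Act}_{G}(\tilde{X},\tilde{\mu})$ (described at the beginning of Subsection~\ref{Subsection:nonamenable}) are compatible with the construction in Proposition~\ref{prop:reductionUnimodular}, but this has already been observed in the proof of that proposition. Theorem~\ref{Theorem:nB-oeLCSCU} from the introduction then follows by combining Theorem~\ref{Theorem:c-to-1-reduction-lc} with the Epstein--T\"ornquist criterion (Theorem~\ref{Theorem:criterion-nB}).
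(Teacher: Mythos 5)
Your proposal is correct and is essentially the paper's own argument: the paper obtains this theorem precisely by fixing a free ergodic (Gaussian) pmp action of $G$, passing to a cocompact cross section with cross-section equivalence relation $R$, and composing the assignment $A\mapsto R_A$ of Theorem~\ref{Theorem:c-to-1-reduction-relation} with the Borel map $S\mapsto \zeta_S$ of Proposition~\ref{prop:reductionUnimodular}, transferring properties (1) and (2) exactly as you do. Your explicit check that $R$ is nonamenable (via $L(R)$ being an amplification of the noninjective algebra $G\ltimes L^{\infty}(X,\mu)$) supplies a point the paper leaves implicit, and it also makes visible that the statement tacitly assumes $G$ nonamenable, as in Theorem~\ref{Theorem:nB-oeLCSCU}.
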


Theorem~\ref{Theorem:nB-oeLCSCU} is then an immediate consequence of Theorem~%
\ref{Theorem:c-to-1-reduction-lc}, by Theorem~\ref{Theorem:criterion-nB}.







\begin{thebibliography}{10}

\bibitem{ad_haagerup_2013}
Claire Anantharaman-Delaroche, \emph{The {Haagerup} property for discrete
  measured groupoids}, Operator algebra and dynamics, Springer {Proc}. {Math}.
  {Stat}., vol.~58, Springer, Heidelberg, 2013, pp.~1--30. \MR{3142029}

\bibitem{ad_amenable_2000}
Claire Anantharaman-Delaroche and Jean Renault, \emph{Amenable groupoids},
  Monographies de {L}'{Enseignement} {M}ath{\'{e}}matique, vol.~36,
  L'Enseignement Math{\'{e}}matique, Geneva, 2000.

\bibitem{bekka_kazhdans_2008}
Bachir Bekka, Pierre de~la Harpe, and Alain Valette, \emph{Kazhdan's property
  ({T})}, New {Mathematical} {Monographs}, vol.~11, Cambridge University Press,
  Cambridge, 2008. \MR{2415834}

\bibitem{bergelson_mixing_1988}
Vitaly Bergelson and Joseph Rosenblatt, \emph{Mixing actions of groups},
  Illinois Journal of Mathematics \textbf{32} (1988), no.~1, 65--80.
  \MR{MR921351}

\bibitem{blackadar_operator_2006}
Bruce Blackadar, \emph{Operator {algebras}}, Encyclopaedia of {Mathematical}
  {Sciences}, vol. 122, Springer-Verlag, Berlin, 2006.

\bibitem{blecher_operator_2004}
David~P. Blecher and Christian Le~Merdy, \emph{Operator algebras and their
  modules—an operator space approach}, London {Mathematical} {Society}
  {Monographs}. {New} {Series}, vol.~30, Oxford University Press, Oxford, 2004.

\bibitem{bowen_entropy_2014}
Lewis Bowen, \emph{Entropy theory for sofic groupoids {I}: {The} foundations},
  Journal d'Analyse Math{\'{e}}matique \textbf{124} (2014), 149--233.
  \MR{3286052}

\bibitem{bowen_neumanns_2015}
Lewis Bowen, Daniel Hoff, and Adrian Ioana, \emph{von {Neumann}'s problem and
  extensions of non-amenable equivalence relations}, Groups, Geometry, and
  Dynamics, to appear.

\bibitem{connes_factor_1980}
Alain Connes, \emph{A factor of type {$\mathrm{II}_1$} with countable
  fundamental group}, Journal of Operator Theory \textbf{4} (1980), no.~1,
  151--153. \MR{587372}

\bibitem{connes_amenable_1981}
Alain Connes, Jacob Feldman, and Benjamin Weiss, \emph{An amenable equivalence
  relation is generated by a single transformation}, Ergodic Theory and
  Dynamical Systems \textbf{1} (1981), no.~04, 431--450.

\bibitem{delaroche_relation_1995}
Claire Delaroche and Alexandre Kirillov, \emph{Sur les relations entre l'espace
  dual d'un groupe et la structure de ses sous-groupes ferm{\'{e}}s
  (d'apr{\`{e}}s {D}. {A}. {Kajdan})}, S{\'{e}}minaire {Bourbaki}, {Vol}. 10,
  Soc. Math. France, Paris, 1995, pp.~Exp. No. 343, 507--528. \MR{1610473}

\bibitem{dooley_non-bernoulli_2008}
Anthony~H. Dooley, Valentin~Ya. Golodets, Daniel~J. Rudolph, and Sergey~D.
  Sinelshchikov, \emph{Non-{Bernoulli} systems with completely positive
  entropy}, Ergodic Theory and Dynamical Systems \textbf{28} (2008), no.~1,
  87--124. \MR{2380304}

\bibitem{dye_groups_1959}
Henry~A. Dye, \emph{On groups of measure preserving transformation. {I}},
  American Journal of Mathematics \textbf{81} (1959), 119--159. \MR{0131516}

\bibitem{dykema_sofic_2014}
Ken Dykema, David Kerr, and Mika{\"{e}}l Pichot, \emph{Sofic dimension for
  discrete measured groupoids}, Transactions of the American Mathematical
  Society \textbf{366} (2014), no.~2, 707--748. \MR{3130315}

\bibitem{ellwood_new_2000}
David~Alexandre Ellwood, \emph{A {new} {characterisation} of {principal}
  {actions}}, Journal of Functional Analysis \textbf{173} (2000), no.~1,
  49--60.

\bibitem{epstein_orbit_2007}
Inessa Epstein, \emph{Orbit inequivalent actions of non-amenable groups},
  arXiv:0707.4215 (2007).

\bibitem{epstein_borel_2011}
Inessa Epstein and Asger T{\"{o}}rnquist, \emph{The {Borel} complexity of von
  {Neumann} equivalence}, arXiv:1109.2351 (2011).

\bibitem{feldman_ergodic_1977}
Jacob Feldman and Calvin~C. Moore, \emph{Ergodic equivalence relations,
  cohomology, and von {Neumann} algebras. {I}}, Transactions of the American
  Mathematical Society \textbf{234} (1977), no.~2, 289--324.

\bibitem{feldman_ergodic_1977-1}
\bysame, \emph{Ergodic equivalence relations, cohomology, and von {Neumann}
  algebras. {II}}, Transactions of the American Mathematical Society
  \textbf{234} (1977), no.~2, 325--359.

\bibitem{feldman_subrelations_1989}
Jacob Feldman, Colin~E. Sutherland, and Robert~J. Zimmer, \emph{Subrelations of
  ergodic equivalence relations}, Ergodic Theory and Dynamical Systems
  \textbf{9} (1989), no.~2, 239--269. \MR{1007409}

\bibitem{foreman_conjugacy_2011}
Matthew Foreman, Daniel Rudolph, and Benjamin Weiss, \emph{The conjugacy
  problem in ergodic theory}, Annals of Mathematics \textbf{173} (2011), no.~3,
  1529--1586.

\bibitem{forrest_virtual_1974}
Peter Forrest, \emph{On the virtual groups defined by ergodic actions of
  {$\mathbb{R}^n$} and {$\mathbb{Z}^n$}}, Advances in Mathematics \textbf{14}
  (1974), 271--308.

\bibitem{furman_popas_2007}
Alex Furman, \emph{On {Popa}'s cocycle superrigidity theorem}, International
  Mathematics Research Notices (2007), no.~19, Art. ID rnm073, 46.

\bibitem{gaboriau_measurable-group-theoretic_2009}
Damien Gaboriau and Russell Lyons, \emph{A measurable-group-theoretic solution
  to von {Neumann}'s problem}, Inventiones Mathematicae \textbf{177} (2009),
  no.~3, 533--540. \MR{2534099}

\bibitem{gaboriau_uncountable_2005}
Damien Gaboriau and Sorin Popa, \emph{An uncountable family of nonorbit
  equivalent actions of {$\mathbb{F}_n$}}, Journal of the American Mathematical
  Society \textbf{18} (2005), no.~3, 547--559. \MR{2138136}

\bibitem{gao_invariant_2009}
Su~Gao, \emph{Invariant {descriptive} {set} {theory}}, Pure and {Applied}
  {Mathematics}, vol. 293, CRC Press, Boca Raton, FL, 2009.

\bibitem{glasner_ergodic_2003}
Eli Glasner, \emph{Ergodic theory via joinings}, Mathematical {Surveys} and
  {Monographs}, vol. 101, American Mathematical Society, Providence, RI, 2003.
  \MR{1958753}

\bibitem{halmos_lectures_1956}
Paul~R. Halmos, \emph{Lectures on ergodic theory}, Publications of the
  {Mathematical} {Society} of {Japan}, no. 3, The Mathematical Society of
  Japan, 1956. \MR{0097489}

\bibitem{hjorth_classification_2000}
Greg Hjorth, \emph{Classification and {Orbit} {Equivalence} {Relations}},
  Mathematical {Surveys} and {Monographs}, vol.~75, American Mathematical
  Society, Providence, RI, 2000.

\bibitem{ioana_relative_2010}
Adrian Ioana, \emph{Relative property ({T}) for the subequivalence relations
  induced by the action of {$\mathrm{SL}_2(\mathbb{Z})$} on {$\mathbb{T}^2$}},
  Advances in Mathematics \textbf{224} (2010), no.~4, 1589--1617.

\bibitem{ioana_orbit_2011}
\bysame, \emph{Orbit inequivalent actions for groups containing a copy of
  {$\mathbb{F}_2$}}, Inventiones Mathematicae \textbf{185} (2011), no.~1,
  55--73. \MR{2810796}

\bibitem{ioana_subequivalence_2009}
Adrian Ioana, Alexander~S. Kechris, and Todor Tsankov, \emph{Subequivalence
  relations and positive-definite functions}, Groups, Geometry, and Dynamics
  \textbf{3} (2009), no.~4, 579--625.

\bibitem{jolissaint_property_2005}
Paul Jolissaint, \emph{On property ({T}) for pairs of topological groups},
  L'Enseignement Math{\'{e}}matique \textbf{51} (2005), no.~1-2, 31--45.
  \MR{2154620}

\bibitem{kadison_fundamentals_1986}
Richard~V. Kadison and John~R. Ringrose, \emph{Fundamentals of the theory of
  operator algebras. {Vol}. {II}}, Pure and {Applied} {Mathematics}, vol. 100,
  Academic Press, Inc., Orlando, FL, 1986.

\bibitem{kazhdan_connection_1967}
David~A. Ka{\v{z}}dan, \emph{On the connection of the dual space of a group
  with the structure of its closed subgroups}, Akademija Nauk SSSR \textbf{1}
  (1967), 71--74. \MR{0209390}

\bibitem{kechris_spaces_2017}
Alexander Kechris, \emph{The spaces of measure preserving equivalence relations
  and graphs}, preprint.

\bibitem{kechris_classical_1995}
\bysame, \emph{Classical {descriptive} {set} {theory}}, Graduate {Texts} in
  {Mathematics}, vol. 156, Springer-Verlag, New York, 1995.

\bibitem{kechris_global_2010}
\bysame, \emph{Global aspects of ergodic group actions}, Mathematical {Surveys}
  and {Monographs}, vol. 160, American Mathematical Society, Providence, RI,
  2010.

\bibitem{kechris_amenable_2008}
Alexander Kechris and Todor Tsankov, \emph{Amenable actions and almost
  invariant sets}, Proceedings of the American Mathematical Society
  \textbf{136} (2008), no.~2, 687--697.

\bibitem{kyed_l2-betti_2015}
David Kyed, Henrik~Densing Petersen, and Stefaan Vaes, \emph{{$L^2$}-{Betti}
  numbers of locally compact groups and their cross section equivalence
  relations}, Transactions of the American Mathematical Society \textbf{367}
  (2015), no.~7, 4917--4956.

\bibitem{luck_type_2000}
Wolfgang L{\"{u}}ck, \emph{The type of the classifying space for a family of
  subgroups}, Journal of Pure and Applied Algebra \textbf{149} (2000), no.~2,
  177--203. \MR{1757730}

\bibitem{mackey_point_1962}
George~W. Mackey, \emph{Point realizations of transformation groups}, Illinois
  Journal of Mathematics \textbf{6} (1962), no.~2, 327--335.

\bibitem{murray_rings_1936}
Francis~J. Murray and John von Neumann, \emph{On {Rings} of {Operators}},
  Annals of Mathematics \textbf{37} (1936), no.~1, 116--229.

\bibitem{newman_pairs_1968}
Morris Newman, \emph{Pairs of matrices generating discrete free groups and free
  products}, The Michigan Mathematical Journal \textbf{15} (1968), 155--160.
  \MR{0228592}

\bibitem{ornstein_ergodic_1980}
Donald~S. Ornstein and Benjamin Weiss, \emph{Ergodic theory of amenable group
  actions. {I}. {The} {Rohlin} lemma}, American Mathematical Society. Bulletin.
  New Series \textbf{2} (1980), no.~1, 161--164. \MR{551753}

\bibitem{oty_actions_2006}
Karla~J. Oty and Arlan Ramsay, \emph{Actions and coactions of measured
  groupoids on {W}*-algebras}, Journal of Operator Theory \textbf{56} (2006),
  no.~1, 199--217. \MR{2261617}

\bibitem{paterson_groupoids_1999}
Alan L.~T. Paterson, \emph{Groupoids, inverse semigroups, and their operator
  algebras}, Progress in {Mathematics}, vol. 170, Birkhäuser Boston Inc.,
  Boston, MA, 1999.

\bibitem{peterson_ergodic}
Jesse Peterson, \emph{Lecture notes on ergodic theory}, available at
  \url{https://math.vanderbilt.edu/peters10/teaching/Spring2011/ErgodicTheoryNotes.pdf}.

\bibitem{popa_class_2006}
Sorin Popa, \emph{On a class of type {$\mathrm{II}_1$} factors with {Betti}
  numbers invariants}, Annals of Mathematics. Second Series \textbf{163}
  (2006), no.~3, 809--899.

\bibitem{popa_some_2006}
\bysame, \emph{Some rigidity results for non-commutative {Bernoulli} shifts},
  Journal of Functional Analysis \textbf{230} (2006), no.~2, 273--328.

\bibitem{popa_cocycle_2007}
\bysame, \emph{Cocycle and orbit equivalence superrigidity for malleable
  actions of w-rigid groups}, Inventiones Mathematicae \textbf{170} (2007),
  no.~2, 243--295.

\bibitem{ramsay_virtual_1971}
Arlan Ramsay, \emph{Virtual groups and group actions}, Advances in Mathematics
  \textbf{6} (1971), no.~3, 253--322.

\bibitem{renault_groupoid_1980}
Jean Renault, \emph{A groupoid approach to {C}*-algebras}, Lecture {Notes} in
  {Mathematics}, vol. 793, Springer, Berlin, 1980.

\bibitem{renault_topological_2015}
\bysame, \emph{Topological amenability is a {Borel} property}, Mathematica
  Scandinavica \textbf{117} (2015), no.~1, 5--30. \MR{3403785}

\bibitem{sauvageot_produit_1983}
Jean-Luc Sauvageot, \emph{Sur le produit tensoriel relatif d'espaces de
  {Hilbert}}, Journal of Operator Theory \textbf{9} (1983), no.~2, 237--252.

\bibitem{schmidt_asymptotic_1984}
Klaus Schmidt, \emph{Asymptotic properties of unitary representations and
  mixing}, Proceedings of the London Mathematical Society. Third Series
  \textbf{48} (1984), no.~3, 445--460. \MR{735224}

\bibitem{timmermann_invitation_2008}
Thomas Timmermann, \emph{An invitation to quantum groups and duality}, {EMS}
  {Textbooks} in {Mathematics}, European Mathematical Society (EMS),
  Z{\"{u}}rich, 2008, From Hopf algebras to multiplicative unitaries and
  beyond.

\bibitem{tornquist_orbit_2006}
Asger T{\"{o}}rnquist, \emph{Orbit equivalence and actions of
  {$\mathbb{F}_n$}}, The Journal of Symbolic Logic \textbf{71} (2006), no.~1,
  265--282.

\bibitem{vaes_rigidity_2007}
Stefaan Vaes, \emph{Rigidity results for {Bernoulli} actions and their von
  {Neumann} algebras (after {Sorin} {Popa})}, Ast{\'{e}}risque (2007), no.~311,
  237--294, S{\'{e}}minaire Bourbaki. Vol. 2005/2006. \MR{2359046}

\bibitem{williams_crossed_2007}
Dana~P. Williams, \emph{Crossed products of {C}*-algebras}, Mathematical
  {Surveys} and {Monographs}, vol. 134, American Mathematical Society,
  Providence, RI, 2007.

\bibitem{zimmer_ergodic_1984}
Robert~J. Zimmer, \emph{Ergodic theory and semisimple groups}, Monographs in
  {Mathematics}, vol.~81, Birkhäuser Verlag, Basel, 1984.

\end{thebibliography}

\providecommand{\MR}[1]{}
\providecommand{\bysame}{\leavevmode\hbox to3em{\hrulefill}\thinspace}
\providecommand{\MR}{\relax\ifhmode\unskip\space\fi MR }
\providecommand{\MRhref}[2]{%
  \href{http://www.ams.org/mathscinet-getitem?mr=#1}{#2}
}
\providecommand{\href}[2]{#2}

\end{document}